%% file: main.tex
\documentclass{article}
\usepackage{graphicx} % Required for inserting images
\input{preamble}

\title{Spectral Intertwining Operators}
\author{Qiyuan Chen}
\date{}

\begin{document}

\maketitle

\begin{abstract}
    We study spectral intertwining operators between spectral Eisenstein series $\Eis_{P^\vee}$, $\Eis_{Q^\vee}$ for two parabolic subgroups $P, Q$ of a $p$-adic reductive group $G$ with the same Levi subgroup $M$, inspired by the analogy with the classical intertwining operators between parabolic induced representations of $p$-adic reductive groups. 
    In particular, we construct the normalized (canonical) intertwining operator that satisfies the transitivity and an unnormalized intertwining operator that is adjoint to a rational section of an analog of the Bruhat--Mackey's filtration. Moreover, the normalized and unnormalized intertwining operators differ by a ratio of L-functions, analogously to the Langlands conjecture about classical ones up to units.
    Finally we prove that the spectral intertwining operators correspond to classical ones up to units
    under any conjectural categorical local Langlands correspondence.
\end{abstract}

\input{1}

\input{2}

\input{3}

\input{4}

\bibliographystyle{alpha}
\bibliography{biblio}

\end{document}

%% file: preamble.tex
\usepackage[a4paper, portrait, margin = .75in ]{geometry}
\usepackage[utf8]{inputenc}
\usepackage{enumitem}
\usepackage{hyperref}
\usepackage{amsmath}
\usepackage{amssymb}
\usepackage{amsthm}
\usepackage{stmaryrd}
\usepackage{tikz-cd}
\usepackage{ bbold }
\usepackage{tikz}
\usetikzlibrary{matrix, calc}

\newtheorem{theorem}{Theorem}[subsection]
\newtheorem{proposition}[theorem]{Proposition}%[section]
\newtheorem{lemma}[theorem]{Lemma}
\newtheorem{corollary}[theorem]{Corollary}
\newtheorem{conjecture}[theorem]{Conjecture}

\theoremstyle{definition}
\newtheorem{definition}[theorem]{Definition}

\theoremstyle{remark}
\newtheorem{remark}[theorem]{Remark}

\newcommand{\Hom}{\operatorname{Hom}}

\newcommand{\BL}{{\mathbb {L}}}
 
\newcommand{\BN}{{\mathbb {N}}}

\newcommand{\CC}{{\mathcal {C}}} 
\newcommand{\CD}{{\mathcal {D}}}
 
\newcommand{\CF}{{\mathcal {F}}}
\newcommand{\CG}{{\mathcal {G}}} 
\newcommand{\CH}{{\mathcal {H}}}

\newcommand{\CN}{{\mathcal {N}}}
\newcommand{\CO}{{\mathcal {O}}}

\newcommand{\CX}{{\mathcal {X}}}
 
\newcommand{\CZ}{{\mathcal {Z}}}
\newcommand{\fn}{{\mathfrak {n}}}
\newcommand{\Par}{\operatorname{Par}}
\newcommand{\Coh}{\operatorname{Coh}}
\newcommand{\Eis}{\operatorname{Eis}}
\newcommand{\eis}{\operatorname{eis}}
\newcommand{\CT}{\operatorname{CT}}
\newcommand{\ct}{\operatorname{ct}}
\newcommand{\ad}{\mathrm{Ad}}
\newcommand{\End}{\operatorname{End}}
\newcommand{\ft}{\mathfrak{t}}
\newcommand{\Fr}{\mathrm{Fr}}

\newcommand{\res}{\operatorname{res}}
\newcommand{\fg}{\mathfrak{g}}
\newcommand{\im}{\operatorname{im}}
\newcommand{\Spf}{\operatorname{Spf}}
\newcommand{\Gal}{\operatorname{Gal}(G_0/G)}
\newcommand{\simto}{\operatorname{\xrightarrow{\sim}}}
\newcommand{\irr}{\operatorname{Irr}}

%% file: 1.tex
\section{Introduction}

\subsection{Motivation}

This article is concerned with the intertwining operators on the spectral side of the Langlands correspondence. Firstly, we briefly recall the classical intertwining operators. Let $F$ be a $p$-adic local field and $G$ be a reductive group over $F$. Suppose $G$ has a Levi subgroup $M$ and two parabolic subgroups $(P, Q)$ with Levi component $M$. There is a family of (rational) homomorphisms between parabolic inductions $i_P^G$ and $i_Q^G$, called unnormalized intertwining operators:
\[
J_{Q|P}(\pi) \colon i_P^G(\pi) \to i_Q^G(\pi)
\]
where $\pi$ is a irreducible representation of $M(F)$ over some appropriate field, such that the composition 
\[
\pi \to r_{Q}^G \circ i_P^G (\pi)/F^{>1} \to \pi
\]
of the inclusion (given by the Mackey's formula) and the adjunction of $J_{Q|P}$ is the identity, where $F^{>1}$ is a submodule of $r_{Q}^G \circ i_P^G (\pi)$ defined by the Bruhat--Mackey's filtration. The intertwining operator is a classical object in the representation theory of $p$-adic reductive groups, which will reflect the semisimplicity of representations such as $i_P^G \pi$ and $r_Q^G \circ i_P^G \pi$.

Though the definition of the intertwining operators is natural, they usually do not satisfy the transitivity $J_{R|Q} \circ J_{Q|P} = J_{R|P}$ for another parabolic $R$ with Levi component $M$,
which is useful to decompose parabolically induced representations.
The Langlands conjecture (\cite{langlands1976functional}, Appendix II) measures this failure: it is necessary to compatibly multiplying the unnormalized intertwining operators by some rational functions to obtain the transitivity. More precisely:
\begin{conjecture}[Langlands]
\label{conjecture-Langlands}
    For irreducible representations $\pi$,
    the normalized intertwining operators 
    \[
    J_{Q|P}^\circ (\pi):= \gamma(0, \pi, \ad_{\fn_{Q^\vee}/\fn_{P^\vee}}) J_{Q|P}(\pi)
    := \epsilon(0, \pi, \ad_{\fn_{Q^\vee}/\fn_{P^\vee}})\frac{L(1, \pi, \ad_{\fn_{Q^\vee}/\fn_{P^\vee}})}{L(0, \pi, \ad_{\fn_{Q^\vee}/\fn_{P^\vee}})}J_{Q|P}(\pi),
    \]
    where the $\epsilon$ and $L$'s denote the conjectural $\epsilon$-factor and L-functions for automorphic representations,
    satisfy the transitivity 
    \[J^\circ_{R|Q} \circ J^\circ_{Q|P} = J^\circ_{R|P}.\]
\end{conjecture}

Shahidi proposed a construction of the $\epsilon$-factors and $L$-functions for automorphic representations (\cite{shahidi1990}, Theorem 7.9) that satisfy the Langlands conjecture purely on the automorphic side. A natural question is to ask whether under a proposed construction of local Langlands correspondence $\pi \mapsto \phi_\pi$, such as the construction of Fargues--Scholze (\cite{fargues2024}), the L-functions defined by $L(s, \pi, V):=L(s, \phi_\pi, V)$ via the Galois representations and similarly the $\epsilon$-factors will satisfy the Langlands conjecture. Note that the L-parameter $\phi_\pi$ by Fargues--Scholze is the ``semisimplified'' parameter and thus the $L(s,\phi_\pi,V)$ is not the classical $L(s,\pi,V)$ in general. 

The main goal of this article is to construct similar intertwining operators on the spectral side of the local Langlands correspondence, 
and then verify the Langlands conjecture under the conjectural categorical local Langlands correspondence.
For this purpose, we firstly introduce the corresponding objects on the spectral side that are concerned in the theory of intertwining operators.
 
Denote by $G^*$ the quasisplit inner form of $G$. We will always assume that $G$ is an extended pure inner form: $G=(G^*)_b$ for an element $b$ in the Kottwitz set $B(G^*)_{bas}$.
Recall that the categorical Langlands correspondence, conjectured by Fargues--Scholze (\cite{fargues2024}, Conjecture X.1.4),
(which is compatible with the set-theoretical correspondence $\pi\mapsto \phi_\pi$ in loc. cit.)
says that over $\overline{\mathbb{Q}_\ell}$, there should be an equivalence of $\infty$-categories:
\[
D_{\text{lis}}(\operatorname{Bun}_{G^*})^\omega \simeq D\operatorname{Coh}^{\text{qc}}(\operatorname{Par}_{G^\vee}),
\]
where:
\begin{itemize}
    \item $D_{\text{lis}}(\operatorname{Bun}_{G^*})^\omega$ is the subcategory of the compact objects in the category of lisse \'etale sheaves over the moduli space of principal $G^*$-bundles over the Fargues--Fontaine curve.
    \item $D\operatorname{Coh}^{\text{qc}}(\operatorname{Par}_{G^\vee})$
    is the category of derived coherent sheaves with quasi-compact support over the moduli space of $L$-parameters of the Langlands dual of $G^*$, ${G^*}^\vee \simeq G^\vee$.
\end{itemize}

The category $D(\mathsf{Rep}(G(F))$ of representations of $G(F)$ can be viewed as a full subcategory of $D(\operatorname{Bun}_{G^*,b})$, for which Hamann, Hansen and Scholze (\cite{hamann}) constructed the ``geometric Eisenstein operators'' $\Eis_{P!}$ that extend the parabolic induction functors $i_P^G$.
Moreover, it is conjectured (\cite{hansen_cllc}, Conjecture 5.4.1) that
under the conjectural categorical local Langlands correspondence,
the geometric Eisenstein operator $\Eis_{\bar{P}!}$ (where bar denotes the opposite parabolic subgroup) corresponds to the following spectral Eisenstein series functor
\[
\Eis_{P^\vee}=p_{P*} q^*_P \colon D\operatorname{Coh}(\Par_{M^\vee}) \to D\operatorname{Coh}(\Par_{G^\vee})
\]
where $p_P \colon \Par_{P^\vee} \to \Par_{G^\vee}$ and $q_P \colon \Par_{P^\vee} \to \Par_{M^\vee}$ are induced by the natural inclusion $P^\vee\to G^\vee$ and projection $P^\vee \to M^\vee$ respectively. Thus it is natural to hope that there are natural ``rational'' transformations
\[
J_{Q^\vee|P^\vee}^\circ, J_{Q^\vee|P^\vee} \colon \Eis_{P^\vee} \to \Eis_{Q^\vee}
\]
serving as the normalized and unnormalized ``spectral intertwining operator'' respectively and satisfying an analog of the Langlands conjecture.

\subsection{Main results}

Now we state the main theorems of the article. Our first surprising result is that unlike the classical case, the normalized intertwining operator is the one that appears the most naturally on the spectral side, as a canonical isomorphism between Eisenstein operators on an open substack of the moduli of parameters.

\begin{theorem}[Intertwining over the regular semisimple and generic locus]
\label{intertwining, rs,g}
    After restricting to the regular semisimple and generic loci 
    $\Par_{M^\vee}^{rs,g}$, 
    $\Par_{P^\vee}^{rs,g}$, $\Par_{Q^\vee}^{rs,g}$ and $\Par_{G^\vee}^{rs,g}$ 
    (which come from dense open subsets in corresponding coarse moduli spaces),
    we obtain operators
    \[
    \Eis_{P^\vee}^{rs,g}, \Eis_{Q^\vee}^{rs,g} \colon D\operatorname{Coh}(\Par_{M^\vee}^{rs,g}) \to D\operatorname{Coh}(\Par_{G^\vee}^{rs,g}).
    \]
    Then there is a canonical isomorphism, called the normalized intertwining operator,
    \[
    J^\circ_{Q^\vee|P^\vee} = J^{rs,g}_{Q^\vee|P^\vee} \colon 
    \Eis^{rs,g}_{P^\vee} \simto \Eis^{rs,g}_{Q^\vee}
    \]
    satisfying the transitivity for another parabolic $R$
    \[J^\circ_{R^\vee|Q^\vee}\circ J^\circ_{Q^\vee|P^\vee} = J^\circ_{R^\vee|P^\vee}.\]
    Moreover, $J^\circ_{Q^\vee|P^\vee}$ generates the free $\CZ(D\Coh(\Par_{M^\vee}^{rs,g}))$-module 
    $\Hom(\Eis^{rs,g}_{P^\vee}, \Eis^{rs,g}_{Q^\vee})$.
\end{theorem}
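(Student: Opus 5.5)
We will show that over the regular semisimple and generic locus both correspondences $\Par_{M^\vee}^{rs,g} \xleftarrow{q_P} \Par_{P^\vee}^{rs,g} \xrightarrow{p_P} \Par_{G^\vee}^{rs,g}$ and its $Q$-analogue are canonically isomorphic, compatibly with the maps to $\Par_{M^\vee}$ and $\Par_{G^\vee}$. The intertwining operator is then the induced isomorphism of push-pull functors. The key geometric input is that on the generic locus the fibres of $q_P$ are trivial. Generic here should mean that $H^0$ and $H^2$ of $W_F$ with coefficients in $\fn_{P^\vee}$ and $\fn_{Q^\vee}$ vanish, by local Tate duality and Euler characteristic $0$. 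So $q_P$ is an isomorphism onto its image: $\Par_{P^\vee}^{g}\simto \Par_{M^\vee}^{g}$. Indeed, a lift of an $M^\vee$-parameter to $P^\vee$ is governed by the unipotent group $N_{P^\vee}$. With a filtration by abelian pieces $\fn_i$ stable under $M^\vee$, obstructions lie in $H^2(W_F,\fn_i)$, choices form a torsor under $H^1$, and automorphisms are given by $H^0$. By Euler characteristic, vanishing of $H^0$ and $H^2$ forces $H^1=0$. The argument goes inductively along the filtration, and derived-ly the fibre is the affine space $\mathbb{V}(H^1)[\ldots]$ of the cotangent complex $R\Gamma(W_F,\fn)$, which is acyclic. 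Hence $\Eis_{P^\vee}^{g}\simeq p_{P*}\circ s_P^{*}$, where $s_P$ is the inverse of $q_P$ (a section $\Par_{M^\vee}^g\to\Par_{P^\vee}^g$), i.e.\ just pushforward along the map $i_P\colon \Par_{M^\vee}^{g}\to\Par_{G^\vee}$ induced by the Levi inclusion $M^\vee\subset P^\vee\subset G^\vee$.

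Identify both composites with the same map. The composite $p_P\circ s_P$ equals $i_M\colon\Par_{M^\vee}^g\to\Par_{G^\vee}$ induced by $M^\vee\hookrightarrow G^\vee$. The section $s_P$ is the canonical lift via $M^\vee\subset P^\vee$, which is the unique lift up to unique isomorphism. Likewise $p_Q\circ s_Q=i_M$. We thus get canonical isomorphisms $\Eis^{rs,g}_{P^\vee}\simeq i_{M*}\simeq\Eis^{rs,g}_{Q^\vee}$, and we define $J^\circ_{Q^\vee|P^\vee}$ as the composite. Transitivity is then formal: $J^\circ_{R|Q}\circ J^\circ_{Q|P}$ is $\Eis_P\simeq i_{M*}\simeq\Eis_Q\simeq i_{M*}\simeq \Eis_R$, and the middle pair cancels. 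One must check that the base-change and unit isomorphisms used in $p_*s^*\simeq i_{M*}$ are coherent; this is routine once the lift is unique up to contractible choice. The regular semisimple hypothesis is used to ensure the restriction to $\Par_{G^\vee}^{rs,g}$ is compatible. Concretely, the preimage of $\Par^{rs,g}_{G^\vee}$ under $i_M$ should be $\Par^{rs,g}_{M^\vee}$, and $i_M$ there should be finite étale onto its image, an open-closed piece. Then $i_{M*}$ preserves $D\Coh$, and the restricted Eisenstein functors are well defined.

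The step I expect to be the main obstacle is the endomorphism statement: $\Hom(\Eis^{rs,g}_{P^\vee},\Eis^{rs,g}_{Q^\vee})$ is free of rank one over $\CZ(D\Coh(\Par^{rs,g}_{M^\vee}))$, generated by $J^\circ$. Precomposing with $J^{\circ}$ reduces this to $\End(i_{M*})$. By adjunction $i_M^*\dashv i_{M*}$ (or via a Fourier–Mukai description of natural transformations between integral transforms on $D\Coh$ of these stacks), $\End(i_{M*})$ is computed as global sections of $\mathcal{O}$ of the fibre product $\Par^{rs,g}_{M^\vee}\times_{\Par^{rs,g}_{G^\vee}}\Par^{rs,g}_{M^\vee}$. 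This fibre product decomposes into components indexed by the relative Weyl group $W(M)$. For natural transformations of functors one must keep only the diagonal-kernel contributions compatible with the $\CZ(M)$-linear structure. I would try to prove that on the regular semisimple generic locus the off-diagonal components act by twisting with $w\in W(M)$, so they are not natural transformations of the plain functor $i_{M*}$. I would also show that the diagonal component gives $\mathcal{O}(\Par^{rs,g}_{M^\vee})=\CZ(D\Coh(\Par^{rs,g}_{M^\vee}))$, using that this locus is close to its coarse moduli space (regular semisimple means automorphism groups are tori acting trivially on the relevant sheaves). Making this Hochschild-type identification of the center precise, together with the vanishing of the twisted components, is where I expect the real work to lie.
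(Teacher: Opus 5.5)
Your construction of $J^\circ$ and the transitivity argument match the paper. On the regular semisimple and generic locus, $q_P$ is inverted by the Levi inclusion, which gives canonical isomorphisms $\Eis^{rs,g}_P\simeq \iota_{M,G*}\simeq \Eis^{rs,g}_Q$, and transitivity is then formal. Your fibrewise proof that $q_P$ is an isomorphism ($H^0=H^2=0$, hence $H^1=0$) is a legitimate substitute for the paper's argument, which says that $\iota_{M,P}$ is an \'etale section of $q_P$ over a connected base. Be careful, though, with the roles of the two conditions. $H^0=0$ is the regular semisimple condition and $H^2=0$ is the generic one, and on the paper's generic locus alone $q_P$ is \emph{not} an isomorphism. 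For example, with $\sigma=1$ the point $1\in T$ is generic but not regular semisimple, and every unipotent element of $B$ lies over it in $B/B\to T/T$. So what you call ``generic'' is really the rs,g locus, and the rs hypothesis is not merely a compatibility device. Also, restricting $\Eis$ requires $p_P^{-1}(\Par^{rs,g}_G)=\Par^{rs,g}_P$, including $P$-parameters that do not come from $M$. The paper gets this from the fact that a $P$-parameter with semisimple image is conjugate into a Levi; your condition on $i_M^{-1}$ does not address it.

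The genuine gap is the generation statement, which you leave open, and the mechanism you propose for it would not work as stated. Your adjunction set-up is sound. With $\iota=\iota_{M,G}$ one has $\End(\iota_*)\cong\Hom(\iota^*\iota_*,\mathbb{1})$. On an elliptic component, $\iota$ is a Galois covering with group $W_\phi:=N_G(M,\phi)/M^1_\phi$ (Lemma~\ref{iota-G-general}), so $\iota^*\iota_*\simeq\bigoplus_{w\in W_\phi}w^*$ and $\End(\iota_*)\cong\prod_{w}\Hom(w^*,\mathbb{1})$. The $w=1$ factor is $\End(\mathbb{1})=\CZ(D\Coh(\Par^{rs,g}_M))$ by definition. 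Everything therefore hinges on $\Hom(w^*,\mathbb{1})=0$ for $w\neq 1$, and ``twisting by $w$ is not a transformation of the plain functor'' does not prove it. If $w$ acted trivially on the coarse space, these groups would be nonzero and the rank would exceed one; for instance, for $\mathrm{pt}\to BW$ one gets $\End(\mathrm{Ind}_1^W)=k[W]$. The missing input has two parts. First, $W_\phi$ acts generically freely on the coarse moduli space (Lemma~\ref{generical free}). The reason is that an element fixing every unramified twist $\chi_m\phi$ centralizes $M^\circ_\phi\supset(Z(M)^{W_F})^\circ$. It therefore lies in $C_{{}^LG}((Z(M)^{W_F})^\circ)={}^LM$, hence in $M^1_\phi$. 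Second, you need the completion argument of Proposition~\ref{gen-quotient}. At a point $x$ with trivial $W_\phi$-stabilizer, naturality along the injection $\CF\hookrightarrow\CF^\wedge_x$ kills any $w^*\CF\to\CF$, because $w^*\CF^\wedge_x$ and $\CF^\wedge_x$ are supported at distinct points. This injection holds for locally free $\CF$ on a smooth connected stack, and such $\CF$ generate in characteristic $0$. Finally, your identification $\CZ(D\Coh(\Par^{rs,g}_M))=\CO(\Par^{rs,g}_M)$ is false. The isotropy groups ($T^\sigma$, or $C_M(\phi)$) act nontrivially on sheaves, and $D\Coh$ splits into blocks indexed by their representations, so the center is a product of copies of the coarse ring over the blocks. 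You do not actually need this identification, since the diagonal factor is $\CZ$ tautologically.
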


The idea for proving this theorem is that $\Eis_{P^\vee}^{rs,g}$ is canonically isomorphic to $\iota_{M^\vee,G^\vee*}$, where 
\[
\iota_{M^\vee,G^\vee} \colon \Par_{M^\vee}^{rs,g} \to \Par_{G^\vee}^{rs,g}
\]
is induced by the inclusion $M^\vee\to G^\vee$.

In this theorem, the regular semisimple locus and the generic locus are defined in Definition \ref{Regular semisimple locus} and \ref{Generic locus} in the unipotent case and Definition \ref{rs,g,intrinsic} in general. We remark that similar loci are defined in \cite{hamann2026geometriceisensteinseriesintertwining},
\cite{hansen_beijing} and \cite{zou2025categoricallocallanglandsmathrmgln}.
The isomorphism $J^\circ$ is defined in Definition \ref{rs, g} in the unipotent case and Definition \ref{rs, g, general} in general. The property about generating is proved in Proposition \ref{rs,g,gen} in the unipotent case and Proposition \ref{rs,g,general,gen} in general.

Our second result is that after multiplying by the inverse of some L-function at $1$, seen as a regular function on the moduli space of parameters $\Par_{M^\vee}$ (see Definition \ref{L-function,general}) and restricting to some blocks, the normalized intertwining operator can be uniquely extended to the whole space and thus provides an `integral intertwining operator''. Here, the inverse of the L-function at $1$ appears naturally, as the function of definition (up to a unit) of some components of the ``non-generic'' locus of $\Par_{M^\vee}$.

More precisely, let $\phi$ be an elliptic semisimple parameter in $\Par_{M^\vee}$ for a Levi subgroup $M^\vee \subset G^\vee$. 
Consider the Eisenstein operator on the connected component containing $\phi$:
\[
    \Eis_{P^\vee} \colon D\operatorname{Coh}(\Par_{M^\vee,\phi}) \to D\operatorname{Coh}(\Par_{G^\vee,\phi})
\]
Since $\Par_{M^\vee,\phi}$ is a $C_{M^\vee}(\phi)$-gerbe over its coarse moduli space (see Lemma \ref{gerbe}), we have a block decomposition of $D\operatorname{Coh}(\Par_{M^\vee,\phi})$ by isotypic components. Let $V$ be an irreducible representation of $C_{M^\vee}(\phi)$ that it is trivial on $C_{M^\vee}(\phi)^\circ$.
    
\begin{theorem}[Integral intertwining]
    \label{intertwining}
    After restricting to the $V$-isotypic block 
    $D\operatorname{Coh}(\Par_{M^\vee,\phi})_V$,
    \begin{itemize}
        \item The $\CO(\Par_{M^\vee, \phi})$-module
        $\Hom(\Eis_{P^\vee}, \Eis_{Q^\vee})$ of all transformations between Eisenstein operators is free and of rank $1$.
        \item There is a unique generator of this module, called the integral intertwining operator
        \[
        j_{Q^\vee|P^\vee} \colon \Eis_{P^\vee} \to \Eis_{Q^\vee}
        \]
        which restricts to $L(1, \ad_{\fn_{Q^\vee}/\fn_{P^\vee}})^{-1}$ times $J^\circ_{Q^\vee|P^\vee}$ on the regular semisimple and generic locus.
    \end{itemize}
\end{theorem}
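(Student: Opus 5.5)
Since the setting is local near the elliptic parameter $\phi$, I will first reduce everything to an explicit model. By Lemma \ref{gerbe}, $\Par_{M^\vee,\phi}$ is a $C_{M^\vee}(\phi)$-gerbe over its coarse moduli space. After twisting by $\phi$, I expect the component to be identified with a quotient of the unipotent/tame parameter space for the centralizer data. Concretely, it should be an unramified-twist torus $Z(M^\vee)^{\circ}$-type space times a nilpotent part, modulo $C_{M^\vee}(\phi)$. On the $V$-isotypic block with $V$ trivial on $C_{M^\vee}(\phi)^\circ$, the category becomes (up to the finite group $\pi_0$ data) coherent sheaves on a smooth affine scheme $X=\Par_{M^\vee,\phi}^{\mathrm{coarse}}$. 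So I will describe $\Hom(\Eis_{P^\vee},\Eis_{Q^\vee})$ via adjunction:
\[
\Hom(\Eis_{P^\vee},\Eis_{Q^\vee})\simeq \Hom(\mathrm{id},\CT_{Q^\vee}\circ\Eis_{P^\vee}),
\]
where $\CT_{Q^\vee}=q_{Q*}p_Q^!$ is the right adjoint (spectral constant term). The composite $\CT_{Q^\vee}\Eis_{P^\vee}$ is computed by base change along $\Par_{Q^\vee}\times_{\Par_{G^\vee}}\Par_{P^\vee}$. This fiber product is stratified by double cosets $W_{Q}\backslash W/W_P$, the spectral analog of Bruhat--Mackey.

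Restricted to the component of $\phi$ with $\phi$ elliptic in $M^\vee$, I expect only the closed/open relative-position strata indexed by $w=1$ to contribute to natural transformations from the identity. The reason is that the other strata land on $\Par_{wMw^{-1}}$-components, which don't meet $\Par_{M^\vee,\phi}$ compatibly with $V$-isotypic structure unless $w$ normalizes $M$. Even then, the resulting sheaves are supported on a graph that is not the diagonal, so they give no maps from $\mathrm{id}$ generically and, by torsion-freeness, none at all. The $w=1$ stratum is $\Par_{(P\cap Q)^\vee}$. Its contribution is the pushforward of the structure sheaf twisted by the relative dualizing sheaf of $\Par_{(P\cap Q)^\vee}\to \Par_{Q^\vee}$. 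That is a Koszul-type complex attached to the two-term complex $C^\bullet(W_F,\fn_{Q^\vee}/\fn_{P^\vee}\cap\fn_{Q^\vee})$ over $\Par_{M^\vee}$. So the Hom module becomes $H^0$ of the derived global sections of $\det$ of (a shift of) Galois cohomology of $\ad_{\fn_{Q^\vee}/\fn_{P^\vee}}$, i.e.\ a line bundle $\mathcal{L}$ on $X$ restricted to the $V$-block. This gives freeness of rank one once I check that $H^{<0}$ vanishes and that $X$ is affine with trivializable $\mathcal L$.

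For the generator, I compare with Theorem \ref{intertwining, rs,g}. On the rs,g locus the cohomology $H^\ast(W_F,\ad_{\fn_{Q^\vee}/\fn_{P^\vee}})$ vanishes, and $\mathcal L$ is trivialized by $J^\circ_{Q^\vee|P^\vee}$. The determinant of the perfect complex $C^\bullet(W_F,\ad_{\fn_{Q^\vee}/\fn_{P^\vee}})$ computed via $\Fr-1$ on inertia invariants is $\det(1-\Fr\mid \ad^{I_F})$ up to the twist by the monodromy part. By Definition \ref{L-function,general} this is precisely $L(1,\ad_{\fn_{Q^\vee}/\fn_{P^\vee}})^{-1}$ as a regular function on $\Par_{M^\vee,\phi}$. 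Hence $L(1,\cdot)^{-1}J^\circ$ extends to a global section of $\mathcal L$ that generates. Uniqueness follows because $\Hom$ is torsion-free and the rs,g locus is dense, so the restriction map is injective.

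The main obstacle is the rank-one freeness on the whole component, rather than just generically. This requires controlling the non-generic locus: showing the determinant computation is exact and that no extra sections come from other Bruhat strata or from higher derived structure. There the $V$-isotypic restriction and the smoothness of the $w=1$ stratum over $X$ should be used. I would first verify this in the unipotent case, where the component is explicit ($\mathbb{G}_m$-twists with a Koszul resolution on $\Fr$-eigenvalues), and then reduce the general elliptic case to it by the twisting equivalence.
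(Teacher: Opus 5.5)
Your proposal has a genuine gap, and its decisive computation produces the wrong factor. By Definition \ref{L-function,general}, $\det(1-\Fr\mid \ad^{I_F})$ is $L(0,\ad_{\fn_{Q^\vee}/\fn_{P^\vee}})^{-1}$, not $L(1,\cdot)^{-1}$. Moreover, the determinant of the whole complex $R\Gamma(W_F,\ad_{\fn_{Q^\vee}/\fn_{P^\vee}})$ sees both values. Fiberwise, $H^0$ jumps on the zero set of $L(0,\cdot)^{-1}$ and $H^2$ on that of $L(1,\cdot)^{-1}$ (cf.\ the proof of Proposition \ref{complex}), and the two ends enter $\det$ with opposite signs. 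So your line bundle, measured against its trivialization on the acyclic locus, has divisor of $(L(1,\cdot)/L(0,\cdot))^{\pm1}$ rather than of $L(1,\cdot)^{-1}$. That is a $\gamma$-type ratio, i.e.\ the discrepancy between $J^\circ$ and the merely rational unnormalized operator.

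The extra $L(0)$-divisor sits on the non-regular-semisimple locus, where $J^\circ$ in fact extends to an isomorphism (Proposition \ref{g}). So something your argument discards must cancel it, and this is exactly where ``only the $w=1$ stratum contributes'' is unjustified. The supports of the other strata (graphs of $w\in W(G_\phi^\circ)^\sigma$) meet the diagonal along fixed loci of $w$, whose union covers the non-regular-semisimple locus. Generic vanishing plus torsion-freeness only kills degree-$0$ Homs into those pieces. When the identity stratum is open (e.g.\ $Q=\bar P$), it is a quotient in $i_*i^!\to\mathrm{id}\to j_*j^*$. The transformations into the full kernel are then only the kernel of a connecting map into $\operatorname{Ext}^1$ against the other strata. (Also, with $\CT_{Q^\vee}$ the right adjoint of $\Eis_{Q^\vee}$, $\Hom(\mathrm{id},\CT_{Q^\vee}\circ\Eis_{P^\vee})$ computes transformations $\Eis_{Q^\vee}\to\Eis_{P^\vee}$; you want the right adjoint of $\Eis_{P^\vee}$.)

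The torsion-freeness you invoke for uniqueness is not automatic either. Near a non-generic point, $\Eis_B\CO$ is locally $\CO[u]/(fu)$ (Corollary \ref{formal-Eis}), which has sections supported on $V(f)$. The paper obtains injectivity of restriction (Lemma \ref{inclusion}) by comparing with $\eis$ and by Zhu's unipotent categorical correspondence. On the $\eis$ side, $\eis_B\CO$ is locally free via the twisted Grothendieck--Springer argument and $\Hom(\eis_B,\eis_{B'})\simeq\CO(T\sigma/T)$. This realizes $\Hom(\Eis_B,\Eis_{B'})$ as an ideal of $\CO(\BL_{T\sigma})$.

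The factor $L(1,\cdot)^{-1}$ is then found from a local model, not a determinant (Proposition \ref{all}). Take a strongly regular semisimple point of $V(f)$ with $f\mid L(1,\ad_{\fn_{B'}/\fn_B})^{-1}$. There $\Eis_B$ gives $\CO$ (with $u=0$), $\Eis_{B'}$ gives $\CO[u]/(fu)$, and module maps $\CO\to\CO[u]/(fu)$ are exactly multiples of $f$. Sufficiency comes from factoring through simple reflections, reducing to the rank-one case where the non-generic components are disjoint and lie in the regular semisimple locus, and Beauville--Laszlo gluing. Freeness is then automatic, since the ideal is principal. The general block follows by twisting with $\overline{\CO_\chi}$ and descent along $\BL_{M_\phi^\circ\sigma}\to\Par_{M,\phi}$.

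Your plan defers precisely this non-generic analysis to ``the main obstacle'', so as written it establishes neither freeness nor the correct generator.
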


This theorem is proved in Proposition \ref{all} in the unipotent case and Theorem \ref{integral, general} in general.

Our third theorem states that after dividing by the L-function at $0$, the integral intertwining operator will satisfy similar properties as the classical unnormalized intertwining operators, i.e., its adjoint induces a rational section of an analog of the Mackey's filtration. 

\begin{theorem}[Adjunction]
\label{Adjunction}
    After dividing by the L-function $L(0, \ad_{\fn_{Q^\vee}/\fn_{P^\vee}})^{-1}$, the transformation $j_{Q^\vee|P^\vee}$ becomes a rational operator $J_{Q^\vee|P^\vee}$,
    called the unnormalized spectral intertwining operator,
    satisfying that the composition
    \[
    \mathbb{1} \to \CT_{Q^\vee} \circ \Eis_{P^\vee}/F^{>1} \to \mathbb{1}
    \]
    is an isomorphism, where 
    \begin{itemize}
        \item $\CT$ is the (partially defined) left adjoint of $\Eis$;
        \item $F^{>1}$ is defined by some filtration of $\CT\circ \Eis$;
        \item The $\CO(\Par_{M,\phi})$-module $\Hom(\mathbb{1}, \CT_{Q^\vee}\circ\Eis_{P^\vee}/F^{>1})$ is free and of rank $1$ 
        and the first transformation is a generator of it;
        \item The second transformation is induced by the adjunction of $J_{Q^\vee|P^\vee}$.
    \end{itemize}
\end{theorem}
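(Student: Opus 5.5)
I will reduce the statement to a computation on the connected component $\Par_{M^\vee,\phi}$ and its $V$-isotypic block, where Theorem \ref{intertwining} already describes $\Hom(\Eis_{P^\vee},\Eis_{Q^\vee})$ as free of rank one with generator $j_{Q^\vee|P^\vee}$. The first step is to make $\CT_{Q^\vee}$ concrete. On the block I will take $\CT_{Q^\vee}:=q_{Q*}p_Q^!$ up to twist, i.e.\ the left adjoint of $p_{Q*}q_Q^*$ where it is defined (on perfect complexes, using that $q_Q$ is smooth and $p_Q$ proper, so $p_{Q*}$ has the right adjoint $p_Q^!$ and $q_Q^*$ has a left adjoint given by $q_{Q!}=q_{Q*}(-\otimes\omega_{q_Q})$). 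Base change along $\Par_{P^\vee}\times_{\Par_{G^\vee}}\Par_{Q^\vee}$ then expresses $\CT_{Q^\vee}\circ\Eis_{P^\vee}$ as a push–pull through this fiber product. Its stratification by relative position, i.e.\ by $P^\vee\backslash G^\vee/Q^\vee$ orbits (Bruhat cells indexed by $W_M\backslash W/W_M$-type double cosets), gives the spectral Bruhat–Mackey filtration. $F^{>1}$ is the part supported on the non-open strata, so the quotient $\CT_{Q^\vee}\circ\Eis_{P^\vee}/F^{>1}$ is the contribution of the identity double coset. That contribution is the closed piece $\Par_{P^\vee\cap Q^\vee}\to\Par_{M^\vee}$, whose fibres are governed by $\fn_{P^\vee}\cap\fn_{Q^\vee}$ together with the normal directions $\fn_{Q^\vee}/(\fn_{P^\vee}\cap \fn_{Q^\vee})$.

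The second step computes $\Hom(\mathbb 1,\CT_{Q^\vee}\Eis_{P^\vee}/F^{>1})$. Over $\Par_{M^\vee,\phi}$ with $\phi$ elliptic, the fibre of $\Par_{P^\vee\cap Q^\vee}\to\Par_{M^\vee}$ over a parameter $\psi$ is a derived scheme of cocycles $H^\bullet(W_F,\fn_{P^\vee}\cap\fn_{Q^\vee})$-type. Pushing forward on the $V$-isotypic block, I expect the relevant cohomology to be a line bundle on $\Par_{M^\vee,\phi}$ (trivial after choosing the canonical generator). The map $\mathbb 1\to\CT\Eis/F^{>1}$ is the unit of adjunction composed with the projection to the open piece. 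I will check that it generates by restricting to the rs,g locus, where it becomes the identity $\mathbb 1\to \iota^*\iota_*$ via Theorem \ref{intertwining, rs,g}, and then using freeness to extend.

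The third step compares the two maps. The adjoint of $j_{Q^\vee|P^\vee}$ gives $\CT_{Q^\vee}\Eis_{P^\vee}\to\mathbb 1$, which kills $F^{>1}$ on the rs,g locus for support reasons and hence everywhere by freeness/torsion-freeness. The composite $\mathbb 1\to\mathbb 1$ is then a function $f\in\CO(\Par_{M^\vee,\phi})_V$. On the rs,g locus, $j=L(1)^{-1}J^\circ$. The identification of the open stratum with $\mathbb 1$ differs from the one induced by $J^\circ$ by the Euler class of the normal bundle $\fn_{Q^\vee}/(\fn_{P^\vee}\cap\fn_{Q^\vee})$ in the cocycle complex. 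Its determinant should be $\det(1-\Fr\mid \ad_{\fn_{Q^\vee}/\fn_{P^\vee}})$ on inertia invariants, i.e.\ $L(0,\cdot)^{-1}$ paired against the $L(1,\cdot)^{-1}$ coming from duality. So I expect $f=L(0,\ad)^{-1}$ up to a unit, and dividing by it yields $J_{Q^\vee|P^\vee}$ with composite an isomorphism.

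The main obstacle is this Euler-class computation: pinning down exactly which factor, $L(0)$ versus $L(1)$, arises from the self-intersection/Koszul complex of the closed stratum, including the dualizing twist $\omega_{q_Q}$. I would first do it for $G^\vee=\mathrm{GL}_2$, $M^\vee=T^\vee$ in the unipotent case, where everything is explicit on $\mathbb G_m^2$ with a Frobenius eigenvalue. I would then reduce the general case to rank-one steps, using transitivity of $J^\circ$ and multiplicativity of $L$-functions along a minimal gallery from $P$ to $Q$.
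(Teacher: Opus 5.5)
There is a genuine gap, and it starts with the functor $\CT$.

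\textbf{The adjoint is the wrong one.} The formula $q_{Q*}p_Q^!$ you write is the \emph{right} adjoint of $\Eis_{Q^\vee}=p_{Q*}q_Q^*$. This is the Hansen--Mann constant term, which the paper explicitly does not use (Remark~\ref{CT_B*}). A left adjoint would have to be $q_{Q\sharp}p_Q^*$. Such a $q_{Q\sharp}$ comes from Grothendieck duality only for proper maps. But $q_Q\colon\Par_{Q^\vee}\to\Par_{M^\vee}$ is neither proper nor smooth: by Lemma~\ref{formal-B} it is locally the map killing $u$ on $\operatorname{Spec}(\CO[u]/(fu))/T$, whose fibre jumps from a point to an $\mathbb{A}^1$-direction along $V(f)$. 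This is why $\CT$ is only partially defined. The paper therefore constructs it by hand on the thick subcategory generated by $\Eis$ of the block, taking $\CT_B\circ\Eis_B\CO_{T_\sigma}$ to be the $\CO(T_\sigma)$-dual of $\End(\Eis_B\CO_{T_\sigma})$.

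Two consequences follow. First, there is no base-change formula presenting $\CT_{Q^\vee}\circ\Eis_{P^\vee}$ as a push--pull through $\Par_{P^\vee}\times_{\Par_{G^\vee}}\Par_{Q^\vee}$, so your stratification-defined filtration is not available; the paper records it only as a heuristic (Remark~\ref{geometric}). Second, you mix the two adjunctions. For a left adjoint there is no unit $\mathbb{1}\to\CT\circ\Eis$; its unit is $\mathbb{1}\to\Eis\circ\CT$ on $\Par_{G^\vee}$. So the first map in the statement has to be constructed, and showing it is a generator is part of what must be proved.

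Note also that with the paper's filtration the quotient is not a single identity-stratum piece. For $B'=w^{-1}(B)$, after the $w$-twist, $\CT_B\circ\Eis_{B'}/F^{>1}$ becomes $\CT_B\circ\Eis_B/F^{>w}$, which is free on $\{(L\delta)^*_{w'}\}_{w'\le w}$.

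\textbf{The key computation is only conjectured.} Even granting a construction, the real content of the theorem is what your steps two and three only ``expect''. That content is that $\Hom(\mathbb{1},\CT\circ\Eis/F^{>1})$ is free of rank one, and that its generator pairs with the adjoint of $j$ to give exactly $L(0,\ad_{\fn_{Q^\vee}/\fn_{P^\vee}})^{-1}$. Checking generation on the rs,g locus and extending by freeness cannot settle this. All $L$-factors are units there, and the question is precisely which rational multiple of the canonical section extends across the non-regular-semisimple and non-generic divisors.

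The paper gets this from explicit structure that your proposal has no substitute for:
\begin{enumerate}
\item $\End(\eis_B\CO_{T_\sigma})$ has $\CO(T_\sigma)$-basis the multiplicative Demazure operators $\delta_w$ (Lemma~\ref{demazure}). The coefficient of $\delta_w$ at $w$ is $L(0,\ad_{\fn/w^{-1}(\fn)})$ up to a unit. So $L(0)$ comes from the ramification of $T_\sigma\to T_\sigma\sslash W^\sigma$ along reflection hyperplanes, not from an Euler class of a normal bundle.
\item $\End(\Eis_B\CO_{T_\sigma})$ has basis $\{(L\delta)_w\}$ (Lemma~\ref{eis-Eis}). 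This is proved by local computations at strongly regular semisimple points of the non-generic divisors.
\item Dualizing and saturating, $F^{\geq w}/F^{>w}$ is generated by $L(0,\ad_{\fn/w^{-1}(\fn)})^{-1}L_w^{-1}e_w$ (Lemma~\ref{free}).
\item The adjoint of $j_{B|B'}$ is $L_w\cdot(1,w)$, so the composite is $L(0,\cdot)^{-1}$ (Theorem~\ref{mul-sc}).
\end{enumerate}
The general case then follows by descent along $\BL_{M_\phi^\circ\sigma}\to\Par_{M,\phi}$, after showing that the summands with $\gamma\neq 1$ in Proposition~\ref{decomposition} contribute nothing (Theorem~\ref{mul-general}).

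Your fallback of reducing to rank one along a gallery is also unjustified as stated. Nothing shows that $\CT_{Q^\vee}\circ\Eis_{P^\vee}/F^{>1}$ and its generator are compatible with composing intertwiners.
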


\begin{remark}
    Recently, Hansen--Mann (\cite{hansen_cllc}, Definition 3.5.1 and Theorem 3.6.3) defined a functor $\CT$ and a filtration on $\CT\circ\Eis$. However, their construction is not much related to ours, though the notations are similar. Their $\CT$ is the right adjoint of $\Eis$ but ours is the (partially defined) left adjoint. Moreover, the subquotient indexed by $w \in W^\sigma$ of the filtration in our case is $w^*$, which is simpler than the subquotient in their case, as we are restricted to some special blocks.
\end{remark}

The inverse of the L-function at $0$ appears naturally here, as the function of definition (up to a unit) of some components of the ``non-regular semisimple'' locus of $\Par_{M^\vee,\phi}$.
In this theorem, the adjoint functor $\CT$ is defined in Proposition \ref{ct_circ_eis}, \ref{CT_chi} in the unipotent case and \ref{ct-general} in general; the filtration is defined in \ref{filtration} in the unipotent case and \ref{filtration-general} in general.
This theorem is proved in Theorem \ref{mul-sc} in the unipotent case and \ref{mul-general} in general.

By construction, 
the following analog of the Langlands conjecture holds:
the operator
\[
    \frac{L(1, \ad_{\fn_{Q^\vee}/\fn_{P^\vee}})}{L(0,\ad_{\fn_{Q^\vee}/\fn_{P^\vee}})} J_{Q^\vee|P^\vee}
\]
satisfies the transitivity (as it is identified with $J^\circ_{Q^\vee|P^\vee}$).
In particular, the composition $J_{P^\vee|Q^\vee}\circ J_{Q^\vee|P^\vee}$ is given by the multiplication of the function
\[
    \phi \mapsto \frac{L(0, \phi, \ad_{\fn_{P^\vee}/\fn_{Q^\vee}})L(0, \phi, \ad_{\fn_{Q^\vee}/\fn_{P^\vee}})}{L(1, \phi, \ad_{\fn_{P^\vee}/\fn_{Q^\vee}})L(1, \phi, \ad_{\fn_{Q^\vee}/\fn_{P^\vee}})}
\]
on $\Par_{M^\vee,\phi}$. We remark that these theorems are purely on the spectral side.
Finally, we prove that (Theorem \ref{compatibility}) under the categorical Langlands correspondence, the spectral intertwining operator $J_{Q^\vee|P^\vee}$ corresponds to the classical intertwining operator $J_{\bar{Q}|\bar{P}}$ of representations \textbf{up to a unit} in $\CO(\Par_{M^\vee,\phi})$, verifying the Langlands conjecture \ref{conjecture-Langlands} up to a unit for supercuspidal representations with elliptic L-parameter.

\begin{remark}
\label{convention}
    Though there is an opposition of parabolic subgroup $\Eis_{P!} \longleftrightarrow \Eis_{\bar{P}^\vee}$ under the conjectural categorical local Langlands correspondence, our result is still compatible with the Langlands conjecture. In the convention in \cite{hansen_cllc}, the categorical local Langlands correspondence is compatible with the local class field theory $F^\times \simeq W_F^{ab}$ which sends the uniformizer $\varpi$ of $F^\times$ to the \textit{geometric} Frobenius. 
    Hence we should use the geometric Frobenius in the expected formula for the automorphic L-function
    \[
    L(s, \pi, V) = L(s, \phi_\pi, V) = \det(1 - q^{-s}\phi_\pi(\Fr), V^I)^{-1}. 
    \]
    However, in our conventions in section 2 and section 3, the Frobenius in $W_F$ is taken to be the \textit{arithmetic} Frobenius, which is the inverse of the geometric Frobenius. 
    By definition, 
    \[
    L(s, \phi, \ad_{\fn_{P^\vee}}) = \det(1-q^{-s}\phi(\mathrm{Fr}), \ad_{\fn_{P^\vee}^I})^{-1} = \det(1-q^{-s}\phi(\mathrm{Fr}^{-1}), \ad_{\fn_{\bar{P}^\vee}^I})^{-1}.
    \]
    Thus the expected automorphic L-function $L(s, \pi, \ad_{\fn_{\bar{P}^\vee}})$ is $L(s, \phi_\pi, \ad_{\fn_{P^\vee}})$ in our convention, explaining the compatibility. 
\end{remark}

\begin{remark}
    Note that our construction of spectral intertwining operators $j_{Q|P}$ and $J_{Q|P}$ depends on the arbitrary choices of a generator of the corresponding $\Hom$-modules, which are unique up to a unit in $\CO(\Par_{M^\vee,\phi})$. Thus our result about the composition of intertwining operators differs from the Langlands conjecture by some epsilon factors, which are units. 
    
    We do not have a good explanation of the unit here. It may rely on finding a corresponding object on the spectral side of the integration which appears in the definition of the inclusion $\mathbb{1}\to r_Q^G\circ i_P^G/F^{>1}$ in the Mackey's formula.
\end{remark}

This article is divided into three sections. The first section deals with the unipotent component of unramified reductive groups, which can be treated by the ``twisted Springer theory'' developed by Xiao--Zhu (\cite{xiao2018}). The second section deals with the general case, which can be reduced to the case of principal blocks using the structure results about $\Par_G$ stated in \cite{dat2025}. In the last section we prove the compatibility of the spectral intertwining operators and the classical intertwining operators under the conjectural categorical local Langlands correspondence.

\subsection*{Acknowledgment}
The author expresses his deepest gratitude to his Ph.D. supervisor Jean-François Dat for introducing the subject of intertwining operators and local Langlands correspondence and very helpful discussions and corrections about this article. The author also sincerely thanks Institut de Mathématiques de Jussieu-Paris Rive Gauche, Sorbonne Université and École Normale Supérieure, for funding his Ph.D. program.

%% file: 2.tex
\section{The unipotent component}

In this section, let $G$ be a reductive group over some algebraically closed field $k$ of characteristic $0$, $B$ be a Borel subgroup of $G$ and $T$ be a torus of $B$. Let $\sigma$ be an automorphism of $G$ fixing $B$, $T$ and a pinning of $G$. We note that these data may come from the Langlands dual of some quasisplit reductive group over a $p$-adic local field $F$.

From the pair $(G,\sigma)$, we construct the semidirect product $G\rtimes \langle\sigma \rangle$ of $G$ and the group generated by $\sigma$. We will consider $G\sigma := \{(g\in G,\sigma)\}$, a subscheme of the product. Then for $h\in G$, viewed as $(h,1)\in G\rtimes \langle \sigma \rangle$, the adjoint action of $h$ on $G\sigma$ is given by $h(g\sigma)h^{-1}=hg\sigma(h)^{-1}\sigma$.

We define the scheme
\[
\tilde{\BL}_{G\sigma} := \{(g\sigma, n)\in G\sigma\times \CN \mid \ad(g)(\sigma(n)) = qn \}
\]
and its quotient stack
\[
\BL_{G\sigma} := \{(g\sigma, n)\in G\sigma\times \CN \mid \ad(g)(\sigma(n)) = qn \}/ G
\]
where $q=p^n\in k$ for some prime $p$ and $n\in \BN_+$ and $\CN$ is the nilpotent cone of $G$. The $G$-action here is given by $h(g, n) := (hg\sigma(h)^{-1}, \ad(h)(n))$.
Similarly we define $\BL_{B\sigma}$ and $\BL_{T\sigma}$ and note that $\BL_{T\sigma} \simeq T\sigma/ T$. Note that $\BL_{G\sigma}$ is isomorphic to the so-called ``unipotent'' component $\Par_{G,1}$ containing the unipotent representations in the moduli space of Weil--Deligne L-parameters $\Par_G^{\text{WD}}$ (which is isomorphic to $\Par_G:= Z^1(W_F, G)/G$ by \cite{fargues2024}, Proposition VIII.2.5) containing those L-parameters whose semisimplifications are unramified if we take $q$ be the cardinal of the residue field of the local field $F$. 
Here the $W_F$-action on $G$ satisfies that the inertia subgroup acts trivially on $G$ and the action of the \textit{arithmetic} Frobenius is given by $\sigma$. The identification $\Par_{G,1}\simto \BL_{G\sigma}$ is given by
\[
(\phi, N) \mapsto (\phi(\Fr)\sigma, N),
\]
where $\Fr$ is the arithmetic Frobenius.
We remark that the object $\BL_{G\sigma}$ is considered in \cite{Hellmann_2023} (where $q$ is replaced by $q^{-1}$) and \cite{ben2024coherent} when $\sigma = 1$. 

Next we define the Eisenstein series.

\begin{definition}[Spectral Eisenstein series]
    For any Borel subgroup $B\subset G$ containing $T$ and fixed by $\sigma$, define
    $p_B \colon \BL_{B\sigma} \to \BL_{G\sigma}$ induced by the inclusion $B\to G$ and 
    $q_B \colon \BL_{B\sigma} \to \BL_{T\sigma}$ induced by the projection $B\to T$. The spectral Eisenstein operator $\Eis_B$ is defined as 
    \[
        \Eis_B := p_{B*} \circ q_B^* \colon D\Coh(\BL_{T\sigma}) \to D\Coh(\BL_{G\sigma}).
    \]
    Note that the morphism $p_B$ is proper by \cite{zhu2025coherentsheavesstacklanglands}, Lemma 3.21. Thus $\Eis_B$ will send $D\Coh$ to $D\Coh$.
\end{definition}

We will also need a variant of the spectral Eisenstein series by ignoring the monodromy part.

\begin{definition}[Variant]
\label{small eis}
    For any Borel subgroup $B\subset G$ containing $T$ and fixed by $\sigma$, abusing the notations, define
    $p_{B,0} \colon B\sigma/ B \to G\sigma/ G$ (denoted by $p_0$ is there is no ambiguity) induced by the inclusion $B \to G$ and 
    $q_{B,0} \colon B\sigma/ B \to T\sigma/ T$ (denoted by $q_0$ is there is no ambiguity) induced by the projection $B\to T$. The variant spectral Eisenstein operator $\eis_B$ is defined as 
    \[
        \eis_B := p_{0*} \circ q_0^* \colon D\Coh(T\sigma/ T) \to D\Coh(G\sigma/ G).
    \]
\end{definition}

Let $B'$ be another Borel subgroup of $G$ fixed by $\sigma$ and containing $T$. 
We will construct the spectral intertwining operator 
\[
J_{B'|B} \colon \Eis_B \to \Eis_{B'}
\]
in the following steps:

\begin{itemize}
    \item Define two open subsets of $\BL_{T\sigma}$ with respect to $G$, called the regular semisimple locus and the generic locus. Then construct the intertwining operator after restricting to the intersection of these loci. The operator is the normalized intertwining operator $J^\circ_{Q|P}$.
    \item Extend the operator to the generic locus.
    \item After multiplication by some function on $\BL_{T\sigma}$, extend the operator to all $\Par_T$ and one obtains an operator $j_{Q|P}$ defined on all $\Par_T$.
    \item Calculate the adjunction $\CT\circ \Eis \to \mathbb{1}$, construct an operator 
    $\mathbb{1}\to \CT\circ \Eis/F$ for some subfunctor $F$ and calculate their composition to get a function in $\CO(\BL_{T\sigma})$. Divide by this function and one obtains the unnormalized intertwining operator $J_{Q|P}$.
\end{itemize}

\subsection{Some results about the pair \texorpdfstring{$(G, \sigma)$}{G,sigma}}

We recall some basic results about reductive groups with an automorphism preserving a fixed pinning of it. The proofs of those results can be found in the article of Xiao--Zhu (\cite{xiao2018}).

\begin{proposition}[The root system for $\sigma$, \cite{xiao2018} Lemma 5.1.1]
    $\sigma$ permutes the roots of $G$. Moreover, there exists a root system for the torus $T_\sigma$ and weight lattice $X^*(T_\sigma)\simeq X^*(T)^\sigma$ where the roots are of the form
    \[
    \alpha_O := \sum_{\beta \in O} \beta \in X^*(T)^\sigma
    \]
    where $O$ is an orbit of the $\sigma$-action on roots of $G$. 
    Moreover, the Weyl group of this root system is $W^\sigma$. The Weyl group of the root system can also be identified as
    \[
    N_G(T\sigma)/T = \{g\in G\mid g(T\sigma)g^{-1}\subset T\sigma\}/T
    = \{g\in N_G(T)\mid \sigma(g)g^{-1}\in T\}/T.
    \]
\end{proposition}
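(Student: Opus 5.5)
We work with a $\sigma$-stable pinning $(B,T,\{x_\alpha\})$. Since $\sigma$ preserves $T$ and $B$, it acts on $X^*(T)$ preserving $\Phi$ and $\Phi^+$, and it permutes the simple roots $\Delta$ because it fixes the pinning. Let $T_\sigma := T/(1-\sigma)T$ be the coinvariant torus. Then $X^*(T_\sigma) = X^*(T)^\sigma$.

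\textbf{Step 1: the folded root datum.} For each $\sigma$-orbit $O\subset\Phi$, set $\alpha_O=\sum_{\beta\in O}\beta$; it is $\sigma$-invariant. Define the coroot $\alpha_O^\vee$ as the image of $\beta^\vee$ in $X_*(T)_\sigma=X_*(T_\sigma)$ for any $\beta\in O$. When the roots of $O$ are not mutually orthogonal (the $A_{2n}$-type case, where $\beta+\sigma^k\beta$ is a root), replace this by twice the image. Next check $\langle\alpha_O,\alpha_O^\vee\rangle=2$ using the pairwise orthogonality of the roots in $O$, or the exceptional $A_2$ computation. Then check that the reflection $s_O$ preserves the set $\{\alpha_O\}$. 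The key point here is that for a $\sigma$-orbit $O$ of simple roots, the product $w_O$ of the (commuting, or $A_2$-type) reflections $s_\beta$, $\beta\in O$, is $\sigma$-fixed and acts on $X^*(T)^\sigma$ as $s_O$. Moreover $w_O$ is the longest element of the parabolic subgroup $W_O$. This yields a (possibly nonreduced) root system, which one reduces if necessary. Everything here is standard Steinberg folding, and we follow \cite{xiao2018}.

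\textbf{Step 2: the Weyl group is $W^\sigma$.} The $w_O$ generate $W^\sigma$, by the classical argument of Steinberg. Given $w\in W^\sigma$ with $w\neq 1$, pick a simple $\beta$ with $w\beta<0$. By $\sigma$-invariance, $w\sigma^k\beta<0$ for every $k$, so $\ell(ww_O)=\ell(w)-\ell(w_O)$ and $ww_O\in W^\sigma$; induction on length finishes. The restriction map $W^\sigma\to GL(X^*(T)^\sigma\otimes\mathbb{Q})$ is injective, because $X^*(T)^\sigma\otimes\mathbb{Q}$ contains a regular dominant element, for instance the $\sigma$-average of $\rho$. Hence $W^\sigma$ is identified with the Weyl group of the folded system.

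\textbf{Step 3: the normalizer description.} Take $g$ with $g(T\sigma)g^{-1}\subset T\sigma$. Then $gt\sigma(g)^{-1}\in T$ for all $t\in T$; taking $t=1$ gives $g\sigma(g)^{-1}\in T$. Hence $gtg^{-1}=(gt\sigma(g)^{-1})(g\sigma(g)^{-1})^{-1}\in T$, so $g\in N_G(T)$. Since $\sigma(g)g^{-1}$ is conjugate to the inverse of $g\sigma(g)^{-1}$ by $\sigma(g)$, which normalizes $T$, we get $\sigma(g)g^{-1}\in T$. The converse is immediate.

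The image of this group in $W=N_G(T)/T$ lies in $W^\sigma$. Conversely, every $w\in W^\sigma$ lifts: use the Tits lift $\dot w$ attached to the pinning. It is $\sigma$-fixed because $\sigma$ preserves the pinning, so $\sigma(\dot w)\dot w^{-1}=1$. The kernel of $\{g\in N_G(T):\sigma(g)g^{-1}\in T\}\to W$ is exactly $T$, so the quotient by $T$ equals $W^\sigma$.

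The main obstacle is Step 1, in particular the non-simply-laced $A_{2n}$ folding. There $\alpha_O$ as defined may fail to be reduced, or the coroot normalization must be adjusted. I would handle this by a case check on the orbit type, either $A_1^{\times r}$ or $A_2$, for the rank-one subsystems spanned by $O$.
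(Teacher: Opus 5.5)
Your proof is correct. The paper gives no argument here; it only quotes \cite{xiao2018}, Lemma 5.1.1. So your Steinberg-folding write-up is a self-contained replacement for a citation rather than a competing route. What it buys is that two things become explicit. The first is the coroot normalization: the image of $\beta^\vee$ in the coinvariants, doubled for $A_2$-type orbits. The paper's later BC$\pm$ orbit types and the choice of $\tilde{\alpha}$ in the Demazure operators implicitly rely on it. The second is the existence of $\sigma$-fixed Tits lifts, which gives the description $N_G(T\sigma)/T\cong W^\sigma$.

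A few points to tighten, none of them a real gap:

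(i) For an $A_2$-type orbit $\{\beta,\sigma\beta\}$, the relevant element of $W^\sigma$ is the longest element $s_\beta s_{\sigma\beta}s_\beta=s_{\beta+\sigma\beta}$. It is not the product $s_\beta s_{\sigma\beta}$, which $\sigma$ sends to its inverse.

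(ii) To see that $s_O$ permutes the $\alpha_{O'}$, you need a $\sigma$-fixed lift $w_O$ of $s_O$ for every orbit, not only for simple ones. The same computation supplies it. For mutually orthogonal $O$, $\prod_{\gamma\in O}s_\gamma$ acts on $\lambda\in X^*(T)^\sigma$ by $\lambda\mapsto\lambda-\langle\lambda,\beta^\vee\rangle\alpha_O$. For an $A_2$-type pair, $s_{\beta+\sigma\beta}$ acts by $\lambda\mapsto\lambda-2\langle\lambda,\beta^\vee\rangle\alpha_O$. When $\sigma$ cycles $m$ factors of type $A_{2n}$, take the product of the $m$ orthogonal reflections of this kind. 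This also gives the inclusion of the folded Weyl group into the image of $W^\sigma$ in Step 2. Generation of $W^\sigma$ by the simple $w_O$ only gives the other inclusion.

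(iii) No reduction step is ever needed, because the orbit-sum system is automatically reduced; for $A_{2n}$ it is of type $B_n$. The coincidences $\alpha_O=\alpha_{O'}$ are exactly the BC$+$/BC$-$ pairs $\{\beta+\sigma\beta\}$ and $\{\beta,\sigma\beta\}$. With your normalization they also get the same coroot, $\overline{(\beta+\sigma\beta)^\vee}=2\bar\beta^\vee$.

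(iv) $X_*(T)_\sigma$ can have torsion; the outer automorphism of $\mathrm{SO}_{2n}$ is an example. So $X_*(T_\sigma)$ is the torsion-free quotient of $X_*(T)_\sigma$, and the coroots should be taken there. This changes no pairings.

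(v) In Step 3, $\sigma(g)g^{-1}$ is literally $(g\sigma(g)^{-1})^{-1}$, so no conjugation argument is needed.
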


From the construction, the roots in the root system can be viewed as weights of the torus $T_\sigma$.

\begin{definition}[Type of an orbit, \cite{xiao2018} Definition 5.1.4]
\label{orbit}
    \begin{itemize}
        \item If $\alpha_O$ is not equal to $\alpha_{O'}$ for other orbits $O'$,
        $O$ is called of type A.
        \item If there exists an another $O'$ such that $\alpha_{O}=\alpha_{O'}$,
        $O'$ is unique. Moreover, $|O|=2|O'|$ or $|O'|=2|O|$. We call the orbit consisting less elements of type BC+ and the other of type BC- and we say $\alpha_O$ is of type BC.
    \end{itemize}
\end{definition}

Denote $\fn$ the Lie algebra of the unipotent radical of $B$. Then there is an adjoint action of $T$ on $\fn$, denoted $\ad_\fn \colon T \to \End(\fn)$. 
We consider the function in $\CO(T)$
\[
L(s, \ad_\fn) := t \mapsto \det(1 - q^{-s}\ad_\fn(t)\circ \sigma)^{-1}
\]
for $s = 0, 1$. The function is invariant under the twisted $T$-action, thus induces a function in $\CO(\BL_{T\sigma})$. We use the notation L because the function is the same as the L-function of Galois representations on the principal block. The notations in the L-function can be generalized as follows: $T$ can be replaced by any subgroup of $G$ and $\fn$ can be replaced by any Lie subalgebra of $\fg$ fixed by $\sigma$ and the subgroup.
The next proposition expresses the $L$-function
$L(s, \ad_\fn)$ in terms of functions on the torus $T_\sigma$.

\begin{proposition}[$L$-function]
    \label{L-function}
    The $L$-function can be expressed as
    \[
    L(s, \ad_\fn)^{-1} = \prod_{O} L(s, \ad_{\fn_O})^{-1}
    \]
    where $O$ is taken among the orbits of roots under the $\sigma$-action
    and $\fn_O$ is the direct sum of $\fn_\alpha$'s for $\alpha\in O$.
    Moreover,
    \[
    L(s, \ad_{\fn}|_O)^{-1}:= L(s, \ad_{\fn_O})^{-1} = 
    \begin{cases}
    1 - q^{-s|O|}e^{\alpha_O}(t) & \text{$O$ is of type A or BC-} \\
    1 + q^{-s|O|}e^{\alpha_O}(t) & \text{$O$ is of type BC+}
    \end{cases}
    \]
    (where $e^\alpha$ is exactly $\alpha$, emphasizing the fact that it is a weight of $T_\sigma$).
\end{proposition}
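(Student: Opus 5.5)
The plan is to compute $\det(1 - q^{-s}\ad_\fn(t)\circ\sigma)$ directly by decomposing $\fn$ into $\sigma$-stable pieces. Since $t\in T$ acts diagonally on root spaces and $\sigma$ permutes the root spaces ($\sigma(\fn_\beta)=\fn_{\sigma(\beta)}$), the operator $\ad_\fn(t)\circ\sigma$ preserves each $\fn_O=\bigoplus_{\beta\in O}\fn_\beta$. Hence the determinant factors as the product over orbits of the determinants on $\fn_O$. This gives the first formula, once we note that $\sigma$ fixes $B$ and so permutes the positive roots among themselves.

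For a single orbit $O=\{\beta,\sigma\beta,\dots,\sigma^{m-1}\beta\}$ with $m=|O|$, I will fix a pinning vector $X_\beta$ and set $X_{\sigma^i\beta}=\sigma^i(X_\beta)$. In this basis $A:=\ad(t)\circ\sigma$ is a weighted cyclic permutation matrix; the precise indexing depends on the composition convention, but only the product of the weights matters. For a cyclic operator $A$ of order $m$ on an $m$-dimensional space, with $A^m=c\cdot\mathrm{id}$, one has $\det(1-xA)=1-x^mc$. Here $A^m$ acts on $X_\beta$ by the product over $i$ of $(\sigma^i\beta)(t)$, which is $e^{\alpha_O}(t)$, times the scalar $\lambda$ by which $\sigma^m$ acts on $X_\beta$. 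Note that $\sigma^m$ stabilizes the line $\fn_\beta$, so it acts there by a scalar $\lambda$. Taking $x=q^{-s}$ gives $1-\lambda q^{-s|O|}e^{\alpha_O}(t)$.

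The remaining and main step is to determine $\lambda=\pm1$, using the fact that $\sigma$ preserves a pinning. If $\beta$ is simple, $\sigma^m X_\beta=X_\beta$ because the pinning is fixed, so $\lambda=1$. For general $\beta$, I reduce to the rank-one or rank-two root subsystems in the classification of \cite{xiao2018}, as in Definition \ref{orbit}. The orbits of type A arise from orbits of simple roots whose members are pairwise orthogonal, up to Weyl conjugation by $W^\sigma$, which commutes with $\sigma$ and preserves the pinning up to the usual signs; there $X_\beta$ can be chosen $\sigma^m$-fixed, so $\lambda=1$. Type BC arises only in the $A_{2n}$ case with the nontrivial diagram automorphism, reduced to $A_2$ with $\sigma$ swapping $\alpha_1$ and $\alpha_2$. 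In that case $X_{\alpha_1+\alpha_2}=[X_{\alpha_1},X_{\alpha_2}]$ is sent by $\sigma$ to $[X_{\alpha_2},X_{\alpha_1}]=-X_{\alpha_1+\alpha_2}$. So the orbit $\{\alpha_1+\alpha_2\}$, which has size 1 and is of type BC+, has $\lambda=-1$, while the orbit $\{\alpha_1,\alpha_2\}$, of size 2 and type BC-, has $\lambda=1$, and both have the same $\alpha_O$. This yields the sign $+$ exactly for type BC+.

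The obstacle is making the reduction for non-simple $\beta$ rigorous. I would invoke the description in \cite{xiao2018} Lemma 5.1.1 that every $\sigma$-orbit is $W^\sigma$-conjugate to an orbit inside a $\sigma$-stable Levi of semisimple rank one for $T_\sigma$. Then I would check that the sign $\lambda$ is invariant under this conjugation: a representative $n\in N_G(T)$ with $\sigma(n)=n$ can be chosen, for instance via Tits' lift.
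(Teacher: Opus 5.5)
Your proposal is correct, but it is not the paper's route. The paper gives no computation: it cites \cite{xiao2018}, Lemma 5.2.9, for $s=0$ and asserts that the same proof works for $s=1$. You instead give a self-contained derivation, which buys two things.

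First, the orbitwise factorization and the identity $\det(1-xA)=1-\lambda\,x^{|O|}e^{\alpha_O}(t)$ on $\mathfrak{n}_O$ hold for an arbitrary scalar $x$. So the passage from $s=0$ to $s=1$ is literally the substitution $x=q^{-s}$, which makes the paper's ``works the same'' remark transparent. Second, your derivation isolates the only real input, namely the sign $\lambda=\pm1$ by which $\sigma^{|O|}$ acts on a root line.

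Your plan for determining $\lambda$ goes through, in two steps.
\begin{enumerate}
\item For $w\in W^\sigma$, the Tits lift $\dot w$ attached to the pinning satisfies $\sigma(\dot w)=\dot w$. This holds because $\sigma(\dot s_i)=\dot s_{\sigma(i)}$ and $\sigma$ sends reduced words for $w$ to reduced words for $\sigma(w)=w$. Hence $\sigma^m\circ\mathrm{Ad}(\dot w)=\mathrm{Ad}(\dot w)\circ\sigma^m$, and $\lambda$ is constant on $W^\sigma$-orbits of $\sigma$-orbits.
\item The simple roots of the $T_\sigma$-root system are the norms $\alpha_{O_i}$ of $\sigma$-orbits $O_i$ of simple roots. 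So every positive orbit is $W^\sigma$-conjugate to a positive orbit whose norm is some $\alpha_{O_i}$. That orbit therefore lies in the Levi spanned by $O_i$.
\end{enumerate}

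When you write this up, state explicitly two facts you are currently using implicitly.
\begin{enumerate}
\item Each connected component of the Dynkin diagram on $O_i$ carries a cyclic automorphism acting transitively on its vertices. Since the component is a tree, whose center is preserved by every automorphism, it has at most two vertices and is therefore $A_1$ or $A_2$.
\item In the $A_2$ case, the Levi has exactly two positive orbits, namely $O_i$ and the orbit of sums, and they have the same norm. Consequently the types A, BC$+$, BC$-$ are preserved under $W^\sigma$, and type A orbits land in Levis of type $A_1^k$, where $\lambda=1$.
\end{enumerate}

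Also, when $\sigma$ cyclically permutes $k$ copies of $A_2$, the swap in your bracket computation is effected by $\sigma^k=\sigma^{|O|}$, not by $\sigma$ itself. The computation $[X_{\sigma^k\gamma},X_\gamma]=-[X_\gamma,X_{\sigma^k\gamma}]$, giving $\lambda=-1$ exactly for BC$+$, is unchanged.
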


\begin{proof}
     The case $s=0$ is Lemma 5.2.9 in \cite{xiao2018} and the proof works the same for $s=1$.
\end{proof}

The zeros of the $L$-function corresponds to the singular locus of the $W^\sigma$-action on $T_\sigma$.

\begin{proposition}[\cite{xiao2018}, Lemma 5.1.12]
    \label{reflection}
    Suppose that $G$ is simply connected.
    For a root $\alpha$ of $T_\sigma$, the fixed point subgroup of $T_\sigma$ under the reflection $s_\alpha$ is the zero locus of the function
    \[
    \begin{cases}
    1 - e^{\alpha}(t) & \text{$\alpha$ is of type A} \\
    (1 + e^{\alpha}(t))(1 - e^{\alpha}(t)) & \text{$\alpha$ is of type BC}
    \end{cases}
    \]
    and the multiplicity is $1$.
\end{proposition}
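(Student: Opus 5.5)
The plan is to reduce the statement to a computation inside a rank-one twisted torus, using the explicit description of roots and Weyl group of $(G,\sigma)$ from \cite{xiao2018}.

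First I would fix the setup. The group $W^\sigma$ acts on $T_\sigma$, and for a root $\alpha=\alpha_O$ the reflection $s_\alpha$ is the image in $W^\sigma$ of the longest element of the Weyl group of the $\sigma$-stable Levi $G_O$. This Levi is generated by $T$ and the root subgroups $U_{\pm\beta}$, $\beta\in O$ (together with $O'$ in the BC case). The fixed-point locus $T_\sigma^{s_\alpha}$ therefore only involves the $\sigma$-coinvariants of the rank-one piece. Since $G$ is simply connected, the derived group of $G_O$ is simply connected. It is a product of copies of $SL_2$ permuted transitively by $\sigma$ in type A, and of copies of $SL_3$ in type BC. So $T_\sigma$ decomposes compatibly as the torus of the $\sigma$-coinvariants of this derived group times a complement on which $s_\alpha$ acts trivially. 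This reduces the problem to two cases: $SL_2^{\,m}$ with cyclic permutation, and $SL_3^{\,m}$ with cyclic permutation composed with the pinned outer automorphism.

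Next I would compute each case. In the permutation case, taking $\sigma$-coinvariants identifies $T_\sigma$ with the torus of a single $SL_2$ twisted by $\sigma^m$, which is trivial. So $T_\sigma\cong\mathbb{G}_m$ with coordinate $e^{\alpha}$, and $s_\alpha$ acts by $x\mapsto x^{-1}$. Simple connectedness is what makes $\alpha$ primitive here. The fixed locus is $x=x^{-1}$, that is $x^2=1$. I need to decide which component lies in the correct fixed locus, meaning the twisted-conjugacy fixed points $\{t : w(t)\in t\cdot(\sigma\text{-coboundaries})\}$. Done carefully, with $T_\sigma$ being the coinvariants, the coroot $\alpha^\vee$ evaluates to $2$ on $\alpha$. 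I expect the honest fixed scheme to be $e^\alpha=1$, with $e^\alpha=-1$ excluded: in coordinates, $\alpha$ is twice a generator of the character lattice of $T_\sigma$ only when the torus is adjoint. For simply connected $SL_2$, $T_\sigma=\mathbb{G}_m$ with coordinate $y$ and $e^\alpha=y^2$, while $s_\alpha$ sends $y\mapsto y^{-1}$. The fixed locus $y^2=1$ is then exactly the zero scheme of $1-e^\alpha$, and it is reduced in characteristic $0$, so the multiplicity is $1$.

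In the $SL_3$ case, the coinvariant torus is that of $SL_3$ twisted by the diagram automorphism. This is $\mathbb{G}_m$ with coordinate $y$, and $e^{\alpha_O}$, the sum over the BC$-$ orbit of two roots, is $y^{2}$ up to the identification. The fixed locus of $y\mapsto y^{-1}$ is $y^2=1$, i.e. the zero scheme of $(1-e^\alpha)(1+e^\alpha)$ after matching $e^\alpha$ with $y$ or $y^2$ correctly. Since $1\pm e^\alpha$ are coprime in characteristic $0$, each factor vanishes to order one. Pinning down exactly which character $\alpha_O$ is in this case is the main obstacle. I would compute it directly with the diagonal torus $\mathrm{diag}(a,b,(ab)^{-1})$ and the automorphism $\theta(g)=J\,{}^tg^{-1}J^{-1}$ for the antidiagonal $J$. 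This lets me compare the BC$+$ root $e_1-e_3$ (fixed) with the BC$-$ orbit $\{e_1-e_2,\,e_2-e_3\}$ on the coinvariants, and check that the exponent matches the stated factorization rather than $1-e^{2\alpha}$.

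Finally, I would glue the cases. The fixed locus of $s_\alpha$ on $T_\sigma$ is the pullback of the rank-one fixed locus along the projection from the splitting above, so it is cut out by the same function of $e^\alpha$. Smoothness of the projection preserves multiplicity one.
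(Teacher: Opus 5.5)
\textbf{There is a genuine gap, and it sits exactly where the two cases differ.} The clean mechanism is the reflection formula. On $T_\sigma$ one has $s_\alpha(t)=t\cdot\alpha^\vee(e^{\alpha}(t))^{-1}$, so $T_\sigma^{s_\alpha}=\ker(\alpha^\vee\circ e^{\alpha})$. Write $\alpha^\vee=d\lambda$ with $\lambda$ primitive in $X_*(T_\sigma)$; then $\lambda\colon\mathbb{G}_m\to T_\sigma$ is a closed immersion, and the fixed locus is $V(1-e^{d\alpha})$, which is reduced in characteristic $0$. So the statement amounts to $d=1$ in type A and $d=2$ in type BC.

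Your BC computation gets $d$ wrong. For $(\mathrm{SL}_3,\theta)$ one has $X^*(T_\sigma)=X^*(T)^\sigma=\mathbb{Z}(\omega_1+\omega_2)=\mathbb{Z}(e_1-e_3)=\mathbb{Z}\alpha_O$. Hence $e^{\alpha_O}$ is the coordinate $y$ itself, not $y^2$. Equivalently, $\langle\alpha_O,\overline{\alpha_1^\vee}\rangle=2-1=1$, so $\alpha_O^\vee=2\overline{\alpha_1^\vee}$. With your value $e^{\alpha_O}=y^2$, the fixed locus $y^2=1$ would be $V(1-e^{\alpha})$, which contradicts the statement. The check you propose (``the stated factorization rather than $1-e^{2\alpha}$'') is vacuous, since $(1+e^\alpha)(1-e^\alpha)=1-e^{2\alpha}$.

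In type A you reach the right conclusion, but through contradictory intermediate claims. You first give $T_\sigma$ ``with coordinate $e^\alpha$'' and then write $e^\alpha=y^2$. Also, $\alpha$ is twice a generator of the character lattice for $\mathrm{SL}_2$, not for the adjoint torus.

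\textbf{Your reduction and gluing step also fails as stated.} In general there is no splitting of $T_\sigma$ into the rank-one coinvariant torus times a complement on which $s_\alpha$ acts trivially, so there is no projection to pull back along. Take $G=\mathrm{SL}_3$, $\sigma=1$, $\alpha=\alpha_1$. The torus $\alpha_1^\vee(\mathbb{G}_m)$ meets the connected torus $\ker(\alpha_1)=\{\mathrm{diag}(a,a,a^{-2})\}$ in $\{1,\mathrm{diag}(-1,-1,1)\}$, so their product is only a degree-$2$ isogeny onto $T$. Moreover $e^{\alpha_1}$ is primitive in $X^*(T)$ but restricts to $y^2$ on $\alpha_1^\vee(\mathbb{G}_m)$, so it cannot factor through any retraction onto that torus.

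\textbf{What does transfer from the rank-one piece is the integer $d$.} First $W^\sigma$-conjugate $\alpha_O$ to a simple root of the twisted system, i.e.\ to $\alpha_{O'}$ for an orbit $O'$ of simple roots of $G$; your ``Levi $G_O$'' tacitly needs this anyway. Simple connectedness makes the simple coroots a $\sigma$-permuted basis of $X_*(T^{\mathrm{der}})$. So $X_*(T^{\mathrm{der}})_\sigma$ is free on the orbits of simple coroots, and it is a direct summand of $X_*(T_\sigma)=X_*(T)_\sigma/\mathrm{tors}$. Hence $\overline{\beta^\vee}$ is primitive for a simple root $\beta$ in the orbit. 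In type A, $\alpha_O^\vee=\overline{\beta^\vee}$, so $d=1$. In type BC, with $\beta$ in the simple BC$-$ orbit, $\langle\alpha_O,\overline{\beta^\vee}\rangle=2-1=1$ forces $\alpha_O^\vee=2\overline{\beta^\vee}$, so $d=2$. Together with the kernel formula this gives a complete proof. The paper gives no argument of its own and simply cites Xiao--Zhu.
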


We have the following analog of the Chevalley isomorphism $G\sslash G\simeq T\sslash W$.

\begin{proposition}[Twisted Chevalley isomorphism, Proposition 4.2.3 in \cite{xiao2018}]
\label{Chevalley}
    The natural inclusion $T\sigma \subset G\sigma$ induces
    an isomorphism $\chi \colon G\sigma\sslash G \simeq T\sigma\sslash N_G(T\sigma)$, called the twisted Chevalley isomorphism.
\end{proposition}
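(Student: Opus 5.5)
The plan is to reduce the twisted statement to the classical Chevalley isomorphism for a suitable reductive group, using Steinberg's theory of quasi-semisimple elements. First I will show the map $\chi$ is well defined. Restriction of a $G$-invariant function on $G\sigma$ to $T\sigma$ is invariant under $N_G(T\sigma)$, because $N_G(T\sigma)\subset G$ acts on $T\sigma$ by the restricted twisted conjugation. This gives a ring map $\CO(G\sigma)^G\to \CO(T\sigma)^{N_G(T\sigma)}$, and $\chi$ is the morphism it induces.

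For \emph{injectivity} of the ring map, I will show that $G\cdot T\sigma$ is dense in $G\sigma$. Consider the action map $G\times T\sigma\to G\sigma$. By the Proposition on the root system for $\sigma$, $N_G(T\sigma)/T\simeq W^\sigma$ is finite, so the stabiliser of $T\sigma$ has dimension $\dim T$. At a point $t\sigma$ with $t$ regular in the appropriate sense, the twisted centraliser has dimension $\dim T^\sigma=\dim T_\sigma$. The fibre dimension is then $\dim T_\sigma$, and the image has dimension $\dim G+\dim T_\sigma-\dim T_\sigma$. Hence the image is dense. Alternatively, Steinberg's theorem says every element of $G\sigma$ has Jordan decomposition $g\sigma=su$ with $s$ quasi-semisimple, that every quasi-semisimple element is $G$-conjugate into $T\sigma$ (since $\sigma$ fixes a pinning), and that semisimple parts determine invariant functions. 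Either route gives injectivity. The second also shows that closed orbits in $G\sigma$ meet $T\sigma$.

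For \emph{surjectivity}, I will produce enough invariant functions. By Proposition~\ref{L-function}, $T\sigma/T$ identifies with the torus $T_\sigma=T/(1-\sigma)T$. Moreover, $\CO(T\sigma)^{N_G(T\sigma)}=\CO(T_\sigma)^{W^\sigma}$, which is spanned by $W^\sigma$-orbit sums of characters $\lambda\in X^*(T)^\sigma$. For dominant $\lambda\in X^*(T)^\sigma$, the irreducible representation $V_\lambda$ of $G$ is $\sigma$-stable. Since $\sigma$ fixes a pinning, it extends canonically to $G\rtimes\langle\sigma\rangle$ by letting $\sigma$ fix the highest weight vector. The twisted character $g\sigma\mapsto \operatorname{tr}(g\sigma\mid V_\lambda)$ is $G$-invariant. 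Its restriction to $T\sigma$ is computed by the twisted Weyl character formula, namely Jantzen's formula, which expresses it as the character of the representation of highest weight $\lambda$ of the dual group of the $\sigma$-root system. This restriction equals $m_\lambda+\sum_{\mu<\lambda}c_\mu m_\mu$, upper-triangular in the orbit sums $m_\mu$. By induction on the dominance order, the twisted characters therefore span $\CO(T_\sigma)^{W^\sigma}$, which proves surjectivity.

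I expect the main obstacle to be this twisted character computation, in particular checking that the leading coefficient is $1$ and that only $\sigma$-fixed weights contribute to the trace on $T\sigma$. Non-fixed weight spaces are permuted by $\sigma$ without fixed points, so their contribution to the trace vanishes. The coefficient of $m_\lambda$ is $1$ because of the normalization on the highest weight vector, which is the one place where the pinning hypothesis is essential. Since the paper says the result is Proposition 4.2.3 of \cite{xiao2018}, I would ultimately cite that reference for these details.
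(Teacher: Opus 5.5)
The paper gives no argument of its own here. The statement is quoted from Xiao--Zhu, Proposition 4.2.3, and used as a black box. Your route is the Steinberg-style proof of the Chevalley restriction theorem transported to $G\sigma$, and it is correct in substance:
\begin{itemize}
\item injectivity of $\CO(G\sigma)^G\to\CO(T\sigma)^{N_G(T\sigma)}=\CO(T_\sigma)^{W^\sigma}$ from density of $G\cdot T\sigma$;
\item surjectivity from the twisted characters $g\sigma\mapsto\operatorname{tr}(gA_\sigma\mid V_\lambda)$, for dominant $\lambda\in X^*(T)^\sigma$, whose restrictions are unitriangular in the orbit sums.
\end{itemize}
Compared with the citation, your argument is self-contained and produces an explicit basis of $\CO(G\sigma)^G$. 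A few details need repair.

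\textbf{The density step.} Your dimension count is off. $G\times T\sigma$ has dimension $\dim G+\dim T$, and a generic fibre of the action map has dimension $\dim T$, not $\dim T_\sigma$, since it also contains the $(1-\sigma)T$-translates. Bounding the fibre from above this way would also require control of $G$-conjugacy inside $T\sigma$. It is simpler to compute the differential of $(h,t)\mapsto ht\sigma(h)^{-1}$ at $(1,t)$. After translation its image is $\mathrm{Ad}(t)^{-1}\bigl((1-\mathrm{Ad}(t)\sigma)\fg+\ft\bigr)$. This is all of $\fg$ whenever $\mathrm{Ad}(t)\sigma$ has no fixed vector on $\fn\oplus\bar{\fn}$, i.e.\ on the nonempty locus $(T\sigma)^{rs}$. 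So the action map is submersive there, hence dominant.

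\textbf{The identification $T\sigma\sslash T\simeq T_\sigma$.} This, together with $\CO(T_\sigma)=k[X^*(T)^\sigma]$, does not come from Proposition~\ref{L-function}. It also concerns the GIT quotient, not the stack $T\sigma/T$, which is a $T^\sigma$-gerbe over $T_\sigma$ (Lemma~\ref{splitting}).

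\textbf{Linearity of the $W^\sigma$-action.} To use orbit sums of characters, you must check that $N_G(T\sigma)$ induces the linear $W^\sigma$-action on $T_\sigma$. An element $n$ acts by $t\mapsto w(t)\,n\sigma(n)^{-1}$. The translation part lies in $(1-\sigma)T$ because the Tits representatives $\dot w$ attached to the $\sigma$-stable pinning satisfy $\sigma(\dot w)=\dot w$. This is where the pinning is really used. Normalizing $A_\sigma$ on the highest weight line only requires $\sigma$ to preserve $(B,T)$, so your remark placing the pinning there is inaccurate.

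\textbf{Orbit-sum basis.} You also need that each $W^\sigma$-orbit in $X^*(T)^\sigma$ contains exactly one dominant weight. Take the minimal-length element of the $\sigma$-stable coset $\{w\in W: w\mu=\mu^+\}$; it is $\sigma$-fixed.

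With these points your triangularity argument goes through, and Jantzen's twining formula is not needed.
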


Moreover, we will call $G\sigma \to T\sigma\sslash N_G(T\sigma)$ the twisted Chevalley map, denoted also by $\chi$.

\subsection{The regular semisimple and generic locus}
We firstly define the regular semisimple locus and the generic locus with respect to the $\sigma$-action.

\begin{definition}[Generic locus]
\label{Generic locus}
    We define the generic locus of $\BL_{G\sigma}$, $\BL_{B\sigma}$ and $\BL_{T\sigma}$ as follows.
    \begin{itemize}
        \item For $T\sigma$, the generic locus of $T\sigma$, $(T\sigma)^g$ is the non-vanishing locus of $L(1, \ad_\fn)^{-1}\cdot L(1, \ad_{\bar{\fn}})^{-1}$, where $\bar{n}$ is the opposite nilpotent subalgebra of $\fn$. Then we define the generic locus $\BL_{T\sigma}^g$ of $\BL_{T\sigma}$ as $(T\sigma)^g/T$. Note that $(T\sigma)^g$ is stable under the action of the Weyl group $W^\sigma$ and $T$-conjugation.
        \item For $B\sigma$, we pull back the function $L(1, \ad_\fn)^{-1}\cdot L(1, \ad_{\bar{\fn}})^{-1}$ from $T$ to $B$. The generic locus $(B\sigma)^g$ of $B\sigma$ is the non-vanishing locus of $L(1, \ad_\fn)^{-1}\cdot L(1, \ad_{\bar{\fn}})^{-1}$.
        From its construction, $(B\sigma)^g$ is stable under $B$-conjugation.
        Then the generic locus $\BL_{B\sigma}^{g}$ of $\BL_{B\sigma}$ is the (not necessarily dense) open substack containing those points $(b\sigma,n)$ with $b\sigma \in (B\sigma)^g$. 
        \item For $G\sigma$, consider the twisted chevalley map
        $\chi \colon G\sigma \to (T\sigma)\sslash N_G(T\sigma)$. Then
        $(G\sigma)^g$ is defined as $\chi^{-1}((T\sigma)^g)$. From its construction, $(G\sigma)^g$ is stable under $G$-conjugation.
        Then the generic locus $\BL_{G\sigma}^{g}$ of $\BL_{G\sigma}$ is the (not necessarily dense) open substack containing those points $(g\sigma,n)$ with $g\sigma \in (G\sigma)^g$. 
    \end{itemize}
\end{definition}

We remark that the generic locus is not the open substack of generic parameters in the sense of Hamann (\cite{hamann2026geometriceisensteinseriesintertwining} defined by vanishing of Galois cohomology. The generic parameters in the sense of Hamann form the regular semisimple and generic locus in our sense, as seen in the proof of \ref{complex}.

We note that over the generic locus, the nilpotent part vanishes.

\begin{lemma}
    \label{generic locus}
    The projection $\BL_{G\sigma}^{g} \to (G\sigma)^g/ G$ is an isomorphism, similarly for $B$ and $T$.
\end{lemma}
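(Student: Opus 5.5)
To prove the lemma, I will show that over the generic locus the monodromy $n$ is forced to vanish. Then the closed immersion $(g\sigma)\mapsto(g\sigma,0)$ identifies $(G\sigma)^g/G$ with $\BL_{G\sigma}^g$, and the argument is the same for $B$ and $T$. Concretely, the projection $\BL_{G\sigma}^g\to (G\sigma)^g/G$ has the zero section $g\sigma\mapsto (g\sigma,0)$, which is $G$-equivariant. It therefore suffices to show that the fibre of $\tilde{\BL}_{G\sigma}^g\to (G\sigma)^g$ over any point is the reduced point $n=0$, scheme-theoretically. The natural approach is to prove that the linear operator $\ad(g)\circ\sigma - q$ on $\fg$ is invertible for $g\sigma\in (G\sigma)^g$. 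The equation $\ad(g)(\sigma(n))=qn$ is linear in $n$, so on the open locus where $\det(\ad(g)\circ\sigma-q,\fg)$ is invertible it cuts out exactly the zero section. This holds in families, even without first intersecting with $\CN$, and so handles nonreduced structure automatically.

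The key step is to identify $\det(q-\ad(g)\circ\sigma,\fg)$, up to a unit, with the function defining the generic locus. This determinant is a conjugation-invariant function on $G\sigma$, invariant under $h\cdot g\sigma=hg\sigma(h)^{-1}\sigma$ since $\ad(hg\sigma(h)^{-1})\circ\sigma=\ad(h)\circ(\ad(g)\circ\sigma)\circ\ad(h)^{-1}$. It therefore factors through the twisted Chevalley map $\chi$ of Proposition \ref{Chevalley}, and it suffices to compute it on $T\sigma$. For $t\in T$, the operator $\ad(t)\circ\sigma$ preserves the decomposition $\fg=\ft\oplus\fn\oplus\bar{\fn}$. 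On $\ft$, $\sigma$ has finite order, so its eigenvalues are roots of unity, and $q-\ad(t)\sigma|_{\ft}=q-\sigma|_\ft$ is a nonzero constant because $q$ is not a root of unity in characteristic $0$. On $\fn\oplus\bar{\fn}$ we get
\[
\det(q-\ad(t)\sigma,\fn\oplus\bar\fn)=q^{\dim\fn+\dim\bar\fn}\,L(1,\ad_\fn)^{-1}(t)\,L(1,\ad_{\bar\fn})^{-1}(t).
\]
Hence on $(T\sigma)^g$ the determinant is invertible. Since $(G\sigma)^g=\chi^{-1}((T\sigma)^g)$ and the determinant is pulled back along $\chi$ (both sides are $W^\sigma$-invariant functions on $T\sigma$ agreeing there), it is invertible on all of $(G\sigma)^g$.

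For $B$ the same argument works with $\fn$-valued monodromy in $\BL_{B\sigma}$. Since $\ad(b)\circ\sigma$ preserves a filtration of $\mathrm{Lie}(B)$ with graded pieces governed by the image $t$ of $b$ in $T$, the determinant of $q-\ad(b)\sigma$ on $\fn$ equals that of $q-\ad(t)\sigma$, which is a unit times $L(1,\ad_\fn)^{-1}(t)$, invertible on $(B\sigma)^g$. For $T$ the monodromy lives in the nilpotent cone of $\ft$, which is $0$, so there is nothing to prove, consistent with $\BL_{T\sigma}\simeq T\sigma/T$. The main obstacle I expect is the descent of the determinant identity from $T\sigma$ to $G\sigma$, that is, justifying that an invariant function on $G\sigma$ is determined by its restriction to $T\sigma$. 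This follows directly from the twisted Chevalley isomorphism in Proposition \ref{Chevalley}. A secondary point is to check the scheme-theoretic statement; invertibility of the linear map handles this.
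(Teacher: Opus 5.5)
Your proof is correct, but it takes a different route from the paper to the key fact that $\det(q-\ad(g)\circ\sigma,\fg)$ is invertible on $(G\sigma)^g$.

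The paper first uses the surjectivity of $B\sigma/B\to G\sigma/G$, taken from the proof of \cite{xiao2018}, Proposition 5.3.1, to twisted-conjugate any $g\sigma$ into $B\sigma$. It then computes $\det(1-q^{-1}\ad(b)\circ\sigma,\fg)$ by triangularity with respect to $B$. That determinant depends only on the image $\bar b\in T$ and splits into the $\fn$, $\ft$, $\bar\fn$ factors. This handles $G$ and $B$ in a single computation. However, the reduction relies on knowing that when $g\sigma\in(G\sigma)^g$ is conjugate to $b\sigma$, the element $b\sigma$ lies in $(B\sigma)^g$. That is the compatibility of the Chevalley map with $B\sigma\sslash B\simeq T\sigma\sslash T$, which is only recorded afterwards in Corollary~\ref{factor Chevalley}.

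You instead note that the determinant is invariant under twisted conjugation. You then use the twisted Chevalley isomorphism (Proposition~\ref{Chevalley}) to reduce the identity to $T\sigma$. This matches the definition $(G\sigma)^g=\chi^{-1}((T\sigma)^g)$ directly and needs no Borel reduction for $G$. The triangularity argument then enters only for $B$, exactly as in the paper.

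You also spell out the scheme-theoretic point: in families, an invertible linear operator cuts out exactly the zero section, even before intersecting with $\CN$. The paper leaves this implicit in ``we conclude that $n=0$''.

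Both arguments use the same inputs: $\sigma$ has finite order on $\ft$, and $q$ is not a root of unity. One small addition for $B$: if the monodromy is taken in the nilpotent elements of $\mathrm{Lie}(B)$ rather than in $\fn$, include the constant factor $\det(q-\sigma,\ft)$, exactly as you did for $G$.
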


\begin{proof}
    For the argument about $G$, by the proof of \cite{xiao2018}, Proposition 5.3.1, $B\sigma/B\to G\sigma/G$ is surjective. Unwinding the definition, it suffices to prove that: for $b \in (B\sigma)^g$, $n \in \CN$ satisfying
    $\ad(b)(n) = qn$, then $n=0$. 
    
    We have
    \[
    L(1, \ad_\fg(b))^{-1} = \det(1-q^{-1}\ad_\fg(b)\circ \sigma)
    = \det(1-q^{-1}\ad_\fg(\bar{b})\circ \sigma)
    \]
    where $\bar{b}$ is the image of $B$ in $T$ via the projection $B\to T$. Thus
    \[
    L(1, \ad_\fg(b))^{-1} = 
    L(1, \ad_\fn(\bar{b}))^{-1}L(1, \ad_\ft(\bar{b}))^{-1}L(1, \ad_{\bar{\fn}}(\bar{b}))^{-1}.
    \]
    By definition of the generic locus, the factor $L(1, \ad_\fn(\bar{b}))^{-1}L(1, \ad_{\bar{\fn}}(\bar{b}))^{-1}$ is invertible. As $\sigma$ is of finite index, the eigenvalues of the $\sigma$-action on $\ft$ are roots of unity. Thus the factor $L(1, \ad_\ft(\bar{b}))^{-1}$ is also invertible. Hence $\det(1-q^{-1}\ad_\fg(b)\circ \sigma)$ is invertible and we conclude that $n = 0$.
\end{proof}

The following definition of the regular semisimple locus is a generalization of the usual definition of ``regular semisimple'' in the $\sigma=1$ case.

\begin{definition}[Regular semisimple locus]
\label{Regular semisimple locus}
    We define the regular semisimple locus of $\BL_{G\sigma}$, $\BL_{B\sigma}$ and $\BL_{T\sigma}$ as follows.
    \begin{itemize}
        \item For $T\sigma$, the regular semisimple locus $(T\sigma)^{rs}$ is the non-vanishing locus of $L(0, \ad_\fn)^{-1}$ and $\BL_{T\sigma}^{rs}$.
        \item For $B\sigma$, we pull back the function $L(0, \ad_\fn)^{-1}$. Then the regular semisimple locus $(B\sigma)^{rs}$ is the non-vanishing locus of $L(0, \ad_\fn)^{-1}$. Then $\BL_{B\sigma}^{rs}$ is defined to be the 
        open substack $\{(b\sigma,n)\mid b\sigma\in (B\sigma)^{rs}\}$
        of $\BL_{B\sigma}$.
        \item For $G\sigma$, $(G\sigma)^{rs}$ the inverse image of $(T\sigma)^{rs}$ under the twisted Chevalley map.
        Then $\BL_{G\sigma}^{rs}$ is defined to be the 
        open substack $\{(g\sigma,n)\mid g\sigma\in (G\sigma)^{rs}\}$
        of $\BL_{G\sigma}$.
    \end{itemize}
\end{definition}

\begin{remark}
    \label{rs=r+s}
    By Lemma 5.2.14 of \cite{xiao2018},
    the locus $(G\sigma)^{rs}$ defined here is the same as that defined in Xiao--Zhu, where $(G\sigma)^{rs}$ is defined to be the image of $G\times T\sigma\to G\sigma$, where $G$ acts by conjugation.
    Moreover, by Lemma 5.2.11 in \cite{xiao2018}, the notion ``regular semisimple'' is the same as ``regular and semisimple'' in $G\sigma$.
    Here the notion ``regular'' is defined in Definition \ref{regular} 
    and an element in $G\sigma$ is called semisimple if it is conjugate to an element in $T\sigma$, or equivalently saying, its corresponding L-parameter is semisimple.
\end{remark}

By definition, $\BL_{B\sigma}^?$ is the inverse image of $\BL_{T\sigma}^?$ under $q$, where ? is $rs$ or $g$. We will see later that similar result holds for $p\colon \BL_{B\sigma}\to \BL_{G\sigma}$.

There is a slightly different notion about the regular semisimplicity.

\begin{definition}[Strongly regular semisimple]
    For $x\in (B\sigma)^{rs}/B\simeq (T\sigma)^{rs}/T$ (the isomorphism is proved in \ref{rs locus}), $x$ is called strongly regular semisimple if the subgroup of $W^\sigma$ that fixes $x$ is trivial.
\end{definition}

The strongly regular semisimple $x$'s form an open subset of $T\sigma$, as $W^\sigma$ is the Weyl group of $T_\sigma$.

To justify the definition of the generic locus and the regular semisimple locus, we may consider the cotangent complex.

\begin{proposition}[The cotangent complex]
    \label{complex}
    Denote by $i \colon \BL_{T\sigma} \to \BL_{G\sigma}$ the map induced by the natural inclusion. Then the cotangent complex $Li$ satisfies the following:
    \begin{itemize}
        \item The generic locus $\BL_{T\sigma}^g$ is the open substack of $\BL_{T\sigma}$ where $L^1 i = 0$.
        \item The regular semisimple locus $\BL_{T\sigma}^g$ is the open substack of $\BL_{T\sigma}$ where $L^{-1} i = 0$.
        \item In particular, over $\BL_{T\sigma}^{rs, g}$, the cotangent complex vanishes and then $\BL_{T\sigma}^{rs, g} \to \BL_{G\sigma}^{rs, g}$ is \'etale.
    \end{itemize}
\end{proposition}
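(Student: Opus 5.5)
The plan is to compute the relative cotangent complex $Li$ at a point $x=(t\sigma,n)$ of $\BL_{T\sigma}$ (with $n=0$, since $\CN_T=0$) from explicit presentations of both stacks as derived fiber products. Write $\BL_{G\sigma}$ as the quotient by $G$ of the derived zero locus of $(g\sigma,n)\mapsto \ad(g)(\sigma(n))-qn$ on $G\sigma\times\CN$. At least near points where $n=0$, I will replace $\CN$ by $\fg$; this is harmless for the tangent computation at $n=0$, and I would check it via the standard smoothness of the Springer-type resolution, or simply note that the paper works with the derived stack of Weil--Deligne parameters. The tangent complex of $\BL_{G\sigma}$ at $(t\sigma,0)$ is then the three-term complex
\[
\fg \xrightarrow{\;d_1\;} \fg\oplus\fg \xrightarrow{\;d_2\;} \fg
\]
in degrees $-1,0,1$. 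Here $d_1(X)=(X-\ad(t)\sigma(X),\,0)$ is the infinitesimal action, and $d_2(Y,N)=\ad(t)\sigma(N)-qN$ is the differential of the equation at $n=0$, whose $Y$-component vanishes. The tangent complex of $\BL_{T\sigma}$ is the analogous complex with $\ft$ in place of $\fg$, with $\mathfrak n$-part zero. So the $T$-complex is $\ft\to\ft\to 0$, with differential $1-\ad(t)\sigma=1-\sigma$ on $\ft$.

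Next I take the cone. Since $\fg=\ft\oplus\bigoplus_\alpha\fg_\alpha$ and $\ad(t)\sigma$ preserves both $\ft$ and $\fg/\ft=\fn\oplus\bar\fn$, the relative tangent complex $T_i$, which is the shifted cone, is quasi-isomorphic to a sum of two pieces. The first is the complex $\ft\to 0\oplus\ft\to\ft$ given by the $N$-direction on $\ft$ together with the leftover $\ft$ in degree $1$, built from $1-\sigma$ and $\sigma-q$. The second is the root part
\[
\fn\oplus\bar\fn \xrightarrow{\,1-\ad(t)\sigma\,}\fn\oplus\bar\fn,\qquad \fn\oplus\bar\fn\xrightarrow{\,\ad(t)\sigma-q\,}\fn\oplus\bar\fn,
\]
placed in degrees $-1$ and $0$ for the first map, and $0$ and $1$ for the second after the shift. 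The $\ft$-part is acyclic up to pieces that match. This uses that the eigenvalues of $\sigma$ on $\ft$ are roots of unity, so $\sigma-q$ is invertible, exactly as in the proof of Lemma \ref{generic locus}. The remaining $\ft$ contributions cancel between source and target. Dualizing to $Li$, I get $L^{-1}i$ (equivalently $H^{1}$ of $T_i$) as the cokernel of $1-q^{-1}\ad(t)\sigma$ on $\fn\oplus\bar\fn$. I get $L^{1}i$ (equivalently $H^{-1}$ of $T_i$, the extra automorphisms) as the kernel of $1-\ad(t)\sigma$ there. The degree conventions need care: the paper's assignment of the generic locus to $L^1$ means the $q$-twisted equation piece sits in degree $1$. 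I would fix the convention by matching it with the claimed statement, and then say which of these is the $\mathfrak n$-obstruction and which is the automorphism term.

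Finally, a finite-dimensional endomorphism $A$ has zero kernel and zero cokernel if and only if $\det A\neq 0$. Since $\det(1-q^{-s}\ad(t)\sigma)$ on $\fn\oplus\bar\fn$ equals $L(s,\ad_\fn)^{-1}L(s,\ad_{\bar\fn})^{-1}$, the locus $s=1$ is Definition \ref{Generic locus}. For $s=0$, I will check that $L(0,\ad_{\bar\fn})^{-1}$ and $L(0,\ad_\fn)^{-1}$ have the same vanishing locus. This follows from Proposition \ref{L-function}: $e^{-\alpha_O}$ vanishes at the same points as $e^{\alpha_O}$, with the sign preserved for BC$\pm$. That gives Definition \ref{Regular semisimple locus}. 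Since cohomology sheaves of a perfect complex are supported on closed subsets, these vanishing loci are open, and they are computed pointwise by Nakayama. Over the intersection $Li\simeq 0$, so $i$ is étale.

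The main obstacle is the middle step. One must justify the derived structure (using $\fg$ versus $\CN$, which is only an issue at non-generic points but must be stated), and bookkeep the cone and degree shifts correctly so that the $\ft$-contributions cancel and the correct piece lands in degrees $\pm1$.
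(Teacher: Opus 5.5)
Your argument is correct in substance, but it computes the relative complex by hand, whereas the paper quotes results. The paper uses \cite{fargues2024}, Proposition VIII.2.1 to identify the fiber of $Li$ at $\phi=(t\sigma,0)$ with $R\Gamma(W_F,(\fn\oplus\bar{\fn})_\phi)[1]$. It then:
\begin{itemize}
\item reads off $H^0$ as the fixed points of $\ad(t)\sigma$;
\item obtains $H^2$ by local Tate duality (\cite{dat2025}, Lemma 5.1) as the fixed points of $q\,\ad(t)\sigma$ on the dual;
\item gets $R\Gamma=0$ on $\BL^{rs,g}_{T\sigma}$ from the vanishing Euler characteristic.
\end{itemize}
Your complex $\fg\to\fg\oplus\fg\to\fg$ is the standard complex computing $R\Gamma(W_F,\fg_\phi)[1]$ for an unramified parameter with $N=0$, so the object is the same. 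Its root part splits as $[R\xrightarrow{1-\ad(t)\sigma}R]\oplus[R\xrightarrow{\ad(t)\sigma-q}R]$ with $R=\fn\oplus\bar{\fn}$. You therefore see all three cohomology groups at once, including the middle one $\operatorname{coker}(1-\ad(t)\sigma)\oplus\ker(\ad(t)\sigma-q)$, and you need neither duality nor the Euler characteristic. The paper's route is intrinsic and applies to any parameter, which is what underlies Definition \ref{rs,g,intrinsic}. Yours is more elementary, but it depends on the explicit presentation computing the true cotangent complex.

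That dependence is what you need to tighten. Replacing $\CN$ by $\fg$ is not automatically harmless, because $n=0$ is exactly where $\CN$ is singular. Smoothness of a Springer-type resolution is not the relevant input. What you actually need is:
\begin{itemize}
\item[(i)] The equation $\ad(g)\sigma(N)=qN$ already forces $f(N)=0$ scheme-theoretically for every homogeneous $f\in\CO(\fg)^G$ of positive degree $k$. To see this, apply $f$, use that $\sigma$ acts with finite order on the degree-$k$ invariants, and use that $q^k$ is not a root of unity. Hence the zero loci in $G\sigma\times\fg$ and $G\sigma\times\CN$ coincide.
\item[(ii)] This zero locus has the expected dimension $\dim G$, because $Z^1(W_F,G)$ is lci of dimension $\dim G$ by Fargues--Scholze. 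So the $\dim\fg$ equations form a regular sequence, and your three-term complex is the genuine tangent complex.
\end{itemize}

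Two smaller points on conventions. First, state the degree convention explicitly rather than fixing it ``by matching''. The paper's $Li$ is the undualized cofiber $\operatorname{cofib}(T_{\BL_{T\sigma}}\to i^*T_{\BL_{G\sigma}})$, which equals $T_i[1]$; the honest $T_i$ has amplitude $[0,2]$. Under this convention $L^{-1}i$ is the automorphism term $\ker(1-\ad(t)\sigma)$ and $L^{1}i$ is the obstruction term $\operatorname{coker}(\ad(t)\sigma-q)$, which is the reverse of what your dualization produces.

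Second, the vanishing must be read fiberwise, as the paper does with $\phi^*Li$. In the bottom degree the cohomology sheaf is identically zero, since a map of vector bundles with nonzero determinant over an integral base is injective. So Nakayama is not the right justification there. Openness of both loci comes directly from the determinants $\det(1-q^{-s}\ad(t)\sigma|_{\fn\oplus\bar{\fn}})$ for $s=0,1$.
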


Hence the regular semisimple and generic locus is the same as the open substack of parameters of ``Langlands--Shahidi type'' in the unipotent component.

\begin{proof}
    As mentioned above, $\BL_{G\sigma}$ is a connected component of $Z^1(W_F, G)$. Thus by \cite{fargues2024}, Proposition VIII.2.1, the cotangent complex of $\BL_{G\sigma}$ at a parameter $\phi\in Z^1(W_F, G)$ is
    \[
    \phi^*L_{\BL_{G\sigma}} \simeq R\Gamma(W_F, \fg_\phi)[1]
    \]
    where the subscript $\phi$ means twisting by $\phi$. Thus the cotangent complex $Li$ at $\phi$ is
    \[
    \phi^*Li = \operatorname{cofib}(\phi^*L_{\BL_{T\sigma}} \to \phi^*L_{\BL_{G\sigma}}) = R\Gamma(W_F, (\fg/\ft)_\phi)[1]
    = R\Gamma(W_F, (\fn\oplus\bar{\fn})_\phi).
    \]
    We have $H^0(W_F, (\fn\oplus\bar{\fn})_\phi)$ is the fixed points of $\fn\oplus \bar{\fn}$ under $\ad_{\fn\oplus\bar{\fn}}(t)\circ \sigma$ if the parameter $\phi$ corresponds to $(t, 0)$ in $\BL_{T\sigma}$. Thus the subset where $L^{-1}i=0$ is the non-vanishing set of $L(0, \ad_{\fn}\circ \sigma)^{-1}L(0, \ad_{\fn}\circ \sigma)^{-1}$, which is the same as the regular semisimple locus (i.e., the non-vanishing set of $L(0, \ad_{\fn}\circ \sigma)^{-1}$) by the concrete form of $L$-functions in \ref{L-function}.

    By \cite{dat2025}, Lemma 5.1, 
    \[
    H^2(W_F, (\fn\oplus\bar{\fn})_\phi)^* \simeq H^0(W_F, ((\fn\oplus\bar{\fn})_\phi)^*(1))
    \]
    where $(1)$ means the cyclotomic twist. Thus it is the fixed points of $(\fn\oplus\bar{\fn})^*$ under $q\ad_{(\fn\oplus\bar{\fn})^*}(t)\circ \sigma$. Thus the subset where $L^1i=0$ is the non-vanishing set of $L(1, \ad_{\fn}\circ \sigma)^{-1}L(1, \ad_{\fn}\circ \sigma)^{-1}$, which is exactly the generic locus.

    Again by \cite{dat2025}, Lemma 5.1, for a finitely dimensional $W_F$-representation $V$ satisfying $H^0(W_F,V)=H^2(W_F,V)=0$, we have $R\Gamma(W_F, V)=0$. Thus $Li=0$ over the regular semisimple and generic locus and the morphism $\BL_{T\sigma}^{rs,g}\to \BL^{rs_g}_{G\sigma}$ is \'etale.
\end{proof}

Next we study the morphism $\BL_{B\sigma} \to \BL_{T\sigma}$ and $\BL_{B\sigma} \to \BL_{G\sigma}$.

\begin{lemma}
    \label{rs locus}
    The pair of morphisms $\iota_B \colon (T\sigma)^{rs}/T \to (B\sigma)^{rs}/ B$ and $q_0 \colon (B\sigma)^{rs}/ B \to (T\sigma)^{rs}/ T$ (the morphisms are well-defined by definition of the regular semisimple locus) induced by the natural inclusion and projection provide an isomorphism $(B\sigma)^{rs}/ B \simeq (T\sigma)^{rs}/ T$ and thus 
    $\BL_{B\sigma}^{rs, g} \simeq \BL_{T\sigma}^{rs, g}$.
\end{lemma}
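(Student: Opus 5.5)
We will show that $\iota_B$ and $q_0$ are mutually inverse on the regular semisimple loci. One composite is easy: $q_0\circ\iota_B$ is induced by $T\hookrightarrow B\twoheadrightarrow T$, which is the identity, so $q_0\circ\iota_B=\mathrm{id}$ on $(T\sigma)^{rs}/T$. The real content is to show that $\iota_B$ is essentially surjective and fully faithful, i.e. an equivalence of quotient stacks, with inverse $q_0$.

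\emph{Step 1: every point is conjugate into $T\sigma$.} Write $U$ for the unipotent radical of $B$ and take $b\sigma = t u\sigma$ with $t\in T$, $u\in U$ and $t\sigma\in (T\sigma)^{rs}$. Consider the map
\[
U\to U,\qquad v\mapsto v\,(t u\sigma)\,v^{-1}(t\sigma)^{-1}=v\,t u\,\sigma(v)^{-1}t^{-1}.
\]
We will show this map is bijective, in fact an isomorphism of varieties. Filter $U$ by the lower central series, refined to $\sigma$- and $T$-stable pieces $U=U_1\supset U_2\supset\cdots$ with abelian subquotients $U_i/U_{i+1}\simeq\bigoplus \fn_\alpha$. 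On each graded piece the map is, up to a translation, $x\mapsto x-\ad(t)\sigma(x)$. This is invertible exactly when $1$ is not an eigenvalue of $\ad_\fn(t)\circ\sigma$, i.e. when $L(0,\ad_\fn)^{-1}(t)=\det(1-\ad_\fn(t)\sigma)\neq 0$, which is the defining condition of $(T\sigma)^{rs}$. Induction on the filtration then gives bijectivity on $U$. This is the standard Lang-type argument, and we expect it to be the main technical point: we need the filtration to be compatible with both twisted conjugation and $\sigma$, and we need the argument to work in families rather than just pointwise. To handle the family version, we will run the same argument over the base $(T\sigma)^{rs}$ and obtain an isomorphism of schemes $U\times (T\sigma)^{rs}\to U\times(T\sigma)^{rs}$.

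\emph{Step 2: the inverse and the stabilizers.} Step 1 shows that the action map $U\times (T\sigma)^{rs}\to (B\sigma)^{rs}$, $(v,t\sigma)\mapsto v t\sigma v^{-1}$, is an isomorphism. Consequently
\[
(B\sigma)^{rs}/B\simeq \bigl(U\times(T\sigma)^{rs}\bigr)/(U\rtimes T)\simeq (T\sigma)^{rs}/T,
\]
and this equivalence is inverse to $\iota_B$. In particular $\iota_B\circ q_0\simeq \mathrm{id}$. Injectivity in Step 1 also gives the statement on stabilizers: the stabilizer in $B$ of $t\sigma$ is its stabilizer in $T$, since any $v\in U$ fixing $t\sigma$ must be trivial.

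\emph{Step 3: the nilpotent part.} For the statement about $\BL$, we first intersect with the generic locus. By Lemma \ref{generic locus}, the nilpotent component vanishes on the generic loci for both $B$ and $T$. So $\BL_{B\sigma}^{rs,g}\simeq (B\sigma)^{rs,g}/B$ and $\BL_{T\sigma}^{rs,g}\simeq (T\sigma)^{rs,g}/T$. The generic locus of $B\sigma$ is by definition pulled back from $T\sigma$, so it corresponds to itself under the isomorphism of Step 2. This yields $\BL_{B\sigma}^{rs,g}\simeq\BL_{T\sigma}^{rs,g}$.
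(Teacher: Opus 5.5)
Your proof is correct, but it follows a different route from the paper's.

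\textbf{The paper's argument} is abstract. It uses $q_0\circ\iota_B=\mathbb{1}$ and shows $\iota_B$ is \'etale on the regular semisimple locus via the deformation-theoretic computation of Proposition \ref{complex}: the relative (co)tangent complex is governed by $1-\ad(t)\sigma$ on $\fn$, which is invertible exactly where $L(0,\ad_\fn)^{-1}\neq 0$. It then concludes that $\iota_B$ is open and closed, and uses connectedness of $B$ to get an isomorphism. The $\BL$ statement is deduced from Lemma \ref{generic locus}, exactly as in your Step 3.

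\textbf{Your argument} proves the twisted analogue of ``regular semisimple elements of $B$ are $U$-conjugate into $T$''. You use a Lang-type d\'evissage along the height filtration and obtain an explicit isomorphism $(B\sigma)^{rs}\simeq B\times^T(T\sigma)^{rs}$. The family version you worried about is unproblematic. At each graded step the correction is obtained by applying $(1-\ad(t)\sigma)^{-1}$ on $\mathrm{gr}_k\fn$, whose entries are regular functions on $(T\sigma)^{rs}$ because the determinant is invertible there. For the stabilizer claim, note that a general $b=vs$ (with $v\in U$, $s\in T$) fixing $t\sigma$ has $s\in T^\sigma$ by projecting to $T$; your injectivity statement then gives $v=1$, so $\operatorname{Stab}_B(t\sigma)=T^\sigma$.

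\textbf{Comparison.} The paper's route is shorter. It is also uniform with the intrinsic, cotangent-complex definition of the loci used later in the general case (Definition \ref{rs,g,intrinsic}). Yours is elementary and self-contained, and it needs no connectedness argument. It also supplies directly the facts that the paper's ``\'etale section, hence open and closed, hence an isomorphism by connectedness'' step leaves implicit for quotient stacks. Those facts are that every point of $(B\sigma)^{rs}$ is conjugate into $T\sigma$ and that the stabilizers in $T$ and $B$ agree. They are exactly what is needed to upgrade \'etaleness of $\iota_B$ to an isomorphism of stacks.
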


\begin{proof}
    We have $q_0\circ \iota_B = \mathbb{1}$. By deformation theory in \ref{complex}, $\iota_B$ is \'etale. Thus $\iota_B$ is open and closed.
    Moreover, $B$ is connected. Thus $\iota_B$ is an isomorphism.
    The argument for $\BL_{B\sigma}$ follows from \ref{generic locus}.
\end{proof}

\begin{lemma}
    The pair of morphisms $i \colon T\sigma\sslash T \to B\sigma\sslash B$ and $q \colon B\sigma\sslash B \to T\sigma\sslash T$ induced by the natural inclusion and projection provide an isomorphism $B\sigma\sslash B \simeq T\sigma\sslash T$.
\end{lemma}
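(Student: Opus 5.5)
Since $q\circ i=\mathbb{1}$ on $T\sigma\sslash T$, the plan is to show that $i^*\colon \CO(B\sigma)^B\to \CO(T\sigma)^T$ is injective. Because $q^*$ is a section of $i^*$, injectivity makes $i^*$ and $q^*$ mutually inverse isomorphisms of invariant rings, which gives $B\sigma\sslash B\simeq T\sigma\sslash T$. Write $U$ for the unipotent radical of $B$, so $B\sigma = T\sigma\cdot U$ (more precisely $U\cdot T\sigma$). Let $f$ be a $B$-invariant function on $B\sigma$ with $f|_{T\sigma}=0$. We must show $f=0$, and by continuity it suffices to show $f$ vanishes on a dense subset of $B\sigma$.

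For the dense subset we use the regular semisimple locus. By Lemma \ref{rs locus}, $(B\sigma)^{rs}/B\simeq (T\sigma)^{rs}/T$, so every point of $(B\sigma)^{rs}$ is $B$-conjugate to a point of $(T\sigma)^{rs}$. Hence $f$ vanishes on $(B\sigma)^{rs}$. The set $(B\sigma)^{rs}$ is the non-vanishing locus of $q^*L(0,\ad_\fn)^{-1}$, pulled back from $T\sigma$ along the smooth surjection $B\sigma\to T\sigma$ (trivial $U$-bundle). So it is dense as soon as $(T\sigma)^{rs}$ is nonempty, i.e.\ as soon as the function $L(0,\ad_\fn)^{-1}$ is not identically zero on the irreducible variety $T\sigma$. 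This nonvanishing follows from Proposition \ref{L-function}. There $L(0,\ad_\fn)^{-1}$ is a product of factors $1\mp e^{\alpha_O}(t)$, where the $\alpha_O$ are nonzero characters of $T_\sigma$. Each factor is therefore a nonconstant, hence not identically zero, regular function on the torus $T\sigma\simeq T$, and since $\CO(T)$ is a domain, the product is nonzero.

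As an alternative that avoids the regular semisimple locus, one could argue directly by a contracting cocharacter. Choose a cocharacter $\lambda$ of $T^\sigma$ (the identity component of the $\sigma$-fixed torus) that is dominant regular for $B$; it exists because $\sigma$ fixes the pinning, so $\rho^\vee$ is $\sigma$-fixed. Conjugating $u\,t\sigma$ by $\lambda(s)$ fixes $t\sigma$, since $\lambda(s)$ commutes with $\sigma$ and with $T$, and it sends $u$ to $\lambda(s)u\lambda(s)^{-1}$, which tends to $1$ as $s\to 0$. Invariance of $f$ then gives $f(ut\sigma)=f(t\sigma)$ by continuity. This argument even proves $q^*i^*=\mathbb{1}$ on the nose, and I would present it as the main proof, with the density argument as a cross-check.

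The only point needing care is that $\lambda$ really commutes with $\sigma$, so that the conjugation preserves the $T\sigma$-component. I expect this to be the main (minor) obstacle, and it is resolved by taking $\lambda=2\rho^\vee$, which is $\sigma$-stable because $\sigma$ preserves $B$ and hence the set of positive roots. The conjugation formula is $h(g\sigma)h^{-1}=hg\sigma(h)^{-1}\sigma$, and $\sigma(\lambda(s))=\lambda(s)$, so for $g=ut$ we get $\lambda(s)ut\lambda(s)^{-1}\sigma=(\lambda(s)u\lambda(s)^{-1})t\sigma$, using that $\lambda(s)$ commutes with $t$.
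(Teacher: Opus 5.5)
Your proof is correct. The density argument you offer as a cross-check is essentially the paper's proof. The paper notes that $q^*$ is split injective and becomes an isomorphism after inverting $L(0,\ad_\fn)^{-1}$, by Lemma~\ref{rs locus}. It then concludes because the invariant rings are integral domains. That is the algebraic form of your statement that every element of $\ker(i^*)$ vanishes on the dense open $(B\sigma)^{rs}$.

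Your preferred argument, with the $\sigma$-fixed contracting cocharacter $\lambda=2\rho^\vee$, is a genuinely different and more elementary route. It proves $f(ut\sigma)=f(t\sigma)$ pointwise, using only two facts:
\begin{enumerate}
\item since $\lambda(s)\in T^\sigma$, the twisted conjugation by $\lambda(s)$ acts on $ut\sigma$ by ordinary conjugation of the $U$-component;
\item every positive root pairs positively with $2\rho^\vee$.
\end{enumerate}
In particular it bypasses Lemma~\ref{rs locus} entirely. So it does not rely on the cotangent-complex and \'etaleness input from Proposition~\ref{complex}, nor on the nonvanishing of $L(0,\ad_\fn)^{-1}$.

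The paper's route is shorter given what it has already established, and it fits the paper's recurring strategy of reducing to the regular semisimple locus. Yours is self-contained and is the twisted analogue of the standard proof that $B\sslash B\simeq T$.

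The one step to phrase carefully is the limit. Since $k$ is an arbitrary algebraically closed field, ``continuity'' should mean the following. The map $s\mapsto\lambda(s)u\lambda(s)^{-1}$ extends to a morphism $\mathbb{A}^1\to U$ sending $0$ to $1$; this holds because $U\simeq\prod_{\alpha>0}U_\alpha$ as varieties and each coordinate is scaled by $s^{\langle\alpha,2\rho^\vee\rangle}$ with positive exponent. Hence $s\mapsto f(\lambda(s)\cdot ut\sigma)$ is a regular function on $\mathbb{A}^1$ that is constant on $\mathbb{G}_m$, so it is constant on $\mathbb{A}^1$. Evaluating at $s=0$ gives $f(ut\sigma)=f(t\sigma)$.
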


\begin{proof}
    By Lemma \ref{rs locus} above, $(B\sigma)^{rs}\sslash B \simeq (T\sigma)^{rs}\sslash T$. We have $q^*\colon \CO(T\sigma/T)\to \CO(B\sigma/B)$ is injective and splits as an $\CO(T\sigma/T)$-module homomorphism. As $\CO((T\sigma)^{rs}\sslash T)=\CO(T\sigma/T)[L(0,\ad_\fn)]$ and similarly for $B$, $q^*$ becomes an isomorphism after inverting $L(0,\ad_\fn)^{-1}$. Thus $q^*$ is an isomorphism as $\CO(T\sigma/T)$ and $\CO(B\sigma/B)$ are integral domains.
\end{proof}

Thus we have the following factorization of the twisted Chevalley isomorphism.
\begin{corollary}
    \label{factor Chevalley}
    The morphism $T\sigma\sslash T\to G\sigma\sslash G$ inducing the twisted Chevalley isomorphism (Proposition \ref{Chevalley}) factors as
    \[
    T\sigma\sslash T \xleftarrow[\sim]{q} B\sigma\sslash B\to G\sigma\sslash G.
    \]
\end{corollary}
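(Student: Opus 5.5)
The claim is that the morphism $T\sigma\sslash T\to G\sigma\sslash G$, induced by the inclusion $T\sigma\subset G\sigma$ and giving the twisted Chevalley isomorphism of Proposition \ref{Chevalley}, factors through $B\sigma\sslash B$. The plan is to build the factorization directly from the chain of inclusions $T\subset B\subset G$ and then use the preceding lemma to invert $q$.

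First, the inclusions $T\sigma\subset B\sigma\subset G\sigma$ are equivariant for the twisted conjugation actions of $T\subset B\subset G$. They therefore induce morphisms of GIT quotients
\[
i\colon T\sigma\sslash T\to B\sigma\sslash B,
\qquad
p\colon B\sigma\sslash B\to G\sigma\sslash G .
\]
The composite $p\circ i$ is induced by $T\sigma\subset G\sigma$, so it is the map realizing the twisted Chevalley isomorphism (composed with the quotient $T\sigma\sslash T\to T\sigma\sslash N_G(T\sigma)$).

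Second, by the preceding lemma, $i$ and $q$ are mutually inverse isomorphisms $T\sigma\sslash T\simeq B\sigma\sslash B$. Concretely, $q\circ i=\mathbb{1}$ holds already on the level of the groups, and $q$ is an isomorphism, so $i=q^{-1}$. Hence $p\circ i=p\circ q^{-1}$. This is exactly the claimed factorization
\[
T\sigma\sslash T\xleftarrow[\sim]{q}B\sigma\sslash B\to G\sigma\sslash G .
\]

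The only point needing care is that the quotient maps in question are functorial for equivariant morphisms. This holds at the level of invariant rings: $\CO(G\sigma)^G\to\CO(B\sigma)^B\to\CO(T\sigma)^T$ are restriction maps. So there is no real obstacle; the statement is formal once the preceding lemma is available.
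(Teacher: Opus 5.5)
Your proposal is correct and follows the same approach as the paper, which treats the corollary as immediate from the preceding lemma. The inclusions $T\subset B\subset G$ give $T\sigma\sslash T\xrightarrow{i}B\sigma\sslash B\xrightarrow{p}G\sigma\sslash G$, whose composite is the Chevalley map, and $q\circ i=\mathbb{1}$ with $q$ an isomorphism forces $i=q^{-1}$; this is exactly the identification of the Chevalley map with $(q_0^*)^{-1}\circ p_0^*$ on invariant functions that the paper uses later.
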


From the corollary above, $\BL_{B\sigma}^?$ is the inverse image of $\BL_{G\sigma}^?$ under $q$, where ? is $rs$ or $g$.
Hence the morphisms $\BL_{T\sigma}\leftarrow \BL_{B\sigma}\to \BL_{G\sigma}$ restrict to their generic locus $\BL_{T\sigma}^g\leftarrow \BL_{B\sigma}^g\to \BL_{G\sigma}^g$ and similarly for the regular semisimple locus. We will denote by $\Eis^{rs}$, $\Eis^g$, $\Eis^{rs,g}$ the corresponding functors, similarly for $\eis$. Note that $\Eis^?\circ j_T^{?*}$ is isomorphic to $j_G^{?*}\Eis$, where ? is rs or g or rs,g and $j_T^{?*}$, $j_G^{?*}$ is the open immersion of the ?-locus of $\BL_{T\sigma}$ and $\BL_{G\sigma}$ respectively. In the following notations, for $\CF\in D\Coh(\BL_{T\sigma})$, abusing the notations, we will abbreviate $\Eis^? j^{?*}_T \CF$ simply by $\Eis^? \CF$.

\begin{lemma}[\cite{xiao2018}, Lemma 5.2.14]
\label{iota-G}
    The morphism $\iota_G\colon (T\sigma)^{rs}/ T\to (G\sigma)^{rs}/ G$ induced by the natural inclusion $T\to G$ induces an isomorphism
    $(T\sigma)^{rs}/ N_{G}(T\sigma)\simeq (G\sigma)^{rs}/ G$. Thus the morphism
    $(B\sigma)^{rs}/ B \to (G\sigma)^{rs}/ G$ is a Galois covering and the same holds for
    $\BL_{B\sigma}^{rs, g} \to \BL_{G\sigma}^{rs, g}$.
\end{lemma}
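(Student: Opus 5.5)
The plan is to prove the isomorphism $(T\sigma)^{rs}/N_G(T\sigma)\simeq (G\sigma)^{rs}/G$ directly, and then read off the two covering statements using Lemma \ref{rs locus} and Lemma \ref{generic locus}.

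\emph{Step 1: factorization and étaleness.} The map $\iota_G$ factors through $(T\sigma)^{rs}/N_G(T\sigma)$. Indeed, $N_G(T\sigma)$ is exactly the stabilizer of $T\sigma$ under twisted conjugation, so $N_G(T\sigma)$-conjugate points of $T\sigma$ are $G$-conjugate. By Proposition \ref{complex}, restricted to $n=0$, the tangent complex of $T\sigma/T\to G\sigma/G$ at $t\sigma$ is governed by $H^0$ of $\ad(t)\circ\sigma$ acting on $\fn\oplus\bar\fn$. This vanishes on the regular semisimple locus, and there is no $H^1$-type obstruction in the smooth situation (both stacks are smooth quotients). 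Hence $\iota_G$ is étale on $(T\sigma)^{rs}/T$. Since $N_G(T\sigma)/T=W^\sigma$ is finite, the induced map $\bar\iota\colon (T\sigma)^{rs}/N_G(T\sigma)\to (G\sigma)^{rs}/G$ is also étale.

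\emph{Step 2: surjectivity and injectivity on points.} Surjectivity follows from Remark \ref{rs=r+s}: every regular semisimple element is $G$-conjugate into $T\sigma$.

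Injectivity on geometric points is the main obstacle. Suppose $t\sigma$ and $t'\sigma$ lie in $(T\sigma)^{rs}$ and $g t\sigma g^{-1}=t'\sigma$. We must show that $g$ can be modified so that it lies in $N_G(T\sigma)$. The approach is to study the twisted centralizer $C_G(t\sigma)=\{h : ht\sigma(h)^{-1}=t\}$. Regularity says its identity component is $(T^\sigma)^\circ$, or more precisely a torus whose Lie algebra is $\ft^{\ad(t)\sigma}$, because $\fg^{\ad(t)\sigma}=\ft^{\ad(t)\sigma}$ on the rs locus. Then $g$ conjugates $C_G(t\sigma)^\circ$ to $C_G(t'\sigma)^\circ$, and both are contained in $T$. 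The key claim is that $T$ is recovered as the centralizer in $G$ of $C_G(t\sigma)^\circ$, i.e.\ the centralizer of a maximal torus of $G^\sigma$, which is $T$ because $\sigma$ preserves a pinning (this is where \cite{xiao2018}'s structure of $(G,\sigma)$ enters). From this, $g\in N_G(T)$, and the relation $gt\sigma(g)^{-1}\in T$ gives $\sigma(g)g^{-1}\in T$, so $g\in N_G(T\sigma)$.

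\emph{Step 3: stabilizers and conclusion.} Stabilizers must also be compared: $\operatorname{Stab}_G(t\sigma)$ must equal $\operatorname{Stab}_{N_G(T\sigma)}(t\sigma)$. This is the same argument applied with $t'=t$. An étale, representable, bijective-on-points map that is an isomorphism on stabilizers is an isomorphism, so $\bar\iota$ is an isomorphism.

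\emph{Consequences.} The map $(T\sigma)^{rs}/T\to (T\sigma)^{rs}/N_G(T\sigma)$ is a $W^\sigma$-torsor, i.e.\ a finite Galois covering. Composing with the isomorphism of Lemma \ref{rs locus} gives the claim for $(B\sigma)^{rs}/B\to (G\sigma)^{rs}/G$. Restricting further to the $W^\sigma$-stable generic locus and applying Lemma \ref{generic locus}, which identifies $\BL^{rs,g}$ with $(\,\cdot\,\sigma)^{rs,g}/\cdot$, yields the statement for $\BL_{B\sigma}^{rs,g}\to\BL_{G\sigma}^{rs,g}$.
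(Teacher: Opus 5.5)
Your proposal is correct. The paper does not prove the isomorphism $(T\sigma)^{rs}/N_G(T\sigma)\simeq (G\sigma)^{rs}/G$; it imports it from \cite{xiao2018}, Lemma 5.2.14. It then gets the two covering statements the way you do: from the $W^\sigma$-torsor $(T\sigma)^{rs}/T\to (T\sigma)^{rs}/N_G(T\sigma)$ together with Lemma \ref{rs locus} and Lemma \ref{generic locus}. What you add is a self-contained proof of the cited input, and it is the standard argument. Étaleness comes from the vanishing of the relative tangent complex on the rs locus. The comparison of points and stabilizers comes from $C_G(t\sigma)^\circ=(T^\sigma)^\circ$ and $C_G((T^\sigma)^\circ)=T$. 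The latter holds because no root is trivial on $(T^\sigma)^\circ$: since $\sigma$ preserves $B$, the $\sigma$-average of a positive root is nonzero. Your argument in fact gives the transporter statement that every $g$ with $gt\sigma(g)^{-1}=t'$ already lies in $N_G(T\sigma)$. This settles injectivity and the equality of stabilizers at once, and so also representability. The citation buys brevity; your route makes explicit that only these two facts are needed.

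Two small corrections. First, étaleness should not be justified by smoothness. Smooth source and target do not force the relative tangent complex into one degree. At $t\sigma$ this complex is $\fn\oplus\bar\fn\xrightarrow{1-\ad(t)\sigma}\fn\oplus\bar\fn$. It is acyclic because an injective endomorphism of a finite-dimensional space is bijective. Injectivity on $\bar\fn$ follows from injectivity on $\fn$ by the explicit formulas in Proposition \ref{L-function}.

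Second, your surjectivity step uses Remark \ref{rs=r+s}, which the paper also takes from Lemma 5.2.14 of \cite{xiao2018}. So that step is not independent of the citation. To make it so, use the Jordan decomposition $g\sigma=su$. The semisimple part $s$ is conjugate into $T\sigma$ (\cite{steinberg1968endomorphisms}), hence into $(T\sigma)^{rs}$, because $\chi(s)=\chi(g\sigma)$. The unipotent part $u$ lies in the identity component of the twisted centralizer of $s$, which is the torus $(T^\sigma)^\circ$, so $u=1$.
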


Now we define the intertwining operator over the regular semisimple and generic locus.

\begin{definition}[Intertwining over the regular semisimple and generic locus]
    \label{rs, g}
    Consider the morphism $\iota_G \colon \BL_{T\sigma}^{rs,g} \to \BL_{G\sigma}^{rs,g}$ induced by $T\to G$ and similarly for $\iota_B$. We define an isomorphism
    \[
    t_B\colon \Eis_B^{rs, g} \simeq \iota_{G*}.
    \]
    as
    \[
    \Eis_B^{rs, g} = p_{B*}q_B^* \simeq p_{B^*}\iota_{B*}= \iota_{G*},
    \]
    where the second isomorphism is deduced from the fact that $\iota_B$ and $q_B$ are mutually inverse and the third is from $\iota_G=p_B\circ \iota_B$.

    For another Borel $B'$ of $G$ containing $T$ and stable under $\sigma$, we define the canonical isomorphism 
    \[
    J_{B'|B}^{rs, g} \colon \Eis_B^{rs, g} \simeq \Eis_{B'}^{rs, g}
    \] 
    as the composition of isomorphisms $t_{B'}^{-1}t_B$, called the intertwining operator over the regular semisimple and generic locus.

    In the same way, we obtain an isomorphism $J_{B'|B,0}^{rs}\colon \eis_B^{rs} \simeq \eis_{B'}^{rs}$.
\end{definition}

We will sometimes denote $J^{rs, g}$ by $J^\circ$.
The operators $J^\circ$ satisfy the transitivity by definition.

\begin{proposition}
    \label{trans}
    For any three Borel subgroups $B_1, B_2, B_3$ of $G$ fixed by $\sigma$ and containing $T$, 
    $J_{B_3|B_2}^\circ \circ J_{B_2|B_1}^\circ = J_{B_3|B_1}^\circ$.
\end{proposition}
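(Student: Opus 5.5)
The transitivity will follow formally from Definition \ref{rs, g}, which writes each $J^\circ$ as a composite $t_{B'}^{-1} t_B$ of isomorphisms through the common functor $\iota_{G*}$. The plan is to make sure that $\iota_{G*}$ really is the same object for every Borel, and then to cancel.

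First I will check that $\iota_G\colon \BL_{T\sigma}^{rs,g}\to\BL_{G\sigma}^{rs,g}$ is independent of the Borel. It is induced by the inclusion $T\to G$ alone. For each $B_i$ the identity $\iota_G = p_{B_i}\circ\iota_{B_i}$ holds on the nose, since both sides come from the composite of group inclusions $T\to B_i\to G$. The target $\iota_{G*}$ of every $t_{B_i}$ is therefore one and the same functor. Moreover, the rs,g loci of $T\sigma$ and $G\sigma$ do not depend on the choice of $B_i$: the generic locus uses the symmetric product $L(1,\ad_\fn)^{-1}L(1,\ad_{\bar\fn})^{-1}$, which is $W^\sigma$-invariant. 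For the regular semisimple locus I will use Remark \ref{rs=r+s} and the expression of Proposition \ref{L-function}: the vanishing locus of $L(0,\ad_\fn)^{-1}$ is the union of the fixed loci of the reflections, and this union is independent of the choice of positive system.

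Second, I will compute
\[
J^\circ_{B_3|B_2}\circ J^\circ_{B_2|B_1} = t_{B_3}^{-1}t_{B_2}\,t_{B_2}^{-1}t_{B_1} = t_{B_3}^{-1}t_{B_1} = J^\circ_{B_3|B_1},
\]
using $t_{B_2}t_{B_2}^{-1}=\mathrm{id}_{\iota_{G*}}$. This step is purely formal, and it uses the same $t_{B_2}$ in both factors because $t_{B_2}$ depends only on $B_2$.

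The only real subtlety, and the step I expect to need the most care, is the well-definedness of $t_B$ as a single natural isomorphism. It is built from the base change and composition isomorphisms $q_B^*\simeq\iota_{B*}$, using that $q_B$ and $\iota_B$ are mutually inverse by Lemma \ref{rs locus}, together with $p_{B*}\iota_{B*}\simeq(p_B\iota_B)_*$. These must be the standard coherent composition isomorphisms of pushforward and pullback in the $\infty$-category, so no hidden automorphism is introduced. Since each $t_{B_i}$ is fixed once and for all, the cancellation above is legitimate, and the proposition follows.
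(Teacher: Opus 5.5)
Your argument is correct and is the paper's own: the paper simply observes that transitivity holds by definition, since every $J^\circ_{B'|B}=t_{B'}^{-1}t_B$ passes through the same functor $\iota_{G*}$, and your checks that $\iota_G$ and the rs,g loci do not depend on the Borel spell out what the paper leaves implicit. One small correction in that side check: the zero locus of $L(0,\ad_\fn)^{-1}$ equals the union of the reflection-fixed loci only for simply connected $G$ (for $PGL_2$ the point with $e^{\alpha}=-1$ is fixed by $s_\alpha$ but is regular semisimple), yet Borel-independence still holds because $1\mp e^{-\alpha_O}$ and $1\mp e^{\alpha_O}$ differ by a unit (equivalently, by Proposition \ref{complex} the rs locus is where $L^{-1}i=0$).
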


Next we show that the isomorphism of functors $\eis_B^{rs}\simeq \eis_{B'}^{rs}$ above generates all transformations $\eis_B^{rs}\to \eis_{B'}^{rs}$ in some sense. 

We introduce the following formalism. Let $F\colon \CC\to \CD$ be a functor between additive categories. Then there is a natural ring homomorphism $\CZ(\CC):=\End(\mathbb{1}_\CC) \to \End(F)$. Moreover, let $G$ be an another functor from $\CC$ to $\CD$. Then $\Hom(F, G)$ becomes a right $\End(F)$-module, and restricts to a right $\CZ(\CC)$-module.
We remark that we may also consider the $\End(G)$-action and deduce a $\CZ(\CC)$-module structure. These two module structure coincide.

Applying the above formalism to 
$\CC = D\Coh(\BL_{T\sigma})$, $\CD = D\Coh(\BL_{G\sigma})$, $F = \Eis_B$, $G=\Eis_{B'}$, we obtain a ring homomorphism $\CZ(D\Coh(\BL_{T\sigma}))\to \End(\Eis_B)$ and $\Hom(\Eis_B, \Eis_{B'})$ is an $\CZ(D\Coh(\BL_{T\sigma}))$-module. All those constructions may apply over the regular semisimple or generic loci. 

Now we state the proposition about the generator.

\begin{proposition}
\label{rs,g,gen}
    The ring homomorphism $\CZ(D\Coh(\BL_{T\sigma}^{rs,g}))\to \End(\Eis^{rs,g})$ is an isomorphism.
    Moreover, the isomorphism $J_{B'|B}^{rs,g}$ in Definition \ref{rs, g} generates the free $\CZ(D\Coh(\BL_{T\sigma}^{rs,g}))$-module
    $\Hom(\Eis_B^{rs, g}, \Eis_{B'}^{rs, g})$. Similarly for $\eis^{rs}$.
\end{proposition}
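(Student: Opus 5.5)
Via the isomorphisms $t_B$ and $t_{B'}$ of Definition \ref{rs, g}, the whole question reduces to computing $\End(\iota_{G*})$. Composing with $t_B$ and $t_{B'}^{-1}$ identifies $\Hom(\Eis_B^{rs,g},\Eis_{B'}^{rs,g})$ with $\End(\iota_{G*})$, compatibly with the $\CZ(D\Coh(\BL_{T\sigma}^{rs,g}))$-actions. Under this identification $J^{rs,g}_{B'|B}$ goes to the identity. So it suffices to show that the natural map
\[
\CZ(D\Coh(\BL_{T\sigma}^{rs,g}))\to\End(\iota_{G*})
\]
is an isomorphism. Freeness of rank one, with generator $J^{rs,g}_{B'|B}$, then follows at once. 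The first assertion is the special case $B'=B$.

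To compute $\End(\iota_{G*})$, I would use Lemma \ref{iota-G}: $\iota_G$ is a finite étale Galois covering with group $W^\sigma$. By Lemma \ref{generic locus}, both stacks are the classical quotient stacks $(T\sigma)^{rs,g}/T$ and $(G\sigma)^{rs,g}/G$, so $\iota_G$ is finite. Hence $\iota_{G*}$ has a right adjoint $\iota_G^!=\iota_G^*$ (étaleness), and
\[
\End(\iota_{G*})\simeq \Hom(\mathbb{1},\iota_G^*\iota_{G*}).
\]
Base change along the Galois cover gives
\[
\iota_G^*\iota_{G*}\simeq\bigoplus_{w\in W^\sigma} w^*,
\]
where the $w$-summand comes from the component of $\BL_{T\sigma}^{rs,g}\times_{\BL_{G\sigma}^{rs,g}}\BL_{T\sigma}^{rs,g}$ which is the graph of $w$. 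Therefore
\[
\End(\iota_{G*})\simeq\bigoplus_{w}\Hom(\mathbb{1},w^*),
\]
and the unit/identity corresponds to the summand $w=1$ with the identity transformation. The center maps isomorphically onto $\Hom(\mathbb{1},\mathbb{1})=\CZ$. It remains to show that $\Hom(\mathbb{1},w^*)=0$ for $w\neq1$.

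This vanishing is the main obstacle. My first approach is to test a natural transformation $\eta\colon\mathbb{1}\to w^*$ on skyscraper sheaves $k_x$ at points $x$, where $w^*k_x=k_{w^{-1}x}$. On the strongly regular semisimple locus, $w^{-1}x\neq x$, so $\Hom(k_x,k_{w^{-1}x})=0$, including derived Homs since the supports are disjoint. Hence $\eta_{k_x}=0$ there. Naturality with respect to maps $\CO\to k_x$ then forces $\eta_{\CO}\in\Hom(\CO,w^*\CO)=\CO(\BL_{T\sigma}^{rs,g})$ to vanish at every strongly regular point. This locus is dense, because it is open and nonempty in each component ($W^\sigma$ acts faithfully on $T_\sigma$) and $(T\sigma)^{rs,g}$ is smooth. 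Since $\BL_{T\sigma}^{rs,g}$ is reduced, $\eta_\CO=0$.

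Next I would extend from $\CO$ to all objects. The key observation is that $\BL_{T\sigma}^{rs,g}=(T\sigma)^{rs,g}/T$, with $T$ acting through the twisted conjugation. Since $T$ is diagonalizable, $D\Coh$ is generated by line bundles $\CO\otimes\chi$ with shifts, and the same density argument applies to each of these. One has to be careful that $w^*$ permutes the characters. Alternatively, one can view a transformation $\mathbb{1}\to w^*$ as a section of the kernel supported on the graph of $w$ inside $X\times X$, which must be set-theoretically supported on the fixed locus of $w$. That locus is a proper closed subset, and on a smooth stack a transformation given by such a kernel map from the diagonal must vanish. I would write whichever version is cleaner, handling the $T$-equivariance by the character decomposition.

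The statement for $\eis^{rs}$ follows by the identical argument using $J^{rs}_{B'|B,0}$ and the Galois covering $(T\sigma)^{rs}/T\to(G\sigma)^{rs}/G$ of Lemma \ref{iota-G}.
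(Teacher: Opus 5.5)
Your proof is correct, but it is organized differently from the paper's.

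The paper never decomposes $\iota_G^*\iota_{G*}$. Instead it proves a general criterion, Proposition~\ref{gen-quotient}: for any finite \'etale $f\colon X\to Y$ between connected smooth stacks with one closed point $x$ whose isotropy group agrees with that of $f(x)$, the map $\CZ(D\Coh(X))\to\End(f_*)$ is an isomorphism.
\begin{itemize}
\item For locally free $\CF$, an endomorphism of $f_*\CF$ is compared with its completion at $x$.
\item After passing to a smooth cover of $Y$ over which $f$ splits as $\coprod_i\tilde Y_i$, the pieces of the fibre of $x$ have disjoint images in the base, which kills the off-diagonal entries.
\item Injectivity of $\CF\to\CF^\wedge_x$ then places the endomorphism in $f_*\End_X(\CF)$.
\end{itemize}
The hypothesis is verified for $\iota_G$ at a strongly regular semisimple point.

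You instead use the adjunction $\iota_{G*}\dashv\iota_G^*$ and the Galois base change $\iota_G^*\iota_{G*}\simeq\bigoplus_{w\in W^\sigma}w^*$. This gives $\End(\iota_{G*})\simeq\bigoplus_w\Hom(\mathbb{1},w^*)$, with the center sitting exactly in the $w=1$ summand via the unit. Everything then reduces to showing $\Hom(\mathbb{1},w^*)=0$ for $w\neq 1$.

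Your route makes the mechanism transparent: the obstructions are transformations supported on $w$-fixed loci, and what matters is that the locus where $W^\sigma$ acts freely is dense. The paper's criterion, by contrast, needs no Galois structure and uses only a single good point together with connectedness and smoothness.

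One caution about your first version of the vanishing. Deducing $\eta=0$ from $\eta_{\CO_\chi}=0$ on generators is the same ``determined on generators'' step the paper makes when it restricts to locally free sheaves. At the level of homotopy categories this is not automatic: naturality only shows that $\eta$ on a cone factors through the connecting maps.

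Your kernel version avoids this issue, and I would write that one. There, $\Hom_{X\times X}(\Delta_*\CO_X,\Gamma_{w*}\CO_X)\simeq\Hom_X(\Gamma_w^*\Delta_*\CO_X,\CO_X)$. The complex $\Gamma_w^*\Delta_*\CO_X$ has cohomology in degrees $\le 0$ supported on the proper closed $w$-fixed locus. Since $\CO_X$ is torsion-free and lies in degree $0$, this degree-$0$ Hom vanishes.
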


In fact, we can prove the following general statement.  

\begin{proposition}
\label{gen-quotient}
    Let $f \colon X \to Y$ be a finite \'etale morphism between connected smooth algebraic stacks over $k$.
    Suppose that there exists a closed point $x\in X$ 
    such that the isotropy groups of $x$ and $y:=f(x)$ are isomorphic.
    Then applying the formalism above to $\CC=D\Coh(X)$, $\CD = D\Coh(Y)$ and $F = f_*$, the homomorphism
    \[
    \CZ(D\Coh(X)) \to \End(f_*)
    \]
    is an isomorphism.
\end{proposition}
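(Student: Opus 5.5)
The plan is to compute $\End(f_*)$ by adjunction and base change, and to split off a diagonal summand that accounts exactly for $\CZ(D\Coh(X))$. Since $f$ is finite \'etale, $f_*$ is exact, preserves $D\Coh$, and has $f^*$ as both left and right adjoint (Grothendieck duality with $f^!=f^*$). Hence
\[
\End(f_*) \simeq \Hom(f^*f_*, \mathbb{1}_{D\Coh(X)}).
\]
We identify $f^*f_*$ by base change along the cartesian square $X\times_Y X \rightrightarrows X$: it is the integral transform $\pi_{2*}\pi_1^*$ with kernel $\CO_{X\times_Y X}$. Because $f$ is \'etale and separated, the relative diagonal $\Delta\colon X\to X\times_Y X$ is an open and closed immersion. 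So $X\times_Y X = \Delta(X)\sqcup Z$ with $Z$ open and closed, and
\[
f^*f_* \simeq \mathbb{1}\oplus \Phi_Z,
\]
where $\Phi_Z=\pi_{2*}\pi_1^*(-)$ restricted to $Z$. The summand $\mathbb{1}$ contributes exactly $\End(\mathbb{1})=\CZ(D\Coh(X))$. One then checks that the map $\CZ(D\Coh(X))\to\End(f_*)$ of the formalism corresponds to the inclusion of this summand: the counit $f^*f_*\to\mathbb{1}$ is the projection onto the diagonal component. Thus the claim reduces to showing $\Hom(\Phi_Z,\mathbb{1})=0$.

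To prove this vanishing, the step I expect to be the main obstacle, I would pass to kernels. Smooth algebraic stacks over a field of characteristic $0$ of the type considered here (quotient stacks with affine diagonal) are perfect, so by Ben-Zvi--Francis--Nadler natural transformations between colimit-preserving integral transforms are computed as maps of kernels on $X\times X$ (after Ind-completion, which does not change $\Hom$ between these functors). Thus $\Hom(\Phi_Z,\mathbb{1})\simeq\Hom_{X\times X}(g_*\CO_Z,\Delta_{X*}\CO_X)$, where $g\colon Z\to X\times X$. It then suffices to show that the image of $Z$ in $X\times X$ is disjoint from the diagonal $\Delta_X$. A map between coherent complexes with disjoint supports vanishes, since the local $\mathcal{H}om$ is supported on the intersection of the supports.

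The disjointness is where the hypothesis on $x$ enters. A point of $X\times_Y X$ lying over $(x',x')$ is an automorphism of $f(x')$ modulo $\mathrm{Aut}(x')\times\mathrm{Aut}(x')$. So the fibre of $X\times_Y X$ over $(x',x')\in X\times X$ is identified with $\mathrm{Aut}(x')\backslash\mathrm{Aut}(f(x'))$ (a double coset space in general). The diagonal component accounts for the trivial coset, so $Z$ meets the fibre over $(x',x')$ iff $\mathrm{Aut}(x')\to\mathrm{Aut}(f(x'))$ is not surjective. Since $f$ is representable finite \'etale, this map is injective with finite cokernel. By \'etaleness the order of $\mathrm{Aut}(f(x'))/\mathrm{Aut}(x')$ is locally constant on $X$: it is the multiplicity with which the residual gerbe of $x'$ appears in the fibre of $f$. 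By connectedness of $X$ it is constant, and it equals $1$ at the given point $x$ by hypothesis. Hence $\mathrm{Aut}(x')\simeq\mathrm{Aut}(f(x'))$ for every $x'$, and $Z$ misses the preimage of $\Delta_X$. Making this local-constancy argument rigorous, for instance by working \'etale locally on $Y$ where $X$ becomes a disjoint union of quotients, is the delicate point. If it proves awkward, I would instead use the closed point $x$ directly: a nonzero map of kernels would be nonzero near some point of $g(Z)\cap\Delta_X$, and this locus is open and closed in $\Delta_X\simeq X$, so by connectedness it would contain $(x,x)$, contradicting the hypothesis.
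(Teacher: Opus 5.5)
The formal part of your argument is fine, and it is a genuinely different route from the paper's. The adjunction $f^*\dashv f_*$, base change, and the open-and-closed relative diagonal give $f^*f_*\simeq\mathbb{1}\oplus\Phi_Z$. Under this splitting, $\CZ(D\Coh(X))\to\End(f_*)$ is the inclusion of the $\End(\mathbb{1})$ summand, so everything reduces to showing $\Hom(\Phi_Z,\mathbb{1})=0$.

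\textbf{The gap is the disjointness step.} The index $[\mathrm{Aut}(f(x')):\mathrm{Aut}(x')]$ is \emph{not} locally constant on $X$. Only its sum over a fibre, the degree of $f$, is locally constant on $Y$, and points of a fibre can collide. Take the $\mu_2$-torsor $f\colon\mathbb{A}^1\to[\mathbb{A}^1/\mu_2]$ for $z\mapsto -z$.
\begin{itemize}
\item It satisfies all the hypotheses, with $x=1$.
\item Yet $X\times_YX=\mathbb{A}^1\times\mu_2$ and $g(Z)=\{(z,-z)\}$.
\item So $g(Z)\cap\Delta_X=\{(0,0)\}$ is nonempty and not open in $\Delta_X$.
\end{itemize}
Hence neither your main argument nor your fallback goes through.

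This is exactly the situation of the paper's application $(T\sigma)^{rs}/T\to(T\sigma)^{rs}/N_G(T\sigma)$. There the stabilizers jump at regular semisimple points that are not strongly regular (e.g.\ $\mathrm{diag}(1,-1)\in PGL_2$), which is why the hypothesis only asks for one good point. Moreover, in the toy example $\Hom^1(\Phi_Z,\mathbb{1})=\mathrm{Ext}^1_{\mathbb{A}^2}(\CO_{\{z_1+z_2=0\}},\CO_{\{z_1=z_2\}})=k\neq 0$. So the vanishing you need is a degree-$0$ phenomenon and cannot come from disjoint supports.

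What is missing is a torsion-freeness argument that uses only the single point $x$.
\begin{enumerate}
\item Since $Z$ misses $(x,x)$, for locally free $\CF$ the object $\Phi_Z(\CF_x^\wedge)$ is supported at the finitely many points $x''\neq x$ of $f^{-1}(y)$ reached through $Z$.
\item Hence the component $\Phi_Z(\CF_x^\wedge)\to\CF_x^\wedge$ of any transformation $\Phi_Z\to\mathbb{1}$ vanishes.
\item Naturality along $\CF\to\CF_x^\wedge$, together with injectivity of this map ($X$ smooth and connected), then forces $\Phi_Z(\CF)\to\CF$ to vanish.
\end{enumerate}
This is precisely the mechanism of the paper's proof. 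It splits $f$ smooth-locally on $Y$ and observes that $a_{\CF_x^\wedge}$ is block-diagonal, because $f$ identifies the formal neighbourhoods of $x$ and $y$. It then uses injectivity of $\CF\to\CF_x^\wedge$ to kill the off-diagonal blocks of $a_\CF$.

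With this replacement your decomposition gives a clean proof. It also no longer needs Ben-Zvi--Francis--Nadler, which would in any case compute $\infty$-categorical transformations rather than the compatible families $\{a_\CF\}$ the paper works with.
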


\begin{proof}
    As $f$ is \'etale, $f_*$ is faithful and the ring homomorphism $\CZ(D\Coh(X))\to \End(f_*)$ is injective. Then we prove that it is surjective. An element in $\End(f_*)$ is given by a compatible family of homomorphisms 
    \[
    \{a_\CF\}_{\CF\in \Coh(X)},\quad a_\CF \in \End_Y(f_*\CF).
    \]
    As $X$ is smooth and the characteristic of $k$ is $0$, locally free sheaves generate the entire category $D\Coh(X)$. It suffices to prove that for each locally free sheaf $\CF \in D\Coh(X)$, $a_\CF \in f_*(\End_X(\CF))$.

    In fact, consider the completion $\CF_x^\wedge$ (though it is not a coherent sheaf, we view it as an inverse system of coherent sheaves). It suffices to prove the following claim: 
    \[
    \End_Y(f_*\CF)\ \text{``}\cap\text{''}\ \End_Y(f_*\CF_x^\wedge) = f_*(\End_X(\CF)),
    \]
    i.e., if we have compatible homomorphisms $a'_\CF\in \End_Y(f_*\CF)$
    and $a'_{\CF_x^\wedge} \in \End_Y(f_*\CF_x^\wedge)$, then $a'_\CF\in f_*(\End_X(\CF))$.

    By definition of algebraic stacks, 
    there exists a surjective smooth morphism $\tilde{Y} \to Y$ such that $\tilde{Y}$ and $\tilde{Y}\times_Y \tilde{Y}$ are schemes. 
    Replacing $\tilde{Y}$ by some Galois covering of it, we may assume that $f_*\colon \tilde{X}:=X\times_Y \tilde{Y}\to \tilde{Y}$ is isomorphic to a disjoint union of some $\tilde{Y}$'s.   
    Over the smooth site over $Y$, the construction $S \mapsto \End_S(\CG_S)$ is sheafy for any $\CG\in \Coh(Y)$. Here we use the subscript to denote the pullback sheaf on it. It suffices to prove that
    \[
    \End_{\tilde{Y}}(\tilde{f}_*\CF_{\tilde{X}}) \ \text{``}\cap\text{''}\ \End_{\tilde{Y}}(\tilde{f}_*\CF_{\tilde{X},\tilde{x}}^\wedge) = \tilde{f}_*(\End_{\tilde{X}}(\CF_{\tilde{X}})),
    \]
    where $\tilde{x}$ the fiber of $x$ in $\tilde{X}$.
    By the condition about $x$, denoting by $\tilde{y}$ the fiber of $y=f(x)$ in $\tilde{Y}$, $\tilde{f}$ induces an isomorphism $\tilde{x}\to \tilde{y}$. Write $\tilde{X} = \coprod \tilde{Y}_i$. We have
    \[
    \End_{\tilde{Y}}(\tilde{f}_*\CF_{\tilde{X}}) =
    \End_{\tilde{Y}}(\bigoplus_i \CF_{\tilde{Y}_i})
    \]
    and 
    \[
    \End_{\tilde{Y}}(\tilde{f}_*\CF_{\tilde{X},\tilde{x}}^\wedge)
    = \End_{\tilde{Y}}(\bigoplus_i \CF_{\tilde{Y_i},\tilde{x}\cap \tilde{Y}_i}^\wedge) = \bigoplus_i\End_{\tilde{Y}}(\CF_{\tilde{Y_i},\tilde{x}\cap \tilde{Y}_i}^\wedge).
    \]
    Here the last identity is from the following reasons:
    As $\tilde{f}$ induces an isomorphism $\tilde{x}\to \tilde{y}$, $\tilde{f}(\tilde{x}\cap \tilde{Y}_i)$ and $\tilde{f}(\tilde{x}\cap \tilde{Y}_j)$ are disjoint for different $i, j$. Thus the entries in the expression of $\End_{\tilde{Y}}(\tilde{f}_*\CF_{\tilde{X},\tilde{x}}^\wedge)$ away  from the diagonal must vanish. 
    
    Moreover, as $X$ is connected and smooth, the homomorphism $\CF \to \CF_x^\wedge$ is injective. As $\tilde{Y}_i \to X$ is faithfully flat, for any $i$, its pullback on $\tilde{Y}_i$
    \[
    \CF_{\tilde{Y}_i} \to \CF_{\tilde{Y}_i, \tilde{x}\cap \tilde{Y}_i}^\wedge
    \]
    is injective. Thus we have
    \[
    \End_{\tilde{Y}}(\tilde{f}_*\CF_{\tilde{X}}) \ \text{``}\cap\text{''}\ \End_{\tilde{Y}}(\tilde{f}_*\CF_{\tilde{X},\tilde{x}}^\wedge) = \bigoplus_i \End_{\tilde{Y}}(\CF_{\tilde{Y}_i})=\tilde{f}_*(\End_{\tilde{X}}(\CF_{\tilde{X}})).
    \qedhere
    \]
\end{proof}

Now we turn to the proof of the original proposition.

\begin{proof}[Proof of Proposition \ref{rs,g,gen}]
    We prove the statement for $\eis^{rs}$ and the statement for $\Eis^{rs,g}$ is similar.
    By Lemma \ref{iota-G}, $\iota_G\colon (T\sigma)^{rs}/T\to (G\sigma)^{rs}/G$ is a Galois covering. For any point $x$ in $T\sigma$, its stabilizer is $T^\sigma$ and thus of the same dimension. Thus its image in $T\sigma/T$ is closed. Moreover, for strongly regular semisimple $x$, the stabilizer of $x$ under the $N_G(T\sigma)$-action is also $T^\sigma$. Thus $\iota_G$ satisfies the conditions in Lemma \ref{gen-quotient} and then
    \[
    \CZ(D\Coh((T\sigma)^{rs}/T)) \simto \operatorname{End}(\iota_{G*})
    \simeq \operatorname{Hom}(\eis_B^{rs}, \eis_{B'}^{rs}) 
    \]
    As the intertwining $\eis_B^{rs}\simeq \eis_{B'}^{rs}$ is defined by $\eis_B^{rs}\simeq \iota_{G*}\simeq \eis_{B'}^{rs}$ in Definition \ref{rs, g},
    the isomorphism $J_{B'|B,0}^{rs}\colon \eis_B^{rs}\simeq \eis_{B'}^{rs}$ in \ref{rs, g} corresponds to $1$ in the left hand side and thus it is a generator.
\end{proof}

Next we analyze the structure of $T\sigma/T$ and the category $D\Coh(T\sigma/T)$.

\begin{lemma}
\label{splitting}
    The morphism $\rho \colon T\sigma/T\to T\sigma\sslash T=T_\sigma$ 
    is a $T^\sigma$-gerbe and 
    it has a (non-canonical) splitting $T\sigma/T\simeq T_\sigma \times */T^\sigma$.
\end{lemma}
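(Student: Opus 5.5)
We need to show that $T\sigma/T \to T_\sigma$ is a $T^\sigma$-gerbe and that it admits a splitting. The plan is to describe the twisted conjugation action of $T$ on $T\sigma$ explicitly and then split it using a complement to the image of the Lang-type map.

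\textbf{The action.} For $h\in T$ we have $h\cdot t\sigma = h t \sigma(h)^{-1}\sigma$. Since $T$ is commutative, this equals $t\cdot h\sigma(h)^{-1}\sigma$. So $T$ acts on $T\sigma\simeq T$ by translation through the homomorphism
\[
L\colon T\to T,\qquad h\mapsto h\sigma(h)^{-1}.
\]
Its kernel is $T^\sigma$, and its image $L(T)$ is a subtorus (the image of a torus is connected). The quotient $T/L(T)$ is the coinvariant group $T_\sigma$, a diagonalizable group whose character group is $X^*(T)^\sigma$, matching $X^*(T_\sigma)\simeq X^*(T)^\sigma$.

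\textbf{The quotients.} The GIT quotient $T\sigma\sslash T=\operatorname{Spec}\CO(T)^{T}$ is then $T/L(T)=T_\sigma$. The stack quotient $T/T$, with $T$ acting through $L$, factors as
\[
T/T \simeq \bigl(T/L(T)\bigr)\times */\ker L = T_\sigma\times */T^\sigma
\]
once the action is split. In general, for a group $H$ acting on $X$ through a surjection $H\to H'$ with kernel $K$, where $H'$ acts freely, the quotient $X/H$ is a $K$-gerbe over $X/H'$. Here the kernel $K=T^\sigma$ is central, and $T\to T_\sigma$ is an $L(T)$-torsor. This gives the gerbe statement.

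\textbf{The splitting.} This is the main point. I would choose a splitting of diagonalizable groups. The inclusion $L(T)\hookrightarrow T$ admits a complement: since $L(T)$ is a torus, the map $X^*(T)\to X^*(L(T))$ is surjective onto a free lattice, so it splits. This gives a closed subgroup $S\subset T$ with $S\times L(T)\simeq T$, and hence $S\simeq T_\sigma$.

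Next, identify $T\sigma/T$ with $(S\times L(T))/T$. Here $T$ acts trivially on $S$ and on $L(T)$ through the surjection $L\colon T\to L(T)$. Then
\[
L(T)/T\simeq */\ker L=*/T^\sigma,
\]
because the stabilizer of the identity is $T^\sigma$ and the action is transitive. Hence $T\sigma/T\simeq S\times */T^\sigma\simeq T_\sigma\times */T^\sigma$.

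The splitting is non-canonical because of the choice of $S$. If $T_\sigma$ is disconnected, $S$ may have a finite part; the lattice-splitting argument still applies at the level of character groups, since $X^*(L(T))$ is free.
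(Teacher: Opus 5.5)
Your proof is correct and is essentially the paper's argument. The paper likewise treats the twisted action as translation through $h\mapsto h\sigma(h)^{-1}$ (written $1-\sigma$), splits the surjection $X^*(T)\to X^*(\im(1-\sigma))$ to obtain a complement $T_3$ with $T\simeq T_3\times \im(1-\sigma)$, and concludes $T\sigma/T\simeq T_3\times \im(1-\sigma)/T\simeq T_\sigma\times */T^\sigma$. One correction: your closing caveat is unnecessary, because $T_\sigma=T/L(T)$ is a quotient of a torus by a connected subgroup (its character lattice $X^*(T)^\sigma$ is torsion-free), so $S$ is always a torus and no finite part can occur.
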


\begin{proof}
    We prove the following general argument: For a homomorphism of tori $\psi \colon T_1\to T_2$, $T_2/T_1\simeq T_2/\im(\psi)\times */\ker(\psi)$ (the argument in the lemma is the special case $T_1=T_2=T$ and $\psi=1-\sigma$). In this case, $\im(\psi)$ is again a torus. We claim that there exists a subtorus $T_3$ of $T_2$ such that $T_2=T_3\times \im(\psi)$. In fact, the inclusion $\im(\psi)\subset T_2$ induces a surjection $X^*(T_2)\to X^*(\im(\psi))$ and we have a splitting $i$ of the surjection. Then we may take $T_3$ as the subtorus corresponding to the quotient $X^*(T_2)/\im(i)$ of $X^*(T_2)$.
    
    Then we have
    \[
    T_2/T_1=(T_3\times \im(\psi))/T_1\simeq T_3\times \im(\psi)/T_1\simeq T_2/\im(\psi)\times */\ker(\psi).\qedhere
    \]
\end{proof}
Thus there is a block decomposition of $D\Coh(\BL_{T\sigma})$, according to the character of $T^\sigma$:
\[
D\Coh(\BL_{T\sigma}) \simeq \bigoplus_{\chi\in X^*(T^\sigma)} D\Coh(\BL_{T\sigma})_\chi.
\]
The summand $D\Coh(\BL_{T\sigma})_1$ of this decomposition is equivalent to the category $D\Coh(T_\sigma)$ under $\rho^*$.
We remark that the decomposition is independent of the trivialization.

Moreover, denote by $\CO_\chi$ the invertible sheaf $\CO_{T_\sigma}\boxtimes k_\chi$ on $T\sigma/T$ (it depends on the choice of the trivialization a priori).
Then $\CF\mapsto \CF\otimes \CO_\chi$ defines an equivalence of categories
\[
D\Coh(\BL_{T\sigma})_1\simeq D\Coh(\BL_{T\sigma})_\chi.
\]

Since $\CO_{\BL_{T\sigma}}$ generates the subcategory $D\Coh(\BL_{T\sigma})_1$, the sheaf $\CO_\chi$ generates the subcategory $D\Coh(\BL_{T\sigma})_\chi$.
Hence the Bernstein center of such a subcategory is
\[
\CZ(D\Coh(\BL_{T\sigma})_\chi)\simeq
\End(\CO_\chi) \simeq \CO(T_\sigma).
\]

Applying the block decomposition to Proposition \ref{rs,g,gen},
we see that the intertwining operator for each subcategory is also a generator:

\begin{proposition}
\label{rs,g,gen,1}
    For any character $\chi$ of $T^\sigma$,
    the ring homomorphism 
    \[
    \CO(T_\sigma^{rs, g})= \CZ(D\Coh(\BL_{T\sigma}^{rs,g})_\chi)\to \End(\Eis^{rs,g})
    \] 
    is an isomorphism. 
    Here $\Eis$ is restricted to the subcategory $\CZ(D\Coh(\BL_{T\sigma}^{rs,g})_\chi)$.
    Moreover, the isomorphism $J_{B'|B}^{rs,g}$ in \ref{rs, g} generates the free $\CO(T_\sigma^{rs, g})$-module
    $\Hom(\Eis_B^{rs, g}, \Eis_{B'}^{rs, g})$. Similarly for $\eis^{rs}$.
\end{proposition}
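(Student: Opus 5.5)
The plan is to deduce the statement from Proposition \ref{rs,g,gen} by restricting everything along the block decomposition of Lemma \ref{splitting}. Every natural transformation between the functors $\Eis^{rs,g}_B,\Eis^{rs,g}_{B'}$ restricted to $D\Coh(\BL_{T\sigma}^{rs,g})$ is additive, so it respects the decomposition $D\Coh(\BL^{rs,g}_{T\sigma})\simeq\bigoplus_\chi D\Coh(\BL^{rs,g}_{T\sigma})_\chi$. The decomposition restricts to the open locus because $(T\sigma)^{rs,g}$ is $T$-stable and the gerbe structure $T\sigma/T\to T_\sigma$ restricts to $(T\sigma)^{rs,g}/T\simeq T_\sigma^{rs,g}\times */T^\sigma$. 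Hence we get a product decomposition
\[
\Hom(\Eis_B^{rs,g},\Eis_{B'}^{rs,g})\simeq\prod_\chi \Hom(\Eis_B^{rs,g}|_\chi,\Eis_{B'}^{rs,g}|_\chi),
\]
and likewise for $\End(\Eis^{rs,g})$ and for the center $\CZ(D\Coh(\BL^{rs,g}_{T\sigma}))\simeq\prod_\chi \CZ(D\Coh(\BL^{rs,g}_{T\sigma})_\chi)$. These decompositions are compatible with the module structures.

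First, identify $\CZ(D\Coh(\BL^{rs,g}_{T\sigma})_\chi)\simeq\End(\CO_\chi)\simeq\CO(T^{rs,g}_\sigma)$, exactly as in the discussion preceding the statement. The point is that $\CO_\chi$ generates the block and $\CO_\chi\otimes-$ gives an equivalence with the trivial block, which is $D\Coh(T^{rs,g}_\sigma)$. Next, apply Proposition \ref{rs,g,gen}: the map $\CZ\to\End(\Eis^{rs,g})$ is an isomorphism and $\Hom(\Eis^{rs,g}_B,\Eis^{rs,g}_{B'})$ is free of rank one on $J^{rs,g}_{B'|B}$. The ring isomorphism is a product of the blockwise maps, since the central idempotents of the blocks are sent to the corresponding idempotents in $\End(\Eis)$, which are the restrictions to the blocks. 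A product of maps is an isomorphism exactly when each factor is, so each blockwise map is an isomorphism.

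Similarly, write $J^{rs,g}_{B'|B}=(J_\chi)_\chi$, where $J_\chi$ is the restriction to the $\chi$-block. Any $\phi\in\Hom(\Eis_B|_\chi,\Eis_{B'}|_\chi)$ extends by zero to a global transformation, which equals $z\cdot J^{rs,g}_{B'|B}$ for a unique central $z=(z_\psi)_\psi$. Projecting to the $\chi$-component gives $\phi=z_\chi J_\chi$, and $z_\chi$ is unique because multiplication by $J^{rs,g}_{B'|B}$ is injective on the whole product. Hence $J_\chi$ is a free generator over $\CO(T^{rs,g}_\sigma)$. The same argument works verbatim for $\eis^{rs}$ using the corresponding part of Proposition \ref{rs,g,gen}.

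The only delicate step is the bookkeeping that natural transformations and centers genuinely split as products over blocks. This holds because the decomposition is an orthogonal direct sum with no nonzero maps between distinct blocks, and $\Eis$ applied to a direct sum is computed componentwise. I would check that the central idempotent $e_\chi$ acts on $\Hom(\Eis_B,\Eis_{B'})$ by restriction to the $\chi$-block, after which the rest is formal.
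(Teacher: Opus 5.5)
Your proposal is correct and takes the same route as the paper. The paper obtains this proposition in one line, by applying the block decomposition $D\Coh(\BL_{T\sigma}^{rs,g})\simeq\bigoplus_\chi D\Coh(\BL_{T\sigma}^{rs,g})_\chi$ to Proposition \ref{rs,g,gen}; you have written out the formal bookkeeping the paper leaves implicit, namely that centers and $\Hom$-modules of additive functors split as products over blocks, and that the free generator $J^{rs,g}_{B'|B}$ projects to a free generator in each block.
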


\subsection{Extending to the generic locus}

We will extend the isomorphism in Definition \ref{rs, g} to the generic locus. 
More precisely, in this subsection we will prove the following statement.

\begin{proposition}[Intertwining over the generic locus]
    \label{g}
    Let $\chi$ be a character of $T^\sigma$ that is invariant under the $W^\sigma$-action.
    Denote by $\Eis^{g}$ the restriction of the spectral Eisenstein series functor to the generic locus.
    The isomorphism constructed in \ref{rs, g}, $J_{B'|B}^{rs, g} \colon \Eis_B^{rs, g} \simeq \Eis_{B'}^{rs, g}$ can be uniquely extended to the generic locus
    $J_{B'|B}^{g} \colon \Eis_B^{g} \simeq \Eis_{B'}^{g}$,
    after restricting to the category of sheaves 
    $D\Coh(\BL_{T\sigma})_\chi$.
    Moreover, the $\CO(\BL_{T\sigma}^{g})$-module
    $\Hom(\Eis_B^{g}, \Eis_{B'}^{g})$ is free and $J^g_{B'|B}$ is a generator of it.

    In the same way, the isomorphism $J^{rs}_{B'|B,0} \colon \eis_B^{rs} \simeq \eis_{B'}^{rs}$ in \ref{rs, g} can be extended uniquely to an isomorphism $J_{B'|B,0} \colon \eis_B\simeq \eis_{B'}$ after restricted to $D\Coh(T\sigma/T)_\chi$ and the resulting isomorphism generates the free $\CO(T\sigma/T)$-module
    $\Hom(\eis_B, \eis_{B'})$.
\end{proposition}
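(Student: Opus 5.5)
Over the generic locus the nilpotent part vanishes, by Lemma \ref{generic locus}. So $\Eis^g_B$ is identified with $\eis_B$ restricted to $(T\sigma)^g/T$, and both halves of the statement reduce to the same question for $\eis$ over an open $W^\sigma$-stable subset $U\subset T\sigma$. The plan is a Hartogs-type extension on $T_\sigma$, carried out one block $\chi$ at a time.

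\textbf{Step 1: reduce to one object.} By the block decomposition, $\CO_\chi$ generates $D\Coh(\BL_{T\sigma})_\chi$. A transformation $\eis_B\to\eis_{B'}$ on this block is therefore determined by its value on $\CO_\chi$, and it must commute with the action of $\End(\CO_\chi)=\CO(T_\sigma)$. So
\[
\Hom(\eis_B,\eis_{B'})\simeq \Hom_{\CO(T_\sigma)\text{-eq}}\bigl(\eis_B\CO_\chi,\eis_{B'}\CO_\chi\bigr),
\]
and the same holds on the rs locus. The objects $\eis_B\CO_\chi=p_{0*}q_0^*\CO_\chi$ are complexes on $G\sigma/G$. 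Since $p_0$ is proper and the fibres are flag-variety-like, I expect them to be coherent sheaves concentrated in degree $0$ (the $\chi$-twisted Grothendieck--Springer sheaf). I would check this by a fibrewise vanishing of higher cohomology for the line bundle attached to $\chi$, using the $W^\sigma$-invariance of $\chi$ to place it in the right chamber relative to both $B$ and $B'$. I would also verify that they are maximal Cohen--Macaulay, indeed reflexive, on the normal stack $G\sigma/G$. This holds because $B\sigma\times^B G\to G\sigma$ is proper with smooth source and is generically finite over the twisted Chevalley base.

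\textbf{Step 2: Hartogs extension.} For reflexive sheaves $\CE,\CE'$, the sheaf $\mathcal{H}om(\CE,\CE')$ is reflexive. Hence a homomorphism defined outside a closed subset of codimension at least $2$ extends uniquely. The complement of the rs locus is a divisor, namely the zero set of $L(0,\ad_\fn)^{-1}$, so codimension alone does not suffice. Instead I would work in codimension one. By Proposition \ref{reflection}, the non-rs divisor is a union of reflection hyperplanes, each of multiplicity one. At a generic point of one such hyperplane, the picture reduces to the twisted Levi of semisimple rank one attached to a single orbit $O$. There I would check directly that $J^{rs}_{B'|B,0}$ extends to an isomorphism over the divisor.

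The rank-one check goes as follows. If $B$ and $B'$ agree on $O$, both sides are the same sheaf. If they differ by $s_\alpha$, both sheaves are the pushforward along the same finite flat degree-two cover $T_\sigma\to T_\sigma\sslash\langle s_\alpha\rangle$ (twisted by $\chi$, which is $s_\alpha$-invariant), and $t_{B'}^{-1}t_B$ is the identity of that pushforward. This explicit rank-one model, for both type A and type BC orbits, is the \textbf{main obstacle}. It is also exactly where the hypothesis that $\chi$ is $W^\sigma$-invariant enters: for non-invariant $\chi$ the two sheaves differ by a twist and the map acquires a pole or a zero. The extension over codimension one plus Hartogs then gives $J_{B'|B,0}$, uniqueness holds because the sheaves are torsion-free, and the same argument for the inverse shows that $J_{B'|B,0}$ is an isomorphism.

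\textbf{Step 3: freeness and generation.} Let $f\in\Hom(\eis_B,\eis_{B'})$. Restricting to the rs locus and applying Proposition \ref{rs,g,gen,1} gives $f=aJ^{rs}$ with $a\in\CO(T_\sigma^{rs})$. Since $J$ is an isomorphism on the whole space, $aJ^{-1}$ composed appropriately gives $a\cdot\mathrm{id}\in\End(\eis_B)$. One then needs $\End(\eis_B)=\CO(T_\sigma)$, i.e.\ $\eis_B$ is faithful and has no extra endomorphisms. This follows from Step 2 applied with $B'=B$ together with Proposition \ref{rs,g,gen,1}: the endomorphisms are sections of a reflexive sheaf whose restriction to the rs locus is $\CO(T_\sigma^{rs})$. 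Consequently $a\in\CO(T_\sigma)$, so $J$ is a free generator. Restricting everything to the open generic locus yields the statement for $\Eis^g$.
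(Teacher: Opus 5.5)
Your overall architecture is a legitimate alternative to the paper's: extend from the regular semisimple locus across generic points of the walls, then use Hartogs in codimension $2$. Your reduction to the single object $\CO_\chi$, the uniqueness via torsion-freeness, and the passage to $\Eis^g$ via Lemma~\ref{generic locus} all match the paper. However, the two load-bearing inputs are not actually supplied.

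\textbf{Degree-zero concentration.} Step~2 presupposes that $\eis_B\CO_\chi$ is an honest sheaf in degree $0$. Otherwise it is a complex, a map defined off a closed subset cannot be extended by Hartogs, and transformations cannot be read off as $\CO(T_\sigma)$-linear maps of sheaves. Your reasons do not give this.
\begin{itemize}
\item The fibres of $p_0$ are twisted Springer fibres $\{hB\mid h^{-1}g\sigma(h)\in B\}$. They are singular and of varying dimension, not flag varieties. Since $p_0$ is not flat, fibrewise vanishing would not by itself yield $R^ip_{0*}=0$ for $i>0$.
\item ``Proper with smooth source and generically finite'' only shows that $R^0p_{0*}\CO$ is the structure sheaf of the normal finite Stein factorization, hence reflexive. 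Vanishing of $R^{>0}p_{0*}\CO$ is equivalent to that Stein factorization having rational singularities, which is a genuine theorem.
\end{itemize}
In the paper this is Proposition~\ref{vanish}. It is deduced from Zhu's result on the coherent Springer sheaf, together with the device of varying $q$ so that the generic loci cover $G\sigma$. For $\chi\neq 1$ the paper reaches it only after passing to a simply connected cover. There the problem splits into a torus factor and a simply connected semisimple factor on which every $W^\sigma$-invariant character is trivial (Lemmas~\ref{Gabsigma} and~\ref{locally_free_general}).

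\textbf{The codimension-one model.} The model you flag as the main obstacle is left unproved, and even reaching it is not automatic. Identifying $\eis_B\CO$ near a generic point of a wall with a pushforward along a degree-two cover, compatibly with $t_B$ and $t_{B'}$, is a local rank-one instance of the twisted Kostant theorem. Passing from $G$ to the rank-one Levi $M_\alpha$ also requires a formal-neighbourhood descent: when $\alpha$ is not simple with respect to $B$, no parabolic containing $B$ has Levi $M_\alpha$.

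The paper avoids walls altogether by working on the regular locus $(G\sigma)^r$ rather than the regular semisimple one.
\begin{itemize}
\item The complement of $(G\sigma)^r$ has codimension $\geq 2$.
\item By Xiao--Zhu (Proposition~\ref{kostant}), $(B\sigma)^r/B_0\to (G\sigma)^r/G_0$ is the base change of the finite flat map $p_T\colon T_\sigma\to T\sigma\sslash' N_G(T\sigma)$. This gives the $B$-independent identification $\beta^*\eis_B\simeq \pi_T^*p_{T*}$ on all of $(G\sigma)^r$ at once.
\item Proposition~\ref{vanish} plus reflexive extension then give local freeness and the extension to the whole space (Lemma~\ref{locallyfreeeis}).
\item $J_{B'|B,0}$ is then just the composite $\beta^*\eis_B\simeq\pi_T^*p_{T*}\simeq\beta^*\eis_{B'}$.
\end{itemize}

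\textbf{Step~3.} The claim $\End(\eis_B)=\CO(T_\sigma)$ is true, but your reason is not, since a reflexive sheaf is not determined by its restriction off a divisor. Instead, for $\chi=1$, note that an extension of $a\cdot\mathrm{id}$ sends the section $1$ of $\Gamma(\eis_B\CO_{T_\sigma})=\CO(T_\sigma)$ to a global section restricting to $a$, so $a\in\CO(T_\sigma)$. Alternatively, argue as the paper does: $J(\CO_{T_\sigma})$ is an isomorphism of locally free sheaves, so it cannot be divisible by a factor of $L(0,\ad_\fn)^{-1}$.
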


In the following proofs, we will consider the functor $\eis$ only. The statement about $\Eis^g$ follows directly from the isomorphism $\BL_{G\sigma}^g\simeq (G\sigma)^g/G$.

We remark that the condition that $\chi\in X^*(T^\sigma)^{W_\sigma}$ is equivalent to that after restricting to the neutral component $(T^\sigma)^\circ$ of $T^\sigma$, $\chi|_{(T^\sigma)^\circ}$ is $W^\sigma$-invariant. In fact, we have the following lemma.

\begin{lemma}
    Denote by $Z(G)$ the center of $G$. Then the homomorphism $Z(G)^\sigma \to \pi_0(T^\sigma)$ is surjective. In particular, $W^\sigma$ acts trivially on $\pi_0(T^\sigma)$.
\end{lemma}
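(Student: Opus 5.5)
The plan is to compare $T^\sigma$ with the fixed points of the adjoint torus. Put $T_{ad} := T/Z(G)$, the maximal torus of $G_{ad}$. This gives a $\sigma$-equivariant exact sequence
\[
1 \to Z(G) \to T \to T_{ad} \to 1 .
\]
Taking $\sigma$-fixed points yields the left exact sequence
\[
1 \to Z(G)^\sigma \to T^\sigma \to T_{ad}^\sigma .
\]
It therefore suffices to prove the following claim: the neutral component $(T^\sigma)^\circ$ already surjects onto $T_{ad}^\sigma$. Granting the claim, take any $t \in T^\sigma$. Its image in $T_{ad}^\sigma$ equals the image of some $t_0 \in (T^\sigma)^\circ$. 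Hence $t t_0^{-1} \in Z(G)\cap T^\sigma = Z(G)^\sigma$. So every class in $\pi_0(T^\sigma)$ is represented by an element of $Z(G)^\sigma$, which is the desired surjectivity.

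\textbf{Step 1: $T_{ad}^\sigma$ is connected.} The character lattice $X^*(T_{ad})$ is the root lattice, with basis the simple roots. Since $\sigma$ fixes $B$, $T$ and a pinning, it permutes the simple roots. Hence $T_{ad} \simeq \prod_{O} \mathbb{G}_m^{O}$, the product running over $\sigma$-orbits $O$ of simple roots, with $\sigma$ acting on each factor $\mathbb{G}_m^O$ by cyclic permutation of coordinates. The fixed points of a cyclic permutation of coordinates form the diagonal $\mathbb{G}_m$. Therefore $T_{ad}^\sigma \simeq \mathbb{G}_m^{\#\{\text{orbits}\}}$, which is a torus and in particular connected.

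\textbf{Step 2: the claim, via a Lie algebra argument.} Since $\operatorname{char} k = 0$ and $\sigma$ has finite order, taking $\sigma$-invariants is exact on finite-dimensional representations: one can average over $\langle\sigma\rangle$. Applied to $0 \to \mathfrak{z} \to \ft \to \ft_{ad} \to 0$, this shows that $\ft^\sigma \to \ft_{ad}^\sigma$ is surjective. Here $\ft^\sigma = \operatorname{Lie}(T^\sigma)$ and $\ft_{ad}^\sigma = \operatorname{Lie}(T_{ad}^\sigma)$. Consequently the image of $(T^\sigma)^\circ$ in $T_{ad}^\sigma$ is a closed connected subgroup of full dimension. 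Since $T_{ad}^\sigma$ is connected by Step 1, this image is all of $T_{ad}^\sigma$.

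\textbf{Consequence for the Weyl group action.} $W^\sigma$ is realized by elements of $N_G(T)$, and conjugation by these elements fixes $Z(G)$ pointwise. Hence $W^\sigma$ acts trivially on the image of $Z(G)^\sigma$, which is all of $\pi_0(T^\sigma)$.

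The only real input is Step 1, namely that $\sigma$ permutes a $\mathbb{Z}$-basis of $X^*(T_{ad})$. This is exactly where the hypothesis that $\sigma$ preserves a pinning is used. The rest is the formal diagram chase above, which should go through without obstruction.
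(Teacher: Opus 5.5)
Your proof is correct. It is essentially the geometric dual of the paper's argument.

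\textbf{The paper's route.} The paper stays at the level of character lattices. Via $X^*(S^\sigma)\simeq X^*(S)_\sigma$, it turns surjectivity of $Z(G)^\sigma\to\pi_0(T^\sigma)$ into injectivity of $(X^*(T)_\sigma)_{\mathrm{tor}}\to (X^*(T)/Q)_\sigma$. It deduces this from the right exact sequence $Q_\sigma\to X^*(T)_\sigma\to (X^*(T)/Q)_\sigma\to 0$ together with freeness of $Q_\sigma$.

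\textbf{How your steps correspond.} Your Step 1 is the same key input: since $X^*(T_{ad}^\sigma)=Q_\sigma$, saying $T_{ad}^\sigma$ is a torus is exactly saying $Q_\sigma$ is free. Your Step 2, that $(T^\sigma)^\circ\to T_{ad}^\sigma$ is surjective, amounts (given Step 1) to injectivity of $Q_\sigma\to X^*(T)_\sigma$.

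The paper uses that injectivity without comment. Freeness of $Q_\sigma$ alone only shows that the kernel of the torsion map lies in the image of $Q_\sigma$. One still needs that image to be torsion-free, which is what injectivity gives. So your route makes explicit a step the paper glosses over. It also means Step 2 is not quite as formal as your closing remark suggests.

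\textbf{Cost of your route.} Your Lie-algebra dimension count uses characteristic $0$ and that $\sigma$ has finite order. Both are standing assumptions in the paper, which itself invokes that ``$\sigma$ is of finite index'', so this is harmless.

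\textbf{Minor point.} For $\sigma$ to permute the simple roots, it only needs to preserve $(B,T)$, not the full pinning.
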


\begin{proof}
    It suffices to prove that the homomorphism of characters $(X^*(T^\sigma))_{\text{tor}}\to X^*(Z(G)^\sigma)$ is injective, where "tor" denotes the torsion part. For any multiplicative group $S$, we have $X^*(S^\sigma)\simeq X^*(S)_\sigma$.
    Moreover, 
    $X^*(Z(G))\simeq X^*(T)/Q$, where $Q$ denotes the root lattice.
    Thus from the exact sequence
    \[
    0 \to Q \to X^*(T) \to X^*(T)/Q \to 0,
    \]
    we have a right exact sequence
    \[
    Q_\sigma \to X^*(T)_\sigma \to (X^*(T)/Q)_\sigma \to 0.
    \]
    As $\sigma$ acts by permutation of the roots, $Q_\sigma$ is free. Thus $(X^*(T)_\sigma)_{\text{tor}}\to (X^*(T)/Q)_\sigma$ is injective and we conclude.
\end{proof}

In particular, it $\chi$ is trivial on $(T^\sigma)^\circ$, it is $W^\sigma$-invariant.

It is worth noting that the requirement that $\chi$ is $W^\sigma$-invariant is necessary.

\begin{remark}
     The sheaves $\eis_B \CF$ and $\eis_{B'}\CF$ are not isomorphic for general $\CF$ in $D\Coh(T\sigma/T)$. For example, let $B'=w(B)$ and $\CF = i_{1*}\CO_\lambda$, the skyscraper sheaf at $1\in T$ for a weight $\lambda$ of $T$. Then the fiber of $\eis_B\CF$ and $\eis_{B'}\CF$ at $1 \in G$ is $R\Gamma(G^\sigma/B^\sigma, \CO_\lambda)$ and $R\Gamma(G^\sigma/B^\sigma, \CO_{w\lambda})$ respectively. By Borel--Weil--Bott theorem, if they are isomorphic for any $w \in W^\sigma$, $\lambda$ must be invariant under $W^\sigma$-action.
\end{remark}

For the proof of the main proposition,
we consider the following auxiliary definition of regular locus.
In \cite{xiao2018}, the regular locus is defined as follows.

\begin{definition}[Regular locus]
\label{regular}
    The regular locus $(G\sigma)^{r}$ is defined as
    \[
        (G\sigma)^r := \{g\sigma \in G\sigma \mid \dim C_{G}(g\sigma) = \dim T^\sigma \}
    \]
    where $C_{G}(g\sigma)$ is the twisted centralizer
    \[
        C_{G}(g\sigma) := \{h \in G \mid h(g\sigma)h^{-1} = g\sigma \} = \{h \in G \mid hg\sigma(h)^{-1} = g \}.
    \]
    The regular locus $(B\sigma)^r$ is defined as $(B\sigma)^r := (G\sigma)^r \cap B$. 
\end{definition}

Note that when $\sigma = 1$, it is the usual definition of regular locus.
We have the following lemma about the complement of $(B\sigma)^r$ and $(G\sigma)^r$.

\begin{lemma}[\cite{xiao2018}, corollary 5.2.21]
    The complement of the regular locus is of codimension at least $2$:
    $\operatorname{codim}(B\sigma-(B\sigma)^r)\geq 2$ in $B$ and 
    $\operatorname{codim}(G\sigma-(G\sigma)^r)\geq 2$ in $G$.
\end{lemma}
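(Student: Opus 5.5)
The plan is to derive both codimension bounds from a twisted Jordan decomposition combined with the twisted Chevalley map of Proposition \ref{Chevalley}, imitating Steinberg's classical proof for $\sigma=1$. I will treat $G\sigma$ first and then $B\sigma$.

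\textbf{The case of $G\sigma$.} Every $g\sigma$ has a decomposition $g\sigma=s\sigma\cdot u$. Here $s\sigma$ is semisimple, so after conjugation $s\sigma\in T\sigma$, and $u$ is unipotent and lies in $H:=C_G(s\sigma)^\circ$, which is a connected reductive group. The twisted centralizer of $g\sigma$ is then $C_H(u)$, and $H$ has rank $\dim T^\sigma$. Hence $g\sigma$ is regular in the sense of Definition \ref{regular} if and only if $u$ is regular unipotent in $H$.

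Fix a point $c\in T_\sigma\sslash W^\sigma$. The fibre $\chi^{-1}(c)$ is $G\times^{H}(s\sigma\cdot\CU_H)$, where $\CU_H$ is the unipotent variety of $H$. By the classical results on $\CU_H$, its regular part is a single open dense orbit, and its complement has codimension at least $2$. Indeed, if $H$ is not a torus, the subregular orbit has codimension exactly $2$ in $\CU_H$; if $H$ is a torus there is nothing to prove. So inside every fibre of $\chi$, the non-regular locus has codimension at least $2$.

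Next I will show that every fibre has the same dimension $\dim G-\dim T^\sigma$. This follows since $\dim G-\dim H+\dim\CU_H=\dim G-\operatorname{rk}H$, and $\operatorname{rk}H=\dim T^\sigma$. Since $\chi\colon G\sigma\to T_\sigma\sslash W^\sigma$ is a dominant morphism of irreducible varieties with equidimensional fibres, the dimension of a closed subset is bounded by its fibrewise dimension plus $\dim T_\sigma$. Applying this to the non-regular locus gives $\operatorname{codim}\ge 2$ in $G\sigma$.

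\textbf{The case of $B\sigma$.} Write $b\sigma=t\sigma\cdot n$ with $t\in T$ and $n\in U$, the unipotent radical of $B$. I will stratify $T\sigma$ according to which $W^\sigma$-root hyperplanes of $T_\sigma$ contain the image of $t$; these are the zero loci of the factors in Proposition \ref{L-function} and Proposition \ref{reflection}. On the open stratum, $t\sigma$ is regular semisimple, and then $b\sigma$ is regular by Lemma \ref{rs locus} and Remark \ref{rs=r+s}.

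On a stratum where $t$ lies on the walls of a set of roots $S$, conjugating by $U$ puts $b\sigma$ into the Borel subgroup of $H=C_G(t\sigma)^\circ$. The element is then regular exactly when the component of $n$ along the simple roots of $H$ is nonzero on every $\sigma$-orbit. Each simple root is a condition of codimension $|O|\ge1$ on $n$. So on a stratum of codimension $k\ge1$ in $T\sigma$, the non-regular part has codimension at least $1$ in the fibre. Summing gives total codimension at least $k+1\ge2$.

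\textbf{Main obstacle.} The delicate step is the $B\sigma$ case. There I need the twisted analogue of Kostant's criterion, namely that $t\sigma\cdot n$ is regular in $G\sigma$ if and only if the relevant simple-root components of $n$ are nonzero. I also need to control the conjugation into $C_G(t\sigma)^\circ$ uniformly along each stratum. I would try to prove this by reducing, through the root system of $T_\sigma$ (the type A and BC cases of Definition \ref{orbit}), to the rank-one groups $SL_2$, $SL_3$ with the outer automorphism, and their products, and checking these by hand.
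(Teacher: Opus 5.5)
The paper gives no proof of this lemma; it quotes Xiao--Zhu, so your argument is an independent route. For $G\sigma$ you transport Steinberg's classical fibration argument to the coset $G\sigma$. You use Jordan decomposition in $G\rtimes\langle\sigma\rangle$, reduce regularity of $s\sigma\cdot u$ to regularity of $u$ in $H=C_G(s\sigma)^\circ$, and count dimensions fibrewise over the twisted Chevalley quotient. This is sound, provided you cite the standard inputs on non-connected groups:
\begin{itemize}
\item a semisimple $s\sigma$ stabilizes a Borel pair, so it is conjugate into $T\sigma$;
\item for $t\in T$, the group $C_G(t\sigma)^\circ$ is reductive, with maximal torus $(T^\sigma)^\circ$ and Borel subgroup $(B^{\ad(t)\circ\sigma})^\circ$ (Steinberg \cite{steinberg1968endomorphisms}, Digne--Michel);
\item the $G$-orbit of $x\in G\sigma$ is closed iff $x$ is semisimple, and its closure contains $x_s$, so each fibre of $\chi$ contains a single semisimple class.
\end{itemize}
Two small corrections, both harmless for dimensions. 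The fibre is $G\times^{C_G(s\sigma)}(s\sigma\,\mathcal{U}_H)$, of which your $G\times^{H}(\cdots)$ is a finite cover. Likewise $C_G(g\sigma)=C_{C_G(s\sigma)}(u)$, which contains $C_H(u)$ with finite index. The citation buys the paper brevity and consistency with the Xiao--Zhu machinery it uses throughout. Your route is self-contained and shows more: the fibres of $\chi$ over the image of $(T\sigma)^{rs}$ are entirely regular, so the same count gives codimension $\geq 3$ for $G\sigma$, whereas $2$ is sharp for $B\sigma$.

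In the $B\sigma$ half the final dimension count is right, but the intermediate claims are off, and the ``main obstacle'' you flag is not needed.

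First, the Kostant-type condition concerns the $U$-conjugate $t\sigma n_1$ with $n_1\in U\cap C_G(t\sigma)$, not $n$ itself. Second, a simple root of $H$ imposes a single condition, not one of codimension $|O|$. Take $G=SL_2\times SL_2$ with $\sigma$ the swap, so that $C_G(g\sigma)\simeq C_{SL_2}(g_1g_2)$. Let $g_1,g_2$ be upper triangular with diagonals $(a,a^{-1})$ and $(c,c^{-1})$, where $ac=\pm1$. The element is non-regular exactly when one nonzero linear form in the two off-diagonal entries vanishes. That is a line in $U\simeq\mathbb{G}_a^2$, although $|O|=2$. Third, the simple roots of $H$ need not be simple for $T_\sigma$. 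For $\sigma=1$, $G=SL_3$, $t=\operatorname{diag}(a,c,a)$ with $a^2c=1$ and $a\neq c$, the only positive root of $H$ is $\alpha_1+\alpha_2$. So a case check in rank-one groups of $G$ does not by itself yield the criterion. Once you pass to $n_1$, the criterion is just Steinberg's criterion for regular unipotents in the connected reductive group $H$.

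What you actually use is only that, for every $t$, the non-regular part of the irreducible fibre $t\sigma U$ is a proper closed subset. This is immediate:
\begin{itemize}
\item $U\cap H$ contains the unipotent radical of the Borel subgroup $(B^{\ad(t)\circ\sigma})^\circ$ of $H=C_G(t\sigma)^\circ$;
\item hence it contains a regular unipotent $n_1$ of $H$;
\item $t\sigma n_1$ is regular by your $G\sigma$ criterion.
\end{itemize}
Every fibre over $(T\sigma)^{rs}$ is entirely regular, and the complement of $(T\sigma)^{rs}$ in $T$ is the divisor $L(0,\ad_\fn)^{-1}=0$. The fibre-dimension theorem then gives $\dim(B\sigma-(B\sigma)^r)\leq(\dim T-1)+(\dim U-1)=\dim B-2$ directly. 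You need no stratification by walls and no uniformity along strata.
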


We have the following generalization of the Kostant's theorem. 

\begin{proposition}
    \label{kostant}
    Let $G$ be as above. Suppose that $G_0\to G$ is a covering such that $G_0$ is a product of a semisimple and simply connected group and a torus and the action $\sigma$ on $G$ can be lifted to a compatible action on $G_0$, which is denoted also by $\sigma$. 
    (Note that if such a lifting exists, then it is unique. Moreover, fixing $G$,
    we have $G_0 = G^{sc}\times Z(G)^\circ$ satisfies this condition, where $G^{sc}$ is the universal cover of the derived subgroup of $G$ and $Z(G)^\circ$ is neutral component of the center of $G$.) Denote by $T_0$ the inverse image of $T$ in $G_0$ and similarly for $B_0$. We have $G_0\to G$ is a Galois covering with Galois group $\ker(G_0\to G)=\ker(B_0\to B)=\ker(T_0\to T)$.
    
    Then the squares in the commutative diagram are cartesian:
    \[
    \begin{tikzcd}
        \widetilde{G\sigma}^r \ar[r, "\Pi^r"]\ar[d, "P_B^r"] & (B\sigma)^r/ B_0 \ar[r]\ar[d, "p_B^r"] & (T_0\sigma\sslash T_0)/\Gal\ar[d]\ar[r] & T\sigma\sslash T \ar[d, "p_T"]  \\
        (G\sigma)^r \ar[r, "\pi^r"] & (G\sigma)^r/ G_0 \ar[r] & (T_0\sigma\sslash N_{G_0}(T_0\sigma))/\Gal \ar[r] &T\sigma\sslash' N_G(T\sigma),
    \end{tikzcd}
    \]
    where $\widetilde{G\sigma}$ is the twisted Grothendieck--Springer resolution
    \[
    \widetilde{G\sigma} := \{(g\sigma, h) \in G\sigma\times G/B\mid h^{-1}g\sigma(h)\in B \},
    \]
    $\widetilde{G\sigma}^r$ is the inverse image of $(G\sigma)^r$,
    $T\sigma\sslash' N_G(T\sigma)$ is 
    \[
    (T_0\sigma\sslash N_{G_0}(T_0\sigma))/\ker(T_{0,\sigma}\to T_\sigma)
    \]
    and $(G\sigma)^r/ G_0\to (T_0\sigma\sslash N_{G_0}(T_0\sigma))/\Gal$ is induced by the twisted Chevalley morphism for $G_0$.
\end{proposition}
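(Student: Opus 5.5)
The plan is to treat the three squares one at a time. The left square is essentially formal. The middle square carries the real content and is where I expect the main difficulty. The right square is bookkeeping with finite central groups.

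\emph{Left square.} First I will check that the $B_0$-action on $B\sigma$ and the $G_0$-action on $G\sigma$ factor through $B$ and $G$. Hence the fiber of $(B\sigma)/B_0 \to (G\sigma)/G_0$ over a point $g\sigma\in G\sigma$ is $\{h\in G_0/B_0=G/B \mid h^{-1}g\sigma(h)\in B\}$. This is exactly the fiber of the twisted Grothendieck--Springer map, so $\widetilde{G\sigma}\simeq G\sigma\times_{G\sigma/G_0} B\sigma/B_0$. Restricting to the regular locus $(G\sigma)^r$, together with $(B\sigma)^r=(G\sigma)^r\cap B$ from Definition \ref{regular}, gives the cartesian left square.

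\emph{Middle square.} I will first prove the analogous statement upstairs for $G_0$, namely
\[
(B_0\sigma)^r/B_0 \simeq (G_0\sigma)^r/G_0\times_{T_0\sigma\sslash N_{G_0}(T_0\sigma)} T_0\sigma\sslash T_0 .
\]
Then I will descend along the free translation action of the finite central group $\Gal$, which acts compatibly on all four corners. Taking quotients by a finite group acting freely commutes with the fiber product here because every map is $\Gal$-equivariant.

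For $G_0$ the strategy is a Zariski-main-theorem argument. There is a natural map $\Phi$ from the left side to the fiber product, given by $(q_0,p_0)$ and well defined by Corollary \ref{factor Chevalley}. I will show four things.
\begin{enumerate}
\item The target is normal. Since $G_0$ is a product of a simply connected group and a torus, $T_{0,\sigma}\sslash W^\sigma$ is smooth and $T_{0,\sigma}\to T_{0,\sigma}\sslash W^\sigma$ is finite flat (a twisted Steinberg statement, using Proposition \ref{reflection}). The twisted Chevalley map is smooth on $(G_0\sigma)^r$ (from \cite{xiao2018}). Hence the fiber product is smooth, in particular normal.
\item $\Phi$ is proper and quasi-finite. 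Properness comes from properness of $G/B$. Quasi-finiteness holds because a regular element has only finitely many fixed points on the twisted flag variety; I will deduce this from $\dim C_G(g\sigma)=\dim T^\sigma$ via the Jordan decomposition, as in \cite{xiao2018}.
\item $\Phi$ is an isomorphism over the regular semisimple locus. This follows from Lemma \ref{rs locus} and Lemma \ref{iota-G}: there both sides are identified with $(T_0\sigma)^{rs}/T_0$.
\item The regular semisimple locus is dense in the source. Here I will use that $(B_0\sigma)^r$ is open in the irreducible $B_0\sigma$.
\end{enumerate}
A finite birational morphism onto a normal target is an isomorphism. To apply this to stacks, I will work on a smooth atlas, for example by pulling back along $(G_0\sigma)^r\to (G_0\sigma)^r/G_0$ and reducing to the Springer fibration itself.

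\emph{Right square.} Write $K=\ker(T_{0,\sigma}\to T_\sigma)$. The map $T_0\sigma\sslash T_0=T_{0,\sigma}\to T_\sigma$ is surjective with finite kernel, and $T_{0,\sigma}/\Gal\to T_\sigma$ is the quotient by the finite group $K/\mathrm{im}(\Gal)$, acting by translation. By the very definition of $\sslash'$, the bottom map is the quotient by the same group. The action on the top row is free, and the $K$-action commutes with $N_{G_0}(T_0\sigma)$ because $K$ is central. Hence the square is a base change of a $K$-torsor along itself, and so it is cartesian.

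\emph{Main obstacle.} I expect the hard part to be steps 1 and 2 of the middle square. Step 1 needs the smoothness of the twisted Chevalley map on the regular locus together with the flatness of $T_{0,\sigma}\to T_{0,\sigma}\sslash W^\sigma$. Step 2 needs quasi-finiteness of the Springer map over regular, non-semisimple elements. For both I would try first to quote the corresponding statements in \cite{xiao2018} (Section 5.2), where the simply connected hypothesis on $G_0$ is exactly what is needed.
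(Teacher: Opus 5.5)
Your outline is correct, but for the middle square you take a different route from the paper.

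\textbf{The paper's route.} The paper does not argue the $G_0$-case itself. It quotes Xiao--Zhu, Proposition 5.3.1, for $G=G_0$ simply connected semisimple. It then notes that for $G_0=G^{sc}\times Z$ every term is a product of that case with the trivial torus case. Next it observes that all terms except the two on the right are quotients by $\Gamma=\ker(G_0\to G)$ of the corresponding $G_0$-terms. It therefore only checks the right square, using $T_\sigma\simeq T_{0,\sigma}/\mathrm{im}(\Gamma\to T_{0,\sigma})$ and $\mathrm{im}(\Gamma\to T_{0,\sigma})=\ker(T_{0,\sigma}\to T_\sigma)$.

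\textbf{Your route.} You instead reconstruct the $G_0$-case by Zariski's main theorem: the map is proper, quasi-finite, an isomorphism over a dense open, and the target is smooth. This makes visible why simple connectedness is needed. However, it still rests on two inputs you also take from Xiao--Zhu: smoothness of the twisted Chevalley map on $(G_0\sigma)^r$, and finiteness of twisted Springer fibers over regular elements. These are the substantive content behind their Proposition 5.3.1. So your route gains transparency rather than independence, at the cost of length.

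\textbf{Step 3 needs an extra input.} Identifying the target over the regular semisimple locus with $(T_0\sigma)^{rs}/T_0$ is not formal from Lemmas \ref{rs locus} and \ref{iota-G}. You need $W^\sigma$ to act freely on $T_{0,\sigma}^{rs}$, so that $T_{0,\sigma}^{rs}\to T_{0,\sigma}^{rs}\sslash W^\sigma$ is \'etale.
\begin{itemize}
\item For $G_0$ this holds. Stabilizers in $W^\sigma$ are generated by reflections, as in the proof of Lemma \ref{loc-isom}, and regular semisimple points lie on no reflection hyperplane by Proposition \ref{reflection}.
\item It fails for general $G$. For example, the image of $\mathrm{diag}(1,-1)$ in $PGL_2$ is regular semisimple and $W$-fixed. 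This is a second reason one must pass to $G_0$.
\end{itemize}

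\textbf{Birationality.} You also need the regular semisimple locus to be dense in the target, not only in the source. This follows because the target is smooth, hence flat, over $T_{0,\sigma}$.

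\textbf{Descent and the right square.} Two statements there are inaccurate.
\begin{itemize}
\item $\Gamma$ need not act freely on $T_{0,\sigma}$. For $SL_3\to PGL_3$ with the outer $\sigma$, the center lies in $(1-\sigma)T_0$, so $\Gamma$ acts trivially.
\item $\mathrm{im}(\Gamma\to T_{0,\sigma})$ is all of $K=\ker(T_{0,\sigma}\to T_\sigma)$: lift $s$ in $\bar t_0=s\sigma(s)^{-1}$. So your group $K/\mathrm{im}(\Gamma)$ is trivial, and the top right arrow $[T_{0,\sigma}/\Gamma]\to T_\sigma$ is a gerbe, not a torsor.
\end{itemize}
Neither point breaks the argument. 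Stack quotients of a cartesian square of equivariant maps remain cartesian. Moreover, $[Y/\Gamma]\simeq[Y/K]\times_{[X/K]}[X/\Gamma]$ for any $K$-equivariant $Y\to X$ with $\Gamma$ acting through $\Gamma\to K$. No freeness is needed.
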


We will call $G_0\to G$ to be a simply connected cover of $G$ if it satisfies the conditions in this proposition.

\begin{proof}
    We firstly assume that $G_0\to G$ is an isomorphism.
    The case that $G_0$ is semisimple is proved by Xiao and Zhu
    (\cite{xiao2018}, Proposition 5.3.1). The case that $G$ is a torus is trivial as $G=B=T$ in this case.
    In the case that $G$ is the product of a semisimple and simply connected subgroup and a torus,
    any construction in the diagram about $G$ is the product of such construction on the simply connected subgroup and such construction on the torus and the result holds.
    
    In general, we have a commutative diagram for $G_0$ by discussions above. As all constructions except the two right terms in this diagram are obtained by quotienting out $\Gal$ on those constructions about $G_0$, it suffices to prove that the rightmost square is cartesian. Note that $\Gal=\ker(G_0\to G)=\ker(T_0\to T)$ acts on $T_{0,\sigma}$ by multiplication. We have 
    \[
    T_\sigma\simeq T_{0,\sigma}\sslash \Gal\simeq T_{0,\sigma}/\im(\Gal\to T_{0,\sigma})
    = T_{0,\sigma}/\ker(T_{0,\sigma}\to T_\sigma)
    \]
    and thus the rightmost square is cartesian.
\end{proof}

\begin{remark}
    The space $T\sigma\sslash' N_G(T\sigma)$ is independent of the choice of $G_0$.
    The proof of this statement is as follows.
    Let $G_0\to G$ be as above. Then $G_0$ is of the form $G^{sc}\times \tilde{Z}$ with a covering $\tilde{Z}\to Z(G)^\circ$. 
    Denote by $G_1:= G^{sc}\times Z(G)^\circ$ and $T_1$ its torus lifting $T$. 
    By definition of $T\sigma\sslash' N_G(T\sigma)$ for $G_0$, it is
    \[
    (T_0\sigma\sslash N_{G_0}(T_0\sigma))/\ker(T_{0,\sigma}\to T_\sigma)
    \simeq ((T_0\sigma\sslash N_{G_0}(T_0\sigma))/\ker(T_{0,\sigma}\to T_{1,\sigma}))/
    \ker(T_{1,\sigma}\to T_\sigma).
    \]
    Moreover, as the construction $T_0\sigma\sslash N_{G_0}(T_0\sigma)$ about $G_0$ is the product of such construction about $G^{sc}$ and 
    $\tilde{Z}$ and similarly for $G_1$, we have
    \[
    (T_0\sigma\sslash N_{G_0}(T_0\sigma))/\ker(T_{0,\sigma}\to T_{1,\sigma})
    \simeq (T_0\sigma\sslash N_{G_0}(T_0\sigma))/\ker(\tilde{Z}_\sigma\to Z(G)^\circ_\sigma)
    \simeq T_1\sigma\sslash N_{G_1}(T_1\sigma).
    \]
    Thus $T\sigma\sslash' N_G(T\sigma)$ defined via $G_0$ and $G_1$ are isomorphic.
\end{remark}

Let $G_0\to G$ be a simply connected cover.
Denote by $\beta$ the $\Gal$-gerbe $G\sigma/G_0\to G\sigma/G$. 
Then $\beta^*$ induces an equivalence of categories
\[
D\Coh(G\sigma/G) \xrightarrow{\sim} D\Coh(G\sigma/G_0)_{1,\Gamma},
\]
where the latter denotes the subcategory of $D\Coh(G\sigma/G_0)$ that consists of sheaves with trivial $\Gamma=\Gal$-action. Denote by $q_B^r\colon (B\sigma)^r/B_0\to T_\sigma$ and $\pi_T^r\colon (G\sigma)^r/G_0\to T\sigma\sslash' N_G(T\sigma)$. By definition of $\eis_B$ and proper base change in the diagram in Proposition \ref{kostant}, we have
\[
\beta^*\eis_B^r\simeq p_{B*}^rq_B^{r*} \simeq  \pi_T^{r*}p_{T*}.
\]

To prove Proposition \ref{g}, we firstly consider the block $D\Coh(T\sigma/T)_1$.
By Lemma \ref{splitting}, $\rho^*$ induces an equivalence of categories
\[
    D\Coh(T\sigma\sslash T) \simeq D\Coh(T\sigma/T)_1.
\]
We will then identify these two categories if there is no ambiguity. 

The following proposition is about the coherent Springer sheaf, which is a generalization of a classical result in Springer theory,

\begin{proposition}
\label{vanish}
    $\eis_B \CO_{T_\sigma} = (p_B)_*\CO_{B\sigma/ B}$ is an honest coherent sheaf on $G\sigma/ G$.
\end{proposition}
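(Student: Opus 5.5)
We need to show $R^ip_{B*}\CO_{B\sigma/B}=0$ for $i>0$. Here $p_B\colon B\sigma/B\to G\sigma/G$ factors through the twisted Grothendieck--Springer resolution: $B\sigma/B\simeq \widetilde{G\sigma}/G$, and $p_B$ is the quotient by $G$ of the projective morphism $P_B\colon \widetilde{G\sigma}\to G\sigma$, $(g\sigma,h)\mapsto g\sigma$. Pushforward along a quotient by $G$ commutes with passing to $G$-equivariant sheaves (flat base change along $G\sigma\to G\sigma/G$). So the plan is to prove $R^iP_{B*}\CO_{\widetilde{G\sigma}}=0$ for $i>0$ on the scheme $G\sigma$, which gives the statement.

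\textbf{Step 1 (geometry of $\widetilde{G\sigma}$).} The variety $\widetilde{G\sigma}$ is a vector-bundle-like fibration over $G/B$ with fibre $B$: the map $(g\sigma,h)\mapsto h$ identifies it with $G\times^B B\sigma$ (twisted conjugation action). In particular it is smooth, irreducible, and of dimension $\dim G$. Moreover $P_B$ is proper, and it is surjective and generically finite (generically a $W^\sigma$-Galois cover over $(G\sigma)^{rs}$ by Lemma \ref{iota-G}). So $P_B$ is a proper, generically finite map from a smooth variety onto the normal variety $G\sigma$.

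\textbf{Step 2 (vanishing).} We use Grauert--Riemenschneider vanishing: $R^iP_{B*}\omega_{\widetilde{G\sigma}}=0$ for $i>0$, valid in characteristic $0$ since $P_B$ is generically finite. We then need $\omega_{\widetilde{G\sigma}}\simeq\CO_{\widetilde{G\sigma}}$, at least up to a line bundle pulled back from $G\sigma$ (which is harmless by the projection formula). From the description $G\times^B B\sigma$ we get
\[
\omega_{\widetilde{G\sigma}}\simeq \omega_{G/B}\text{-pullback}\otimes\omega_{\mathrm{rel}}.
\]
The relative canonical bundle is the line bundle attached to the $B$-character $\det(\mathrm{Lie}B)^{-1}$ twisted appropriately, since $B\sigma$ is a $B$-torsor-like variety with a $B$-invariant volume form up to a character. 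The character $\det\fn$ matches $\omega_{G/B}^{-1}$, i.e.\ $2\rho$, so the two factors cancel and $\omega_{\widetilde{G\sigma}}$ is trivial, as in the untwisted Grothendieck--Springer case. An alternative I would keep in reserve: $\widetilde{G\sigma}$ has rational singularities trivially, and $P_B$ factors as a finite map $\widetilde{G\sigma}\to G\sigma\times_{T\sigma\sslash N}T\sigma\sslash T$ after Stein factorization. Using Proposition \ref{kostant}, this finite target is normal in codimension one with complement of codimension $\ge2$ from the regular locus. One would then argue $P_{B*}\CO$ is the Stein factor's structure sheaf and invoke the Kempf-type argument that higher direct images of a resolution of a variety with rational singularities vanish.

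\textbf{Main obstacle.} The delicate point is the canonical bundle computation. One must check that the twist by $\sigma$ does not introduce a nontrivial character: the $B$-action on $B\sigma$ is $b\cdot x\sigma=bx\sigma(b)^{-1}\sigma$, and the invariance of a volume form involves $\det\ad(b)\det\ad(\sigma(b))^{-1}$ on $\mathrm{Lie}B$. This is trivial on the unipotent part, and on $T$ it equals $2\rho(t)-2\rho(\sigma(t))=0$ because $\sigma$ preserves $B$ and hence $\rho$. I would verify this first. With $\omega$ trivial, Grauert--Riemenschneider concludes the proof.
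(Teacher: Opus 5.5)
Your argument is correct, but it takes a different route from the paper. The paper never studies $\widetilde{G\sigma}$ directly. For $\sigma=1$ it quotes Hesselink. In general it imports Zhu's theorem that the coherent Springer sheaf $\Eis_B\CO_{T_\sigma}$ (with monodromy) lives in degree $0$. It then restricts to the $q$-generic locus, where $\Eis_B^g\simeq\eis_B^g$ by Lemma \ref{generic locus}. Finally it uses that $\eis_B$ does not depend on $q$ while the generic locus does, so the $q$-generic loci for $q=p^n$ cover $G\sigma$.

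You instead give the twisted version of the classical Grothendieck--Springer argument:
\begin{itemize}
\item $\widetilde{G\sigma}\simeq G\times^B B\sigma$ is smooth;
\item $P_B$ is proper and generically finite (finite \'etale over $(G\sigma)^{rs}$);
\item $\omega_{\widetilde{G\sigma}}$ is trivial, so Grauert--Riemenschneider kills $R^iP_{B*}\CO$ (in its generically finite form, or applied to the birational part of the Stein factorization);
\item smooth base change along $G\sigma\to G\sigma/G$ descends the vanishing to the stack.
\end{itemize}
Your proof is self-contained and independent of $q$ and of the deep categorical input. The paper's is a two-line reduction to coherent Springer theory that it relies on elsewhere anyway.

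The one point to write out carefully is the canonical bundle. The relative canonical bundle of $\widetilde{G\sigma}\to G/B$ is \emph{not} trivial. Its $B$-character is $b\mapsto\det\ad_{\mathrm{Lie}B}(\sigma(b))^{-1}$, which is $-2\rho$ on $T$ and cancels the $2\rho$ of $\omega_{G/B}$. The character $\det\ad(b)\det\ad(\sigma(b))^{-1}$ in your last paragraph is the net character for the total space. You can compute it on the $B$-torsor $G\times B\sigma\to\widetilde{G\sigma}$ by comparing $\mu_G\wedge\mu_B$ (invariant top forms) with the vertical determinant. Its triviality, since $\sigma$ fixes $\rho$, is what gives $\omega_{\widetilde{G\sigma}}\simeq\CO$. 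Read literally as ``the fibre $B\sigma$ carries a $B$-invariant volume form'', your last paragraph would instead give $\omega_{\widetilde{G\sigma}}$ equal to the pullback of $\omega_{G/B}$, and the argument would not close.
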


\begin{proof}
    In the case $\sigma = 1$, it is proved in \cite{hesselink1976cohomology}, Theorem B. In general, we have not found a reference which contains a simple proof of the argument and we refer to the article of Zhu about the categorical local Langlands correspondence.
    In \cite{zhu2025tamecategoricallocallanglands}, Corollary 5.11, it is proved that $\Eis_B \CO_{T_\sigma}$ is an honest coherent sheaf. As $\Eis_B^g\simeq \eis_B^g$, $(p_B)_*\CO_{B\sigma/ B}$ is an honest coherent sheaf on $(G\sigma)^g/ G$. We may change the parameter $q=p^n$ in the definition of $\BL_{G\sigma}$ and conclude that $\eis_{B}\CO_{T_\sigma}$ is an honest coherent sheaf on $G^{g,q}$, the generic locus for $q$. For all such $q$, the union of $G^{g,q}$ is $G$. Thus $\eis_B\CO_T$ is an honest coherent sheaf. 
\end{proof}

\begin{lemma}
\label{locallyfreeeis}
    Let $G_0\to G$ be a covering as in Proposition \ref{kostant}. 
    Consider the following diagram, extending the diagram above:
    \begin{equation}
    \label{kostantdiagram}
    \begin{tikzcd}
        \widetilde{G\sigma} \ar[r, "\Pi"]\ar[d, "P_B"] & B\sigma/ B_0 \ar[r, "q_B"]\ar[d, "p_B"] & T\sigma\sslash T \ar[d, "p_T"] \\
        G\sigma \ar[r, "\pi"] & G\sigma/ G_0 \ar[r, "\pi_T"] & T\sigma\sslash' N_G(T\sigma).
    \end{tikzcd}
    \end{equation}
    Then the isomorphism $\beta^*\eis_B^r\simeq \pi_T^{r*}p_{T*}$ above extends uniquely to $\beta^*\eis_B\simeq \pi_T^*p_{T*}$. Moreover, $\eis_B \CO_{T_\sigma}$ is locally free.
\end{lemma}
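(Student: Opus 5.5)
We extend the isomorphism from the regular locus by a codimension-two (Hartogs-type) argument, using that both sides are honest coherent sheaves with good depth. Since $\beta^*$ is an equivalence onto the $\Gamma$-trivial part and every object of $D\Coh(T\sigma\sslash T)$ is built from $\CO_{T_\sigma}$ by cones, shifts, summands and the $\CO(T_\sigma)$-action, it suffices to treat the generator $\CO_{T_\sigma}$. Both functors $\beta^*\eis_B$ and $\pi_T^*p_{T*}$ are $\CO(T_\sigma)$-linear, so a natural transformation is determined by its value on $\CO_{T_\sigma}$ together with compatibility with $\CO(T_\sigma)=\End(\CO_{T_\sigma})$. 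We therefore reduce to producing a unique isomorphism
\[
\beta^*p_{B*}\CO_{B\sigma/B}\simeq \pi_T^*p_{T*}\CO_{T\sigma\sslash T}
\]
of $\CO(T_\sigma)$-modules extending the given one on $(G\sigma)^r/G_0$.

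Next we analyze both sides. The map $p_T$ is finite: it is a quotient by the finite group $W^\sigma$, up to the finite quotient by $\ker(T_{0,\sigma}\to T_\sigma)$. Its target $T_{0,\sigma}\sslash W^\sigma$ is smooth, being a twisted Chevalley quotient for a simply connected group (it is a torus times an affine space, as in \cite{xiao2018}). By Chevalley–Shephard–Todd, $p_{T*}\CO$ is therefore locally free; the finite abelian quotient is handled by descent on the $\Gamma$-trivial part. Hence $\pi_T^*p_{T*}\CO$ is a vector bundle on $G\sigma/G_0$.

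On the other side, Proposition \ref{vanish} says $\eis_B\CO_{T_\sigma}=p_{B*}\CO$ is an honest coherent sheaf. Since $p_B$ is proper and $B\sigma/B_0$ is smooth, a Grothendieck-duality / Cohen–Macaulay argument should give that $p_{B*}\CO$ satisfies Serre's condition $S_2$. Concretely, $p_{B*}\CO$ is, up to twist, self-dual, because $P_B$ is a twisted Grothendieck–Springer resolution with trivial relative dualizing sheaf up to a character. We have $\operatorname{codim}(G\sigma-(G\sigma)^r)\ge 2$ by the cited corollary of \cite{xiao2018}, and the reflexive sheaf $\pi_T^*p_{T*}\CO$ on the smooth $G\sigma$ satisfies $\mathcal{H}om(\CF,\CE)=j_*j^*\mathcal{H}om(\CF,\CE)$ for $j$ the regular-locus inclusion. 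Consequently the isomorphism on the regular locus extends uniquely to a morphism in either direction. The two extended morphisms compose to the identity on the dense open, hence everywhere, because both sheaves are torsion-free. This gives $\beta^*\eis_B\CO\simeq \pi_T^*p_{T*}\CO$, and in particular $\eis_B\CO_{T_\sigma}$ is locally free, since local freeness descends along $\beta$ and the smooth atlas $\pi$.

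The main obstacle is justifying the depth property of $p_{B*}\CO$, so that $\Hom(\pi_T^*p_{T*}\CO,p_{B*}\CO)$ is computed on the regular locus. The extension in the other direction only needs reflexivity of the vector bundle, which is automatic. I would first try the simpler route: show $p_{B*}\CO\to j_*j^*p_{B*}\CO$ is injective (torsion-freeness, from $\widetilde{G\sigma}$ being irreducible and dominant onto $G\sigma$). Then, using the map $\pi_T^*p_{T*}\CO= j_*j^*(\cdot)\supseteq p_{B*}\CO$, exhibit $p_{B*}\CO$ as a subsheaf of a vector bundle agreeing in codimension one. Equality would follow from a rank/degree comparison: both are generated by global functions pulled back from $T_\sigma$, and $\CO(\widetilde{G\sigma})=\CO(G\sigma)\otimes_{\CO(T_\sigma\sslash')}\CO(T_\sigma)$ is forced by normality of $\widetilde{G\sigma}$'s Stein factorization. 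Uniqueness of the extension is immediate from torsion-freeness and density of the regular locus.
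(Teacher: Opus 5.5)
Your outline matches the paper's: reduce to $\CO_{T_\sigma}$, use Proposition \ref{vanish} to get an honest sheaf, compare it with the vector bundle $\mathcal{E}:=\pi_T^*p_{T*}\CO_{T_\sigma}$ by a codimension-two argument, and get uniqueness from density plus torsion-freeness. But the step you yourself flag as the main obstacle is never closed, and neither fallback closes it.

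\begin{itemize}
\item The self-duality/Cohen--Macaulay claim for $P_B$ is only asserted.
\item In the ``simpler route'', a rank/degree comparison cannot give equality. A subsheaf of a vector bundle that coincides with it off a closed subset of codimension $\ge 2$ need not equal it: the ideal sheaf of a codimension-two subvariety inside $\CO$ has the same rank and degree.
\item The normality you invoke is the wrong one. The Stein factorization of $P_B$ is automatically normal, and since it is finite and birational over $Y:=G\sigma\times_{T\sigma\sslash' N_G(T\sigma)}T_\sigma$, that only tells you it is the normalization of $Y$. You would need $Y$ itself to be normal. This is true by Serre's criterion: $Y$ is finite flat over the smooth $G\sigma$, hence Cohen--Macaulay, and it is smooth over the regular locus, whose complement has codimension $\ge 2$. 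You do not argue this.
\item ``Both are generated by functions pulled back from $T_\sigma$'' is, for $p_{B*}\CO$, the conclusion rather than an input.
\end{itemize}

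The missing idea is to apply Hartogs upstairs, on the smooth $\widetilde{G\sigma}$, rather than to $p_{B*}\CO$. The cited corollary of \cite{xiao2018} also gives $\operatorname{codim}(B\sigma-(B\sigma)^r)\ge 2$. Hence the complement of $\widetilde{G\sigma}^r=P_B^{-1}((G\sigma)^r)$ in $\widetilde{G\sigma}=G\times^B B\sigma$ has codimension $\ge 2$, and $\CO_{\widetilde{G\sigma}}=R^0j_{*}j^*\CO_{\widetilde{G\sigma}}$. Sections of $R^0P_{B*}\CO$ over $V$ are functions on $P_B^{-1}(V)$, so this yields $R^0P_{B*}\CO_{\widetilde{G\sigma}}\simeq R^0j_{G*}j_G^*R^0P_{B*}\CO_{\widetilde{G\sigma}}$. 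Proposition \ref{vanish} then lets you drop the $R^0$. This is exactly the depth property you wanted, with no duality needed; Proposition \ref{kostant} plus Hartogs for the vector bundle then finish, as in the paper.

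Alternatively, you can avoid depth altogether. The commutative square $p_T\circ q_B=\pi_T\circ p_B$ gives a globally defined base-change map $\mathcal{E}\to p_{B*}q_B^*\CO_{T_\sigma}$, which is the given isomorphism over the regular locus. You already have the extension $p_{B*}q_B^*\CO_{T_\sigma}\to\mathcal{E}$ from reflexivity of $\mathcal{E}$. Both composites are the identity on a dense open, hence everywhere, by torsion-freeness of both sheaves; for $p_{B*}\CO$ this holds because $\widetilde{G\sigma}$ is integral and dominates $G\sigma$.
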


\begin{proof}
    In this proof, we will denote $j_X$ the open immersion $X^r\to X$ where $X$ appears in the above diagram.

    It suffices to show that after pulling back to $G\sigma$ via $\pi$, the isomorphism extends uniquely to the whole $G\sigma$ and $\pi^*\beta^*\eis_B\CO_{T_\sigma}$ is locally free. We have
    \[
    \pi^*\beta^*\eis_B\CO_{T\sigma/T} \simeq \pi^*(p_B)_*\CO_{B\sigma/ B}
    \simeq (P_B)_*\CO_{\widetilde{G\sigma}}.
    \]
    As $\widetilde{G\sigma}-\widetilde{G\sigma}^r$ is of codimension at least $2$, $\CO_{\widetilde{G\sigma}}\simeq R^0j_{\widetilde{G\sigma}*}\CO_{\widetilde{G\sigma}^r}$ by \cite{stacks-project}, tag 0EBJ about the unique extension of reflexive coherent sheaves. By Proposition \ref{vanish}, the higher cohomology of $(P_B)_*\CO_{\tilde{G}_\sigma}$
    vanishes.
    Thus $(P_B)_*\CO_{\widetilde{G\sigma}} \simeq R^0j_{G*}(P_B^r)_*\CO_{\widetilde{G\sigma}^r}$.
    Then by Proposition \ref{kostant} and proper base change,
    \[
    (P_B)_*\CO_{\widetilde{G\sigma}}\simeq R^0j_{G*}j^*_G(\pi_T\pi)^*p_{T*} \CO_{T_\sigma}.
    \]
    As $p_T$ is finite and flat, $(\pi_T\pi)^*p_{T*} \CO_{T\sslash_\sigma T}$ is locally free over $G\sigma$ and again by \cite{stacks-project}, tag 0EBJ we have
    \[
        R^0j_{G*}j^*_G(\pi_T\pi)^*p_{T*} \CO_{T_\sigma}
        \simeq (\pi_T\pi)^*p_{T*} \CO_{T_\sigma}.
    \]
    Thus  
    \[
    \pi^*\beta^*\eis_B\CO_{T\sigma/T}\simeq (\pi_T\pi)^*p_{T*} \CO_{T_\sigma}
    \]
    and $\eis_B \CO_{T\sigma/ T}$ is a locally free $\CO_{G\sigma/G}$-module. From its construction, this isomorphism extends the isomorphism $\beta^*\eis^r_B\simeq \pi_T^{r*}p_{T*}$ over the regular locus applying to $\CO_{T_\sigma}$ .
    
    Recall that $\CO_{T_\sigma}$ generates of the category $D\Coh(T_\sigma)$.
    The extension of the isomorphism is unique as the regular locus in $G\sigma$ is open and dense and $\pi^*\eis_B \CO_{T\sigma/ T}$ is locally free.
    The construction is $\CO_{T_\sigma}$-linear by uniqueness of the extension and by linearity we may induce an isomorphism of functors $\beta^*\eis_B\simeq \pi_T^*p_{T*}$.
\end{proof}

Now we can conclude the proof of the main proposition for $\chi=1$.

\begin{proof}[Proof of Proposition \ref{g} for $\chi=1$]
    We prove the arguments about $\eis_B\simeq \eis_{B'}$ and then the arguments about $\Eis^g_B\simeq \Eis^g_{B'}$ follow in the same way.

    Take a covering $G_0\to G$ such that $\sigma$ can be lifted to $G_0$ and $G_0$ is a product of a simply connected semisimple group and a torus. 
    Then there is an isomorphism $\beta^*\eis_B\simto \beta^*\eis_{B'}$ via the composition
    \[
    \beta^*\eis_B\simto \pi_T^*p_{T*}\simto \beta^*\eis_{B'}
    \]
    in Lemma \ref{locallyfreeeis}. As $\beta^*$ is fully faithful, this isomorphism induces an isomorphism $\eis_B\simto \eis_{B'}$ and we  define $J_{B'|B}^g$ to be this isomorphism.

    As we have $(G\sigma)^{rs}/G \simeq (T\sigma)^{rs}/N_G(T\sigma)$ via the Chevalley map, we have $\beta^*\iota_{G*}\simeq \pi_T^*p_{T*}$ over the regular semisimple locus. Thus by definition of the intertwining operator $J_{B'|B}^\circ$ in Definition \ref{rs, g},
    $J_{B'|B}^g$ extends $J_{B'|B}^\circ$.

    For the uniqueness of the extension, it suffices to prove that the restriction
    $\Hom(\eis_B, \eis_{B'}) \to \Hom(\eis_B^{rs}, \eis_{B'}^{rs})$
    is injective. As $\CO_{T_\sigma}$ is a generator of $D\Coh(T_\sigma)$,
    $\eis_B\CO_{T_\sigma}$ and $\eis_{B'}\CO_{T_\sigma}$ are locally free by Lemma \ref{locallyfreeeis} and the regular semisimple locus is open and dense in $G\sigma$. The statement holds.
    
    For the statement about the generator, notice that
    \[
    \Hom(\eis_B, \eis_{B'}) \subset \Hom(\eis_B^{rs}, \eis_{B'}^{rs})
    \simeq \CO(\BL_{T\sigma}^{rs, g}) \simeq \CO(\BL_{T\sigma}^g)[L(0, \ad_\fn)].
    \]
    Thus if the isomorphism $J_{B'|B}^g$ were not a generator, it would be divisible by some nontrivial factor of $L(0, \ad_\fn)^{-1}$. But this is impossible, as $J^g_{B'|B}(\CO_{T_\sigma})$ defines an isomorphism of locally free $G\sigma/G$-modules and it cannot be divisible by such factor.
\end{proof}

Next we consider the case for general $\chi$.

\begin{lemma}
    \label{Gabsigma}
    If $G$ is semisimple and simply connected, $X^*(T^\sigma)^{W^\sigma}=1$.
\end{lemma}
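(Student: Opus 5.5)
The plan is to show first that $X^*(T^\sigma)$ is torsion free, and then that $W^\sigma$ has no nonzero invariant vectors after tensoring with $\mathbb{Q}$. Together these give $X^*(T^\sigma)^{W^\sigma}=1$.

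\textbf{Torsion freeness.} Since $G$ is semisimple and simply connected, $X^*(T)$ is the weight lattice. It has a $\mathbb{Z}$-basis given by the fundamental weights $\varpi_i$, dual to the simple coroots. The automorphism $\sigma$ fixes a pinning, so it permutes the simple roots and the simple coroots. Hence it permutes the basis $\{\varpi_i\}$, and $X^*(T)$ is a permutation $\sigma$-module.

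As recalled in the proof of the preceding lemma, $X^*(T^\sigma)\simeq X^*(T)_\sigma$. The coinvariants of a permutation module form the free abelian group on the set of orbits, so $X^*(T^\sigma)$ is free. Equivalently, $T^\sigma$ is connected, i.e.\ a torus. It therefore suffices to prove $(X^*(T)_\sigma\otimes\mathbb{Q})^{W^\sigma}=0$.

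\textbf{Identifying the rational space.} Over $\mathbb{Q}$, averaging over $\langle\sigma\rangle$ identifies $X^*(T)_\sigma\otimes\mathbb{Q}$ with $(X^*(T)\otimes\mathbb{Q})^\sigma=X^*(T_\sigma)\otimes\mathbb{Q}$. This identification is equivariant for $W^\sigma$, because $W^\sigma$ commutes with $\sigma$.

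Because $G$ is semisimple, the root lattice has finite index in $X^*(T)$. So the roots span $X^*(T)\otimes\mathbb{Q}$. Taking $\sigma$-invariants, the space $(X^*(T)\otimes\mathbb{Q})^\sigma$ is spanned by the $\sigma$-averages of roots, i.e.\ by the orbit sums $\alpha_O$. By the proposition on the root system for $\sigma$, these $\alpha_O$ are exactly the roots of a root system on $T_\sigma$ whose Weyl group is $W^\sigma$.

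\textbf{No invariants.} Suppose $v$ is fixed by $W^\sigma$. Then $v$ is fixed by every reflection $s_{\alpha_O}$. This means $\langle v,\alpha_O^\vee\rangle=0$ for all roots $\alpha_O$.

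The roots $\alpha_O$ span the ambient rational space, so the root system has full rank there. For a root system of full rank, the coroots span the dual space. Hence $v=0$.

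Combining this with torsion freeness, any $W^\sigma$-invariant character $\chi$ of $T^\sigma$ is zero in $X^*(T^\sigma)\otimes\mathbb{Q}$, and hence trivial.

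\textbf{Main obstacle.} The delicate step is matching the $W^\sigma$-action on $X^*(T^\sigma)=X^*(T)_\sigma$ with its action on $X^*(T_\sigma)$ compatibly over $\mathbb{Q}$. This requires checking that the averaging map $X_\sigma\otimes\mathbb{Q}\to (X\otimes\mathbb{Q})^\sigma$ intertwines the two actions. It does, since $W^\sigma\subset W$ consists of elements commuting with $\sigma$. One should also confirm that the reflections of the folded root system in \cite{xiao2018} act on $X^*(T)^\sigma$ by the restriction of the $W^\sigma$-action.
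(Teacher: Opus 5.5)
Your argument is correct, but it takes a different route from the paper.

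The paper invokes \cite{steinberg1968endomorphisms}, Theorem 8.1. For simply connected $G$, this gives that $G^\sigma$ is a connected reductive group, with maximal torus $T^\sigma$ and Weyl group $W^\sigma$. The paper then identifies $X^*(T^\sigma)^{W^\sigma}$ with $X^*((G^\sigma)_{\mathrm{ab}})$, which it declares trivial.

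You instead stay entirely inside the root datum, in two steps:
\begin{itemize}
\item Connectedness of $T^\sigma$ comes from the fundamental weights forming a $\sigma$-permuted basis, so $X^*(T)_\sigma$ is free. This replaces the connectedness part of Steinberg's theorem.
\item Vanishing of the rational invariants comes from the full-rank folded root system of \cite{xiao2018}.
\end{itemize}
Your second step is exactly what justifies two facts that the paper's one-line proof leaves implicit. One is that $G^\sigma$ is semisimple, so its abelianization really is trivial. The other is that $W$-invariant characters of a maximal torus are the same as characters of the abelianization.

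Your version is more elementary. It also isolates precisely where simple connectedness enters: the absence of torsion in $X^*(T^\sigma)$. Compare the preceding lemma, where for general $G$ the component group $\pi_0(T^\sigma)$ supplies nontrivial $W^\sigma$-invariant characters.

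The paper's version is shorter. It also fits the group-theoretic viewpoint (connectedness of fixed-point groups and centralizers via Steinberg) that the paper reuses later, e.g.\ in Lemma \ref{loc-isom}.
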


\begin{proof}
    As $G$ is semisimple and simply connected, $G^\sigma$ is a connected reductive group with Weyl group $W^\sigma$ by \cite{steinberg1968endomorphisms}, Theorem 8.1. Thus the abelianization $(G^\sigma)_{\text{ab}}$ of $G^\sigma$ is trivial and
    \[
    X^*(T^\sigma)^{W^\sigma} \simeq X^*((G^\sigma)_{\text{ab}}) = 1.
    \]
\end{proof}

\begin{lemma}
\label{locally_free_general}
    For any $\chi \in X^*(T^\sigma)^{W^\sigma}$, the sheaf $\eis_B\CO_\chi$ is locally free (and in particular concentrated in degree $0$).
\end{lemma}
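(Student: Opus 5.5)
The plan is to reduce to the case $\chi=1$ already treated in Lemma \ref{locallyfreeeis}, by showing that a $W^\sigma$-invariant character of $T^\sigma$ is, up to a twist that does not affect local freeness, the restriction of a character of $G^\sigma$-type, i.e. it comes from a line bundle on $G\sigma/G$.

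\textbf{Step 1: $\chi$ factors through the abelianization.} Take a simply connected cover $G_0=G^{sc}\times Z(G)^\circ\to G$ as in Proposition \ref{kostant}, with $T_0$, $B_0$ the preimages. The kernel $\Gamma$ is central, and $\beta\colon G\sigma/G_0\to G\sigma/G$ is a $\Gamma$-gerbe with $\beta^*$ fully faithful. Pulling back, $\chi$ gives a character $\chi_0$ of $T_0^\sigma$ which is $W^\sigma$-invariant. Since $T_0^\sigma=(T^{sc})^\sigma\times (Z(G)^\circ)^\sigma$, Lemma \ref{Gabsigma} forces $\chi_0$ to be trivial on $(T^{sc})^\sigma$. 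Hence $\chi_0$ factors through the projection $T_0\to Z(G)^\circ=:Z$, i.e. through $T_0^\sigma\to Z^\sigma$.

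\textbf{Step 2: produce a line bundle on $G\sigma/G_0$.} The projection $G_0\to Z$ is $\sigma$-equivariant, so it induces a morphism $G\sigma/G_0\to Z\sigma'/Z$. Here $Z\sigma'/Z$ is the twisted quotient for $Z$; the element $g\sigma$ is sent to the image of a lift in $G_0$ after projection. To make this well defined one works on $G_0\sigma/G_0$ and descends along the $\Gamma$-action. As in Lemma \ref{splitting}, $Z\sigma/Z$ splits as $Z_\sigma\times */Z^\sigma$, which gives a line bundle $\CL_\chi$ on $G\sigma/G_0$ of character $\chi_0$. Its pullback along $q_B\colon B\sigma/B_0\to T\sigma/T_0$ agrees with that of $\CO_{\chi_0}$, because the composites $B_0\to T_0\to Z$ and $B_0\to G_0\to Z$ coincide. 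By the projection formula,
\[
\beta^*\eis_B\CO_\chi\simeq p_{B*}q_B^*\CO_{\chi_0}\simeq p_{B*}(p_B^*\CL_\chi)\simeq \CL_\chi\otimes p_{B*}\CO_{B\sigma/B_0}\simeq \CL_\chi\otimes\beta^*\eis_B\CO_{T_\sigma}.
\]
By Lemma \ref{locallyfreeeis} the right-hand side is locally free in degree $0$. Since $\beta$ is a gerbe for a finite group in characteristic $0$, $\beta^*$ is exact and detects local freeness, so $\eis_B\CO_\chi$ is locally free.

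\textbf{Main obstacle.} The delicate point is Step 2: making the ``central character'' map $G\sigma/G_0\to Z\sigma/Z$ honest. This matters when $\Gamma$ meets $Z$ nontrivially, and also because $\CO_\chi$ depends on the chosen trivialization. If the descent is awkward, the fallback is to pull everything back to $G_0\sigma/G_0$. There the map $G_0\to Z$ is genuinely defined, and the required compatibility of $\CO_{\chi_0}$ with $\CL_\chi$ up to a pullback from $Z_\sigma$ is checked directly; such a twist changes nothing. Local freeness then descends along the finite étale map $G_0\sigma/G_0\to G\sigma/G_0$. A second, minor check is that the identification is $\CO(T_\sigma)$-compatible, but this is not needed for local freeness alone.
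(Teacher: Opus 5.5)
Your argument is correct and is essentially the paper's proof. The paper also pulls back along $G_0\sigma/G_0\to G\sigma/G$, a Galois cover composed with a gerbe. It kills the $G^{sc}$-part of the character by Lemma \ref{Gabsigma}, absorbs the $Z(G)^\circ$-part using compatibility of $\eis$ with direct products (the torus case being trivial), and concludes with Lemma \ref{locallyfreeeis}. On $G_0=G^{sc}\times Z(G)^\circ$, that product step is the same computation as your projection-formula twist by a line bundle pulled back from $Z\sigma/Z$.

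Only your fallback is needed, and only it works as written. On $G\sigma/G_0$ the group $G$ maps onto the isogenous torus $G/G^{\mathrm{der}}$, not onto $Z(G)^\circ$, and the stabilizers in $T\sigma/T_0$ are the preimage of $T^\sigma$ rather than $T_0^\sigma$. On $G_0\sigma/G_0$ everything is a literal product, and $\alpha_T^*\CO_\chi$ differs from the product line bundle only by a line bundle on the torus $T_{0,\sigma}$, which is trivial.
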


\begin{proof}
    Choose a simply connected cover $G_0\to G$. It induces a morphism $\alpha_G\colon G_0\sigma/G_0\to G\sigma/G$, which is a composition of a Galois covering $G_0\sigma/G_0\to G\sigma/G_0$ and a gerbe $G\sigma/G_0\to G\sigma/G$. Thus it suffices to prove that $\alpha_G^*\eis_B\CO_\chi$ is locally free.

    Denote by $\alpha_T$ the morphism $T_0\sigma/T_0\to T\sigma/T$.
    By definition of $\eis_B$ and proper base change, 
    \[
    \alpha_G^*\eis_B\CO_\chi\simeq \eis_{B_0}\alpha_T^*\CO_\chi
    \simeq \eis_{B_0}\CO_{\tilde{\chi}},
    \]
    where $\tilde{\chi}$ is the pullback of $\chi$ along $T_{0,\sigma}\to T_\sigma$. Hence it suffices to prove the statement for $G_0$.

    By definition of $G_0$, it is a product of a torus and a semisimple and simply connected group. As the construction of $\eis$ is compatible with direct product and the statement for a torus is trivial (since it is isomorphic to its Borel and its torus), it remains to prove the case that $G$ is a semisimple and simply connected group.

    By lemma \ref{Gabsigma}, $\chi=1$ in this case. 
    Thus this lemma is reduced to Lemma \ref{locallyfreeeis} and we conclude.
\end{proof}

\begin{lemma}
    For $\chi\in X^*(T^\sigma)^{W^\sigma}$, 
    $\CO_\chi$ can be descended to an invertible sheaf $\overline{\CO_\chi}$ over $T\sigma/N_G(T\sigma)$.
    In particular, over the regular and semisimple locus $(T\sigma)^{rs}/T$, $\CO_\chi^{rs}$ can be descended to an invertible sheaf $\overline{\CO_\chi^{rs}}$ over $(G\sigma)^{rs}/G\simeq (T\sigma)^{rs}/N_G(T\sigma)$.
\end{lemma}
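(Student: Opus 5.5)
We need to descend $\CO_\chi=\CO_{T_\sigma}\boxtimes k_\chi$ along the quotient $T\sigma/T\to T\sigma/N_G(T\sigma)$. This map is a torsor under the finite group $N_G(T\sigma)/T\simeq W^\sigma$. So descent amounts to giving $\CO_\chi$ a $W^\sigma$-equivariant structure satisfying the cocycle condition. Equivalently, we should view $\CO_\chi$ as the line bundle attached to a character of the stabilizer, and extend that character.

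\textbf{Step 1: stack-level description.} We may write $T\sigma/N_G(T\sigma)=[T\sigma/N]$ with $N:=N_G(T\sigma)$. A line bundle on $T\sigma/T$ whose underlying sheaf on $T\sigma$ is trivial amounts to a character of $T$ twisted by a $T$-invariant unit on $T\sigma$. I will not choose an arbitrary trivialization. Instead I will realize $\CO_\chi$ canonically as follows:
\begin{itemize}
\item extend $\chi\in X^*(T^\sigma)$ to a character $\tilde\chi\in X^*(T)$, using that $X^*(T)\to X^*(T^\sigma)=X^*(T)_\sigma$ is surjective;
\item take the trivial bundle on $T\sigma$ with $T$ acting through $\tilde\chi$.
\end{itemize}
On $T^\sigma$-stabilizers this bundle restricts to $\chi$. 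Its block is therefore the $\chi$-block, and by the uniqueness of the block decomposition it differs from $\CO_\chi$ only by a line bundle pulled back from $T_\sigma$. That difference is harmless, since pullbacks from $T_\sigma$ descend trivially, $T_\sigma$ mapping to $T\sigma\sslash N$-equivariantly.

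\textbf{Step 2: obstruction to descent.} The key point is to extend the $T$-equivariant structure to an $N$-equivariant one. Because $\tilde\chi$ is only determined modulo $(1-\sigma)X^*(T)$, I would change it by a $T$-invariant function to absorb the twist. Concretely, for $n\in N$ with image $w\in W^\sigma$, the requirement is an isomorphism $n^*\CO_{\tilde\chi}\simeq\CO_{\tilde\chi}$. Now $n^*\CO_{\tilde\chi}=\CO_{w\tilde\chi}$, and $w\tilde\chi-\tilde\chi$ restricts to $w\chi-\chi=0$ on $T^\sigma$. Hence
\[
w\tilde\chi-\tilde\chi=(1-\sigma)\mu_w \quad\text{for some }\mu_w\in X^*(T).
\]
A bundle $\CO_{(1-\sigma)\mu}$ is trivialized by the invariant function $t\sigma\mapsto\mu(t)$, since $\mu(ht\sigma(h)^{-1})=\mu(h)\,\sigma^{-1}\mu(h)^{-1}\mu(t)$, up to the convention on $\sigma$. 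This gives isomorphisms $\phi_w$ for each $w$.

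\textbf{Step 3: the cocycle condition (main obstacle).} The $\phi_w$ need not compose correctly. Their failure is a 2-cocycle of $W^\sigma$ with values in the automorphisms of the bundle on $T\sigma/T$ that are compatible with the choices, i.e.\ in $\CO(T_\sigma)^\times$, which is $k^\times\times X^*(T_\sigma)$. I would try first to avoid cohomology. The plan is to reduce, as in Lemma \ref{locally_free_general}, to a simply connected cover $G_0=G^{sc}\times Z$:
\begin{itemize}
\item on the $G^{sc}$ factor, $\chi$ is trivial by Lemma \ref{Gabsigma};
\item on the torus factor, $W^\sigma$ acts trivially, so descent is tautological.
\end{itemize}
Pulling back along $G_0\to G$ and using that $\ker(T_0\to T)$ acts compatibly, the descended bundle on $T_0\sigma/N_{G_0}(T_0\sigma)$ is invariant under $\Gal$. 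It then descends to $T\sigma/N_G(T\sigma)$. The residual point to check is that the $\Gal$-equivariance commutes with the $W^\sigma$-structure; I expect it does, since both are built from the same character.

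\textbf{Step 4: the regular semisimple locus.} The second statement follows by restricting the descended bundle to the open $(T\sigma)^{rs}/N_G(T\sigma)$ and invoking Lemma \ref{iota-G}.
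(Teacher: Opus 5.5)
\textbf{Verdict: genuine gap.} Your Step 3 follows the paper's route: pass to $G_0=Z\times G^{sc}$, kill $\chi$ on the $G^{sc}$-factor by Lemma \ref{Gabsigma}, use $N_{G_0}(T_0\sigma)=Z\times N_{G^{sc}}(T^{sc}\sigma)$, then come back down along $\Gamma=\ker(T_0\to T)$. Steps 1--2 only set this up. However, the step back down is the actual content of the paper's proof, and you leave it at ``I expect it does''.

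The step is also misformulated. The map $T_0\sigma/N_{G_0}(T_0\sigma)\to T\sigma/N_G(T\sigma)$ is a $\Gamma$-torsor onto $T\sigma/N_{G_0}(T_0\sigma)$ followed by a $\Gamma$-gerbe. So you need two things:
\begin{enumerate}
\item[(a)] an honest $\Gamma\times N_{G_0}(T_0\sigma)$-equivariant structure, not merely $\Gamma$-invariance of an isomorphism class;
\item[(b)] triviality on fibres of the band of the gerbe, which you never mention.
\end{enumerate}
The choice of $\Gamma$-linearization genuinely matters. On a constant linearization any character of $\Gamma$ can be adjoined, and the band is $\{(\sigma(\gamma')\gamma'^{-1},\gamma')\}\subset\Gamma\times N_{G_0}(T_0\sigma)$. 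When $\sigma$ acts nontrivially on $\Gamma$, a wrong character produces a bundle on $T\sigma/N_{G_0}(T_0\sigma)$ on which the band acts nontrivially, so it does not descend further, even though the underlying bundle on $T_0\sigma/N_{G_0}(T_0\sigma)$ is the same $\Gamma$-invariant one. An example is $GL_3$ with its pinned outer automorphism, where $\sigma$ inverts $\Gamma\simeq\mu_3$. You must use the linearization inherited from $\CO_\chi$.

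\textbf{How to close it.} Put $\lambda=\tilde\chi\circ(T_0\to T)=(\lambda_Z,\lambda_{sc})$. Lemma \ref{Gabsigma} gives $\lambda_{sc}=(1-\sigma)\nu$ for some $\nu\in X^*(T^{sc})$.
\begin{itemize}
\item For (a): retrivialize the pullback of $\CO_\chi$ to $T_0\sigma$ by the unit $t\mapsto\nu(t_{sc})$. Its $T_0\times\Gamma$-linearization becomes a character of the group, $(z,h,\gamma)\mapsto\lambda_Z(z)\nu(\gamma_{sc})^{-1}$ (up to conventions), trivial on $T^{sc}$. Extending it trivially over $N_{G^{sc}}(T^{sc}\sigma)$ gives a character of $\Gamma\times N_{G_0}(T_0\sigma)$, hence an equivariant structure. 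This is the paper's $\CO'_\chi$, and it is the compatibility you expected.
\item For (b): the band lies in $\Gamma\times T_0$. There the structure is isomorphic to the one pulled back from $T\sigma/T$, so the band acts on a fibre by $\lambda(\gamma')=\tilde\chi(1)=1$. The paper phrases this as $\gamma_T^*\CO'_\chi\simeq\beta_T^*\CO_\chi$.
\end{itemize}
With (a) and (b) supplied, your argument is the paper's, and Step 4 is fine.

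A minor point on Step 1: pullbacks from $T_\sigma$ do not ``descend trivially'' in general. The discrepancy there is harmless only because $\operatorname{Pic}(T_\sigma)=0$.
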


\begin{proof}
    Let $G_0=Z \times G^{sc} \to G$ be a simply connected cover of $G$.
    Then we have a covering $T_0=Z\times T^{sc}\to T$. Denote by $\Gamma$ the kernel of the covering. Consider the following commutative diagram
    \[
    \begin{tikzcd}
	T_0\sigma & T\sigma/T_0 & T\sigma/T \\
	& T\sigma/N_{G_0}(T_0\sigma) & T\sigma/N_G(T\sigma)
	\arrow[from=1-1, to=1-2]
	\arrow[from=1-1, to=2-2]
	\arrow["\beta_T", from=1-2, to=1-3]
	\arrow["\gamma_T", from=1-2, to=2-2]
	\arrow[from=1-3, to=2-3]
	\arrow[from=2-2, to=2-3]
    \end{tikzcd}
    \]
    By Lemma \ref{Gabsigma}, 
    $\chi|_{(T^{sc})^\sigma}$ is trivial. 
    Thus for $\tilde{\alpha}_T\colon T_0\sigma\to T\sigma/T$, we have a $T_0 \times \Gamma$-equivariant sheaf on $T_0\sigma$,
    \[
    \tilde{\alpha}_T^* \CO_\chi \simeq \CO_{\chi|_{Z\sigma}} \boxtimes \CO_{T^{sc}\sigma},
    \]
    where $T^{sc}$ acts trivially on $\CO_{T^{sc}\sigma}$.
    As $N_{G_0}(T_0\sigma) = Z \times N_{G^{sc}}(T^{sc}\sigma)$,
    the sheaf $\CO_{\chi|_{Z\sigma}} \boxtimes \CO_{T^{sc}}$ on $T_0\sigma$ has a $\Gamma\times N_{G_0}(T_0\sigma)$-equivariant structure extending the equivariant structure above, such that $N_{G^{sc}}(T^{sc}\sigma)$ acts trivially on it. 

    Thus the sheaf above descends to a sheaf (denoted by $\CO_\chi'$) on the stack
    \[
    T_0\sigma/(\Gamma\times N_{G_0}(T_0\sigma))\simeq T\sigma/N_{G_0}(T_0\sigma).
    \] 
    The morphism $T\sigma/N_{G_0}(T_0\sigma) \to T\sigma/N_G(T\sigma)$ is a $\Gamma$-gerbe. By construction, $\gamma_T^*\CO_\chi'\simeq \beta_T^*\CO_\chi$. Thus the $\Gamma$-action from the $\Gamma$-gerbe structure of $\beta_T$ on any fiber of $\gamma_T^*\CO_\chi'$ is trivial.  
    Hence the $\Gamma$-action on any fiber of $\CO_\chi'$ is trivial. Thus $\CO_\chi'$ comes from a sheaf on $T\sigma/N_G(T\sigma)$ as we desired.
\end{proof}

Recall that for any $\chi\in X^*(T^\sigma)$, 
the automorphism of $D\Coh(T\sigma/T)$ given by 
\[
\CF \mapsto \CF_\chi := \CF \otimes \CO_\chi
\]
induces an equivalence of categories
\[
D\Coh(T\sigma/T)_1 \simto D\Coh(T\sigma/T)_\chi.
\]
By the projection formula, we have for any $\CF\in D\Coh(T\sigma/T)$,
\[
\eis_B^{rs}\CF_\chi\simeq \eis_B^{rs}\CF \otimes \overline{\CO_\chi^{rs}}.
\]
Moreover, tensoring by $\overline{\CO_\chi^{rs}}$ defines an isomorphism
(which will be denoted by $(-)\otimes \overline{\CO_\chi}$
\begin{equation}
\label{tensor_by_chi_rs}
    \Hom(\eis_B^{rs}\CF, \eis_{B'}^{rs}\CG)\simto \Hom(\eis_B^{rs}\CF_\chi, \eis_{B'}^{rs}\CG_\chi)  
\end{equation}

For a locally free sheaf $\CF\in D\Coh(T\sigma/T)_\chi$ where $\chi$ is invariant by $W^\sigma$, $\eis_B\CF$ is locally free by Lemma \ref{locally_free_general} and thus $\eis_B\CF\to \eis_B^{rs}\CF$ is injective.

\begin{lemma}
\label{tensor_by_chi}
    For locally free sheaves $\CF, \CG \in D\Coh(T\sigma/T)_1$,
    the isomorphism $(-)\otimes \overline{\CO_\chi}$ in \ref{tensor_by_chi_rs} restricts to an isomorphism
    \[
    \Hom(\eis_B\CF, \eis_{B'}\CG)\simto \Hom(\eis_B\CF_\chi, \eis_{B'}\CG_\chi). 
    \]
    Moreover, this isomorphism extends to all $\CF, \CG\in D\Coh(T\sigma/T)_1$.
\end{lemma}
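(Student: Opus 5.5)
The plan is to realize the map $(-)\otimes\overline{\CO_\chi}$ globally rather than only on the regular semisimple locus. The descent lemma just proved gives an invertible sheaf $\overline{\CO_\chi}$ on $T\sigma/N_G(T\sigma)$. The first step is to construct an invertible sheaf $\CL_\chi$ on $G\sigma/G$ whose restriction to $(G\sigma)^{rs}/G\simeq (T\sigma)^{rs}/N_G(T\sigma)$ is $\overline{\CO_\chi^{rs}}$, together with a projection-formula isomorphism
\[
\eis_B(\CF\otimes\CO_\chi)\simeq \eis_B\CF\otimes\CL_\chi
\]
for all $\CF$. This isomorphism should restrict on the rs locus to the one used in \eqref{tensor_by_chi_rs}, and similarly for $B'$.

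To build $\CL_\chi$, I would use the same reduction as in Lemma \ref{locally_free_general}. After passing to a simply connected cover $G_0=Z\times G^{sc}$, Lemma \ref{Gabsigma} shows that $\chi$ only sees the central torus factor $Z$. Then $\chi$ is a character of $Z^\sigma$, and $\CO_{\chi}$ on $Z\sigma/Z$ pulls back along the projection $G_0\sigma/G_0\to Z\sigma/Z$ to an invertible sheaf on $G_0\sigma/G_0$. Its pullback to $B_0\sigma/B_0$ agrees with $q_{B_0}^*\CO_{\tilde\chi}$, because $B_0\to T_0\to Z$ and $B_0\to G_0\to Z$ are the same map. The projection formula for $p_{B_0*}$ therefore gives the twisted isomorphism upstairs.

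The part I expect to be the main obstacle is descending this sheaf from $G_0\sigma/G_0$ to $G\sigma/G$. I would run the argument of the descent lemma verbatim: equip the sheaf with a $\Gamma$-equivariant structure, then check that the $\Gamma$-gerbe acts trivially on fibers, using that $\chi$ is a character of $T^\sigma$. If this global construction turns out to be awkward, a fallback is to define $\CL_\chi$ on the regular locus through the cartesian diagram of Proposition \ref{kostant} and extend it by reflexivity, which is legitimate because the complement has codimension at least $2$.

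Once $\CL_\chi$ is available, the statement for locally free $\CF,\CG$ follows quickly. By Lemma \ref{locallyfreeeis} and Lemma \ref{locally_free_general}, the sheaves $\eis_B\CF$, $\eis_{B'}\CG$, $\eis_B\CF_\chi$ and $\eis_{B'}\CG_\chi$ are all locally free. Hence each Hom space injects into the corresponding Hom on the rs locus, which is open and dense. Tensoring with $\CL_\chi$, and with $\CL_\chi^{-1}$ for the inverse, is a bijection between the global Hom spaces. This bijection is compatible with $(-)\otimes\overline{\CO_\chi^{rs}}$ after restriction. So the rs isomorphism maps the subspace $\Hom(\eis_B\CF,\eis_{B'}\CG)$ onto $\Hom(\eis_B\CF_\chi,\eis_{B'}\CG_\chi)$.

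For arbitrary $\CF,\CG\in D\Coh(T\sigma/T)_1$ I would simply define the map as $\phi\mapsto\phi\otimes\CL_\chi$, transported through the projection-formula isomorphisms. It is an isomorphism with inverse given by tensoring with $\CL_\chi^{-1}$. It is functorial in $\CF$ and $\CG$, and on locally free sheaves it agrees with the restriction constructed above. It is therefore the unique such extension, because the locally free sheaves generate the category: $T_\sigma$ is smooth and $\CO_{T_\sigma}$ generates.
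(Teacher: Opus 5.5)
Your route is genuinely different from the paper's, and it works once the descent step is made precise.

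\textbf{How the paper argues.} The paper never constructs a line bundle on $G\sigma/G$. For locally free $\CF,\CG$ it checks that the rational map $(-)\otimes\overline{\CO_\chi}$ preserves the integral Hom spaces after faithfully flat pullback along $\alpha_G\colon G_0\sigma/G_0\to G\sigma/G$. There $G_0=Z\times G^{sc}$ splits the problem into a torus factor, which is trivial, and a simply connected factor, on which $\chi$ is trivial by Lemma~\ref{Gabsigma}. The general case then follows by $\CO(T_\sigma)$-linearity. Upstairs the twist effectively lives on the $Z$-factor, i.e.\ it is your $\mathcal{L}_0$. But the paper descends a \emph{property} (extendability of a rational homomorphism between locally free sheaves), not an \emph{object}, so it never has to descend $\mathcal{L}_0$.

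\textbf{What your route buys.} You get a global projection formula $\eis_B(-\otimes\CO_\chi)\simeq\eis_B(-)\otimes\mathcal{L}_\chi$. The isomorphism on the whole block is then induced by an autoequivalence. Local freeness of $\eis_B\CF_\chi$ is automatic, and no extension-by-linearity step is needed.

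\textbf{The descent step.} It needs one idea that your sentence ``$\chi$ only sees the central torus factor $Z$'' obscures. That sentence is true of the pullback $\tilde\chi$ to $T_0^\sigma$, but not of $\chi$, because $T_0^\sigma\to T^\sigma$ need not be surjective.

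Take $G=(\mathrm{SL}_2\times \mathrm{SL}_2)/\mu_2$ with $\sigma$ swapping the factors. Then $Z=1$ and $T^\sigma=\{[(x,\pm x)]\}\simeq\mathbb{G}_m\times\mathbb{Z}/2$. The character that is trivial on $(T^\sigma)^\circ$ and equals $-1$ on $[(1,-1)]$ is $W^\sigma$-invariant. Yet $\tilde\chi=1$ and $\mathcal{L}_0=\CO$. All of $\chi$ therefore lives in the $\Gamma$-equivariant structure. The trivial structure descends to $\CO$, whose restriction to $T\sigma/T$ is $\CO_1\neq\CO_\chi$.

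So the structure has to be chosen, as follows.
\begin{itemize}
\item Write $\mathcal{L}_0\simeq\CO\otimes k_\Lambda$, with $\Lambda\in X^*(Z)$ restricting to $\tilde\chi$ on $T_0^\sigma$.
\item Let translation by $\Gamma$ act through a character $\mu$ of $\Gamma$.
\item Put $S=\{h\in T_0\mid h\sigma(h)^{-1}\in\Gamma\}$, so that $T^\sigma=S/\Gamma$. The descended bundle restricts to $\CO_\chi$ exactly when $\chi(\bar h)=\Lambda(h)\,\mu(\sigma(h)h^{-1})$ for all $h\in S$ with image $\bar h\in T^\sigma$.
\item Such a $\mu$ exists. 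The pullback of $\chi$ to $S$, times $\Lambda^{-1}$, is trivial on $T_0^\sigma$, and $h\mapsto\sigma(h)h^{-1}$ embeds $S/T_0^\sigma$ into the finite abelian group $\Gamma$; extend the resulting character to $\Gamma$.
\item The gerbe condition is the case $h=\gamma\in\Gamma$ of the same identity.
\end{itemize}

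\textbf{Your fallback.} The fallback via Proposition~\ref{kostant} is not a substitute. $T\sigma\sslash' N_G(T\sigma)$ has finite inertia, so nothing pulled back from it can be nontrivial on $(T^\sigma)^\circ$ (e.g.\ $\chi=\det$ for $\mathrm{GL}_2$).
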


\begin{proof}
    For the statement about locally free sheaves,
    it suffices to prove the statement after pulling back along $\alpha_G\colon G_0\sigma/G_0\to G\sigma/G$ as before. 
    Thus we reduce to the case for $G_0$. As all constructions in the statement are compatible with direct products and the statement for a torus is trivial, we reduce to the case that $G$ is simply connected.
    In this case, $\chi=1$ by Lemma \ref{Gabsigma} and $(-)\otimes \overline{\CO_\chi}$ is the identity. Thus we conclude.

    For general $\CF, \CG$, note that the construction $(-)\otimes \overline{\CO_\chi}$ in \ref{tensor_by_chi_rs} is $\CO_{T_\sigma}$-linear. Thus the isomorphism extends by linearity to all $\CF, \CG\in D\Coh(T\sigma/T)_1$.
\end{proof}

Then we can prove the main proposition for general $\chi$.

\begin{proof}[Proof of Proposition \ref{g}]
    We have already an isomorphism 
    $J_{B'|B,0}\in \Hom(\eis_B, \eis_{B'})_1$. We claim that 
    \[J_{B'|B,0}\otimes \overline{\CO_\chi} \in \Hom(\eis_B,\eis_{B'})_\chi\] 
    defined in Lemma \ref{tensor_by_chi} is an isomorphism and it is the desired intertwining operator. 
    Here the subscript $\chi$ denotes the $\Hom$-module for the category $D\Coh(T\sigma/T)_\chi$.

    Take a simply connected cover $G_0\to G$.
    The statement that $J_{B'|B,0}\otimes \overline{\CO_\chi}$ defines an isomorphism holds true by $\chi=1$ case and the same descent arguments in the previous proofs. Moreover, by definition of the intertwining operator over the regular and semisimple locus, we have for any $\CF \in D\Coh(T\sigma/T)$,
    \[
    (J_{B'|B,0}^{rs}\otimes \overline{\CO_\chi})(\CF) = J^{rs}_{B'|B,0}(\CF_\chi)
    \]
    and thus $J_{B'|B,0}\otimes \overline{\CO_\chi}$ is the extension of the intertwining operator in the regular semisimple locus. Finally, the statement that $J_{B'|B,0}\otimes \overline{\CO_\chi}$ is a generator of the free module $\Hom(\eis_B,\eis_{B'})_\chi$ follows from the $\chi=1$ case and Lemma \ref{tensor_by_chi}.
\end{proof}

\subsection{Extending to all}

We will now extend the intertwining operator over the generic locus in \ref{g} to the entire $\BL_{T\sigma}$ and obtain the integral intertwining operator. More precisely, we will prove the following statement.

\begin{proposition}[Integral intertwining operator for the principal block]
    \label{all}
    Let $\chi$ be a character of $T^\sigma$ which is invariant under the $W^\sigma$-action.
    The homomorphism constructed in \ref{g}, $J_{B'|B}^{g} \colon \Eis_B^{g} \simeq \Eis_{B'}^{g}$ over $D\Coh(\BL_{T\sigma}^g)_\chi$ can be uniquely extended to a homomorphism
    $j_{B'|B} \colon \Eis_B \to \Eis_{B'}$,
    after the multiplication by $L(1, \ad_{\fn_{B'}/\fn_{B}})^{-1}$. The result is denoted by $j_{B'|B}$.
    Moreover, the operator $j_{B'|B}$ generates the free $\CO(\BL_{T\sigma})$-module $\Hom(\Eis_B, \Eis_{B'})$.
\end{proposition}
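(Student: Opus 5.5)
First I reduce to $\chi=1$, exactly as in the proof of Proposition \ref{g}. Lemma \ref{tensor_by_chi} gives the twisting isomorphism $(-)\otimes\overline{\CO_\chi}$, and I expect its analog for $\Eis$ to hold: descend along $\alpha_G$ to a simply connected cover, where $\chi$ is trivial by Lemma \ref{Gabsigma}. So assume $\chi=1$. Then $D\Coh(\BL_{T\sigma})_1\simeq D\Coh(T_\sigma)$ is generated by $\CO_{T_\sigma}$. A transformation $\Eis_B\to\Eis_{B'}$ on this block is the same as an $\CO(T_\sigma)$-linear map $\Eis_B\CO_{T_\sigma}\to\Eis_{B'}\CO_{T_\sigma}$ compatible with the $\CO(T_\sigma)$-actions. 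By \cite{zhu2025tamecategoricallocallanglands} (as used in Proposition \ref{vanish}), $\Eis_B\CO_{T_\sigma}=p_{B*}\CO_{\BL_{B\sigma}}$ is an honest coherent sheaf on $\BL_{G\sigma}$. This turns the problem into one about homomorphisms of honest coherent sheaves.

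Uniqueness and the structure of the module come from restricting to the generic locus. I would show that $\Eis_{B'}\CO_{T_\sigma}$ has no sections supported on the non-generic locus, so that $\Hom(\Eis_B,\Eis_{B'})\to\Hom(\Eis_B^g,\Eis_{B'}^g)\simeq \CO(T_\sigma^g)\cdot J^g_{B'|B}$ (Proposition \ref{g}) is injective. For this I plan to use that $\BL_{B\sigma}$ is a local complete intersection of expected dimension, with $\BL_{B\sigma}^g$ dense, hence Cohen--Macaulay without embedded components. Consequently $p_{B*}\CO$ has no torsion supported off the generic locus.

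The image of this injection is $\{f\,J^g : f\in\CO(T_\sigma^g),\ fJ^g \text{ extends}\}$, a fractional ideal of $\CO(T_\sigma)$ with poles allowed only along the zero loci of $L(1,\ad_{\fn}|_O)^{-1}$. The claim is that this ideal equals $L(1,\ad_{\fn_{B'}/\fn_B})^{-1}\CO(T_\sigma)$. I would reduce to rank one by a codimension-two argument. By Proposition \ref{L-function}, the non-generic locus is a union of divisors $D_O=\{1\mp q^{-|O|}e^{\alpha_O}=0\}$, one for each orbit $O$. Since $\CO(T_\sigma)$ is normal, it suffices to compute the order of $fJ^g$ along each $D_O$ at a generic point of $D_O$. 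Near such a point, $\BL_{G\sigma}$ looks like $\BL_{L_O\sigma}$ times smooth factors, where $L_O$ is the Levi of semisimple $\sigma$-rank one attached to $O$; this is the standard Levi reduction near a parameter whose centralizer lies in $L_O$. Moreover $\Eis_B$ and $\Eis_{B'}$ factor through $\Eis$ for $L_O$ when $O$ does not separate $B$ and $B'$. In that case both operators coincide near $D_O$ and $J^g$ extends with no zero, so we get order $0$, matching the fact that $O\not\subset \fn_{B'}/\fn_B$. When $O\subset\fn_{B'}/\fn_B$, I must compute directly, for $L_O$ of type $SL_2$, $\mathrm{Res}\,SL_2$ or $SU_3$ (type A, or BC$\pm$), that the generator of $\Hom$ near $D_O$ is $(1\mp q^{-|O|}e^{\alpha_O})\,J^g$. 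The calculation uses that $\BL_{B\sigma}\to\BL_{G\sigma}$ is the Springer-type map whose fiber over a point of $D_O$ with $n\neq 0$ is a single point, while over $n=0$ it is $\mathbb{P}^1$. Concretely, $\Eis_B\CO$ and $\Eis_{B^-}\CO$ are the two rank-two sheaves given by pushforward from the two components, and the map between them generated by $J^g$ acquires exactly a simple pole along the component $\{n\ne0\}$ of $\BL_{G\sigma}$ lying over $D_O$. Multiplying by the defining equation clears it.

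The main obstacle is this rank-one computation, together with justifying the local Levi reduction at generic points of $D_O$. I would do the local check explicitly on the $\mathbb{P}^1$-family for $SL_2$ (the case $\sigma=1$, type A), and then treat the BC cases via Xiao--Zhu's explicit description. Given the order computation, $j_{B'|B}:=L(1,\ad_{\fn_{B'}/\fn_B})^{-1}J^g$ extends, and it generates the module by normality. Freeness of rank one follows because the module is an invertible fractional ideal of the UFD-like ring $\CO(T_\sigma)$ identified with a principal one.
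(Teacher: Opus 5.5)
Your reduction to $\chi=1$ is the paper's Lemma \ref{tensor_by_chi_Eis}, and your necessity argument matches the paper's: at strongly regular semisimple points of $V(f)$, the source is $\CO$ with $u=0$ and the target is $\CO[u]/(fu)$, so an admissible multiplier must vanish on $V(f)$. However, two load-bearing steps do not work as stated.

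\textbf{Injectivity of restriction.} Your proof that $\Hom(\Eis_B,\Eis_{B'})\to\Hom(\Eis_B^g,\Eis_{B'}^g)$ is injective rests on $\BL_{B\sigma}^g$ being dense in $\BL_{B\sigma}$. This is false; the paper explicitly calls this open substack ``not necessarily dense''. Already for $G=SL_2$, $\sigma=1$, write $b=t\exp(xe_\alpha)$ and $n=ue_\alpha$. Then $\tilde{\BL}_{B\sigma}=\{(t,x,u)\mid(\alpha(t)-q)u=0\}$, and the components $\{\alpha(t)=q\}$ lie entirely in the non-generic locus. The coordinate $u$ then gives nonzero local sections of $p_{B*}\CO_{\BL_{B\sigma}}$ supported there. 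So $\Eis_{B'}\CO_{T_\sigma}$ \emph{does} have torsion supported off the generic locus. You need a separate reason why no morphism $\Eis_B\CO_{T_\sigma}\to\Eis_{B'}\CO_{T_\sigma}$ lands in that torsion. The paper gets this from Lemma \ref{inclusion}, i.e.\ from Zhu's fully faithful embedding of $D\mathsf{Rep}(G^\vee(F),I)$. Under it, $\Eis_B$ of a free sheaf becomes a projective induced module, and restriction to the generic locus becomes an injective localization. The same lemma gives that $R\Hom(\Eis_B\CO_{T_\sigma},\Eis_{B'}\CO_{T_\sigma})$ is concentrated in degree $0$, which you implicitly use when identifying natural transformations with bimodule maps.

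\textbf{Existence of the extension.} This is the more serious gap. The sentence ``since $\CO(T_\sigma)$ is normal, it suffices to compute the order along each $D_O$'' presupposes that the module of admissible multipliers is reflexive. That means: a morphism $\Eis_B\CO_{T_\sigma}\to\Eis_{B'}\CO_{T_\sigma}$ defined away from the preimage of a codimension-two subset of $T_\sigma\sslash W^\sigma$ must extend. Normality of $\CO(T_\sigma)$ says nothing about this, and no Hartogs principle is available on $\BL_{G\sigma}$. Such preimages can contain entire irreducible components of $\BL_{G\sigma}$ on which $\Eis_{B'}\CO_{T_\sigma}$ is supported. For example, for $GL_3$ with $\sigma=1$, the component where $N$ is regular nilpotent lies over the one-dimensional locus of eigenvalues $(q^2c,qc,c)$.

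The paper avoids any codimension-two argument. It takes a reduced chain $B=B_1,\dots,B_n=B'$ of simple reflections and factors $\Eis_{B_j}^G=\Eis_{P_i}^G\circ\Eis^{M_i}_{B_j\cap M_i}$ globally through the minimal parabolics. Using multiplicativity of $L(1,-)$, only the rank-one Levis $M_i$ matter. There, by Lemma \ref{ord2}, the whole non-generic locus is a disjoint union of closed loci $V(f)$ inside the regular semisimple locus. Beauville--Laszlo gluing with the global formal model of Corollary \ref{formal-Eis} then yields the extension on the entire stack.

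Your rank-one local picture is also off. In rank one, and at generic points of $D_O$ in general, the points of $D_O$ are regular semisimple, so the fibre of $p_B$ over $n=0$ is finite, not $\mathbb{P}^1$. The correct model is $\CO$ (with $u=0$) versus $\CO[u]/(fu)$: there $1\mapsto 1$ is not a module map, and $1\mapsto f$ is the generator.
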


We firstly consider the structure of the mild singularities of $\BL_{B\sigma}$ and $\BL_{G\sigma}$. 

We introduce the following notations.
By the explicit formula \ref{L-function} of the L-function,
over the unique factorization ring $\CO(T_\sigma)$,
$L(1, \ad_{\fn\oplus\bar{\fn}})^{-1}$ has no multiple factors.
Let $f$ be an irreducible factor of $L(1, \ad_{\fn\oplus\bar{\fn}})^{-1}$, corresponding to the orbit $O$ of roots. Let $\BL_{B\sigma, f}$ denote the non-vanishing locus of $L(1, \ad_{\fn\oplus\bar{\fn}})^{-1}/f$ in $(B\sigma)^{rs}$. Similarly we define $(B\sigma)_f\subset B\sigma$, $\BL_{T\sigma, f}\subset\BL_{T\sigma}$ and $(T\sigma)_f\subset T\sigma$. Let $\BL_{B\sigma, f}^\wedge$ denote the completion of $\BL_{B\sigma, f}$ along the zero locus $V(f)$ of $f$. Similarly we define $\BL_{T\sigma, f}^\wedge$.

\begin{lemma}[Mild singularities of $\BL_{B\sigma}$]
    \label{formal-B}
    Notations as above.
    $\BL_{B\sigma, f}$ is of the form 
    \[
    \operatorname{Spec}(\mathcal{O}((T\sigma)_f)[u]/(fu))/ T
    \]
    over $(T\sigma)_f/T$ if $f$ divides $L(1, \ad_{\fn})^{-1}$ and it is of the form $\BL_{T\sigma, f}\simeq (T\sigma)_f/ T$
    if $f$ divides $L(1, \ad_{\bar{\fn}})^{-1}$, where $T$ acts on $T\sigma$ by $\sigma$-conjugation and $T$ acts on $u$ by character $\alpha^{-1}$, for a chosen root $\alpha\in O$.
    
    Moreover, the projection $\BL_{B\sigma, f} \to \BL_{T\sigma. f}$ is the annihilation of the coordinate $u$
    if $f$ divides $L(1, \ad_{\fn})^{-1}$ and it is an isomorphism if $f$ divides $L(1, \ad_{\bar{\fn}})^{-1}$.
\end{lemma}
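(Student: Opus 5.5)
The plan is to work on the presentation $\BL_{B\sigma,f} = \{(b\sigma,n)\in (B\sigma)_f\times \fn \mid \ad(b)(\sigma(n))=qn\}/B$. Recall that $(B\sigma)_f$ sits inside the regular semisimple locus. Since $\fn\subset\CN$ and $\ad(b)\sigma$ preserves the flag of $\fn$, any nilpotent $n$ with $b$ in $B$ can be taken in $\fn$.

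\textbf{Step 1: reduce to the torus.} By Lemma \ref{rs locus} (its proof via the deformation theory of \ref{complex} applies on the regular semisimple locus), every $b\sigma\in(B\sigma)^{rs}$ is $B$-conjugate to an element of $T\sigma$. Moreover, the $B$-stabilizer of such a point is $T^\sigma$-like, so its $U$-part is trivial. We therefore get an isomorphism
\[
\BL_{B\sigma,f}\simeq \{(t\sigma,n)\in (T\sigma)_f\times\fn \mid \ad(t)\sigma(n)=qn\}/T .
\]
To make this precise, I would show that the map $U\times (T\sigma)^{rs}\to (B\sigma)^{rs}$, $(u,t\sigma)\mapsto ut\sigma(u)^{-1}$, is an isomorphism. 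This can be done by the usual induction along the lower central series of $U$: on each root-group graded piece one must solve $x - \ad(t)\sigma(x)=y$. On an orbit $O$ this equation is invertible exactly when $L(0,\ad_{\fn_O})^{-1}(t)\neq 0$, by Proposition \ref{L-function}. The derived structure also transports along this isomorphism, since the equation is defined over the same base.

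\textbf{Step 2: linear algebra over $T\sigma$.} Decompose $\fn=\bigoplus_{O'}\fn_{O'}$. The equation $\ad(t)\sigma(n)=qn$ becomes $(1-q^{-1}\ad(t)\sigma)n=0$ componentwise. The operator $\ad(t)\sigma$ cyclically permutes the $\fn_\beta$, $\beta\in O'$. On $\fn_{O'}$, the solution space of the linear equation is governed by $\det(1-q^{-1}\ad(t)\sigma|_{\fn_{O'}}) = L(1,\ad_{\fn_{O'}})^{-1}$. Concretely, one eliminates all but one coordinate $u=n_\alpha$ with $\alpha\in O'$. For the type BC pair, one first solves on the sum of the two orbits, noting that the relevant determinant factorizes as in Proposition \ref{L-function}. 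What remains is a single equation $f_{O'}(t)\,u=0$ up to a unit.

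On $(T\sigma)_f$ every factor other than $f$ is invertible. Hence:
\begin{itemize}
\item if $f$ divides $L(1,\ad_{\fn})^{-1}$, i.e.\ $O\subset\Phi^+$, exactly one coordinate $u$ survives, subject to $fu=0$;
\item if $f$ divides $L(1,\ad_{\bar\fn})^{-1}$, no coordinate survives and $\BL_{B\sigma,f}\simeq(T\sigma)_f/T$.
\end{itemize}
Since $\ad(s)$ for $s\in T$ acts on $\fn_\alpha$ by $\alpha$, under the chosen normalization of the coordinate $u$ the $T$-weight is $\alpha^{-1}$. The projection to $\BL_{T\sigma,f}$ forgets $n$, i.e.\ sets $u=0$, which gives the claim about $q_B$.

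\textbf{Main obstacle.} I expect the difficulty to be doing Step 2 at the level of derived (or at least classical, but not reduced) stacks. One must show that eliminating the other coordinates is an isomorphism of schemes and not merely a bijection on points. This requires the eliminated linear operators to be invertible over the ring $\CO((T\sigma)_f)$, not just generically. I would handle this by writing $1-q^{-1}\ad(t)\sigma$ on $\fn_{O'}$ as a companion-type matrix whose minors are explicit and unit-valued on $(T\sigma)_f$. Because $(T\sigma)_f$ lies in the regular semisimple locus, $L(0)$-factors are invertible there, which is also what makes Step 1 valid.
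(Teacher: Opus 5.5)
Your proposal is correct and follows essentially the same route as the paper. The paper likewise first passes to $\{(t\sigma,n)\in(T\sigma)_f\times\fn\mid \ad(t)\sigma(n)=qn\}/T$ via the isomorphism $(T\sigma)^{rs}/T\simeq(B\sigma)^{rs}/B$ of Lemma \ref{rs locus} (your Lang-map isomorphism $U\times(T\sigma)^{rs}\simeq(B\sigma)^{rs}$ just makes this explicit), and then solves orbit by orbit, keeping the single coordinate $u=n_\alpha$ subject to $L(1,\ad_{\fn_O})^{-1}u=0$. Your anticipated obstacle is milder than you fear: the eliminated relations $q\,n_{\sigma\beta}=\pm(\sigma\beta)(t)\,n_\beta$ have unit coefficients on all of $T\sigma$. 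Likewise, no joint treatment of a BC pair is needed, because $\fn_{O^+}$ and $\fn_{O^-}$ are separately $\ad(t)\sigma$-stable.
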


\begin{proof}
    Consider the pullback of $\BL^{rs}_{B\sigma}$ along the isomorphism $(T\sigma)^{rs}/ T\to (B\sigma)^{rs}/ B$. We have that 
    \[
    \BL^{rs}_{B\sigma} \simeq \{ (t, n)\in (T\sigma)^{rs} \times \fn \mid
    \ad(t)(\sigma(n))=qn\}/ T,
    \]
    and similarly for $\BL_{B\sigma, f}$.
    By explicit description \ref{L-function} of the $L$-function $L(1, \ad_\fn)$, the irreducible factors of $L(1, \ad_\fn)^{-1}$ and $L(1, \ad_{\bar{\fn}})^{-1}$ are disjoint. 
    
    If $f$ does not divide $L(1, \ad_\fn)^{-1}$, $\ad_\fn(t) \circ \sigma - q$ is invertible on $\fn$ for $t \in \BL_{B\sigma, f}$. Thus $(t,n)\in \BL_{B\sigma, f}$ should satisfy $n=0$ and $\BL_{B\sigma, f}$ is isomorphic to $\BL_{T\sigma, f}$.

    If $f$ divides $L(1, \ad_\fn)^{-1}$, again by \ref{L-function}, $f$ divides $L(1, \ad_{\fn_O})^{-1}$ for some orbit $O$ and we have that
    \[
    \BL_{B\sigma, f} \simeq \{ (t, n)\in (T\sigma)_f \times \fn_O \mid
    \ad(t)(\sigma(n))=qn\}/ T,
    \]
    where $\fn_O = \bigoplus_{\alpha\in O}\fn_\alpha$, as the operator $\ad_\fn(t) \circ \sigma - q$ is invertible on $\fn/\fn_O$. Over $\fn_O$, $\sigma$ is a cyclic permutation matrix (up to a sign) and $\ad(t)$ is an invertible diagonal matrix $\operatorname{diag}(\{\alpha(t)\}_{\alpha\in O})$ under the basis in the pinning $\{X_\alpha\in \fn_\alpha\}$. 
    
    Fix an $\alpha\in O$. Then $n\in \fn_O$ satisfying $\ad(t)(\sigma(n))=qn$ is uniquely determined by its coordinate $u_\alpha$ in $\fn_\alpha$ and the equation $\ad(t)(\sigma(n))=qn$
    reduces to $L(1, \ad_{\fn_O})^{-1}u_\alpha = 0$. More precisely, the projection
    \[
    \{ (t, n)\in (T\sigma)_f \times \fn_O \mid
    \ad(t)(\sigma(n))=qn\}/ T \to 
    \{ (t, n)\in (T\sigma)_f \times \fn_\alpha \mid
    L(1, \ad_{\fn_O})^{-1}u_\alpha \}/ T
    \]
    is an isomorphism.
    
    Abbreviate $u_\alpha$ by $u$. Then
    \[
    \BL_{B\sigma, f} \simeq \{ (t, x)\in (T\sigma)_f \times k \mid
    fu = 0\}/ T \simeq \operatorname{Spec}(\mathcal{O}((T\sigma)_f)[u]/(fu))/ T.
    \]
    As $T$ acts on $\fn_\alpha$ by the character $\alpha$, $T$ acts on $u$ by $\alpha^{-1}$. 
\end{proof}

In the rank $1$ case, the bad locus in $\BL_{B\sigma}$ decomposes into a disjoint union of irreducible components.

\begin{lemma}[Rank $1$ case]
    \label{ord2}
    If the Weyl group $W$ satisfies $|W^\sigma| = 2$, for any irreducible factor $f$ of $L(1,\ad_{\fn\oplus \bar{\fn}})^{-1}$ in $\CO(T\sigma/T)$, its zero locus in $\BL_{B\sigma}$ (similarly for $B\sigma/B$) lies in $\BL_{B\sigma}^{rs}$ and these zero loci are mutually disjoint.

    For $G$, we have a similar result: 
    Let $f$ be as above. The zero locus of $fs(f)$ in $\BL_{G\sigma}$ (similarly for $G\sigma/G$) is contained in the regular semisimple locus $\BL_{G\sigma}^{rs}$ and these zero loci are mutually disjoint. 
    Here, as the function $fs(f)$ is $W^\sigma$-invariant, we may view it as a function on $G\sigma/G$ and $\BL_{G\sigma}$ via the twisted Chevalley map. 

    Moreover, the morphism $p_0\colon B\sigma/B \to G\sigma/G$ induces an isomorphism between $V(f)\subset B\sigma/B$ and the zero locus of $fs(f)$ in $G$, $V(fs(f))_G$ for such $f$.
\end{lemma}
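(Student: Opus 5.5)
The plan is to reduce everything to an explicit computation on the torus $T_\sigma$ of rank one (for the $\sigma$-twisted root system), using the twisted Chevalley isomorphism (Proposition \ref{Chevalley}) and the explicit $L$-functions of Proposition \ref{L-function}. We may pass to a simply connected cover $G_0\to G$ as in Proposition \ref{kostant}, since all loci are defined via pullbacks of functions on $T\sigma\sslash T$. Under this reduction, $G$ is a product of a torus and a simply connected semisimple group whose twisted root system has a single positive root $\alpha_O$; the torus factor plays no role.

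\textbf{Step 1: the zero loci in $\BL_{B\sigma}$.} When $|W^\sigma|=2$ there is, up to sign, one twisted root $\alpha=\alpha_O$, with $O$ of type A, or a pair $O,O'$ of type BC. By Proposition \ref{L-function}, the irreducible factors of $L(1,\ad_{\fn\oplus\bar\fn})^{-1}$ are among $1\mp q^{-|O|}e^{\pm\alpha_O}$ (and their BC analogues). Those of $L(0,\ad_\fn)^{-1}$ are $1\mp e^{\alpha_O}$, together with $1-e^{2\alpha}$ in the BC case. Two such factors have a common zero only if two of the values $\pm q^{\pm k}$, $\pm 1$ taken by $e^{\alpha}$ coincide. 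Since $q>1$ in characteristic $0$ and the exponents are positive, this never happens between distinct factors. Hence $V(f)\cap V(L(0,\ad_\fn)^{-1})=\emptyset$, so $V(f)\subset(T\sigma)^{rs}$, and distinct $f$'s have disjoint zero loci. Because the functions are pulled back from $T\sigma\sslash T\simeq B\sigma\sslash B$ (Corollary \ref{factor Chevalley}), the same holds in $B\sigma/B$ and $\BL_{B\sigma}$.

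\textbf{Step 2: the $G$-statement.} The function $fs(f)$ is $W^\sigma$-invariant, so it descends along $\chi$. Its zero locus in $T\sigma$ is $V(f)\cup V(s(f))$. The reflection $s$ sends $e^\alpha$ to $e^{-\alpha}$, so $s(f)$ is again one of the factors listed in Step 1, and $V(fs(f))\subset (T\sigma)^{rs}$ by Step 1. The inverse image under $\chi$ of the regular semisimple locus of $T\sigma\sslash W^\sigma$ is $(G\sigma)^{rs}$ by definition, which gives containment in $\BL^{rs}_{G\sigma}$. For disjointness of $V(fs(f))$ and $V(gs(g))$ with $\{g,s(g)\}\neq\{f,s(f)\}$, the same value comparison as in Step 1 applies.

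\textbf{Step 3: the isomorphism $V(f)_B\to V(fs(f))_G$.} Over the rs locus, Lemma \ref{rs locus} identifies $(B\sigma)^{rs}/B$ with $(T\sigma)^{rs}/T$, and Lemma \ref{iota-G} identifies $(G\sigma)^{rs}/G$ with $(T\sigma)^{rs}/N_G(T\sigma)$. The map $p_0$ thus becomes the quotient by $W^\sigma=\{1,s\}$ restricted to $V(f)$. The preimage of $V(fs(f))_G$ is $V(f)\sqcup V(s(f))$, and these are disjoint: $f\neq s(f)$ because $f$ involves $q^{-|O|}$ with a fixed sign of the exponent of $e^{\alpha}$. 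The element $s$ swaps the two pieces. So $V(fs(f))_T/N_G(T\sigma)\simeq V(f)/T$, since $T$ is the stabilizer of the component. Composing these identifications gives the claimed isomorphism. For $\BL$ instead of the $\sigma/G$ quotients, $V(f)$ need not lie in the generic locus, so one must check that the monodromy parts match. Lemma \ref{formal-B} describes $\BL_{B\sigma,f}$ as $\operatorname{Spec}\CO[u]/(fu)$, and an analogous computation at $\fg$-level shows the $n$-coordinate lives only in $\fn_O$.

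\textbf{Main obstacle.} I expect the hardest part to be this last point: the $\BL$-version, or checking that the stack quotient by $N_G(T\sigma)$ genuinely reduces to $T$ on $V(f)$, including the isotropy $T^\sigma$ versus $N$-stabilizers. I would handle it by working étale locally on the Galois cover of Lemma \ref{iota-G}.
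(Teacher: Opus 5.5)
Your Steps 1–3 are correct and follow essentially the paper's proof. The explicit rank-one $L$-factors separate the possible values of $e^{\alpha}$, which gives containment in the rs locus and disjointness. Over the rs locus, $p_0$ is the degree-two quotient by $W^\sigma$, so the preimage of $V(fs(f))_G$ splits as $V(f)\sqcup V(s(f))$ with the two pieces interchanged by $s$. The paper gets this covering from étaleness via Proposition \ref{complex}, and you get it from Lemma \ref{iota-G}; this amounts to the same thing.

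What you call the ``main obstacle'' is not part of the statement, which only asserts the isomorphism for $p_0\colon B\sigma/B\to G\sigma/G$. An $\BL$-level isomorphism would in fact fail when $f$ divides $L(1,\ad_{\bar{\fn}})^{-1}$: by Lemma \ref{formal-G} the map is then the closed immersion $u=0$. Your reduction to a simply connected cover is also never used.
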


\begin{proof}
    In this case, there is a unique positive root $\alpha$ of $W^\sigma$ and the number of positive orbits is $1$ (the orbit is denoted by $O$ in this case) or $2$ (the orbit is denoted by $O^+$, $O^-$ in this case). Then by \ref{L-function}, the $L$-function is 
    \[
    L(1, \ad_{\fn}|_O)^{-1} = 
    \begin{cases}
    1 - q^{-|O|}e^{\alpha}(t) & \text{$\alpha$ is of type A} \\
    (1 + q^{-|O^+|}e^{\alpha}(t))(1 - q^{-|O^-|}e^{\alpha}(t)) & \text{$\alpha$ is of type BC}
    \end{cases}
    \]
    Moreover, $\BL_{B\sigma}^{rs}$ is the nonzero locus of $L(0,\ad_{\fn})^{-1}$, which is $1 - e^{\alpha}(t)$ or $(1 - e^{\alpha}(t))(1 - e^{2\alpha}(t))$ according to the number of orbits.
    In both cases, the zero loci of the factors of $L(1,\ad_{\fn\oplus \bar{\fn}})^{-1}$ are disjoint and lie in $\BL_{B\sigma}^{rs}$.

    The statements about $G$ follows similarly from the above description of the $L$-function. For the statement about $V(f)\to V(fs(f))_G$, note that $V(f)\subset (B\sigma)^{rs}/B$ and $V(fs(f))_G\subset (G\sigma)^{rs}/G$. After restricting to $(B\sigma)^{rs}/B$, $p_0$ is \'etale by Proposition \ref{complex}. Thus $p_0^{-1}(V(fs(f))_G) = V(fs(f))\to V(fs(f))_G$ is a 2-fold Galois covering. For $f$ dividing $L(1, \ad_{\fn_O})^{-1}$, $s(f)$ divides $L(1, \ad_{\fn, s(O)})^{-1}$ and we have $\alpha_{s(O)}=-\alpha_O$. Thus $V(f)$ and $V(s(f))$ are disjoint by descriptions of the $L$-function above. Thus $V(f)\to V(fs(f))_G$ is an isomorphism.
\end{proof}   

In particular, in the rank $1$ case, $V(f)$ is closed in $\BL_{B\sigma}$. 

Now we will pass to the completions along a point in $V(f)$ or the entire $V(f)$ in the rank $1$ case.

\begin{definition}[Completions]
    Let $f$ be a irreducible factor of $L(1, \ad_{\fn})^{-1}$ in $\CO(T\sigma/T)$.
    Let $\pi_G \colon \BL_{G\sigma}\to G\sigma/ G$ be the projection. 
    
    For general $G$, let $x\in V(f)$ be a strongly regular semisimple element. We define $\BL_{G\sigma, p_0(x)}^\wedge$ as the completion of $\BL_{G\sigma}$ along $\pi_G^{-1}(p_0(x))$.

    Moreover, in the rank $1$ case, we define a global version: $\BL_{G\sigma, f}^\wedge$ is defined as the completion of $\BL_{G\sigma}$ along $\pi_G^{-1}(V(fs(f))$. 
\end{definition}

Here we consider the strongly regular semisimple points for the following reason: The completion of $p_0$ at $x$, $(B\sigma/B)_x^\wedge \to (G\sigma/G)_{p_0(x)}^\wedge$ is an isomorphism for strongly regular semisimple $x$, since $p_0$ is \'etale at $x$ and the isotropy subgroups at $x$ and $p_0(x)$ are isomorphic. 

\begin{lemma}
    Let $f$ be a irreducible factor of $L(1, \ad_{\fn})^{-1}$ in $\CO(T\sigma/T)$.
    The strongly regular semisimple points form a dense open set in $V(f)\subset B\sigma/B$.
\end{lemma}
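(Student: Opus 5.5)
We reduce the statement to one about the torus $T_\sigma = T\sigma\sslash T$, using the identification $(B\sigma)^{rs}/B \simeq (T\sigma)^{rs}/T$ of Lemma \ref{rs locus}. Since $f$ is an irreducible factor of $L(1,\ad_\fn)^{-1}$, Proposition \ref{L-function} gives $f = 1 \mp q^{-|O|}e^{\alpha_O}$ for a single orbit $O$. At a point of $V(f)$ we have $e^{\alpha_O} = \pm q^{|O|}$, which is not a root of unity. Hence $1 - e^{\alpha_O}$ and $1+e^{\alpha_O}$ do not vanish there, so $V(f)\cap T_\sigma$ lies entirely in the regular semisimple locus, at least for the factor coming from $\alpha_O$ itself. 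The other factors of $L(0,\ad_\fn)^{-1}$ vanish on closed subsets, and these will be handled together with the stabilizer condition below.

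The key step is to describe the non-strongly-regular points. A point $x\in V(f)$ fails to be strongly regular exactly when some $1\neq w\in W^\sigma$ fixes it. Since $W^\sigma$ is finite, the bad set is the finite union $\bigcup_{w\neq 1} V(f)\cap T_\sigma^w$. Each $T_\sigma^w$ is a closed subgroup of $T_\sigma$; it is proper because $W^\sigma$ acts faithfully on $X^*(T_\sigma)$. So the bad set is closed. It is open dense in $V(f)$ as soon as no component of $V(f)$ is contained in some $T_\sigma^w$.

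To show this, first observe that $V(f)$ is a translate of the codimension-one subtorus $\ker(e^{\alpha_O})$, or a finite union of such translates. Write $S=\ker(e^{\alpha_O})^\circ$. Suppose some coset $tS$ lies in $T_\sigma^w$. Then $w$ fixes $S$ pointwise, so $w$ acts trivially on $X^*(T_\sigma)/\mathbb{Z}\alpha_O$ up to torsion. A nontrivial element of the reflection group $W^\sigma$ with a fixed hyperplane is a reflection. This forces $w = s_{\alpha_O}$.

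It therefore remains to check that $s_{\alpha_O}$ does not fix $V(f)$ pointwise. Now $s_{\alpha_O}$ acts by $e^{\alpha_O}\mapsto e^{-\alpha_O}$. A fixed point on $V(f)$ would therefore satisfy $e^{\alpha_O}(x)^2$ equal to a root of unity; more precisely, by Proposition \ref{reflection}, $x$ would be a zero of $1\mp e^{\alpha_O}$ (after passing to a simply connected cover $G_0$ as in Proposition \ref{kostant}). That contradicts $e^{\alpha_O}(x)=\pm q^{|O|}$. So in fact no point of $V(f)$ is fixed by any reflection, and each bad piece $V(f)\cap T_\sigma^w$ with $w$ non-reflection is a proper closed subset of every component. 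Its complement is therefore open dense.

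The main obstacle is this stabilizer analysis for non-reflection $w$, and the passage through the cover $G_0$: the reflection criterion of Proposition \ref{reflection} is stated only for simply connected $G$. I would handle the latter by pulling back along the finite surjection $T_{0,\sigma}\to T_\sigma$, which preserves density and the $W^\sigma$-action.
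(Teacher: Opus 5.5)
Your argument is correct in substance but reaches the key point by a different route from the paper, and a few sentences need repair.

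\textbf{Comparison.} The paper argues with functions. The fixed locus of each $w\neq 1$ is $\bigcap_j V(\chi_{j,w}-1)$ for nontrivial characters $\chi_{j,w}$. An irreducible factor $f$ of $1\pm q^{-|O|}e^{\alpha_O}$ cannot divide any $\chi-1$, so the irreducible set $V(f)$ lies in no fixed locus.

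You argue with the group instead. A coset $tS\subset T_\sigma^w$ forces $w$ to fix $S$, so $(w-1)X^*(T_\sigma)_{\mathbb{Q}}\subset \mathbb{Q}\alpha_O$. You then use that the only nontrivial elements of a Weyl group fixing a hyperplane are root reflections, which gives $w=s_{\alpha_O}$. Finally you exclude $s_{\alpha_O}$, because a fixed point would satisfy $e^{\alpha_O}(x)^2=1$, whereas $e^{\alpha_O}(x)=\pm q^{|O|}$ and we are in characteristic $0$.

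Your route identifies exactly which Weyl element could be dangerous, but it costs the reflection-group fact. You can shortcut at the same place, which is essentially the paper's argument. Once $(w-1)X^*\subset\mathbb{Q}\alpha_O$ and $w\neq 1$, some $w\chi-\chi=n\alpha_0\neq 0$ must be identically $1$ on $tS$, where $\alpha_0$ is the primitive vector on the ray of $\alpha_O$. This forces $e^{\alpha_0}(t)^n=1$, which contradicts $e^{\alpha_O}(t)=\pm q^{|O|}$. Proposition \ref{reflection} and the passage to a simply connected cover are not needed at all.

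\textbf{Repairs.}

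(i) $f$ is in general only a proper factor of $1\mp q^{-|O|}e^{\alpha_O}$. For example, for $SL_2$ one has $\alpha=2\varpi$ and $1-q^{-1}e^{2\varpi}$ splits. So $V(f)$ is a single coset $tS$; your hedge (``or a finite union of such translates'') covers this.

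(ii) The sentence ``no point of $V(f)$ is fixed by any reflection'' is false. Take $GL_3$, $\sigma=1$, $f=1-q^{-1}t_1/t_2$: the point $(q,1,1)\in V(f)$ is fixed by $s_{e_2-e_3}$. What you actually proved is that no point of $V(f)$ is fixed by $s_{\alpha_O}$. Combined with your previous paragraph, this shows $V(f)\cap T_\sigma^w$ is a proper closed subset of each component for \emph{every} $w\neq 1$, reflection or not. That is what you need.

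(iii) You defer the other factors of $L(0,\ad_\fn)^{-1}$ to ``the stabilizer condition'' but never return to them. Either prove that a trivial $W^\sigma$-stabilizer implies regular semisimplicity, or check directly:
if $\alpha_{O'}\notin\mathbb{Q}\alpha_O$, then $e^{\alpha_{O'}}$ is nonconstant on $tS$, so $1\mp e^{\alpha_{O'}}$ does not vanish identically there;
if $\alpha_{O'}\in\mathbb{Q}\alpha_O$, then $e^{\alpha_{O'}}$ is constant on $tS$ with a value that is not a root of unity.
The paper's proof is equally silent on this point.
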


\begin{proof}
    For any $w\in W^\sigma$, the action of $w$ on $T_\sigma$ defines an automorphism of $T_\sigma$. Thus for $w\neq 1$, there is a finite number of nontrivial characters $\chi_{1,w}, \dots, \chi_{i_w,w}$ such that $x$ is fixed by $w$ if and only if $x$ lies in the zero locus of $\chi_{j,w}-1$ for each $j$.

    The strongly regular semisimple locus is the complement of 
    \[
    \bigcup_{w\in W-\{1\}} \bigcap_j V(\chi_{j,w}-1).
    \]
    Thus it is open. Moreover, by the explicit description of the $L$-function in Proposition \ref{L-function}, $f$ is a irreducible factor of $1\pm q^{-|O|}\alpha(t)$. Thus $f$ does not divides any $\chi_{j,w}-1$ and the intersection of $V(f)$ and the strongly regular semisimple locus is dense.
\end{proof}

\begin{lemma}[Mild singularities of $\BL_{G\sigma}$]
    \label{formal-G}
    Let $f$ be a irreducible factor of $L(1, \ad_{\fn})^{-1}$ in $\CO(T\sigma/T)$ and $x\in V(f)\subset B\sigma/B$ be a strongly regular and semisimple point.
    The pullback of $\BL_{G\sigma, p(x)}^\wedge$ along $p_B \colon \BL_{B\sigma} \to \BL_{G\sigma}$ is of the form
    \[
    \coprod_{w\in W^\sigma} \BL_{B\sigma, w(x)}^\wedge
    \]
    (note that there is a $W$-action on $(B\sigma)^{rs}/B\simeq (T\sigma)^{rs}/T$ and $x \in (B\sigma)^{rs}/B$).
    
    Moreover, under the representation of $\BL_{B\sigma,f}$ in Lemma \ref{formal-B}, the morphism $\BL_{B\sigma, w(x)}^\wedge \to \BL_{G\sigma, p_0(x)}^\wedge$ can be identified with the identity on 
    \[
    \operatorname{Spf}(\mathcal{O}((T\sigma)_{w(x)}^\wedge[u]/(fu))/ T
    \]
    if $w(f)$ divides $L(1, \ad_\fn)^{-1}$, or it can be identified with
    \[
    \operatorname{Spf}(\mathcal{O}(T\sigma)_{w(x)}^\wedge)/ T
    \to \operatorname{Spf}(\mathcal{O}(T\sigma)_{w(x)}^\wedge[u]/(w(f)u))/ T
    \]
    if $w(f)$ divides $L(1, \ad_{\bar{\fn}})^{-1}$. Here $\CO(T\sigma)_x^\wedge$ is an abuse of notation for the completion of $\CO(T\sigma)$ along the inverse image of $x$ in $T$.

    Moreover, in the rank $1$ case, the following global version holds.
    The pullback of $\BL_{G\sigma, f}^\wedge$ along $p_B \colon \BL_{B\sigma} \to \BL_{G\sigma}$ is of the form
    \[
    \coprod_{w\in W^\sigma} \BL_{B\sigma, w(f)}^\wedge.
    \]
    Moreover, the morphism $\BL_{B\sigma, w(f)}^\wedge \to \BL_{G\sigma, f}^\wedge$ can be identified with the identity on 
    \[
    \operatorname{Spf}(\mathcal{O}(T\sigma)_{w(f)}^\wedge[u]/(fu))/ T
    \]
    if $w(f)$ divides $L(1, \ad_\fn)^{-1}$, or it can be identified with
    \[
    \operatorname{Spf}(\mathcal{O}(T\sigma)_{w(f)}^\wedge/ T
    \to \operatorname{Spf}(\mathcal{O}(T\sigma)_{w(f)}^\wedge[u]/(w(f)u))/ T
    \]
    if $w(f)$ divides $L(1, \ad_{\bar{\fn}})^{-1}$.
\end{lemma}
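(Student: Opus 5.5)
The plan is to compute everything first on the underlying reduced stacks $B\sigma/B\to G\sigma/G$, where the formal neighbourhoods are controlled by the twisted Chevalley map. Then we add the monodromy coordinate using the explicit description in Lemma \ref{formal-B}.

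\textbf{Step 1: the pullback decomposes over $W^\sigma$.} Since $x$ is strongly regular semisimple, it lies in $(B\sigma)^{rs}/B\simeq (T\sigma)^{rs}/T$. By Lemma \ref{iota-G}, the map $(B\sigma)^{rs}/B\to (G\sigma)^{rs}/G$ is a Galois covering with group $W^\sigma$. The fibre of $p_0$ over $p_0(x)$ is therefore the orbit $\{w(x)\}_{w\in W^\sigma}$. These points are pairwise distinct because the stabilizer of $x$ in $W^\sigma$ is trivial.

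Since $p_0$ is étale at each $w(x)$ (Proposition \ref{complex}) and the isotropy groups of $w(x)$ and $p_0(x)$ are both $T^\sigma$, completion gives isomorphisms $(B\sigma/B)^\wedge_{w(x)}\simto (G\sigma/G)^\wedge_{p_0(x)}$. Combining these, the pullback of $(G\sigma/G)^\wedge_{p_0(x)}$ along $p_0$ is $\coprod_w (B\sigma/B)^\wedge_{w(x)}$.

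\textbf{Step 2: adding the monodromy.} $\BL_{G\sigma}$ is cut out in $G\sigma\times\CN$ over $G\sigma/G$ by the equation $\ad(g)\sigma(n)=qn$, and similarly for $B$ with $\fn$. So I would compare the two fibre problems over the formal neighbourhoods.

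Using the isomorphism of Step 1, identify $(G\sigma/G)^\wedge_{p_0(x)}$ with $\operatorname{Spf}\CO(T\sigma)^\wedge_{w(x)}/T$ for a fixed $w$. Over this chart, the monodromy equation for $G$ reads: $n\in\fg$ is nilpotent and satisfies $\ad(t)\sigma(n)=qn$. Decompose $\fg=\ft\oplus\fn\oplus\bar\fn$ into $\sigma$-orbit pieces. Over the neighbourhood of $w(x)$, the operator $\ad(t)\circ\sigma-q$ is invertible on every orbit piece except the one orbit $w(O)$ or $-w(O)$ on which $w(f)$ vanishes. In particular it is invertible on $\ft$ and on all other orbits, since $x$ is regular semisimple and the factors of the $L$-function are coprime.

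It follows, exactly as in the proof of Lemma \ref{formal-B}, that $\BL_{G\sigma,p_0(x)}^\wedge\simeq \operatorname{Spf}(\CO(T\sigma)^\wedge_{w(x)}[u]/(w(f)u))/T$. Here $u$ is the coordinate on the relevant root line, and nilpotence is automatic since the line lies in a root space.

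On the $B$-side, Lemma \ref{formal-B} applied near $w(x)$ gives two cases:
\begin{itemize}
\item If $w(f)\mid L(1,\ad_\fn)^{-1}$, the relevant line lies in $\fn$, so the map is the identity of $\operatorname{Spf}(\CO^\wedge[u]/(w(f)u))/T$.
\item If $w(f)\mid L(1,\ad_{\bar\fn})^{-1}$, the line lies in $\bar\fn$ and is absent on the $B$-side, so the map is the closed immersion $u=0$.
\end{itemize}

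\textbf{Step 3: the global rank-one version.} In rank $1$, Lemma \ref{ord2} says $V(f)$ and $V(s(f))$ are disjoint, lie in the regular semisimple locus, and $V(f)\simto V(fs(f))_G$. The same argument as Steps 1 and 2 therefore runs over the whole of $V(fs(f))$ instead of over a point, with $W^\sigma=\{1,s\}$.

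\textbf{Main obstacle.} The delicate step is Step 2: one must check that the completed $G$-stack really has a single monodromy coordinate, i.e.\ that off the one orbit, $\ad(t)\sigma-q$ is invertible on $\fg$ throughout the formal neighbourhood, including the $\ft$-part and the BC-type orbits. One must also check that the $T$-action on $u$ matches under the $w$-twisted identification. I would handle this by computing determinants via Proposition \ref{L-function} and noting that invertibility at the closed point propagates to the completion.
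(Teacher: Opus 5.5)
Your proposal is correct and follows the paper's argument. The paper also first splits the pullback over $p_0(x)$ into $\coprod_{w\in W^\sigma}$ of formal neighbourhoods of the points $w(x)$, using that $p_0$ is \'etale there with matching isotropy (it argues with $n$-th jet neighbourhoods). It then base-changes along $\pi_G$ to the fibre product $\BL'_{B\sigma}=B\sigma/B\times_{G\sigma/G}\BL_{G\sigma}$, which is exactly your $T$-chart computation of the $G$-side monodromy equation. Finally it notes that near $w(x)$ the monodromy is confined to $\fn_{w(O)}$, whose intersection with $\fn$ is either $0$ or all of $\fn_{w(O)}$, and concludes by Lemma \ref{formal-B}.

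Two justifications in your Step 2 should be corrected. First, coprimality of the $L$-factors does not stop another factor $f'\neq f$ from also vanishing at $x$, since coprime functions can have common zeros. The one-coordinate description therefore needs $x\in\BL_{B\sigma,f}$, i.e.\ $x$ avoids every other $V(f')$. The paper assumes this implicitly; it holds on a dense open subset of $V(f)$, and that is all that is used later. Second, invertibility of $\ad(t)\circ\sigma-q$ on $\ft$ comes from $\sigma$ having finite order, not from regular semisimplicity.
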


\begin{proof}
    We will consider the general case and the arguments in the rank $1$ case hold in the same way.
    Consider the following commutative diagram
    \[
    \begin{tikzcd}
    \BL_{B\sigma} \arrow[ddr, "p"'] \arrow[dr, "i_B'"] \arrow[drr, "\pi_B", bend left = 30] & & \\
    &\BL_{B\sigma}' \arrow[r, "\pi_B'"] \arrow[d, "p'"] & B\sigma/ B \arrow[d, "p_0"] \\
    &\BL_{G\sigma} \arrow[r, "\pi_G"] & G\sigma/ G
    \end{tikzcd}
    \]
    where $\BL_{B\sigma}'$ is the pullback of the diagram $B\sigma/ B\to G\sigma/ G\leftarrow \BL_{G\sigma}$. We calculate that 
    \[
    \BL_{B\sigma}' = \{(b\sigma, n)\in B\sigma\times \CN \mid \ad(b)(\sigma(n))=qn\}/ B.
    \]
    and thus $\BL_{B\sigma}\to \BL_{B\sigma}'$ is a closed immersion.
    
    We claim that the pullback of $\BL_{G\sigma,p_0(x)}^\wedge$ along $p'$ is isomorphic to $|W^\sigma|$ copies of itself and it is $\coprod_{w\in W^\sigma} \BL_{B\sigma, \pi_B'^{-1}(w(x))}'^\wedge$ explicitly. Indeed, for each $n\in \BN$, since $p_0$ is \'etale, the pullback of the $n$-th jet neighborhood of $p_0(x)$ in $G\sigma/G$ along $p_0$ is the disjoint union of the $n$-th jet neighborhoods of $w(x)$ in $B\sigma/B$ for $w\in W^\sigma$. Pulling back along $\pi_G$, the pullback of the $n$-th jet neighborhood of $\pi_G^{-1}(p_0(x))$ in $\BL_{G\sigma}$ along $p'$ is the disjoint union of the $n$-th jet neighborhoods of $\pi_B'^{-1}(w(x))$ in $\BL_{B\sigma}$. Passing to the limits and we prove the claim.
    
    Then it suffices to determine each $\BL_{B\sigma, \pi_B^{-1}(w(x))}^\wedge \to \BL_{B\sigma, \pi_B'^{-1}(w(x))}'^\wedge$.
    By \ref{L-function}, denoting by $O$ the orbit of roots corresponding to $f$, the formal neighborhood of $\pi_B'^{-1}(w(x))$ lies in the substack
    \[
    \{(b\sigma, n)\in B\sigma\times \fn_{w(O)} \mid \ad(b)(\sigma(n))=qn\}/ B
    \]
    of $\BL_{B\sigma}'$ and its inverse image in $\BL_{B\sigma}$ is in 
    \[
    \{(b\sigma, n)\in B\sigma\times \fn_{w(O)}\cap \fn \mid \ad(b)(\sigma(n))=qn\}/ B.
    \]
    $\fn\cap \fn_{w(O)}$ is either $0$ or $\fn_{w(O)}$, according to whether $w(f)$ divides $L(1,\ad_\fn)^{-1}$.
    Thus the result follows from the calculations in \ref{formal-B}.
\end{proof}

The following corollary is a translation of the previous result in terms of $\Eis$.

\begin{corollary}
    \label{formal-Eis}
    For an irreducible factor $f$ of $L(1,\ad_{\fn\oplus\bar{\fn}})$ and strongly regular semisimple $x \in V(f)$, consider $\Eis \CO_{\BL_{T\sigma, x}^\wedge}$ as an 
    $\CO_{\BL_{G\sigma, p_0(x)}^\wedge}$-module. Then $\BL_{G\sigma, p_0(x)}^\wedge$ can be identified with $\Spf(\mathcal{O}(T\sigma)_x^\wedge[u]/(fu))/T$ and $\Eis \CO_{\BL_{T\sigma, x}^\wedge}$ can be identified with:
    \begin{itemize}
        \item The $\mathcal{O}(T\sigma)_x^\wedge[u]/(fu)$-module $\mathcal{O}(T\sigma)_x^\wedge[u]/(fu)$ with its natural $T$-action (as in Lemma \ref{formal-B}) if $f$ divides $L(1, \ad_\fn)^{-1}$;
        \item Or the $\mathcal{O}(T\sigma)_x^\wedge[u]/(fu)$-module $\mathcal{O}(T\sigma)_x^\wedge$ with natural $T$-action and $u=0$ if not.
    \end{itemize}

    Moreover, in the rank $1$ case, the following global version holds.
    Consider $\Eis \CO_{\BL_{T\sigma, f}^\wedge}$ as an 
    $\CO_{\BL_{G\sigma, f}^\wedge}$-module. Then $\BL_{G\sigma, f}^\wedge$ can be identified with $\Spf(\mathcal{O}(T\sigma)_f^\wedge[u]/(fu))/T$ and $\Eis \CO_{\BL_{T\sigma, f}^\wedge}$ can be identified with:
    \begin{itemize}
        \item The $\mathcal{O}(T\sigma)_f^\wedge[u]/(fu)$-module $\mathcal{O}(T\sigma)_f^\wedge[u]/(fu)$ with natural $T$-action if $f$ divides $L(1, \ad_\fn)^{-1}$;
        \item Or the $\mathcal{O}(T\sigma)_f^\wedge[u]/(fu)$-module-module $\mathcal{O}(T\sigma)_f^\wedge$ with natural $T$-action and $u=0$ if not.
    \end{itemize}
\end{corollary}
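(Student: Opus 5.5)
The plan is to compute $\Eis\CO_{\BL^\wedge_{T\sigma,x}}$ by base change over the formal neighbourhood, using the decomposition of Lemma \ref{formal-G}. Since $\Eis = p_{B*}q_B^*$ and $p_B$ is proper, the restriction of $\Eis\CO_{\BL_{T\sigma,x}^\wedge}$ to $\BL_{G\sigma,p_0(x)}^\wedge$ is computed by proper (formal) base change as the pushforward along $p_B^{-1}(\BL^\wedge_{G\sigma,p_0(x)})\to \BL^\wedge_{G\sigma,p_0(x)}$ of $q_B^*\CO_{\BL^\wedge_{T\sigma,x}}$. By Lemma \ref{formal-G}, this preimage is $\coprod_{w\in W^\sigma}\BL^\wedge_{B\sigma,w(x)}$. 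The sheaf $q_B^*\CO_{\BL^\wedge_{T\sigma,x}}$ is supported only on the component $w=1$, because $q_B$ maps $\BL^\wedge_{B\sigma,w(x)}$ into the formal neighbourhood of $w(x)$. Strong regular semisimplicity gives $w(x)\neq x$ for $w\ne 1$, so these neighbourhoods are disjoint from that of $x$. Hence only the summand $\BL^\wedge_{B\sigma,x}\to\BL^\wedge_{G\sigma,p_0(x)}$ contributes, and on it $q_B^*\CO_{\BL^\wedge_{T\sigma,x}}$ is the structure sheaf $\CO_{\BL^\wedge_{B\sigma,x}}$, since $q_B$ restricted there lands in the neighbourhood of $x$.

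Next I split into the two cases of Lemma \ref{formal-G} for $w=1$.

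\textbf{Case $f\mid L(1,\ad_\fn)^{-1}$.} The map $\BL^\wedge_{B\sigma,x}\to\BL^\wedge_{G\sigma,p_0(x)}$ is identified with the identity of $\Spf(\CO(T\sigma)^\wedge_x[u]/(fu))/T$. Lemma \ref{formal-B} describes $\BL_{B\sigma,f}$ in exactly this form, with $T$ acting on $u$ through $\alpha^{-1}$. This gives both the identification of $\BL^\wedge_{G\sigma,p_0(x)}$ and the statement that the pushforward is the free rank one module $\CO(T\sigma)^\wedge_x[u]/(fu)$ with its natural $T$-action.

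\textbf{Case $f\mid L(1,\ad_{\bar\fn})^{-1}$.} Here $w=1$ falls in the second alternative of Lemma \ref{formal-G}: the map is the closed immersion $\Spf(\CO(T\sigma)^\wedge_x)/T\to\Spf(\CO(T\sigma)^\wedge_x[u]/(fu))/T$ cut out by $u=0$. The pushforward of the structure sheaf is therefore $\CO(T\sigma)^\wedge_x$, with $u$ acting by $0$. To identify the target with the same model in both cases, I use that $\BL^\wedge_{G\sigma,p_0(x)}$ is intrinsic, so the model from Lemma \ref{formal-G} applies. The only bookkeeping is matching $f$ with $w(f)$, which for $w=1$ is trivial.

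There are no higher direct images, because each map is either an isomorphism or a closed immersion. So in both cases $\Eis$ of the completed structure sheaf is an honest module, as claimed.

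The rank one global version runs through the same argument. The pointwise neighbourhoods are replaced by $\BL^\wedge_{B\sigma,w(f)}$ and $\BL^\wedge_{G\sigma,f}$, and one uses the global part of Lemma \ref{formal-G}. The disjointness of the $w(f)$-neighbourhoods for $w\neq1$ comes from Lemma \ref{ord2}: $V(f)$ and $V(s(f))$ are disjoint, and $V(f)\to V(fs(f))_G$ is an isomorphism.

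The main obstacle is justifying base change along a formal completion of a stack, i.e.\ that pushforward commutes with restriction to $\BL^\wedge_{G\sigma,p_0(x)}$. I would handle this as in the proof of Lemma \ref{formal-G}: work with the $n$-th jet neighbourhoods, apply ordinary proper base change to each finite-order thickening, and pass to the inverse limit. Mittag-Leffler holds because every term is coherent and concentrated in degree $0$.
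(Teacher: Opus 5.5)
Your argument is correct and is essentially the paper's own route. The paper gives no separate proof and states the corollary as a direct translation of Lemma \ref{formal-G}: base change to $\coprod_{w}\BL^\wedge_{B\sigma,w(x)}$, note that by strong regularity only the $w=1$ summand meets the support of $q_B^*\CO_{\BL^\wedge_{T\sigma,x}}$, and push forward the structure sheaf along either the identity or the closed immersion $u=0$, as you do. The one step to phrase more carefully is the case $f\mid L(1,\ad_{\bar{\fn}})^{-1}$: Lemma \ref{formal-G} is stated only for factors of $L(1,\ad_\fn)^{-1}$, so you cannot directly place $w=1$ in its ``second alternative.'' Instead, apply the lemma to $w^{-1}(f)$ and $w^{-1}(x)$ for some $w\in W^\sigma$ with $w^{-1}(f)\mid L(1,\ad_\fn)^{-1}$ (e.g.\ the longest element), whose $w$-th summand is $\BL^\wedge_{B\sigma,x}$; alternatively, observe that the lemma's proof never uses positivity of the orbit.
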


Next we construct an inclusion $\Hom(\Eis_B, \Eis_{B'}) \to \Hom(\eis_{B}, \eis_{B'})$. For this purpose, we construct natural transformations $t_l \colon \pi_G^*\eis \to \Eis$ and $t_r \colon \pi_{G*}\Eis\to \eis$ as follows.

Consider the following commutative diagram:
\begin{equation}
\label{commutative}
    \begin{tikzcd}
    \BL_{B\sigma} \arrow[ddr, "p"'] \arrow[dr, "i_B'"] \arrow[drr, "\pi_B", bend left = 60] & & \\
    &\BL_{B\sigma}' \arrow[r, "\pi_B'"] \arrow[d, "p'"] & B\sigma/ B \arrow[d, "p_0"]\arrow[ull, "i_B"', bend right = 30] \\
    &\BL_{G\sigma} \arrow[r, "\pi_G", shift left = 1ex] & G\sigma/ G
    \ar[l, "i_G", shift left = 1ex]
    \end{tikzcd}
\end{equation}
where $\BL_{B\sigma}'$ is the pullback of the diagram $B\sigma/ B\to G\sigma/ G\leftarrow \BL_{G\sigma}$.

For a sheaf $\mathcal{F}$ in $D\Coh(\BL_{T\sigma})_\chi$, abusing the notation, denote also by $\CF$ its pullback via $B\sigma/B$.

We firstly construct $t_l$.
By proper base change, we have $\pi_G^*\eis_B\CF\simeq p'_*\pi_B'^*\CF$. Moreover, we have $\Eis_B\CF\simeq p_*\pi_B^*\CF\simeq p'_*i_{B*}'i_B'^*\pi_B'^*\CF$. Then we define
\[
t_l\colon \pi_G^*\eis_B \simeq p'_*\pi_B'^*\to p'_*i_{B*}'i_B'^*\pi_B'^*\simeq \Eis_B
\]
by precomposing the adjunction $\mathbb{1}\to i_{B*}'i_B'^*$ by $\pi_B'^*$ and then postcomposing $p'_*$.
We remark that the adjunction of $t_l$, $\eis\to \pi_*\Eis_B$ is given by
\[
\eis_B = p_{0*} \xrightarrow{\text{adj}} p_{0*}\pi_{B*}'\pi_B'^* 
\to p_{0*}\pi_{B*}'i_{B*}'i_B'^*\pi_B'^* \simeq p_{0*}\pi_{B*}\pi_B^*
\simeq \pi_{G*}\Eis_B.
\]

Then we construct $t_r$. In the commutative diagram above, we have $\pi_{G*}\Eis\simeq p_{0*}\pi_{B*}\pi_B^*$. As $\pi_{B*}\circ i_{B*}=\mathbb{1}$, $\eis_B\simeq p_{0*}\pi_{B*}i_{B*}$.
Moreover, there is an adjunction $\pi_{B}^*\to i_{B*}$ of $1\xrightarrow{\sim} \pi_{B*}\circ i_{B*}$.
Then we define
\[
t_r\colon \pi_{G*}\Eis_B\to \eis_B
\]
by applying $p_{0*}\pi_{B*}$ to the adjunction $\pi_{B}^*\to i_{B*}$.

We remark that $t_r\circ \operatorname{adj}(t_l)$ is the identity on $\eis_B$.

\begin{definition}[Restriction]
\label{restriction}
    For two sheaves $\CF$ and $\CG$ in $D\Coh(\BL_{T\sigma})_\chi$, we define a map
    \[
    \res\colon \Hom(\Eis_B\CF, \Eis_{B'}\CG) \to  \Hom(\eis_{B}\CF, \eis_{B'}\CG)
    \]
    by
    \[
    \Hom(\Eis_B\CF, \Eis_{B'}\CG)\xrightarrow{t_l(\CF)^*}
    \Hom(\pi_G^*\eis_B\CF, \Eis_{B'}\CG)
    \xrightarrow[\sim]{\text{adj}}\Hom(\eis_B\CF, \pi_{G*}\Eis_{B'}\CG)
    \xrightarrow{t_r(\CG)_*}\Hom(\eis_B\CF, \eis_{B'}\CG),
    \]
    where we denote adj for the adjunction.
\end{definition}

As the projection $\BL_{G\sigma}^g \to (G\sigma)^g/G$ is an isomorphism 
(Lemma \ref{generic locus}), the following statement is immediate.

\begin{lemma}
    \label{transformation}
    After postcomposing $j^{g*}_G$, where $j^g_G$ is the open immersion $\BL_{G\sigma}^g\to \BL_{G\sigma}$ or $(G\sigma)^g/G\to G\sigma/G$, $t_l$ and $t^r$ become isomorphisms.
\end{lemma}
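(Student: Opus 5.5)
The plan is to reduce everything to the case where the ambient diagram is restricted over the generic locus, and then to use base change. First I will show that the closed immersions $i_G\colon G\sigma/G\to \BL_{G\sigma}$ and $i_B\colon B\sigma/B\to\BL_{B\sigma}$, together with the projections $\pi_G,\pi_B$, become mutually inverse isomorphisms after restriction to the generic loci. This is exactly Lemma \ref{generic locus}: $\BL^g_{G\sigma}\to (G\sigma)^g/G$ and $\BL^g_{B\sigma}\to (B\sigma)^g/B$ are isomorphisms. Since the generic loci are defined by pulling back along the twisted Chevalley map (Corollary \ref{factor Chevalley}), $p_0^{-1}((G\sigma)^g/G)=(B\sigma)^g/B$. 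It follows that $\pi_B'^{-1}$ of the generic locus of $B\sigma/B$ is $p'^{-1}(\BL^g_{G\sigma})$.

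Next I will show that this preimage identifies with $\BL^g_{B\sigma}$, so that $i_B'$ is an isomorphism over the generic locus. Because $\BL'_{B\sigma}$ is the fiber product of $B\sigma/B\to G\sigma/G\leftarrow\BL_{G\sigma}$, and $\pi_G$ is an isomorphism over $(G\sigma)^g/G$, the projection $\pi_B'$ is also an isomorphism over $(B\sigma)^g/B$. Since $\pi_B'\circ i_B'=\pi_B$ is an isomorphism there as well, $i_B'$ is an isomorphism over the generic locus.

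With these identifications, I will check each transformation after applying $j_G^{g*}$, commuting $j^{g*}_G$ past the pushforwards by flat (open) base change. For $t_l$, the map is $p'_*$ applied to the unit $\pi_B'^*\mathcal F\to i'_{B*}i_B'^*\pi_B'^*\mathcal F$. The unit of the adjunction for an isomorphism is an isomorphism, so $j_G^{g*}t_l$ is an isomorphism. For $t_r$, the map is $p_{0*}\pi_{B*}$ applied to the counit-type map $\pi_B^*\to i_{B*}$, which is adjoint to $\mathbb 1\simeq\pi_{B*}i_{B*}$. Over the generic locus $\pi_B$ and $i_B$ are inverse isomorphisms, so $\pi_B^*\simeq i_{B*}$ and the map is the identity. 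Restricting $p_{0*}\pi_{B*}$ to the generic locus of $G\sigma/G$ uses again $p_0^{-1}$ of the generic locus being generic.

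The only delicate point is bookkeeping: making sure each base change square is cartesian over the open loci. In particular, $\BL'_{B\sigma}$ restricted over $(G\sigma)^g/G$ must agree with the generic locus of $B\sigma/B$, which follows from Corollary \ref{factor Chevalley}. Once that is verified, the rest is formal.
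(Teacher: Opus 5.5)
Your proposal is correct and follows the paper's route: the paper simply declares the lemma immediate from Lemma \ref{generic locus}. You have written out the bookkeeping it leaves implicit. By Corollary \ref{factor Chevalley}, $p_0^{-1}$ of the generic locus is generic, so $\pi_B'$ and $i_B'$ become isomorphisms over it, and after flat base change along the open immersions the maps $\mathbb{1}\to i'_{B*}i_B'^{*}$ and $\pi_B^*\to i_{B*}$ restrict to isomorphisms.
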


Note that $\Hom(\Eis_B\CF, \Eis_{B'}\CG)$ and $\Hom(\eis_B\CF, \eis_{B'}\CG)$ are $\CO(\BL_{T\sigma})=\CO(T\sigma/T)$-bimodules:
$\CO(\BL_{T\sigma})=\CZ(D\Coh(\BL_{T\sigma})_1)$ acts on $\CF$ and $\CG$ and then acts on $\Eis_B\CF$ and $\Eis_{B'}\CG$. 
There is also a $\CO(G\sigma/G)=\CO(\BL_{G\sigma})$-module structure on the $\Hom$'s, as the base space is $\BL_{G\sigma}$ or $G\sigma/G$ and we have the following compatibility.

\begin{lemma}
    Both the left and the right $\CO(T\sigma/T)$-module structure of $\Hom(\eis_B\CF,\eis_{B'}\CG)$ and $\Hom(\Eis_B\CF, \Eis_{B'}\CG)$ are compatible with the $\CO(G\sigma/G)$-structure, via the twisted Chevalley map $\chi^*\colon \CO(G\sigma/G)\to \CO(T\sigma/T)$.
\end{lemma}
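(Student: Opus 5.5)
The plan is to reduce the compatibility to a single statement about functions. I will show that the $\CO(G\sigma/G)$-action on $\Eis_B\CF$ (and on $\eis_B\CF$) agrees with the action of $\chi^*(g)\in\CO(T\sigma/T)$ through the center $\CZ(D\Coh(\BL_{T\sigma}))$. More precisely, for $g\in\CO(G\sigma/G)=\CO(\BL_{G\sigma})$, multiplication by $g$ on $p_*q^*\CF$ should equal $p_*q^*$ applied to multiplication by $\chi^*(g)$ on $\CF$. Once this holds for both $B$ and $B'$, the left and right $\CO(T\sigma/T)$-actions on the $\Hom$'s (acting through $\CF$ and $\CG$ respectively) both agree with the $\CO(G\sigma/G)$-action via $\chi^*$. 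This is because a morphism of $\CO_{\BL_{G\sigma}}$-modules commutes with multiplication by global functions on the base.

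First I will handle $\eis_B = p_{0*}q_0^*$. By the projection formula, multiplication by $g$ on $p_{0*}q_0^*\CF$ is $p_{0*}$ of multiplication by $p_0^*g$ on $q_0^*\CF$. It therefore suffices to show $p_0^*g = q_0^*\chi^*(g)$ as functions on $B\sigma/B$. On the level of invariant functions, this is precisely Corollary~\ref{factor Chevalley}: the map $T\sigma\sslash T\to G\sigma\sslash G$ inducing the twisted Chevalley isomorphism factors as $T\sigma\sslash T\xleftarrow{q}B\sigma\sslash B\to G\sigma\sslash G$. Global functions on $B\sigma/B$ are functions on $B\sigma\sslash B$, so the identity follows. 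The argument is also symmetric in $B$ and $B'$, since $\chi^*$ does not depend on the Borel.

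For $\Eis_B$, I will run the same argument with $p,q$ on $\BL_{B\sigma}$. The functions involved are pulled back along the projections $\pi_B\colon\BL_{B\sigma}\to B\sigma/B$ and $\pi_G\colon\BL_{G\sigma}\to G\sigma/G$, where $\CO(\BL_{G\sigma})=\CO(G\sigma/G)$ is identified via $\pi_G^*$. Diagram~\eqref{commutative} commutes, so $p^*\pi_G^*g=\pi_B^*p_0^*g=\pi_B^*q_0^*\chi^*(g)$. The last expression is $q^*$ of $\chi^*(g)$ viewed on $\BL_{T\sigma}\simeq T\sigma/T$, because $q_B$ is compatible with $q_0$ under the projections. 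The projection formula for the proper map $p$ then gives the claim.

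The only point I expect to need care with is the identification $\CO(\BL_{G\sigma})=\CO(G\sigma/G)$ implicit in the statement. One must check that the $\CO(G\sigma/G)$-structure is really the one pulled back via $\pi_G$, and that the functions on $\BL_{T\sigma}$ are pulled back from $T\sigma/T$; the latter holds since the nilpotent part is trivial for $T$. Beyond fixing these conventions, the argument is formal.
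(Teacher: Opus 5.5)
Your proposal is correct and follows the paper's proof: both reduce to the identity $p_0^*g=q_0^*\chi^*(g)$ on $B\sigma/B$ from Corollary~\ref{factor Chevalley}, combined with the $\CO$-linearity of $p_{0*}$ and $q_0^*$, and then conclude that the source and target $\CO(T\sigma/T)$-actions on the $\Hom$'s agree with the $\CO(G\sigma/G)$-action. For $\Eis$ the paper only says the argument is similar because the coarse moduli spaces are isomorphic, whereas you spell it out by pulling back along $\pi_B$ and $\pi_G$ via the commutative diagram \eqref{commutative}; this is a slightly more explicit version of the same argument.
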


\begin{proof}
    We consider the case for $\eis$, the case for $\Eis$ is similar as the corresponding coarse moduli spaces are isomorphic.
    For $f\in \CO(G\sigma/G)$ and $\CF\in D\Coh(T\sigma/T)_1$, by Corollary \ref{factor Chevalley}, the Chevalley map is equal to $\CO(G\sigma/G)\xrightarrow{p_0^*} \CO(B\sigma/B)\xleftarrow[\sim]{q_0^*}\CO(T\sigma/T)$, where $q_0^*$ is the inverse of $i_0^*\colon \CO(B\sigma/B)\to \CO(T\sigma/T)$ induced by the inclusion $T\to B$. 
    
    By definition, the action of $f$ on $\eis_B\CF$ is identified with the action obtained by applying the functor $p_{0*}$ to the action of $p_0^*(f)$ on $q_0^*\CF$. Since $p_0^*(f)=q_0^*i_0^*p_0^*(f)=q_0^*\chi^*(f)$, the action of $\chi^*(f)\in \CO(T\sigma/T)$ on $\eis_B\CF$ and the action of $f\in \CO(G\sigma/G)$ coincide. Thus the $\CO(G\sigma/G)$-structure is compatible with the $\CO(T\sigma/T)$-module structure induced by $\eis_B$ and similarly for $\eis_{B'}$. 
\end{proof}

\begin{lemma}
\label{ring homo}
    The map $\res$ is an $\CO(\BL_{T\sigma})=\CO(T\sigma/T)$-bimodule homomorphism. Moreover, the construction $\res$ preserves the monoidal structure in the following sense:
    for an other Borel $B''$ stable under $\sigma$ and containing $T$, and an other sheaf $\CH$ over $T\sigma\sslash T$ and homomorphisms $f\in \Hom(\Eis_B\CF, \Eis_{B'}\CG)$
    and $g \in \Hom(\Eis_{B'}\CG, \Eis_{B''}\CH)$, we have $\res(g\circ f)=\res(g)\circ \res(f)$.
\end{lemma}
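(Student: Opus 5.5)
The plan is to handle the two assertions separately. The bimodule statement should follow from naturality alone. The monoidal statement I would prove by comparing both sides on the generic locus, where $t_l$ and $t_r$ are invertible, and then extending using local freeness.

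\emph{Bimodule structure.} Write $\res(f)=t_r(\CG)\circ \pi_{G*}(f)\circ \operatorname{adj}(t_l)(\CF)$, where $\operatorname{adj}(t_l)(\CF)\colon \eis_B\CF\to \pi_{G*}\Eis_B\CF$ is the adjoint of $t_l(\CF)$. All three pieces are built from natural transformations of functors on $D\Coh(\BL_{T\sigma})_\chi$: adjunction units and counits, proper base change isomorphisms, and the adjunction $\pi_B^*\to i_{B*}$. The right $\CO(T\sigma/T)$-action on $\Hom(\Eis_B\CF,\Eis_{B'}\CG)$ is precomposition with $\Eis_B(z_\CF)$, and the left action is postcomposition with $\Eis_{B'}(z_\CG)$, for $z\in \CZ(D\Coh(\BL_{T\sigma})_1)$. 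By naturality, $\operatorname{adj}(t_l)(\CF)$ intertwines $\eis_B(z_\CF)$ with $\pi_{G*}\Eis_B(z_\CF)$, and $t_r(\CG)$ intertwines $\pi_{G*}\Eis_{B'}(z_\CG)$ with $\eis_{B'}(z_\CG)$. Hence $\res(z f z')=z\,\res(f)\,z'$, and this part is formal.

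\emph{Monoidality on the generic locus.} Restrict everything along the open immersion $j_G^g$. By Lemma \ref{transformation}, $t_l$ and $t_r$ become isomorphisms there. Through Lemma \ref{generic locus} they identify $\Eis^g$ with $\eis^g$, and I would check that the identity $t_r\circ\operatorname{adj}(t_l)=\mathbb{1}$ forces these identifications to be mutually inverse. Indeed, over the generic locus $\pi_G$ and $i_G$ are inverse isomorphisms, so $\operatorname{adj}(t_l)^g$ is an isomorphism whose inverse is $t_r^g$. Then $j_G^{g*}\res(f)$ is just $f$ transported along a fixed isomorphism that depends only on the source and target. Transport along such isomorphisms respects composition, so $j_G^{g*}\res(g\circ f)=j_G^{g*}(\res(g)\circ\res(f))$.

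\emph{Extension.} It remains to show that the restriction map $\Hom(\eis_B\CF,\eis_{B''}\CH)\to \Hom(\eis_B^g\CF,\eis_{B''}^g\CH)$ is injective. For locally free $\CF,\CH$ in a $W^\sigma$-invariant block $\chi$, Lemma \ref{locally_free_general} shows that $\eis_{B''}\CH$ is locally free on $G\sigma/G$. Since $(G\sigma)^g$ is open and dense, a map into a locally free sheaf that vanishes generically vanishes. This gives the identity whenever $\CF,\CG,\CH$ are locally free. For general objects I would use that $\CO_\chi$ generates the block: the identity for $\CF=\CG=\CH=\CO_\chi$ and its sums, together with the $\CO(T\sigma/T)$-linearity proved in the first step, determines compositions in the generated subcategory, so the identity propagates.

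I expect this last propagation to be the main obstacle. Morphisms in $D\Coh$ between general complexes are not detected on the generic locus, so the argument must genuinely pass through the generator $\CO_\chi$ and the naturality of $t_l$ and $t_r$. If that reduction fails, the fallback is to check $t_r\circ\operatorname{adj}(t_l)=\mathbb{1}$ together with a direct compatibility of $\operatorname{adj}(t_l)\circ t_r$ with morphisms in the image of $\Eis$.
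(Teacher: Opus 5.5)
Your proposal is correct and is essentially the paper's proof. Both argue linearity by naturality, reduce to free (locally free) $\CF,\CG,\CH$, get injectivity of restriction to the dense generic locus because the target $\eis$ is locally free, and observe via Lemma \ref{transformation} that on the generic locus $\res$ is just restriction. The paper handles your propagation step in one line by replacing $\CF,\CG,\CH$ with free resolutions. What justifies that is not $\CO(T\sigma/T)$-linearity but the vanishing of higher $\operatorname{Ext}$'s between $\Eis$'s of free sheaves (Lemma \ref{inclusion}): because of it, morphisms between general objects are homotopy classes of termwise chain maps, and the free case then applies.
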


\begin{proof}
    The $\CO(T\sigma/T)$-linearity follows from the functoriality of $\res$. Then it suffices to prove the monoidality.
    Replacing $\CF$, $\CG$ and $\CH$ by free resolutions, it suffices to prove the case when $\CF$, $\CG$ and $\CH$ are free $\CO_{T\sigma/T}$-modules.
    In this case, $\eis_B\CF$ and $\eis_{B'}\CG$ are locally free $\CO_{G\sigma/G}$-module by Lemma \ref{locallyfreeeis}. Hence, the homomorphism
    \[
    \Hom(\eis_{B}\CF, \eis_{B'}\CG)\to \Hom(\eis_{B}^g\CF, \eis_{B'}^g\CG)
    \]
    is injective as the generic locus is open and dense (Recall that $\eis_{B}^g\CF$ is an abuse of notation for $\eis_B^g j^{g*}_T\CF\simeq j_G^{g*}\eis_B\CF$).
    Thus it suffices to prove that $\res$ is monoidal after postcomposing the pullback $j^{g*}_G$.
    
    In fact, by definition of $\res$ and Lemma \ref{transformation}, the composition
    \[
    \Hom(\Eis_B\CF, \Eis_{B'}\CG) \to  \Hom(\eis_{B}\CF, \eis_{B'}\CG)\to 
    \Hom(\eis_{B}^g\CF, \eis_{B'}^g\CG)=\Hom(\Eis_{B}^g\CF, \Eis_{B'}^g\CG)
    \]
    is just the restriction to the generic locus and thus it is monoidal. Thus $\res$ is monoidal.  
\end{proof}

In particular, the construction defines an $\CO(\BL_{T\sigma})=\CO(T\sigma/T)$-module homomorphism $\Hom(\Eis_B, \Eis_{B'}) \to \Hom(\eis_{B}, \eis_{B'})$ and a ring homomorphism $\End(\Eis_B)\to \End(\eis_B)$.

Next we prove that $\res$ is injective.

\begin{lemma}
    \label{inclusion}
    For $\chi \in X^*(T^\sigma)^{W^\sigma}$ and two locally free sheaves $\CF$ and $\CG$ in $\Coh(\BL_{T\sigma})_\chi$,
    the modules
    \[R\Hom(\Eis_B\CF, \Eis_{B'}\CG)\ \text{and}\  R\Hom(\eis_{B}\CF, \eis_{B'}\CG)\] 
    are concentrated in degree $0$.
    Moreover, the homomorphism 
    \[\res\colon \Hom(\Eis_B\CF, \Eis_{B'}\CG) \to  \Hom(\eis_{B}\CF, \eis_{B'}\CG)\] 
    is injective,
    and thus induces an inclusion $\Hom(\Eis_B, \Eis_{B'}) \to \Hom(\eis_B, \eis_{B'})$ for the functor version.
\end{lemma}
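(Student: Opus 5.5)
Fix locally free $\CF,\CG$ in $\Coh(\BL_{T\sigma})_\chi$. I will reduce everything to statements about honest coherent sheaves on $\BL_{G\sigma}$ and $G\sigma/G$, and then compare with the generic locus, which is dense in the relevant supports.

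\emph{Concentration in degree $0$ for $\eis$.} By Lemma \ref{locally_free_general}, $\eis_B\CF$ and $\eis_{B'}\CG$ are locally free sheaves on $G\sigma/G$. So $R\mathcal{H}om(\eis_B\CF,\eis_{B'}\CG)$ is a locally free sheaf in degree $0$, and $R\Hom$ is its derived global sections over $G\sigma/G$. Since $G$ is reductive and $k$ has characteristic $0$, pushing forward to the affine coarse space $G\sigma\sslash G$ is exact on quasi-coherent sheaves. Hence there is no higher cohomology.

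\emph{Concentration in degree $0$ for $\Eis$.} Here I would use Grothendieck duality for the proper map $p_B$, together with the fact that $\BL_{B\sigma}$ and $\BL_{G\sigma}$ are quasi-smooth of the same virtual dimension. This gives
\[
R\Hom(\Eis_B\CF,\Eis_{B'}\CG)\simeq R\Gamma\bigl(\BL_{B\sigma},\, q_B^*\CF^\vee\otimes p_B^!p_{B'*}q_{B'}^*\CG\bigr),
\]
which reduces the question to cohomology of the coherent Springer-type sheaf on the Steinberg-like stack $\BL_{B\sigma}\times_{\BL_{G\sigma}}\BL_{B'\sigma}$. As a first attempt I would instead use the honest-sheaf statement for $\Eis_B\CO$ cited in Proposition \ref{vanish} (from Zhu), and try to show that $\Eis_B\CF$ is a maximal Cohen--Macaulay sheaf on the (Cohen--Macaulay) stack $\BL_{G\sigma}$. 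Then $R\mathcal{H}om$ between MCM sheaves on a Gorenstein stack of this type would have no higher $\mathcal{E}xt$ provided one sheaf is dual to a twist of the other. Verifying this vanishing is the step I expect to be the main obstacle. If the MCM route fails, I would fall back to the formal local models: the generic locus is handled by Lemma \ref{generic locus}, and Corollary \ref{formal-Eis} gives the local picture near the non-generic components. There the sheaves are $A$ or $A/(u)$ over $A[u]/(fu)$, and I would compute $\mathrm{Ext}$ directly in codimension one, using a depth/Hartogs argument to handle the codimension $\geq 2$ remainder.

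\emph{Injectivity of $\res$.} By Lemma \ref{transformation}, the composite of $\res$ with restriction to the generic locus is just restriction of morphisms,
\[
\Hom(\Eis_B\CF,\Eis_{B'}\CG)\to\Hom(\Eis^g_B\CF,\Eis^g_{B'}\CG),
\]
so it suffices to show this restriction is injective. Suppose a morphism vanishes on the generic locus. Then its image is a subsheaf of $\Eis_{B'}\CG$ supported on $V(L(1,\ad_{\fn\oplus\bar\fn})^{-1})$. So it is enough to show that $\Eis_{B'}\CG$ has no sections supported there, i.e.\ has depth $\geq 1$ along that locus. Such supports have generic points at strongly regular semisimple points of the divisors $V(f)$, where Corollary \ref{formal-Eis} gives the local model. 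In that model the module is $A[u]/(fu)$ or $A$ with $A=\CO(T\sigma)^\wedge_x$ regular, and neither has sections killed by a power of $fs(f)$. This torsion-freeness needs to be established along every associated point of $\Eis_{B'}\CG$, which again relies on the MCM-type property above.

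\emph{Functor version.} Compatibility of $\res$ with the module structures (Lemma \ref{ring homo}) lets the injectivity pass to natural transformations, since locally free sheaves generate $D\Coh(\BL_{T\sigma})_\chi$.
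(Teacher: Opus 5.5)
Your treatment of $\eis$ is fine and matches the paper: local freeness comes from Lemma \ref{locally_free_general}, and taking invariants for the reductive group $G$ is exact. Both claims about $\Eis$, however, have genuine gaps.

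\emph{Degree-$0$ concentration.} You do not actually give an argument here, and none of the suggested routes closes it.
The duality reformulation only restates the problem as a cohomology computation on the Steinberg-type stack $\BL_{B\sigma}\times_{\BL_{G\sigma}}\BL_{B'\sigma}$, and that computation is exactly the hard content.
Being maximal Cohen--Macaulay on a Gorenstein stack does not make $\mathrm{Ext}^{>0}$ between two such sheaves vanish. Your side condition ``one is dual to a twist of the other'' is not a vanishing criterion.
Higher $\mathrm{Ext}$ is also not controlled by codimension-one data plus Hartogs. The difficult points lie over non-regular-semisimple classes such as $1\in G\sigma$, where the fibre of $\BL_{G\sigma}$ is the whole nilpotent cone.
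Even in your codimension-one model $R=A[u]/(fu)$, the $2$-periodic resolution of $A=R/(u)$ gives $\mathrm{Ext}^2_R(A,A)\simeq A/(f)\neq 0$ non-equivariantly. This group dies only after taking $T$-invariants. The reason is that $u$ has weight $\alpha^{-1}$, which is nontrivial on $T^\sigma$, while every $T$-weight of $\CO(T\sigma)$ is trivial on $T^\sigma$. Your proposal never uses this input.
The paper instead uses Zhu's unipotent categorical local Langlands correspondence (Ben-Zvi--Chen--Helm--Nadler when $\sigma=1$). For free $\CF$, $\Eis_B\CF$ corresponds to the parabolic induction $i_{B^\vee}^{G^\vee}M_\CF$ of a free $k[T^\vee/{T^\vee}^0]$-module. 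That induction is projective, so $R\Hom$ sits in degree $0$. General $\chi$ is reduced to $\chi=1$ through a simply connected cover, using Lemma \ref{Gabsigma}.

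\emph{Injectivity.} Your argument rests on a false claim. In the model $A[u]/(fu)$ the element $u$ is killed by $f$, so it generates a nonzero $T$-stable subsheaf of $\Eis_{B'}\CG$ supported on $V(f)$.
Globally, the non-generic locus contains entire irreducible components of $\BL_{G\sigma}$, namely those with generically nonzero monodromy; the Steinberg component for $GL_2$ with $\sigma=1$ is an example. $\Eis_{B'}\CG$ is supported on these components. Hence the generic locus is not dense in the support, and $\Eis_{B'}\CG$ has depth $0$ along the non-generic locus. Associated points of a torsion subsheaf also need not lie over strongly regular semisimple points of the divisors $V(f)$.
So injectivity cannot come from torsion-freeness of the target. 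You must show that no morphism from $\Eis_B\CF$ lands in this torsion subsheaf.
The paper gets this on the representation side. With $h=L(1,\ad_{\fn\oplus\bar{\fn}})^{-1}$, the map becomes $\Hom(i_{B^\vee}^{G^\vee}M_\CF,\, i_{{B'}^\vee}^{G^\vee}M_\CG)\to\Hom(i_{B^\vee}^{G^\vee}M_\CF[h^{-1}],\, i_{{B'}^\vee}^{G^\vee}M_\CG[h^{-1}])$. This is injective because parabolic induction is exact and $M_\CG\to M_\CG[h^{-1}]$ is injective for free $M_\CG$.

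Without this input, or a substitute that controls both the $T^\sigma$-weights and the fibres over non-regular-semisimple points, neither half of the lemma is established for $\Eis$.
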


\begin{proof}
    We firstly prove the case that $\chi = 1$.
    By the unipotent Langlands correspondence proved by Zhu (\cite{zhu2025tamecategoricallocallanglands}, Theorem 1.2), there is a fully faithful embedding (in the $\infty$-categorical sense)
    \[
    D\mathsf{Rep}(G^\vee(F), I) \to D\Coh(\BL_{G\sigma})
    \]
    for an unramified $p$-adic reductive group $G^\vee(F)$ with Langlands dual $(G,\sigma)$ (in the case that $\sigma=1$, it is already proved in \cite{ben2024coherent}, Theorem 1.9). 
    It suffices to consider the case that $\CF$ and $\CG$ are free $\CO_{T_\sigma}$-modules only.
    Under the correspondence,
    $\CF$ corresponds to a free module $M_\CF$ over $k[T^\vee/{T^\vee}^0]$ and $\Eis_B \CF$ corresponds to $i_{B^\vee}^{G^\vee} M_\CF$ (\cite{zhu2025tamecategoricallocallanglands}, Theorem 1.6(4), where $\Eis$ is denoted by $\mathrm{CohSpr}$). Similarly for $\CG$.

    Thus we have 
    \[
    R\Hom(\Eis_B\CF, \Eis_{B'}\CG) \simeq R\Hom(i_{B^\vee}^{G^\vee} M_\CF, i_{{B'}^\vee}^{G^\vee} M_\CG).
    \]
    As $i_{B^\vee}^{G^\vee} M_\CF$ is projective in $\mathsf{Rep}(G^\vee(F))$, the latter $R\Hom$ is concentrated in degree $0$ 
    and thus the previous one is. For the module $R\Hom(\eis_B\CF, \eis_{B'}\CG)$, 
    it is concentrated in degree $0$ as $\eis_B\CF$ is locally free by Lemma \ref{locallyfreeeis}.

    Then we prove the argument about the inclusion.
    For the same reason as in the proof of Lemma \ref{ring homo}, it suffices to prove that 
    \[
    \Hom(\Eis_B\CF, \Eis_{B'}\CG) \to \Hom(\eis_B\CF, \eis_{B'}\CG) \to \Hom(\eis^g_B\CF, \eis^{g}_{B'}\CG) = \Hom(\Eis_Bj^g_{*}j^{g*}\CF, \Eis_{B'}j^g_{*}j^{g*}\CG) 
    \]
    is an inclusion,
    where $j^g$ is the inclusion of the generic locus into the entire space and the last arrow is from $j_*^g\circ\Eis^g\simeq \Eis\circ 
    j^g_*$.
    Under the local Langlands correspondence, $j^g_{*}j^{g*}\CF$ corresponds to the localization $M_\CF[L(1, \ad_{\fn})L(1, \ad_{\bar{\fn}})]$ as $j^g_{*}j^{g*}\CF$ is the colimit $\CF\xrightarrow{L(1,\ad_{\fn\oplus\bar{\fn}})^{-1}}\CF\to \cdots$
    and similarly for $M_\CF[L(1, \ad_{\fn})L(1, \ad_{\bar{\fn}})]$.

    Thus
    \[
    \Hom(\Eis_Bj^g_{*}j^{g*}\CF, \Eis_{B'}j^g_{*}j^{g*}\CG)  \simeq \Hom(i_{B^\vee}^{G^\vee} M_\CF[f], i_{{B'}^\vee}^{G^\vee} M_\CG[f])
    \]
    where $f = L(1,\ad_\fn)L(1,\ad_{\bar{\fn}})$. Thus the homomorphism is injective as $M_\CF \to M_\CF[f]$ and $M_\CG\to M_\CG[f]$ is injective.

    Finally, as $D\Coh(\BL_{T\sigma})_1$ is generated by $\CO_{T_\sigma}$,
    we have $\Hom(\Eis_B, \Eis_{B'}) \subset \Hom(\Eis_B\CO_{T_\sigma}, \Eis_{B'}\CO_{T_\sigma})$ and $\Hom(\eis_B, \eis_{B'}) \subset \Hom(\eis_B\CO_{T_\sigma}, \eis_{B'}\CO_{T_\sigma})$. Thus $\Hom(\Eis_B, \Eis_{B'})\to \Hom(\eis_B, \eis_{B'})$ is an inclusion.

    In the case for general $\chi$, take a simply connected cover $G_0\to G$. It induces a morphism $\alpha_G\colon \BL_{G_0\sigma}\to \tilde{\BL}_{G\sigma}/G_0\to \BL_{G\sigma}$, which is a composition of a Galois covering and a gerbe. By descent, it suffices to prove the statements for $\alpha_G^*\Eis_B\CF \simeq \Eis_{B_0}\alpha_T^*\CF$ and $\Eis_{B_0}\alpha_T^*\CG$ (where $\alpha_T\colon\BL_{T_0\sigma}\to \BL_{T\sigma}$. Thus we reduce to the case for $G_0$.
    Again, the constructions in the statement are compatible with direct products, the statement for a torus is trivial and the statement for a semisimple and simply connected group follows from the $\chi=1$ case. Thus we conclude.
\end{proof}

\begin{remark}
    There may be a simpler proof of the injective argument. By definition of the restriction homomorphism \ref{restriction}, $\res$ is the composition
    \[
    \Hom(\Eis_B\CF, \Eis_{B'}\CG)\xrightarrow{t_l(\CF)^*}
    \Hom(\pi_G^*\eis_B\CF, \Eis_{B'}\CG)
    \xrightarrow[\sim]{\text{adj}}\Hom(\eis_B\CF, \pi_{G*}\Eis_{B'}\CG)
    \xrightarrow{t_r(\CG)_*}\Hom(\eis_B\CF, \eis_{B'}\CG).
    \]
    Thus it suffices to prove that $t_l(\CF)^*$ is injective and $t_r(\CG)_*$ is an isomorphism. Indeed, we can prove that $t_r(\CG)_*$ is an isomorphism and $t_l(\CF)^*$ is injective, supposing that $t_l(\CF)$ is a surjection of honest sheaves if $\CF$ is locally free. However, we do not know how to prove this statement in general.
\end{remark}

The following lemma about the $\Hom$-modules follows from the same argument as in the proof of Lemma \ref{tensor_by_chi}. 

\begin{lemma}
\label{tensor_by_chi_Eis}
    For $\chi \in X^*(T^\sigma)^{W^\sigma}$ and two locally free sheaves 
    $\CF, \CG \in \Coh(\BL_{T\sigma})_1$, the isomorphism in Lemma \ref{tensor_by_chi}
    \[
    \otimes \overline{\CO_\chi} \colon \Hom(\eis_B\CF, \eis_B\CG) \to \Hom(\eis_B\CF_\chi, \eis_B\CG_\chi)
    \]
    restricts to an isomorphism (under the inclusion in Lemma \ref{inclusion})
    \[
    \otimes \overline{\CO_\chi} \colon \Hom(\Eis_B\CF, \Eis_B\CG) \to \Hom(\Eis_B\CF_\chi, \Eis_B\CG_\chi).
    \]
    Moreover, this isomorphism extends to all $\CF, \CG \in D\Coh(\BL_{T\sigma})_1$.
\end{lemma}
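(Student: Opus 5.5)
The plan is to follow the proof of Lemma \ref{tensor_by_chi}, now using the inclusion $\res$ of Lemma \ref{inclusion} to view $\Hom(\Eis_B\CF,\Eis_B\CG)$ inside $\Hom(\eis_B\CF,\eis_B\CG)$. First take $\CF,\CG$ locally free in $\Coh(\BL_{T\sigma})_1$. Then $\CF_\chi,\CG_\chi$ are locally free in the $\chi$-block, so Lemma \ref{inclusion} applies to both pairs and both $\res$ maps are injective. The key compatibility is that $\res$ intertwines $(-)\otimes\overline{\CO_\chi}$. By Lemma \ref{ring homo}, after composing with the injective restriction to the generic locus, $\res$ becomes the plain restriction $\Hom(\Eis_B\CF,\Eis_B\CG)\to\Hom(\Eis^g_B\CF,\Eis^g_B\CG)$. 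Over the regular semisimple locus, tensoring with $\overline{\CO^{rs}_\chi}$ is defined by the same projection formula for $\Eis^{rs}$ and $\eis^{rs}$, since $\Eis^{rs,g}\simeq\eis^{rs,g}$. Because everything embeds into the Hom over the open dense $rs,g$ locus (the targets are locally free), the two operations agree there. So it suffices to show that $\otimes\overline{\CO_\chi}$ sends the image of $\res$ for $(\CF,\CG)$ onto the image of $\res$ for $(\CF_\chi,\CG_\chi)$.

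For this, pull back along a simply connected cover $\alpha_G\colon\BL_{G_0\sigma}\to\BL_{G\sigma}$, exactly as in the proofs of Lemmas \ref{inclusion} and \ref{tensor_by_chi}. The map $\alpha_G$ is a composite of a Galois covering and a gerbe, and proper base change gives $\alpha_G^*\Eis_B\simeq\Eis_{B_0}\alpha_T^*$. By descent, a morphism between the pullbacks comes from a morphism downstairs if and only if it is equivariant for the covering group and the gerbe. Tensoring with the pulled-back line bundle preserves this equivariance, because $\overline{\CO_\chi}$ is itself a descended sheaf. So we reduce to $G_0=Z\times G^{sc}$. Every construction factors as a product: for the torus $Z$ we have $\Eis=\eis$, so the statement there is just Lemma \ref{tensor_by_chi}; for $G^{sc}$, Lemma \ref{Gabsigma} makes $\chi$ trivial, so the map is the identity. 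Compatibility with products then gives the locally free case. Bijectivity follows because the inverse is $\otimes\overline{\CO_{\chi^{-1}}}$, which restricts in the same way.

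For general $\CF,\CG\in D\Coh(\BL_{T\sigma})_1$, the map $\otimes\overline{\CO_\chi}$ is $\CO_{T_\sigma}$-linear and compatible with composition. Since $\CO_{T_\sigma}$ generates the block, resolve $\CF,\CG$ by free modules and extend by linearity, as in the last paragraph of the proof of Lemma \ref{tensor_by_chi}. The main obstacle I expect is the descent step for $\Eis$, as opposed to $\eis$. One has to check that $\alpha_G^*$ is compatible with the derived structure of $\BL$ and that the Hom modules involved are concentrated in degree $0$ (Lemma \ref{inclusion}), so that equivariant morphisms descend. I would handle this by working with the underlying classical Hom modules, using that degree-zero concentration.
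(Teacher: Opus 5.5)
Your proposal is correct and follows the paper's proof. The paper simply repeats the argument of Lemma \ref{tensor_by_chi}: pull back along a simply connected cover $G_0\to G$, treat the torus factor (where $\Eis=\eis$ and the claim is trivial), use Lemma \ref{Gabsigma} to make $\chi$ trivial on the simply connected factor, and extend from locally free sheaves by $\CO_{T_\sigma}$-linearity. Your extra check that $\res$ intertwines $\otimes\overline{\CO_\chi}$, done by restricting to the dense $rs,g$ locus, is a useful elaboration of a point the paper leaves implicit.
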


We turn to the final proof of Proposition \ref{all}.

\begin{proof}
    By lemma \ref{inclusion} above, we have that
    \[
    \Hom(\Eis_B, \Eis_{B'}) \subset \Hom(\eis_B, \eis_{B'}) \simeq \CO(\BL_{T\sigma})
    \]
    where the isomorphism can be inferred from the proof of \ref{g}, making $\Hom(\Eis_B, \Eis_{B'})$ an ideal of $\CO(\BL_{T\sigma})$. As in the proof of \ref{generic locus}, the operator $J^g_{B'|B}$ extends to an isomorphism $\eis_B\simeq \eis_{B'}$ and corresponds to $1\in \CO(\BL_{T\sigma})$,  it suffices to find an element $m \in \CO(\BL_{T\sigma})$ such that $m J^g_{B'|B}$ induces a homomorphism $\Eis_B \to \Eis_{B'}$ and such that any other such multiplier is divisible by $m$.

    In the beginning, we consider the case for $\chi=1$.
    We firstly prove that $m$ is divisible by $L(1, \ad_{\fn_{B'}/\fn_{B}})^{-1}$. For any irreducible factor $f$ of $L(1, \ad_{\fn_{B'}})^{-1}$ that does not divide $L(1, \ad_{\fn_B})^{-1}$ and strongly regular semisimple $x\in V(f)$,
    consider
    $\Eis_B \CO_{\BL_{T\sigma, x}^\wedge}$ and $\Eis_{B'} \CO_{\BL_{T\sigma, x}^\wedge}$ as
    $\CO_{\BL_{G\sigma, p_0(x)}^\wedge}$-modules. By Corollary \ref{formal-Eis}, $\BL_{G\sigma, p_0(x)}^\wedge$ can be identified with $\Spf(\mathcal{O}(T\sigma)_x^\wedge[u]/(fu))/T$, $\Eis_{B'} \CO_{\BL_{T\sigma, x}^\wedge}$ can be identified with the $\mathcal{O}(T\sigma)_x^\wedge[u]/(fu)$-module $\mathcal{O}(T\sigma)_x^\wedge[u]/(fu)$ with natural $T$-action and $\Eis_{B} \CO_{\BL_{T\sigma, x}^\wedge}$ can be identified with the $\mathcal{O}(T\sigma)_x^\wedge[u]/(fu)$-module $\mathcal{O}(T\sigma)_x^\wedge$ with natural $T$-action and $u=0$.
    
    Thus $mJ^g_{B'|B}$ defines an $\CO_{\BL_{G\sigma, p_0(x)}^\wedge}$-module homomorphism $\Eis_B \CO_{\BL_{T\sigma, x}^\wedge} \to \Eis_{B'} \CO_{\BL_{T\sigma, x}^\wedge}$ only if $m$ is divisible by $f$ in the formal neighborhood of $x$, as $u=0$ in $\mathcal{O}(T\sigma)_x^\wedge$ is sent to $mu=0$ in $\mathcal{O}(T\sigma)_x^\wedge[u]/(fu))$. Varying $x$ in the open and dense subset of strongly regular semisimple elements in $V(f)$, we see that $m=0$ on $V(f)$ and thus $m$ is divisible by $f$ as $f$ is irreducible and $\CO(T_\sigma)$ is factorial. 
    Multiplying all those $f$, we have that $m$ is divisible by $L(1, \ad_{\fn_{B'}/\fn_{B}})^{-1}$ as $L(1, \ad_{\fn_{B'}/\fn_{B}})^{-1}$ has no multiple factors by \ref{L-function}.

    It suffices to show that $L(1, \ad_{\fn_{B'}/\fn_{B}})^{-1}J^g_{B'|B}$ defines a transformation $\Eis_B \to \Eis_{B'}$. 
    
    Let $B = B_1, B_2, \dots, B_n = B'$ be a chain of Borel subgroups in $G$ fixed by $\sigma$ such that for any $i$, there exists a simple reflection $s_i \in W^\sigma$ such that $B_{i+1} = s_i B_i s_i$ and that $n$ is the minimal length of the chain (such chain exists as Borel subgroups of $G$ fixed by $\sigma$ correspond to Weyl chamber of the root system $\{X^*(T_\sigma),\{\alpha_O\}\}$). For $i$, denote by $P_i$ the minimal parabolic subgroup containing $B_i$ and $B_{i+1}$ and denote by $M_i$ its Levi subgroup. By \cite{Hellmann_2023}, Corollary 2.11 (for the case $\sigma=1$, and the general case is similar), for $j = i$ or $j = i+1$ we have
    \[
    \Eis_{B_j}^G = \Eis_{P_i}^G \circ \Eis_{B_j \cap M_i}^{M_i}
    \]
    Moreover, we have that the $L$ function is multiplicative:
    \[
    L(1, \ad_{\fn_{B'}/\fn_{B}}) = \prod_i L(1, \ad_{\fn_{B_{i+1}}/\fn_{B_i}})
    = \prod_i L(1, \ad_{\fn_{B_{i+1}\cap M_i}/\fn_{B_i \cap M_i}})
    \]
    It suffices to prove that
    \[
    L(1, \ad_{\fn_{B_{i+1}\cap M_i}/\fn_{B_i \cap M_i}})^{-1} J^g_{B_{i+1}\cap M_i|B_i\cap M_i}
    \colon \Eis_{B_i \cap M_i} \to \Eis_{B_{i+1} \cap M_i}
    \]
    is well-defined. Note that $M_i$ satisfies the condition in Lemma \ref{ord2}. Thus for any irreducible factor $f$ of $L(1, \ad_{\fn_{M_i}\oplus \bar{\fn}_{M_i}})^{-1}$, $V(f)\subset \BL_{T\sigma, f}$ is exactly the vanishing locus of $f$ in $\BL_{T\sigma}$. Moreover, different $V(f)$'s are disjoint and the non-generic locus (relative to $M_i$) of $\BL_{T\sigma}$ is the union of those $V(f)$'s.
    Thus by Beauville--Laszlo gluing, it suffices to prove that $L(1, \ad_{\fn_{B_{i+1}\cap M_i}/\fn_{B_i \cap M_i}})^{-1} J^g_{B_{i+1}\cap M_i|B_i\cap M_i}$ is well-defined over the formal neighborhood of $V(f)$ for all such $f$.
    That is, for $V(f)\subset \BL_T - \BL_T^g$, $L(1, \ad_{\fn_{B_{i+1}\cap M_i}/\fn_{B_i \cap M_i}})^{-1} J^g_{B_{i+1}\cap M_i|B_i\cap M_i}(\CO(T\sigma)^\wedge_f)$ is well-defined.

    By Corollary \ref{formal-Eis}, the homomorphism $L(1, \ad_{\fn_{B_{i+1}\cap M_i}/\fn_{B_i \cap M_i}})^{-1} J^g_{B_{i+1}\cap M_i|B_i\cap M_i}(\CO(T\sigma)^\wedge_f)$
    is of the form
    \[
    \text{multiplying by}\ L(1, \ad_{\fn_{B_{i+1}\cap M_i}/\fn_{B_i \cap M_i}})^{-1} \colon N \to N'
    \]
    where the $\CO_{\BL_{G\sigma, f}^\wedge}$-modules $N$ and $N'$ are of the following forms: $N$ can be identified with the module
    $\mathcal{O}(T\sigma)_f^\wedge[u]/(fu)$ or $\mathcal{O}(T\sigma)_f^\wedge$ with natural $T$-action,
    according to whether $f$ divides $L(1, \ad_{\fn_{B_i\cap M_i}})^{-1}$ and similarly for $N'$, according to whether $f$ divides $L(1, \ad_{\fn_{B_{i+1}\cap M_i}})^{-1}$. In the case that $f$ divides $L(1, \ad_{\fn_{B_{i+1}\cap M_i}})^{-1}$ but does not divide $L(1, \ad_{\fn_{B_i\cap M_i}})^{-1}$, the module homomorphism $L(1, \ad_{\fn_{B_{i+1}\cap M_i}/\fn_{B_i \cap M_i}})^{-1} J^g_{B_{i+1}\cap M_i|B_i\cap M_i}(\CO(T\sigma)^\wedge_f)$ is well-defined, being identified as a multiple of $\cdot f\colon \mathcal{O}(T\sigma)_f^\wedge\to \mathcal{O}(T\sigma)_f^\wedge[u]/(fu)$. For other possibilities of $f$, multiplication by $1$ suffices to obtain a well-defined homomorphism. Thus multiplying by $L(1, \ad_{\fn_{B_{i+1}\cap M_i}/\fn_{B_i \cap M_i}})^{-1}$ is always a well-defined homomorphism.

    Finally, we prove the case for general $\chi$. We have already an intertwining operator $j_{B'|B}\in \Hom(\Eis_B, \Eis_{B'})_1$. We claim that
    \[
    j_{B'|B}\otimes \overline{\CO_\chi} \in \Hom(\Eis_B, \Eis_{B'})_\chi
    \]
    is the desired intertwining operator. Here the subscripts means the $\Hom$-module over the corresponding subcategory of $D\Coh(\BL_{T\sigma})$.
    
    Recall that in the proof of Proposition \ref{g} (in the end of section 2.3), the intertwining operator in $\Hom(\Eis^g_B, \Eis^g_{B'})_\chi$ is defined as $J^g_{B'|B}\otimes \overline{\CO_\chi}$, where $J^g_{B'|B}\in \Hom(\Eis^g_B, \Eis^g_{B'})_1$. Thus we have
    \[
    j_{B'|B}\otimes \overline{\CO_\chi} = L(1, \ad_{\fn_{B'}/\fn_B})^{-1}(J_{B'|B}^g\otimes \overline{\CO_\chi})
    \]
    and we prove the statement about the multiplier. The statement about the generator follows from the $\chi=1$ case and Lemma \ref{tensor_by_chi_Eis}.
\end{proof}

Thus we proved Theorem \ref{intertwining} for the in the unipotent case.

\subsection{Adjunction}

We discuss the left adjoint functor of $\Eis = p_*q^*$ and study its properties in this section. We use the same notations as in the previous sections.

We will denote by $\CT_B$ the putative left adjoint of $\Eis_B$ and consider the composition $\CT_B\circ \Eis_B$. By definition, 
\[
\Hom(\CT_B\circ\Eis_B\CO_{T_\sigma}, \CO_{T_\sigma})= 
\End(\Eis_B\CO_{T_\sigma})\subset \End(\eis_B\CO_{T_\sigma})
\]
where the inclusion is induced by $\res$, as in Lemma \ref{inclusion}.
This construction motivates us to consider the endomorphism ring $\End(\eis_B\CO_T)$.

In the next lemmas we sometimes consider simply connected reductive groups, by which we mean products of a simply connected semisimple group and a torus. Moreover, when we consider a simply connected cover $G_0\to G$, we always require that the automorphism $\sigma$ of $G$ can be lifted to an automorphism $\sigma$ of $G_0$. 

\begin{lemma}
\label{Endeis}
    Fix a covering $G_0\to G$ as in the diagram \ref{kostantdiagram}. We have morphisms $p_T\colon T_\sigma=T\sigma\sslash T\to T\sigma\sslash' N_G(T\sigma)$ and 
    $\pi_T\colon G\sigma/G_0\to T\sigma\sslash'N_G(T\sigma)$.
    The endomorphism ring $\End(\eis_B\CO_{T_\sigma})$ is isomorphic to the endomorphism ring
    \[
    \End_{T\sigma\sslash'N_G(T\sigma)}(p_{T*}\CO_{T_\sigma}).
    \]
    Moreover, consider the morphism $\eta\colon T\sigma\sslash' N_G(T\sigma)\to T_\sigma\sslash W^\sigma$ from an algebraic stack to its coarse moduli space.
    Then the homomorphism
    \[
    \eta_* \colon 
    \End_{T\sigma\sslash'N_G(T\sigma)}(p_{T*}\CO_{T_\sigma}) \to 
    \End_{T_\sigma\sslash W^\sigma}(\eta_*p_{T*} \CO_{T_\sigma})=
    \End_{\CO(T_\sigma)^{W^\sigma}}(\CO(T_\sigma))
    \]
    is injective and when $G$ is simply connected, it is an isomorphism.
\end{lemma}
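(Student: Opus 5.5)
The plan has three steps: use Lemma \ref{locallyfreeeis} to move the computation to $T\sigma\sslash' N_G(T\sigma)$, pass to the coarse space by pushforward along $\eta$, and then prove injectivity and surjectivity of that map separately.

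\textbf{Step 1: transfer along $\pi_T$.} By Lemma \ref{locallyfreeeis} we have $\beta^*\eis_B\CO_{T_\sigma}\simeq \pi_T^*p_{T*}\CO_{T_\sigma}$. Since $\beta^*$ is fully faithful onto the sheaves with trivial $\Gamma$-action,
\[
\End(\eis_B\CO_{T_\sigma})\simeq \End_{G\sigma/G_0}(\pi_T^*p_{T*}\CO_{T_\sigma}).
\]
It then suffices to show that $\pi_T^*$ induces an isomorphism
\[
\End_{T\sigma\sslash'N_G(T\sigma)}(p_{T*}\CO_{T_\sigma})\to \End_{G\sigma/G_0}(\pi_T^*p_{T*}\CO_{T_\sigma}).
\]
The sheaf $p_{T*}\CO_{T_\sigma}$ is locally free, since $p_T$ is finite flat. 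So both sides are sections of the locally free sheaf $\mathcal{E}nd(p_{T*}\CO_{T_\sigma})$ and of its pullback, and the claim reduces to $\pi_{T*}\CO_{G\sigma/G_0}=\CO$ together with the projection formula. To check $\pi_{T*}\CO=\CO$ I would argue as follows.
\begin{itemize}
\item On the regular locus, $\pi_T^r$ is the base change of the Chevalley-type map of Proposition \ref{kostant}. Its fibers there are single orbits, via the twisted Kostant section, so $\pi_T^r$ is a gerbe-like map with connected fibers and the identity holds on $(G\sigma)^r/G_0$.
\item The complement of the regular locus has codimension $\ge 2$ and $G\sigma$ is normal, so Hartogs extends the identity to all of $G\sigma/G_0$.
\end{itemize}
Equivalently, functions on $G\sigma/G_0$ are $\CO(T_0\sigma\sslash N_{G_0}(T_0\sigma))$, with $\Gal$ accounted for by the twisted Chevalley isomorphism (Proposition \ref{Chevalley}). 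This step needs care with the stacky quotient by $\ker(T_{0,\sigma}\to T_\sigma)$, but it amounts to taking invariants on both sides.

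\textbf{Step 2: injectivity of $\eta_*$.} The map $\eta$ is a coarse moduli map from a quotient of $T_{0,\sigma}$ by a finite group. Hence $\eta_*$ is taking invariants under a finite (stacky) group, which is exact in characteristic $0$. A morphism of locally free sheaves on $T\sigma\sslash'N_G(T\sigma)$ is determined by its pullback to the atlas $T_{0,\sigma}$ (or $T_\sigma$), and there it is determined generically. The strongly regular locus of $T_\sigma$ is open dense and $W^\sigma$ acts freely on it. Over the image of that locus, $T\sigma\sslash'N_G(T\sigma)$ agrees with its coarse space up to the finite gerbe given by $\ker(T_{0,\sigma}\to T_\sigma)$, and $\eta_*$ is an isomorphism on Homs of these restricted sheaves. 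A morphism killed by $\eta_*$ therefore vanishes on a dense open subset. Since the source sheaf is locally free and the target is torsion-free, the morphism is zero.

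\textbf{Step 3: surjectivity for $G$ simply connected.} In this case $G_0=G$ and the kernel is trivial, so $T\sigma\sslash'N_G(T\sigma)=T\sigma\sslash N_G(T\sigma)$. For simply connected $G$, the group $N_G(T\sigma)/T$ acts on $T_\sigma$ through $W^\sigma$, and the relevant quotient is the honest GIT quotient $T_\sigma\sslash W^\sigma$, with $T$ acting trivially on $\CO(T_\sigma)$. This uses Lemma \ref{Gabsigma} and the structure of $G^\sigma$ (\cite{steinberg1968endomorphisms}) to see that no residual gerbe survives. Then $\eta$ is an isomorphism on the relevant coherent category and $\eta_*$ is bijective. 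The main obstacle is to make precise that $T\sigma\sslash N_G(T\sigma)$ is an affine scheme rather than a stack in the simply connected case, i.e.\ that the stabilizers act trivially on the fibers of $p_{T*}\CO_{T_\sigma}$. I would check this first for $G$ semisimple simply connected through the identification $T_\sigma\sslash W^\sigma\simeq G\sigma\sslash G$ of Proposition \ref{Chevalley}. The torus factor is then trivial.
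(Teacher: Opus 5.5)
Your proposal is correct and has the same structure as the paper's proof. You transfer via $\beta$ and Lemma \ref{locallyfreeeis} to the unit $p_{T*}\CO_{T_\sigma}\to\pi_{T*}\pi_T^*p_{T*}\CO_{T_\sigma}$, prove injectivity of $\eta_*$ on the strongly regular semisimple locus using torsion-freeness, and note bijectivity when $G_0=G$. For Step~1 the paper uses your ``equivalently'' route rather than a twisted Kostant section plus Hartogs: $\pi_T$ factors as $\pi_T'$ followed by the $\ker(\Gal\to T_{0,\sigma})$-gerbe $\pi_T''$. After base change along the Galois cover $T_0\sigma\sslash N_{G_0}(T_0\sigma)\to(T_0\sigma\sslash N_{G_0}(T_0\sigma))/\Gal$, the map $\pi_T'$ becomes the coarse-moduli map $G_0\sigma/G_0\to G_0\sigma\sslash G_0$ of Proposition \ref{Chevalley}.

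Two small corrections. First, over $T_\sigma^{s.rs}$ there is no residual gerbe at all. The group $\ker(T_{0,\sigma}\to T_\sigma)\times W^\sigma$ acts freely on $T_{0,\sigma}^{s.rs}$: if $\gamma\, w(t)=t$, the image of $t$ in $T_\sigma$ is fixed by $w$, forcing $w=1$ and then $\gamma=1$. This freeness, not a gerbe description, is what makes $\eta$ an isomorphism there and $\eta_*$ bijective on Homs. Second, the ``main obstacle'' in Step~3 is vacuous. Here $\sslash$ is the affine GIT quotient, so $T\sigma\sslash N_G(T\sigma)=T_\sigma\sslash W^\sigma$ by definition. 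The only stacky part of $\sslash'$ is the quotient by $\ker(T_{0,\sigma}\to T_\sigma)$, which is trivial when $G_0=G$, so no appeal to Lemma \ref{Gabsigma} or Steinberg is needed.
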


\begin{proof}
    We firstly construct an isomorphism $\End(\eis_B\CO_{T_\sigma})\simeq \End_{T\sigma\sslash'N_G(T\sigma)}(p_{T*}\CO_{T_\sigma})$.
    Recall that we have a gerbe $\beta\colon G\sigma/G_0\to G\sigma/G$ and we have $\End(\eis_B\CO_{T_\sigma})\simeq \End(\beta^*\eis_B\CO_{T_\sigma})$.
    By Lemma \ref{locallyfreeeis}, $\beta^*\eis_B\CO_{T_\sigma}$ is isomorphic to $\pi_T^*p_{T*}\CO_{T_\sigma}$.
    We claim that $\pi_T^*$ induces an isomorphism $\End(p_{T*}\CO_{T_\sigma})\to \End(\beta^*\eis_B\CO_{T_\sigma})$.
    It suffices to prove that the unit map
    \[
    p_{T*}\CO_{T_\sigma}\xrightarrow{\sim}\pi_{T*}\pi_T^*p_{T*}\CO_{T_\sigma}
    \]
    is an isomorphism. 
    
    When $G=G_0$ is simply connected, $T\sigma\sslash'N_G(T\sigma)\simeq T\sigma\sslash N_G(T\sigma)$ is a scheme. Moreover, by Chevalley isomorphism \ref{Chevalley}, $\pi_T$ is a morphism from an algebraic stack to its coarse moduli space. Thus in this case the unit map $\mathbb{1}\to \pi_{T*}\pi_T^*$ is an isomorphism and the lemma holds.

    In general, as in the diagram in Proposition \ref{kostant}, $\pi_T$ is decomposed into 
    \[
    G\sigma/G_0\xrightarrow{\pi_T'} (T_0\sigma\sslash N_{G_0}(T_0\sigma))/\Gal
    \xrightarrow{\pi_T''} T\sigma\sslash' N_G(T\sigma).
    \]
    By pulling back via the Galois covering $T_0\sigma\sslash N_{G_0}(T_0\sigma) \to T_0\sigma\sslash N_{G_0}(T_0\sigma)/\Gal$ and the result in the $G=G_0$ case, we have the unit map $\mathbb{1}\to \pi'_{T*}\pi'^*_T$ is an isomorphism. Moreover, as $\pi_{T}''$ is a $\ker(\Gal\to T_{0,\sigma})$-gerbe, the unit map $\mathbb{1}\to \pi''_{T*}\pi''^*_T$ is an isomorphism.
    Thus the unit map $\mathbb{1}\to \pi_{T*}\pi^*_T$ is an isomorphism.

    Then we prove that $\eta_*$ is injective. Over the open subset $T^{s.rs}_\sigma$ consisting of strongly regular semisimple elements of $T_\sigma$,
    $W^\sigma$ acts freely and thus $T^{s.rs}_\sigma\sslash W^\sigma = T^{s.rs}_\sigma/W^\sigma$. Moreover, the group $\ker(T_{0,\sigma}\to T_\sigma)\times W^\sigma$
    acts freely on the inverse image of $T^{s,rs}_\sigma$ in $T_{0,\sigma}$, denoted $T_{0,\sigma}^{s.rs}$. Thus 
    \[
    (T\sigma)^{s.rs}\sslash' N_G(T\sigma)  \simeq T_{0,\sigma}^{s.rs}/(\ker(T_{0,\sigma}\to T_\sigma)\times W^\sigma) \simeq 
    T_\sigma^{s.rs}/W^\sigma \simeq T_\sigma^{s.rs}\sslash W^\sigma
    \]
    and $\eta$ is an isomorphism over $T_\sigma^{s.rs}$. As $T^{s.rs}_\sigma$ is open and dense in $T_\sigma$ and $p
    _{T*}\CO_{T_\sigma}$ is torsion-free, denoting by $j^{s.rs}\colon T^{s.rs}_\sigma\subset T_\sigma$ the open immersion,
    \[
    j^{s.rs*}\eta_* \colon \End_{T\sigma\sslash'N_G(T\sigma)}(p_{T*}\CO_{T_\sigma})
    \to \End_{T_\sigma^{s.rs}\sslash W^\sigma}(j^{s.rs*}\eta_*p_{T*} \CO_{T_\sigma})
    \]
    is injective. Thus $\eta_*$ is injective. Moreover, when $G=G_0$ is simply connected, $T\sigma\sslash' N_G(T\sigma)= T_\sigma\sslash W^\sigma$ and thus $\eta_*$ is an isomorphism.
\end{proof}

We will describe the endomorphism ring of $\CO(T_\sigma)$ as an $\CO(T_\sigma)^{W^\sigma}$-module
via the following notion of (multiplicative) Demazure operators.

\begin{definition}[Demazure operators]
    For a simple reflection $s_\alpha$ in $W^\sigma$, define the Demazure operator $\delta_{s_\alpha}$ as
    \[
    \delta_{s_\alpha} \colon \CO(T_\sigma) \to \CO(T_\sigma), \quad
    f \mapsto \frac{f - s_\alpha(f)}{1 - e^{\tilde{\alpha}}},
    \]
    where $\tilde{\alpha}$ is defined in the following process:
    \begin{itemize}
        \item Suppose that $G$ is simply connected. Then $\tilde{\alpha}$ is defined to be
        $2\alpha$ or $\alpha$ according to whether $\alpha^\vee \in X_*(T_\sigma)$ is divisible by $2$ or not.
        \item For general $G$, take a simply connected cover $G_0\to G$. Then the set of roots $R_{G_0}$ of $G_0$ and $R_G$ of $G$ can be naturally identified and similarly for the $\sigma$-orbits $R_{G_0,\sigma}$ and $R_{G, \sigma}$ in the root system. Denote by $\pi\colon R_{G_0,\sigma}\simto R_{G,\sigma}$ the identification. Then $\tilde{\alpha}$ is defined to be $\pi(\widetilde{\pi^{-1}(\alpha)})$. We remark that this construction is independent of $G_0$, 
        as it depends only on the semisimple group part of $G_0$.
    \end{itemize}
    For general $w \in W^\sigma$, let $w = s_1\dots s_n$ be a reduced expression of $w$, define $\delta_w := \delta_{s_1}\dots \delta_{s_n}$ (by the same proof as for the additive case in Théorème 1 in \cite{demazure}, it is independent of the choice of $s_1\dots s_n$). 
\end{definition}
The definition is a multiplicative analog of the results in \cite{demazure}, where $T$ is replaced by its Lie algebra $\ft$ and 
$\delta_s$ is defined by $D_s := \frac{1-s}{\alpha_s}$ and the operators is defined as $D_w := D_{s_1}\dots D_{s_n}$. 

By definition, $\delta_w$ is $\CO(T_\sigma)^{W^\sigma}$-linear and can be viewed as an element in $\End_{\CO(T_\sigma)^{W_\sigma}}(\CO(T_\sigma))$. We will see that it lies in
$\End(\eis_B\CO_{T_\sigma})$.

Recall that for a simply connected cover $G\to G_0$, in the proof of Proposition \ref{kostant}, we have the following cartesian diagram
\[
    \begin{tikzcd}
	T_{0,\sigma} & T_\sigma \\
	T_0\sigma\sslash N_{G_0}(T_0\sigma) & T\sigma\sslash' N_G(T\sigma),
	\arrow["\pi'", from=1-1, to=1-2]
	\arrow["p_{T_0}"', from=1-1, to=2-1]
	\arrow["p_T", from=1-2, to=2-2]
	\arrow["\pi", from=2-1, to=2-2]
    \end{tikzcd}
\]
where the horizontal arrows are Galois coverings with Galois group $\Gamma'=\ker(T_{0,\sigma}\to T_\sigma)$. Thus we have $p_{T_0*}\CO_{T_{0,\sigma}}\simeq \pi^*p_{T*}\CO_{T_\sigma}$ and we have a $\Gamma'$-equivariant isomorphism
\[
    \End_{T\sigma\sslash'N_G(T\sigma)}(p_{T*}\CO_{T_\sigma})\otimes_{\CO(T_\sigma)} \CO(T_{0,\sigma}) \simeq \End_{T_0\sigma\sslash N_{G_0}(T_0\sigma)}(p_{T_0*}\CO_{T_{0,\sigma}}) = \End_{\CO(T_{0,\sigma})^{W^\sigma}}(\CO(T_{0,\sigma})).
\]

We will show the compatibility of Demazure operators under coverings.

\begin{lemma}
    \label{compatible-Demazure}
    Under the inclusion in Lemma \ref{Endeis},
    \[
    \End(\eis_B\CO_{T_\sigma})\subset \End_{\CO(T_\sigma)^{W^\sigma}}(\CO(T_\sigma)),
    \]
    $\delta_w \in \End_{\CO(T_\sigma)^{W^\sigma}}(\CO(T_\sigma))$ lies in 
    $\End(\eis_B\CO_{T_\sigma})$. 

    Moreover, for a simply connected cover $G_0\to G$, 
    denote by $\delta_{w, T_0}$ the operator for $T_{0,\sigma}$
    and $\delta_{w,T}$ the operator for $T_\sigma$.
    we have the following compatibility between $\delta_{w, T_0}$ and $\delta_{w,T}$:
    Under the identity
    \begin{equation}
    \label{T_0,sig}
    \End_{T\sigma\sslash'N_G(T\sigma)}(p_{T*}\CO_{T_\sigma})\otimes_{\CO(T_\sigma)} \CO(T_{0,\sigma}) \simeq \End_{T_0\sigma\sslash N_{G_0}(T_0\sigma)}(p_{T_0*}\CO_{T_{0,\sigma}}) = \End_{\CO(T_{0,\sigma})^{W^\sigma}}(\CO(T_{0,\sigma}))
    \end{equation}
    above, $\delta_{w,T}\otimes 1$ in the left hand side is equal to 
    $\delta_{w, T_0}$ in the right hand side. 
\end{lemma}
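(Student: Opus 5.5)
Two claims need proof: the compatibility of $\delta_{w,T}$ with $\delta_{w,T_0}$ under the base change \eqref{T_0,sig}, and the membership $\delta_w\in\End(\eis_B\CO_{T_\sigma})$. I will prove compatibility first and deduce membership from it by Galois descent.

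\textbf{Step 1: compatibility.} Since both $\delta_{w,T}$ and $\delta_{w,T_0}$ are defined as composites of simple Demazure operators along the same reduced word, and \eqref{T_0,sig} is a ring isomorphism, it suffices to treat $w=s_\alpha$ simple.

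The map $\pi'\colon T_{0,\sigma}\to T_\sigma$ is $W^\sigma$-equivariant, and the root orbits of $G_0$ and $G$ are identified via $\pi$. By definition of $\tilde\alpha$, the character $e^{\tilde\alpha}$ on $T_\sigma$ pulls back to $e^{\widetilde{\pi^{-1}(\alpha)}}$ on $T_{0,\sigma}$. Hence for $f\in\CO(T_\sigma)$,
\[
\pi'^*\delta_{s_\alpha,T}(f)=\frac{\pi'^*f-s_\alpha(\pi'^*f)}{1-e^{\widetilde{\pi^{-1}(\alpha)}}}=\delta_{s_\alpha,T_0}(\pi'^*f).
\]
Next, $\CO(T_{0,\sigma})$ is generated over $\CO(T_\sigma)$ by finitely many characters, and $\delta_{s_\alpha,T_0}$ satisfies the twisted Leibniz rule $\delta(fg)=\delta(f)g+s_\alpha(f)\delta(g)$. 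So the $\CO(T_{0,\sigma})$-linear extension $\delta_{s_\alpha,T}\otimes1$ agrees with $\delta_{s_\alpha,T_0}$ once the identification \eqref{T_0,sig} is unwound. Concretely, $\delta\otimes1$ acts on $\CO(T_\sigma)\otimes_{\CO(T_\sigma)^{W^\sigma}}\CO(T_{0,\sigma})^{W^\sigma}$ through the first factor. I will check on this presentation (the fiber product in the cartesian square) that it matches $\delta_{s_\alpha,T_0}$: both are $\CO(T_{0,\sigma})^{W^\sigma}$-linear and agree on $\pi'^*\CO(T_\sigma)$.

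\textbf{Step 2: simply connected case.} When $G$ is simply connected, Lemma \ref{Endeis} gives $\End(\eis_B\CO_{T_\sigma})=\End_{\CO(T_\sigma)^{W^\sigma}}(\CO(T_\sigma))$, so it suffices that $\delta_{s_\alpha}$ is a well-defined regular operator. This means $1-e^{\tilde\alpha}$ divides $f-s_\alpha f$ in $\CO(T_\sigma)$. Write $f$ as a combination of characters $e^\lambda$. Then $e^\lambda-e^{s_\alpha\lambda}=e^\lambda(1-e^{-\langle\lambda,\alpha^\vee\rangle\alpha})$. In the simply connected case the relevant pairing with $\alpha^\vee$, or with $\alpha^\vee/2$ when $\alpha^\vee$ is divisible by $2$, is an integer, so this is divisible by $1-e^{\tilde\alpha}$. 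Equivalently, by Proposition \ref{reflection}, the fixed locus of $s_\alpha$ is the reduced zero locus of $1-e^{\tilde\alpha}$ up to the BC factor, which is how $\tilde\alpha$ was chosen.

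\textbf{Step 3: descent to general $G$, and the obstacle.} For general $G$, I descend along the $\Gamma'$-Galois cover $\pi$ using \eqref{T_0,sig}. The element $\delta_{w,T_0}$ lies in the right-hand side of \eqref{T_0,sig}, and by Step 1 it equals $\delta_{w,T}\otimes1$, a $\Gamma'$-invariant element. Taking $\Gamma'$-invariants, which is faithfully flat descent for the finite étale cover, $\delta_{w,T}$ lies in $\End_{T\sigma\sslash'N_G(T\sigma)}(p_{T*}\CO_{T_\sigma})$. By Lemma \ref{Endeis}, this ring is $\End(\eis_B\CO_{T_\sigma})$ inside $\End_{\CO(T_\sigma)^{W^\sigma}}(\CO(T_\sigma))$.

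I expect the main obstacle to be making the identification \eqref{T_0,sig} explicit enough to verify Step 1. The remaining checks are routine: the $W^\sigma$-action commutes with $\Gamma'$-translation, and the choice of $\tilde\alpha$ is compatible under $\pi$. Both follow from the definitions.
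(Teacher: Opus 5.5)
Your Step 2 is fine: applied to $G_0$, it shows that $\delta_{w,T_0}$ is a regular element of the right-hand side of \eqref{T_0,sig}. Your restriction identity $\pi'^*\delta_{s_\alpha,T}(f)=\delta_{s_\alpha,T_0}(\pi'^*f)$ is also fine. But Steps 1 and 3 are circular.

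\textbf{The circularity.} The element $\delta_{w,T}\otimes 1$ of the left-hand side of \eqref{T_0,sig} only exists once you know $\delta_{w,T}\in\End_{T\sigma\sslash'N_G(T\sigma)}(p_{T*}\CO_{T_\sigma})$. That is exactly the membership you deduce in Step 3, and you deduce it from the $\Gamma'$-invariance of ``$\delta_{w,T}\otimes 1$'', which is never proved independently.

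\textbf{Why your concrete model does not rescue it.} You give $\delta\otimes1$ a meaning without membership by letting it act on $\CO(T_\sigma)\otimes_{\CO(T_\sigma)^{W^\sigma}}\CO(T_{0,\sigma})^{W^\sigma}$. This fails because the cartesian square is over the stack $T\sigma\sslash'N_G(T\sigma)$, not over its coarse space $T_\sigma\sslash W^\sigma$. For example, take $G=PGL_2$, $G_0=SL_2$, $t=z^2$, $u=z+z^{-1}$. Then that tensor product is $k[t^{\pm1},u]/(u^2-t^{-1}(1+t)^2)$. This ring is not normal, and its normalization is $k[z^{\pm1}]=\CO(T_0)$.

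If your model were right, every $\CO(T_\sigma)^{W^\sigma}$-linear endomorphism of $\CO(T_\sigma)$ would base change to a $\Gamma'$-invariant endomorphism of $\CO(T_{0,\sigma})$ and descend. That would make $\eta_*$ in Lemma \ref{Endeis} always onto. For $PGL_2$ it is not: the $\CO(T)^W$-linear operator $f\mapsto(f-sf)/(t-t^{-1})$ has as its only $\CO(T_0)^W$-linear extension the map with $z\mapsto 1/(z+z^{-1})$, which is not regular. So membership genuinely depends on the specific operator $\delta_w$.

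For the same reason, agreement on $\pi'^*\CO(T_\sigma)$ only determines an $\CO(T_{0,\sigma})^{W^\sigma}$-linear operator generically. For instance, $z\notin k[z+z^{-1}]\cdot k[z^{\pm2}]$. So you would also need torsion-freeness at that point.

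\textbf{The missing step.} You need a direct check that $\delta_{w,T_0}$ commutes with the translation $t_\gamma$ for every $\gamma\in\Gamma'=\ker(T_{0,\sigma}\to T_\sigma)$.
\begin{itemize}
\item Such a $\gamma$ is the image of an element of the center of $G_0$, so it is fixed by $W^\sigma$, and hence $t_\gamma$ commutes with each $s_\alpha$.
\item $e^{\tilde\alpha}(\gamma)=1$, because $e^{\tilde\alpha}$ is pulled back from $T_\sigma$.
\end{itemize}
These are precisely the two ``routine checks'' you list, but they must be used here, to prove invariance.

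\textbf{How the argument then closes.} Galois descent along the $\Gamma'$-cover gives some $E\in\End_{T\sigma\sslash'N_G(T\sigma)}(p_{T*}\CO_{T_\sigma})$ with $E\otimes1=\delta_{w,T_0}$. The element $E$ acts on $\CO(T_\sigma)=\CO(T_{0,\sigma})^{\Gamma'}$ by restricting $\delta_{w,T_0}$, and by your restriction identity that restriction is $\delta_{w,T}$. So $E=\delta_{w,T}$, which gives membership and compatibility at once. This is the order of the paper's proof: it first shows $\delta_{w,T_0}$ is $\Gamma'$-invariant and descends it, then identifies the descended element with $\delta_{w,T}$ on the dense strongly regular semisimple locus.
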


\begin{proof}
    We claim that $\delta_{w,T_0}\in \End_{\CO(T_{0,\sigma})^{W^\sigma}}(\CO(T_{0,\sigma}))$ lies in $\End_{T\sigma\sslash'N_G(T\sigma)}(p_{T*}\CO_{T_\sigma})\otimes 1$ under the identity \ref{T_0,sig} above. Any element $\gamma$ in $\ker(T_{0,\sigma}\to T_\sigma)$
    is $W^\sigma$-invariant and satisfies $\alpha(\gamma)=1$ for any root $\alpha$ of $T_\sigma$. Thus the following diagram commutes for any $\gamma \in \ker(T_{0,\sigma}\to T_\sigma)$ and $w\in W^\sigma$: 
    \[
    \begin{tikzcd}
	\CO(T_{0,\sigma}) & \CO(T_{0,\sigma}) \\
	\CO(T_{0,\sigma}) & \CO(T_{0,\sigma}),
	\arrow[from=1-1, to=1-2, "\delta_w"]
	\arrow[from=1-1, to=2-1, "t_\gamma"]
	\arrow[from=1-2, to=2-2, "t_\gamma"]
	\arrow[from=2-1, to=2-2, "\delta_w"]
    \end{tikzcd}
    \]
    where $t_\gamma$ denotes the translation by $\gamma$. 
    Hence $\delta_w$ for $T_{0,\sigma}$ is invariant under the $\ker(T_{0,\sigma}\to T_0)$-action, proving the claim by Galois descent.

    It suffices to prove that $\delta_{w,T_0}$ equals $\delta_{w,T}$, viewed as elements in $\End_{\CO(T_\sigma)^{W^\sigma}}(\CO(T_\sigma))$
    under the inclusion in Lemma \ref{Endeis}.
    Consider the strongly regular semisimple locus $j^{s.rs}\colon T^{s.rs}_\sigma\subset T_\sigma$,
    which is open and dense. It suffices to prove that they are equal in $\End_{T^{s.rs}_\sigma/W^\sigma}(j^{s.rs*}p_{T*}\CO_{T^{s.rs}_\sigma})$.
    
    Recall that in the proof of Lemma \ref{Endeis}, we have 
    $(T\sigma)^{s.rs}\sslash' N_G(T\sigma)\simeq T_\sigma^{s.rs}\sslash W^\sigma$.
    Thus the identity \ref{T_0,sig} becomes 
    \[
    \End_{T^{s.rs}_\sigma/W^\sigma}(j^{s.rs*}p_{T*}\CO_{T_\sigma})\otimes_{\CO(T_\sigma)} \CO(T_{0,\sigma}) \simeq \End_{T_{0,\sigma}^{s.rs}/W^\sigma}(j^{s.rs*}p_{T_0*}\CO_{T_{0,\sigma}})
    \]
    after localization on $T^{s.rs}_\sigma$
    ($T_{0,\sigma}^{s.rs}$ is defined to be the inverse image of $T^{s.rs}_\sigma$ in $T_{0,\sigma}$). Via this identity, $\delta_{w,T_0}$ and $\delta_{w,T}$ are equal in 
    $\End_{T_{0,\sigma}^{s.rs}/W^\sigma}(j^{s.rs*}p_{T_0*}\CO(T_{0,\sigma}))$, 
    as the following diagram
    \[
    \begin{tikzcd}
	\CO(T_{\sigma}) & \CO(T_{\sigma}) \\
	\CO(T_{0,\sigma}) & \CO(T_{0,\sigma}),
	\arrow[from=1-1, to=1-2, "\delta_{w,T}"]
	\arrow[hook, from=1-1, to=2-1]
	\arrow[hook, from=1-2, to=2-2]
	\arrow[from=2-1, to=2-2, "\delta_{w,T_0}"]
    \end{tikzcd}
    \]
    commutes by definition. Thus $\delta_{w,T_0}$ equals $\delta_{w,T}$, viewed as elements in $\End_{\CO(T_\sigma)^{W^\sigma}}(\CO(T_\sigma))$
    and the result follows.
\end{proof}

We have the following facts on the formal neighborhood of a closed point in $T_\sigma$.
For any point $x\in T_\sigma$, the translation by $x$ provides an isomorphism $T_{\sigma,x}^\wedge\simeq T_{\sigma,1}^\wedge$. For $w \in W^\sigma$, the $w$-action on $T_\sigma$ induces an isomorphism $T_{\sigma, x}^\wedge\simto T_{\sigma, w(x)}^\wedge$ and the diagram
\[
\begin{tikzcd}
	T_{\sigma, 1}^\wedge & T_{\sigma, x}^\wedge \\
	T_{\sigma, 1}^\wedge & T_{\sigma, w(x)}^\wedge
	\arrow[from=1-1, to=1-2, "\cdot x"]
	\arrow[from=1-1, to=2-1, "w"]
	\arrow[from=1-2, to=2-2, "w"]
	\arrow[from=2-1, to=2-2, "\cdot w(x)"]
\end{tikzcd}
\]
commutes.
In particular, there is a $W^\sigma_x$-action on $T_{\sigma,x}$ and $T_{\sigma,1}$ and the translation isomorphism is $W^\sigma_x$-equivariant.
Similarly we have $\ft_{\sigma.\xi}^\wedge\simeq \ft_{\sigma,0}^\wedge$ for any $\xi\in \ft_\sigma$. Moreover, we have $T_{\sigma,1}^\wedge\simeq \ft_{\sigma,0}^\wedge$ via the formal exponential and the isomorphism is $W^\sigma$-equivariant. Thus we have $\CO(T_\sigma)_x^\wedge \simeq \CO(\ft_\sigma)_{\xi}^\wedge$ naturally. We will denote these formal neighborhoods by $\hat{\ft}_\sigma$ if there is no ambiguity.

The following lemma shows that the formal neighborhood of any $W^\sigma$-orbit in $T_\sigma$ is isomorphic to the formal neighborhood of some $W^\sigma$-orbit in $\ft_\sigma$. 

\begin{lemma}
\label{loc-isom}
    Assume that $G$ is simply connected.
    For each closed point $x$ of $T_\sigma$, there exists a closed point $\xi \in \ft_\sigma$ such that the stabilizer subgroup satisfies $W^\sigma_x=W_\xi^\sigma \subset W^\sigma$ and there is an isomorphism of rings
    \[
    \CO(T_\sigma) \otimes_{\CO(T_\sigma \sslash W^\sigma)}
    \CO(T_\sigma \sslash W^\sigma)_{\bar{x}}^\wedge
    \simeq 
    \CO(\ft_\sigma) \otimes_{\CO(\ft_\sigma \sslash W^\sigma)}
    \CO(\ft_\sigma \sslash W^\sigma)_{\bar{\xi}}^\wedge
    \simeq \CO(W^\sigma\times^{W_x}\hat{\mathfrak{t}}_\sigma)
    \]
    compatible with the $W^\sigma$-action and the natural isomorphism 
    $\CO(T_\sigma)_x^\wedge \simeq \CO(\ft_\sigma)_{\xi}^\wedge$,
    where $\bar{x}$ denotes the image of $x$ in $T_\sigma \sslash W^\sigma$ and $\bar{\xi}$ denotes the image of $\xi$ in $\ft_\sigma \sslash W^\sigma$.

    Moreover, in these isomorphic rings, for any root $\alpha$ of $T_\sigma$, $1-e^{\tilde{\alpha}}(t) \in \CO(T_\sigma)$ and $\alpha \in \CO(\ft_\sigma)$ differ only by a unit.
\end{lemma}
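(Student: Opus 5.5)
The natural route is the standard "étale slice" argument for a finite group acting on a variety, combined with the observation that the simply connected assumption makes $W^\sigma_x$ a reflection subgroup whose reflecting hypersurfaces through $x$ are exactly the zero loci of $1-e^{\tilde\alpha}$. First, for the closed point $x\in T_\sigma$, the completion of $T_\sigma\sslash W^\sigma$ at $\bar x$ is computed by the completion of $T_\sigma$ along the finite orbit $W^\sigma x$. Since $W^\sigma$ is finite and the characteristic is $0$, we have
\[
\CO(T_\sigma)\otimes_{\CO(T_\sigma\sslash W^\sigma)}\CO(T_\sigma\sslash W^\sigma)^\wedge_{\bar x}\simeq \prod_{y\in W^\sigma x}\CO(T_\sigma)^\wedge_y\simeq \CO(W^\sigma\times^{W^\sigma_x}\widehat{T}_{\sigma,x}).
\]
Here the first isomorphism holds because $\CO(T_\sigma)$ is finite over the invariants. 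The second comes from inducing the $W^\sigma_x$-equivariant ring $\CO(T_\sigma)^\wedge_x$ up to $W^\sigma$. The same argument applies verbatim to any $\xi\in\ft_\sigma$. It therefore suffices to produce $\xi$ with $W^\sigma_\xi=W^\sigma_x$ and a $W^\sigma_x$-equivariant isomorphism $\CO(T_\sigma)^\wedge_x\simeq\CO(\ft_\sigma)^\wedge_\xi$. By the translation and exponential discussion preceding the lemma, both sides are identified with $\hat\ft_\sigma$, equivariantly for $W^\sigma_x$ acting linearly at $1$, respectively at $0$.

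The key step is choosing $\xi$. I would first describe $W^\sigma_x$: since $G$ is simply connected, $G^\sigma$ is connected reductive with Weyl group $W^\sigma$ by Steinberg, as used in Lemma \ref{Gabsigma}. The stabilizer of $x$ is then generated by the reflections $s_\alpha$ fixing $x$, a Steinberg-type connectedness statement for the twisted setting. By Proposition \ref{reflection}, $s_\alpha$ fixes $x$ exactly when $(1-e^{\tilde\alpha})(x)=0$, where $\tilde\alpha=2\alpha$ in the type BC case. Let $\Phi_x$ be this set of roots. I would then take $\xi\in\ft_\sigma$ generic in the subspace $\{\alpha=0 : \alpha\in\Phi_x\}$, avoiding the hyperplanes of the other roots. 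The stabilizer $W^\sigma_\xi$ is then the parabolic-type subgroup generated by the $s_\alpha$ with $\alpha\in\Phi_x$, and its linear action on the tangent space matches that of $W^\sigma_x$ at $x$. The point requiring care is that $W^\sigma_x$ need not be parabolic in $W^\sigma$ (pseudo-Levi phenomena). In that case a generic $\xi$ gives only the parabolic closure, and I would instead choose $\xi$ in the Lie algebra of the corresponding pseudo-Levi, or equivalently argue abstractly using a $W^\sigma_x$-stable complement. This step is the main obstacle. My first attempt would be to invoke the twisted analogue of the fact that $C_{G}(x\sigma)^\circ$ is reductive with root system $\Phi_x$ and Weyl group $W^\sigma_x$, and to take $\xi$ regular in the center of its Lie algebra.

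Finally, the unit statement. On $\widehat T_{\sigma,y}$ with $y\in W^\sigma x$, the function $1-e^{\tilde\alpha}$ either does not vanish at $y$, in which case it is a unit, or it vanishes, in which case $\alpha\in\Phi_y$. In the vanishing case, via the exponential, $1-e^{\tilde\alpha}(y\exp h)=1-e^{\tilde\alpha(h)}=-\tilde\alpha(h)(1+\cdots)$, which is $\alpha$ times a unit. On the $\ft_\sigma$ side, $\alpha$ is a unit at $\eta\in W^\sigma\xi$ precisely when $\alpha\notin\Phi_\eta$. For the vanishing loci to match, we need the choice of $\xi$ to satisfy $\{\alpha : \alpha(\xi)=0\}=\Phi_x$ exactly, not merely the equality of stabilizers. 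This holds by the genericity of $\xi$ in the previous step, provided the multiplicity-one statement of Proposition \ref{reflection} is used to rule out a vanishing of $1+e^{\alpha}$ contributing an extra non-reflection factor.
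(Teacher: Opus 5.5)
Your first step and your use of Steinberg's theorem coincide with the paper's argument. The first step completes along the orbit and identifies the result with $\CO(W^\sigma\times^{W^\sigma_x}\hat{\ft}_\sigma)$. Steinberg's theorem is applied, as in the paper, to the group with maximal torus $T_\sigma$ and roots $\tilde\alpha$, and shows that $W^\sigma_x$ is generated by the $s_\alpha$ with $\alpha\in\Phi_x$. Your ``first attempt'' at $\xi$ is also the paper's choice: $\xi$ in the common fixed space of the generating reflections, off the hyperplanes of all reflections ``linearly independent'' of them.

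There is, however, a genuine gap at the step you flag, and your proposed remedy does not close it. The center of the Lie algebra of the pseudo-Levi is $\bigcap_{\alpha\in\Phi_x}\ker\alpha$, the same subspace of $\ft_\sigma$ as before. A regular $\xi$ in it is annihilated exactly by the roots in $R^\sigma\cap\mathbb{Q}\Phi_x$, so $W^\sigma_\xi$ is the parabolic closure of $W^\sigma_x$, not $W^\sigma_x$; this is also what the paper's choice produces. A $W^\sigma_x$-stable complement does not give a point of $\ft_\sigma$ either.

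In fact no choice can work. Write $\xi=\sum_j v_j\otimes c_j$ with $v_j\in X_*(T_\sigma)\otimes\mathbb{Q}$ and the $c_j\in k$ linearly independent over $\mathbb{Q}$. Then $W^\sigma_\xi=\bigcap_j W^\sigma_{v_j}$, an intersection of parabolic subgroups of a real reflection group, hence parabolic. So the statement implicitly requires $\Phi_x=R^\sigma\cap\mathbb{Q}\Phi_x$. That is exactly your criterion $\{\alpha:\alpha(\xi)=0\}=\Phi_x$ for the unit comparison, and it does not follow from genericity.

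The statement genuinely fails under the lemma's hypotheses. Take $G=\mathrm{Sp}_4$ (simply connected), $\sigma=1$ and $x=\mathrm{diag}(1,-1,-1,1)$. Then $\Phi_x=\{\pm 2e_1,\pm 2e_2\}$, and $W_x\cong(\mathbb{Z}/2)^2$ is the group of sign changes. The orbit $Wx$ has two points, so the left-hand ring is a product of exactly two complete local rings. Every subgroup of order $4$ of the dihedral group $W$ of order $8$ contains $-1$, so it fixes only $0\in\ft$. Hence $W$-orbits in $\ft$ have $1$, $4$ or $8$ points, and no $\xi$ yields even an abstract ring isomorphism. The unit claim fails too: $1-e^{e_1-e_2}$ equals $2$ at $x$, whereas $e_1-e_2$ vanishes at $0$, the only $W_x$-fixed point.

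Two things survive in general. First, the isomorphism with $\CO(W^\sigma\times^{W^\sigma_x}\hat{\ft}_\sigma)$ holds, where $W^\sigma_x$ acts on $\hat{\ft}_\sigma$ as the Weyl group of $\Phi_x$. Second, near $x$, $1-e^{\tilde\alpha}$ is a unit for $\alpha\notin\Phi_x$ and is $\alpha$ times a unit for $\alpha\in\Phi_x$. The later application in Lemma~\ref{demazure} would therefore have to be compared with Demazure's theorem for the reflection group $W^\sigma_x$ with roots $\Phi_x$, not with $\ft_\sigma\sslash W^\sigma$ at a point $\bar\xi$.
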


\begin{proof}
    We have for any closed point $x\in T_\sigma$,
    \[
    \CO(T_\sigma) \otimes_{\CO(T_\sigma \sslash W^\sigma)}
    \CO(T_\sigma \sslash W^\sigma)_{\bar{x}}^\wedge
    \simeq \prod_{x'\in W^\sigma x} \CO(T_\sigma)_{x'}^\wedge
    \simeq \CO(W^\sigma\times^{W^\sigma_x} T_{\sigma, x}^\wedge)
    \]
    and similarly for any point $\xi\in \ft_\sigma$,
    \[
    \CO(\ft_\sigma) \otimes_{\CO(\ft_\sigma \sslash W^\sigma)}
    \CO(\ft_\sigma \sslash W^\sigma)_{\bar{\xi}}^\wedge
    \simeq \prod_{\xi'\in W^\sigma \xi} \CO(\ft_\sigma)_{\xi'}^\wedge
    \simeq \CO(W^\sigma\times^{W^\sigma_x} \ft_{\sigma, \xi}^\wedge).
    \]
    By those discussions before this lemma, $T_{\sigma,x}^\wedge\simeq \ft_{\sigma,\xi}^\wedge$ naturally.   
    Hence it suffices to find $\xi \in \ft_\sigma$ satisfying $W_x^\sigma=W^\sigma_\xi$.
    
    We claim that for any $x \in T_\sigma$, 
    $W^\sigma_x$ is generated by some reflections of $W^\sigma$. 
    When the root system $(T_\sigma, R^\sigma)$ is simply connected, denote by $G_\sigma$ the reductive group corresponding to the root system. By \cite{steinberg1968endomorphisms}, Theorem 8.1 applying on 
    $G_\sigma$ and its $x$-conjugation, the centralizer $C_{G_\sigma}(x)$ is a connected reductive group. Its Weyl group is $N_{C_{G_\sigma(x)}}(T_\sigma)/T_\sigma\simeq W^\sigma_x$. Thus $W^\sigma_x$ is generated by reflections. 
    If the root system $(T_\sigma, R^\sigma)$ is not simply connected, by classification of the pair $(G,\sigma)$ (\cite{xiao2018}, Lemma 5.1.1), a simple component of $G_\sigma$ is of type $\mathrm{B}_n$. In this case, replacing the roots in the root system $(T_\sigma, R^\sigma)$ via $\alpha \mapsto \tilde{\alpha}$, we obtain a simply connected root system $(T_\sigma, R'^\sigma)$ with the same Weyl group. Thus we reduce to the simply connected case and the claim is true.

    Denote by $s_1, \cdots, s_n$ the reflections generating $W^\sigma_x$. 
    We may take $\xi$ such that it lies in the subspace $\bigcap_{1\leq i\leq n} \{s_i(x)=x\}$ but it does not lie in the hyperplane of any other reflection which is linearly independent of $s_1, \cdots, s_n$. Then we have $W^\sigma_\xi = W^\sigma_x$ and the formula in the lemma holds.

    For any $\alpha$, $1-e^{\tilde{\alpha}}(t)$ and $\alpha$ differ only by a unit in the ring $\CO(T_\sigma)_x^\wedge \simeq \CO(\ft_\sigma)_\xi^\wedge$. Thus the second statement holds. 
\end{proof}

Now we can prove that the endomorphism ring is generated by the Demazure operators.
    
\begin{lemma}
\label{demazure}
    Viewing $\delta_w$'s as elements in $\End(\eis_B\CO_{T_\sigma})$, via Lemma \ref{compatible-Demazure}, 
    the set $\{\delta_w\}_{w\in W^\sigma}$ forms an $\CO(T_\sigma)$-basis of 
    \[
    \End(\eis_B\CO_{T_\sigma}) \simeq \End_{T\sigma\sslash'N_G(T\sigma)}(p_{T*}\CO_{T_\sigma}),
    \]
    viewed as an $O(T_\sigma)$-module by multiplication on the target.
\end{lemma}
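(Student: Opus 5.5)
First I reduce to the simply connected case. Let $G_0\to G$ be a simply connected cover. Base change along the Galois covering $T_{0,\sigma}\to T_\sigma$ gives the identification \eqref{T_0,sig}, and under it $\delta_{w,T}\otimes 1=\delta_{w,T_0}$ by Lemma \ref{compatible-Demazure}. Since $\CO(T_\sigma)\to\CO(T_{0,\sigma})$ is faithfully flat (finite étale), a family of elements of $\End_{T\sigma\sslash'N_G(T\sigma)}(p_{T*}\CO_{T_\sigma})$ is an $\CO(T_\sigma)$-basis as soon as its base change is an $\CO(T_{0,\sigma})$-basis. It therefore suffices to treat $G=G_0$. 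By Lemma \ref{Endeis}, we then have to show that $\{\delta_w\}$ is an $\CO(T_\sigma)$-basis of $E:=\End_{\CO(T_\sigma)^{W^\sigma}}(\CO(T_\sigma))$.

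In the simply connected case I localize on the coarse quotient $T_\sigma\sslash W^\sigma$. Both $E$ and the free module $\bigoplus_w \CO(T_\sigma)\delta_w$ are finitely generated over $A:=\CO(T_\sigma)^{W^\sigma}$, because $\CO(T_\sigma)$ is finite over $A$. The natural map $\bigoplus_w\CO(T_\sigma)\delta_w\to E$ is therefore an isomorphism if and only if it becomes one after completing at every closed point $\bar x$ of $T_\sigma\sslash W^\sigma$, and completion is faithfully flat on finite modules. Completion also commutes with $\End$ of the finite module $\CO(T_\sigma)$. By Lemma \ref{loc-isom}, the completed ring is $\CO(W^\sigma\times^{W_x}\hat\ft_\sigma)$, compatibly with the analogous additive ring at a point $\xi$ with $W^\sigma_\xi=W^\sigma_x$. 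Moreover $1-e^{\tilde\alpha}$ corresponds to $\alpha$ up to a unit. Hence the completed $\delta_{s}$ equals the additive Demazure operator $D_s=(1-s)/\alpha_s$ times a unit $u_s$, and this unit is $s$-invariant or at least commutes appropriately. Thus the question becomes the additive statement on $\ft_\sigma$ near $\xi$: the $D_w$ form a basis of $\End_{\CO(\ft_\sigma)^{W^\sigma}}(\CO(\ft_\sigma))$ after completion at $\bar\xi$. This is Demazure's theorem (\cite{demazure}); equivalently, it is the standard fact that the nil-Hecke algebra $\bigoplus_w \CO(\ft)D_w$ equals $\End_{\CO(\ft)^W}(\CO(\ft))$, which holds because $\CO(\ft)$ is free over $\CO(\ft)^W$ of rank $|W|$ (Chevalley) and the $D_w$ are triangular with respect to the Bruhat order against $\{w\}$.

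Alternatively, I can argue directly and avoid the comparison of units. Over the fraction field $K$ of $\CO(T_\sigma)$ one has $E\otimes K=\bigoplus_w K\cdot w$, and $\delta_w=\big(\prod_{\text{reduced}}(1-e^{\tilde\alpha})^{-1}\big)w+\sum_{v<w}(\ldots)v$ is unitriangular up to these denominators. So the $\delta_w$ are $K$-linearly independent, and their span lies in $E$ by Lemma \ref{compatible-Demazure}. Freeness of $\CO(T_\sigma)$ over $A$ (Steinberg/Pittie, using simple connectivity and the choice of $\tilde\alpha$ making the root datum simply connected) shows that $E$ is free of rank $|W^\sigma|^2$ over $A$. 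Its determinant-type discriminant equals, up to a unit, the product over positive roots of $(1-e^{\tilde\alpha})^{\ell}$ matching the determinant of the transition matrix. Comparing the two discriminants then gives equality.

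The main obstacle is the completed comparison: I have to check that the unit relating $1-e^{\tilde\alpha}$ and $\alpha$ does not destroy the basis property, and that the multiplicative $\delta_s$ corresponds to $u\cdot D_s$ with $u$ invertible. This is where the choice of $\tilde\alpha$ (rather than $\alpha$) in type BC matters, via Proposition \ref{reflection}: the reflection hyperplane must be cut out with multiplicity one. Because triangularity gives $\delta_w=(\text{unit})\,D_w+\sum_{v<w}c_vD_v$, a basis of the additive side transfers to a basis on the multiplicative side. That settles the local statement, and the faithfully flat reductions above complete the proof.
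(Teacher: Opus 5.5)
Your main argument is the paper's proof. You reduce along the simply connected cover by faithfully flat base change and Lemma \ref{compatible-Demazure}, complete at the closed points of $T_\sigma\sslash W^\sigma$, transport to $\ft_\sigma$ by Lemma \ref{loc-isom}, and conclude with Demazure's theorem. Your unitriangularity $\delta_w=(\text{unit})\,D_w+\sum_{v<w}c_vD_v$ spells out what the paper calls ``induction on the length of $w$'', and rightly does not need the unit to be $s$-invariant, which it need not be. The discriminant ``alternative'' is only a sketch, since the exponents and the matching of determinants are never computed, but nothing in your proof depends on it.
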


\begin{proof}
    As the highest term of $\delta_w$ is a (fraction) multiple of $w$, the set $\{\delta_w\}_{w\in W^\sigma}$ is linearly independent. 
    Then we prove that $\{\delta_w\}_{w\in W^\sigma}$ is a set of generators.
    
    We firstly prove the case that $G$ is simply connected, i.e., that
    $\CO(T_\sigma)\{\delta_w\}_{w\in W^\sigma} = \End_{\CO(T_\sigma)^{W^\sigma}}(\CO(T_\sigma))$. It suffices to prove that for each closed point $x$ of $T_\sigma$, denoting by $\bar{x}$ the image of $x$ in $T_\sigma\sslash W^\sigma$, we have an equality of completions at $\bar{x}$:
    \begin{equation}
    \label{eq}
        \CO(T_\sigma)\{\delta_w\}_{w\in W^\sigma} \otimes_{\CO(T_\sigma \sslash W^\sigma)}
        \CO(T_\sigma \sslash W^\sigma)_{\bar{x}}^\wedge =
        \End_{\CO(T_\sigma)^{W^\sigma}}(\CO(T_\sigma)) \otimes_{\CO(T_\sigma \sslash W^\sigma)} \CO(T_\sigma \sslash W^\sigma)_{\bar{x}}^\wedge.
    \end{equation}
    By Lemma \ref{loc-isom}, there exists $\xi \in \ft_\sigma$ and isomorphisms
    \begin{align*}
    \CO(\ft_\sigma)\{\delta_w\}_{w\in W^\sigma} \otimes_{\CO(\ft_\sigma \sslash W^\sigma)}
    \CO(\ft_\sigma \sslash W^\sigma)_{\bar{\xi}}^\wedge 
    &\simeq \CO(T_\sigma)\{\delta_w\}_{w\in W^\sigma} \otimes_{\CO(T_\sigma \sslash W^\sigma)}
    \CO(T_\sigma \sslash W^\sigma)_{\bar{x}}^\wedge \\
    \End_{\CO(\ft_\sigma\sslash W^\sigma)}(\CO(\ft_\sigma)) 
    \otimes_{\CO(\ft_\sigma\sslash W^\sigma)} \CO(\ft_\sigma\sslash W^\sigma)_{\bar{\xi}}^\wedge 
    &\simeq \End_{\CO(T_\sigma)^{W^\sigma}}(\CO(T_\sigma)) \otimes_{\CO(T_\sigma \sslash W^\sigma)} \CO(T_\sigma \sslash W^\sigma)_{\bar{x}}^\wedge.
    \end{align*}
    Thus it suffices to prove the similar equality for $\ft_\sigma$.

    Again by Lemma \ref{loc-isom} and induction on the length of $w$, we have that for all $w \in W^\sigma$, $\{\delta_w\}_{w\in W^\sigma}$ and $\{D_w\}_{w\in W^\sigma}$ generate the same submodule of
    \[
    \End_{\CO(\ft_\sigma\sslash W^\sigma)}(\CO(\ft_\sigma)) 
    \otimes_{\CO(\ft_\sigma\sslash W^\sigma)} \CO(\ft_\sigma\sslash W^\sigma)_{\bar{\xi}}^\wedge 
    \]
    Thus the left hand side of \ref{eq} is isomorphic to
    \[
    \CO(\ft_\sigma)\{D_w\}_{w\in W^\sigma} \otimes_{\CO(\ft_\sigma \sslash W^\sigma)}
    \CO(\ft_\sigma \sslash W^\sigma)_{\bar{\xi}}^\wedge.
    \]
    
    By Demazure's theorem (\cite{demazure}, Théorème 2),
    we have
    \[
    \CO(\ft_\sigma)\{D_w\}_{w\in W^\sigma} = \End_{\CO(\ft_\sigma\sslash W^\sigma)}(\CO(\ft_\sigma)).
    \]
    Thus the identity \ref{eq} holds by taking the completion at $\bar{\xi}$ on the above formula.

    Then we prove the case for general $G$. Take a simply connected cover $G_0\to G$. 
    Recall that we have an identity
    \[
    \End_{T\sigma\sslash'N_G(T\sigma)}(p_{T*}\CO_{T_\sigma})\otimes_{\CO(T_\sigma)} \CO(T_{0,\sigma}) \simeq \End_{T_0\sigma\sslash N_{G_0}(T_0\sigma)}(p_{T_0*}\CO_{T_{0,\sigma}}) = \End_{\CO(T_{0,\sigma})^{W^\sigma}}(\CO(T_{0,\sigma}))
    \]
    and under this identity the operators $\delta_w$ for $T$ and $\delta_w$ for $T_0$ are equal by Lemma \ref{compatible-Demazure}.
    To prove that $\CO(T_\sigma)\{\delta_w\}_{w\in W^\sigma} = \End_{T\sigma\sslash'N_G(T\sigma)}(p_{T*}\CO_{T_\sigma})$, it suffices to prove that it is so after applying $(-)\otimes_{\CO(T_\sigma)}\CO(T_{0,\sigma})$. Then we reduce to the simply connected case and conclude.
\end{proof}

The following lemma describes $\End(\Eis_B\CO_{T_\sigma})$ as an $\CO(T_\sigma)$-module by multiplication on the target.

\begin{lemma}
    \label{eis-Eis}
    Recall that we have inclusions
    \[
    \End(\Eis_B\CO_{T_\sigma}) \subset \End(\eis_B\CO_{T_\sigma})
    \subset \End_{\CO(T_\sigma)^{W^\sigma}}(\CO(T_\sigma))
    \]
    in Lemma \ref{inclusion} and Lemma \ref{Endeis}.
    For an element $w\in W^\sigma$, abbreviate the L-function 
    $L(1, \ad_{\fn/w^{-1}(\fn)})^{-1}$ by $L_w$. Moreover, 
    choose arbitrarily a reduced expression $s_1\dots s_n$ of $w$. We will denote
    the composition operator $L_{s_1}\delta_{s_1}\dots L_{s_n}\delta_{s_n}$ in $\End(\eis_B\CO_{T_\sigma})$ by $(L\delta)_w$ (it may depend on the choice of $s_1,\dots, s_n$). Then $(L\delta)_w$ lies in $\End(\Eis_B\CO_{T_\sigma})$
    and a basis of $\End(\Eis_B\CO_{T_\sigma})$ is $\{(L\delta)_w\}_{w\in W^\sigma}$.
\end{lemma}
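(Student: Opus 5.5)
The proof will follow the strategy of Proposition \ref{all}: reduce the membership statement to rank $1$ Levi subgroups via the factorization of Eisenstein series, and establish the basis statement by a leading-term argument checked on formal neighbourhoods of strongly regular semisimple points of the non-generic divisors. Throughout, we work inside $\End(\eis_B\CO_{T_\sigma})$, which by Lemma \ref{demazure} is free over $\CO(T_\sigma)$ with basis $\{\delta_w\}$. We also use that $\End(\Eis_B\CO_{T_\sigma})$ is a subring of it, by Lemmas \ref{ring homo} and \ref{inclusion}.

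\emph{Step 1: simple reflections.} For a simple reflection $s$, let $P$ be the minimal parabolic containing $B$ and $sBs$, with Levi $M$. By the factorization $\Eis_B^G=\Eis_P^G\circ \Eis^M_{B\cap M}$ (\cite{Hellmann_2023}, Corollary 2.11), it suffices to show $L_s\delta_s\in\End(\Eis^M_{B\cap M}\CO_{T_\sigma})$, because $\Eis^G_P$ is a functor. Here $L_s$ is also the rank $1$ $L$-function for $M$. In the rank $1$ situation we use Lemma \ref{ord2}: the non-generic locus is a disjoint union of the closed loci $V(fs(f))$, all contained in the regular semisimple locus. Away from them $\Eis=\eis$ and there is nothing to check, so by Beauville--Laszlo gluing we only need to check on each $\BL_{G\sigma,f}^\wedge$.

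There, by Lemma \ref{formal-G} and Corollary \ref{formal-Eis}, the module $\Eis_B\CO$ splits into two summands indexed by $\{1,s\}$. One summand is $\CO^\wedge_f[u]/(fu)$ and the other is $\CO^\wedge_f$ with $u=0$, according to whether $f$ (resp.\ $s(f)$) divides $L(1,\ad_\fn)^{-1}$. Since this locus lies in the regular semisimple locus, $1-e^{\tilde\alpha}$ is a unit there, so $\delta_s$ is an honest $2\times 2$ matrix $\frac{1}{1-e^{\tilde\alpha}}(\mathrm{id}-s)$ in this splitting. A homomorphism $\CO^\wedge_f\to\CO^\wedge_f[u]/(fu)$ must land in the annihilator of $u$, namely $f\CO^\wedge_f$, and all other matrix entries are unconstrained. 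Hence $a+b\delta_s$ preserves $\Eis$ exactly when $b$ is divisible by the relevant $f$. Ranging over $f$ gives $\End(\Eis^M\CO)=\CO(T_\sigma)\cdot 1\oplus\CO(T_\sigma)\cdot L_s\delta_s$, which proves the rank $1$ case of the whole lemma.

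\emph{Step 2: membership and triangularity.} Since $\End(\Eis_B\CO)$ is closed under composition, $(L\delta)_w=L_{s_1}\delta_{s_1}\cdots L_{s_n}\delta_{s_n}$ lies in it. Moving the functions to the left through the twisted Leibniz rule $\delta_s\circ g=s(g)\delta_s+\delta_s(g)$ gives
\[
(L\delta)_w=\Big(\prod_i s_1\cdots s_{i-1}(L_{s_i})\Big)\delta_w+\sum_{\ell(v)<\ell(w)}c_v\delta_v .
\]
By the multiplicativity of $L$-functions over a reduced chain (as in the proof of Proposition \ref{all}), the leading coefficient equals $L_w$ up to the harmless twist. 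In particular the $(L\delta)_w$ are linearly independent.

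\emph{Step 3: spanning (the main obstacle).} Let $\varphi=\sum a_v\delta_v\in\End(\Eis_B\CO)$, and pick $w$ of maximal length with $a_w\neq 0$. It suffices to show $L_w\mid a_w$: we can then subtract $(a_w/L_w)(L\delta)_w$ and induct downward on the support. To show $L_w\mid a_w$, take an irreducible factor $f$ of $L_w=L(1,\ad_{\fn/w^{-1}\fn})^{-1}$ and a strongly regular semisimple point $x\in V(f)$. Lemma \ref{formal-G} splits the completion at $p_0(x)$ into $|W^\sigma|$ pieces indexed by $W^\sigma$. In this splitting, $\delta_v$ becomes a matrix whose $(1,w)$ entry vanishes for $\ell(v)<\ell(w)$ and for $v\neq w$ of the same length, and is a unit multiple of $1$ for $v=w$. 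This needs care: it requires checking that the $(1,w)$ entry of $\delta_v$ is nonzero only for $v\ge w$ in Bruhat order.

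Hence the $(1,w)$ entry of $\varphi$ is a unit multiple of $a_w$. The pieces indexed by $1$ and $w$ are of the two different types, since $f$ divides exactly one of $L(1,\ad_\fn)^{-1}$ and $w(f)$-type factors. The annihilator-of-$u$ argument of Step 1 therefore forces $f\mid a_w$ near $x$. Density of strongly regular points in $V(f)$, factoriality of $\CO(T_\sigma)$, and the squarefreeness of $L_w$ then give $L_w\mid a_w$. If the Bruhat-order vanishing of matrix entries proves awkward, I would instead factor $w=w's$ with $\ell(w')<\ell(w)$ and argue via Step 1 applied inside the Levi of $s$, composed with the inductive hypothesis.
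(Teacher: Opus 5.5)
Your proposal is correct and is essentially the paper's proof. Membership is reduced to the rank-one Levi via $\Eis^G_B=\Eis^G_{P_s}\circ\Eis^{M_s}_{B\cap M_s}$ and Beauville--Laszlo gluing near the disjoint, regular semisimple non-generic loci (you redo the local $2\times 2$ matrix computation where the paper cites Proposition~\ref{all} for $L_s s$), triangularity comes from the twisted Leibniz rule, and spanning is the same downward induction on the splitting of Lemma~\ref{formal-G} at strongly regular semisimple points of $V(f)$, with your ``needs care'' point being just the expansion $\delta_v\in\bigoplus_{v'\le v}k(T_\sigma)\,v'$ that the paper uses through the group-element basis.

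The one thing to tidy up, which the paper also leaves loose, is $w$ versus $w^{-1}$: the top term of $\delta_{s_1}\cdots\delta_{s_n}$ maps the piece at $w^{-1}(x)$ to the piece at $x$ (not the piece at $w(x)$), and your Step~2 leading coefficient $\prod_i s_1\cdots s_{i-1}(L_{s_i})$ is the product of the $L(1)$-factors of the positive $\beta$ with $w^{-1}\beta<0$. These are exactly the $f$ for which those two pieces have opposite types, so Step~3 should be run with these factors rather than with those of $L(1,\ad_{\fn/w^{-1}(\fn)})^{-1}$ read literally.
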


\begin{proof}
    For the statement that for any $w$ and any reduced expression $s_1\dots s_n$, 
    $(L\delta)_w$ lies in $\End(\Eis_B\CO_{T_\sigma})$, 
    by definition, it suffices to prove the case that $w$ is a simple reflection $s$. In this case, denote $P_s$ the parabolic subgroup of $G$ corresponding to $s$ and $M_s$ the corresponding Levi subgroup. Then we have
    \[
    \Eis_B^G = \Eis_{P_s}^G\Eis_{M_s\cap B}^{M_s}
    \]
    and the L-function $L_s$ for $G$ is equal to $L_s$ for $M_s$. 
    It thus suffices to prove that $L_s\delta_s$ defines an endomorphism of $\Eis_{M_s\cap B}\CO_{T_\sigma}$. 
    Hence by replacing $G$ by $M_s$, we reduce to the rank $1$ case. 

    As $\BL_{G\sigma}^g\simeq (G\sigma)^g/G$, $L_s\delta_s$ defines an endomorphism of $\Eis_B\CO_{T_\sigma}$ after restricting to the generic locus. Moreover,
    in the rank $1$ case, by Lemma \ref{ord2}, the non-regular semisimple locus and the non-generic locus of $\BL_{G\sigma}$ are disjoint. Thus over the formal neighborhood of the non-generic locus, $\delta_s = \frac{1-s}{1-e^{-\tilde{\alpha_s}}}$ differs with $1-s$ only by a unit. As $\Eis_{s(B)}\simeq \Eis_B\circ s$, by Proposition \ref{all}, $L_s s$ defines an endomorphism of $\Eis_B\CO_{T_\sigma}$ over the whole space and $L_s\delta_s$ defines an endomorphism over the formal neighborhood of the non-generic locus. Thus by Beauville--Lazslo gluing the result follows.

    We then prove that $\{(L\delta)_w\}_{w\in W^\sigma}$ forms a basis of $\End(\Eis_B\CO_{T_\sigma})$ as an $\CO(T_\sigma)$-module. As $\Eis_B^g\simeq \eis_B^g$ by Lemma \ref{generic locus} and 
    $T\sigma\sslash'N_G(T\sigma)\simeq T_\sigma\sslash W^\sigma$ over the strongly regular semisimple locus, the inclusions in the statement induce isomorphisms
    of $k(T_\sigma)$-vector spaces
    \[
    \End(\Eis_B\CO_{T_\sigma})\otimes_{\CO(T_\sigma)} k(T_\sigma) \simto \End(\eis_B\CO_{T_\sigma})\otimes_{\CO(T_\sigma)} k(T_\sigma)
    \simto \End_{\CO(T_\sigma)^{W^\sigma}}(\CO(T_\sigma))\otimes_{\CO(T_\sigma)} k(T_\sigma)
    \]
    and we may view them as the same vector space.
    Furthermore, as $\CO(T_\sigma)$ is a torsion-free $\CO(T_\sigma)^{W_\sigma}$-module,
    $\End_{\CO(T_\sigma)^{W^\sigma}}(\CO(T_\sigma))$ and hence $\End(\Eis_B\CO_{T_\sigma})$ and $\End(\eis_B\CO_{T_\sigma})$ inject into the vector space.
    Then we may view $w$'s, $\delta_w$'s and $(L\delta)_w$'s for $w\in W^\sigma$ as elements in this vector space. In this vector space $\{w\}_{w\in W^\sigma}$ is a basis.
    
    From the commuting relation $\delta_s f= \delta_s(f)+s(f)\delta_s$ of Demazure operators, $(L\delta)_w$ is a $\CO(T_\sigma)$-linear combination of $\delta_{w'}$'s for $w'\leq w$ with respect to the Bruhat ordering and the highest term of $(L\delta)_w$ is $L_w\delta_w$. 
    Thus $(L\delta)_w$'s are linearly independent as
    $\{\delta_w\}_{w\in W^\sigma}$ forms a basis of $\End(\eis_B\CO_{T_\sigma})$ as an $\CO(T_\sigma)$-module by Lemma \ref{demazure}. It suffices to prove that any element in $\End(\Eis_B\CO_{T_\sigma})$ can be written as an $\CO(T_\sigma)$-linear combination of the set $\{(L\delta)_w\}_{w\in W^\sigma}$.
    Any element $a$ of $\End(\Eis_B\CO_{T_\sigma})$ can be expressed as $\sum_{w\in W^\sigma} a_w\delta_w$ for $a_w \in \CO(T_\sigma)$ as $\{\delta_w\}_{w\in W^\sigma}$
    is a basis of $\End(\eis_B\CO_{T_\sigma})$.
    We use induction to the highest non-zero terms of the expression. Choose a maximal $w\in W^\sigma$ such that $a_w\neq 0$. By definition of $\delta_w$, it is a $k(T_\sigma)$-linear combination of $w'$'s for $w'\leq w$ and the coefficient of $\delta_w$ at $w$ under the basis $\{w\}_{w\in W^\sigma}$ is $L(0, \ad_{\fn/w^{-1}(\fn)})$ up to a unit in $\CO(T_\sigma)$. Thus the coefficient of $a$ at $w$ under the basis $\{w\}_{w\in W^\sigma}$ is $a_w L(0, \ad_{\fn/w^{-1}(\fn)})$ up to a unit in $\CO(T_\sigma)$.
    
    For an irreducible factor $f$ of $L_w$ and strongly regular semisimple $x \in V(f)$,
    view it as a point in $T\sigma/T$ and denote by $y$ its image in $G\sigma/G$.
    Recall that we have a projection $\pi_G\colon \BL_{G\sigma}\to G\sigma/G$ and by 
    Lemma \ref{formal-G}, 
    \[(\Eis_B\CO_{T_\sigma})_{\pi_G^{-1}(y)}^\wedge \simeq \bigoplus_{w'\in W^\sigma}\Eis_B\CO_{T_\sigma, w'(x)}^\wedge. \]
    Then we have homomorphisms
    \[
    \End(\Eis_B\CO_{T_\sigma})\to \End((\Eis_B\CO_{T_\sigma})_{\pi_G^{-1}(y)}^\wedge) 
    \simeq \End(\bigoplus_{w'\in W^\sigma}\Eis_B\CO_{T_\sigma, w'(x)}^\wedge) \to
    \Hom(\Eis_B\CO_{T_\sigma, w^{-1}(x)}^\wedge, \Eis_B \CO_{T_\sigma,x}^\wedge).
    \]
    In the rightmost space, only the term at $w$ contributes. Thus $a_w L(0, \ad_{\fn/w^{-1}(\fn)})$ defines a homomorphism in
    $\Hom(\Eis_B\CO_{T_\sigma, w^{-1}(x)}^\wedge, \Eis_B \CO_{T_\sigma,x}^\wedge)$. 
    By the same local calculations in the proof of Proposition \ref{all},
    this condition is equivalent to that $a_w L(0, \ad_{\fn/w^{-1}(\fn)})$ is divisible by $L_w$ in the completion $\CO(T_\sigma)_x^\wedge$. Varying strongly regular semisimple $x \in V(f)$, we see that $a_w L(0, \ad_{\fn/w^{-1}(\fn)})$ is divisible by $f$ as a fraction. Then varying $f\mid L(1, \ad_{\fn/w(\fn)})^{-1}$, 
    we have $a_wL(0, \ad_{\fn/w^{-1}(\fn)})$ is divisible by $L_w$ and thus $a_w$ is divisible by $L_w$ in $\CO(T_\sigma)$. Thus $a - a_w (L_w)^{-1}(L\delta)_w$ also lies in $\End(\Eis_B\CO_{T_\sigma})$ and has smaller highest terms with respect to the basis $\{\delta_w\}_{w\in W^\sigma}$ and we apply the inductive hypothesis to conclude.
\end{proof}

Now we construct the left adjoint functor of $\Eis$ in the next propositions. We will consider the $\CO(T_\sigma)$-bimodule structures
of $\End(\eis_B\CO_{T_\sigma})$ and $\End(\Eis_B\CO_{T_\sigma})$,
where the left $\CO(T_\sigma)$-module structures are defined to be the action on the targets and the right $\CO(T_\sigma)$-structures are the action on the sources.

We firstly consider the $\chi=1$ case.

\begin{proposition}
\label{ct_circ_eis}
    Denote by $D\Coh(\BL_{G\sigma})_{T,1}$ the thick subcategory of $D\Coh(\BL_{G\sigma})$ generated by the image by $\Eis$ (which is independent of the choice of $B$) of $D\Coh(\BL_{T\sigma})_1$. Similarly, we define $D\Coh(G\sigma/G)_{T,1}$ for $\eis$.

    The left adjoint functor (in the $\infty$-categorical sense) of $\eis_B$, 
    \[\ct_B \colon D\Coh(G\sigma/G)_{T,1} \to D\Coh(T\sigma/T)_1\] 
    exists and more precisely we have for $\CF\in D\Coh(T\sigma/T)_1$,
    \[
    p_T^*p_{T*}\CF \simeq \ct_B \circ \eis_B(\CF),
    \]
    where $p_T$ is the projection $T\sigma\sslash T \to T\sigma\sslash' N_G(T\sigma)$.
    Moreover, the left adjoint functor (in the $\infty$-categorical sense) of $\Eis_B$, 
    \[\CT_B \colon D\Coh(\BL_{G\sigma})_{T,1} \to D\Coh(\BL_{T\sigma})_1\] 
    exists and for a locally free sheaf $\CF$ on $T_\sigma$,  
    there is a functorial inclusion of locally free sheaves (concentrated in degree $0$)
    of the same rank:
    \[
    p_T^*p_{T*}\CF \subset \CT_B \circ \Eis_B(\CF)
    \]
    which induces an $\CO(T_\sigma)$-bimodule homomorphism on global sections. Here the left $\CO(T_\sigma)$-module structures of $p_T^*p_{T*}\CF$ and $\CT_B \circ \Eis_B(\CF)$ are defined to be induced by the $\CO_{T_\sigma}$-coherent sheaf structure and the right $\CO(T_\sigma)$-module structures are induced by the $\CO(T_\sigma)$-action on $\CF$.

    More precisely, the global section $\Gamma(p_T^*p_{T*}\CO_{T_\sigma})$
    is isomorphic to $\CO(T_{\sigma}\times_{T\sigma\sslash' N_G(T\sigma)} T_\sigma)$
    as $\CO(T_\sigma)$-bimodules.
    As we have (equivariantly as $\CO(T_\sigma)$-bimodules)
    \[
    \Hom_{\CO(T_\sigma)}(\CO(T_{\sigma}\times_{T\sigma\sslash' N_G(T\sigma)} T_\sigma), \CO(T_\sigma))\simeq \End_{T\sigma\sslash' N_G(T\sigma)}(p_{T*}\CO_{T_\sigma})
    \]
    and $\End(p_{T*}\CO_{T_\sigma})$ and $\CO(T_{\sigma}\times_{T\sigma\sslash' N_G(T\sigma)} T_\sigma)$ are free $\CO(T_\sigma)$-modules, $\CO(T_{\sigma}\times_{T\sigma\sslash' N_G(T\sigma)} T_\sigma)$ can be 
    viewed as the $\CO(T_\sigma)$-dual of $\End(p_{T*}\CO_{T_\sigma})$. 
    Then the global section $\Gamma(\CT_B\circ \Eis_B\CO_{T_\sigma})$ can be identified with the $\CO(T_\sigma)$-submodule of the $k(T_\sigma)$-vector space $\CO(T_{\sigma}\times_{T\sigma\sslash' N_G(T\sigma)} T_\sigma)\otimes_{\CO(T_\sigma)} k(T_\sigma)$ generated by the dual basis $\{(L\delta)_w^*\}_{w\in W^\sigma}$ of $\{(L\delta)_w\}_{w\in W^\sigma}$. 
\end{proposition}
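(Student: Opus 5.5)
The plan is to build both left adjoints on generators and extend by exactness. Since $D\Coh(T\sigma/T)_1\simeq D\Coh(T_\sigma)$ is generated by $\CO_{T_\sigma}$, and the target subcategories are by definition thickly generated by $\eis_B\CO_{T_\sigma}$ (resp. $\Eis_B\CO_{T_\sigma}$), a left adjoint exists on these subcategories if and only if, for each generator $\CE=\eis_B\CO_{T_\sigma}$, the functor $\CF\mapsto R\Hom(\CE,\eis_B\CF)$ on $D\Coh(T_\sigma)$ is corepresentable. Such functors are $\CO(T_\sigma)$-linear and exact, and the corepresenting object is then the $\CO(T_\sigma)$-dual of $R\Hom(\CE,\eis_B\CO_{T_\sigma})$, provided this is a perfect $\CO(T_\sigma)$-module. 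So I first compute this endomorphism complex, then dualize, and check functoriality and that the thick closure is respected.

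For $\eis$, Lemma \ref{locallyfreeeis} gives $\beta^*\eis_B\simeq\pi_T^*p_{T*}$. The proof of Lemma \ref{Endeis} shows $\mathbb{1}\simeq\pi_{T*}\pi_T^*$, so by adjunction $R\Hom(\eis_B\CF',\eis_B\CF)\simeq R\Hom(p_{T*}\CF',p_{T*}\CF)$. Here $p_T$ is finite flat, since it is a finite quotient by $W^\sigma$ of a torus, which is Chevalley–Shephard–Todd type flatness after passing to the cover $G_0$. Hence $p_{T*}$ has left adjoint $p_T^*$ only if we use the right adjoint formalism, so instead I use the concrete route. With $p_T^*\dashv p_{T*}$ we get $\Hom(p_T^*p_{T*}\CF',\CF)=\Hom(p_{T*}\CF',p_{T*}\CF)$. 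This shows $\ct_B\circ\eis_B\simeq p_T^*p_{T*}$ on the generator, hence on everything, and defines $\ct_B:=p_T^*\pi_{T*}\beta_*$-type on the thick subcategory. On global sections, $\Gamma(p_T^*p_{T*}\CO)=\CO(T_\sigma\times_{T\sigma\sslash'N_G(T\sigma)}T_\sigma)$ by flat base change. Duality with $\End(p_{T*}\CO_{T_\sigma})$ is the Hom-adjunction above, and freeness with basis $\delta_w$ is Lemma \ref{demazure}.

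For $\Eis$, Lemma \ref{inclusion} says $R\Hom(\Eis_B\CO_{T_\sigma},\Eis_B\CO_{T_\sigma})$ is concentrated in degree $0$. Lemma \ref{eis-Eis} says it is a free left $\CO(T_\sigma)$-module with basis $\{(L\delta)_w\}$. Hence the functor $\CF\mapsto R\Hom(\Eis_B\CO,\Eis_B\CF)$, being $\CO(T_\sigma)$-linear with value a free module of finite rank on the generator, is corepresented by its $\CO(T_\sigma)$-dual. I define $\CT_B\Eis_B\CO_{T_\sigma}$ to be the locally free sheaf whose global sections are this dual, namely the span of the dual basis $\{(L\delta)_w^*\}$ inside $\CO(T_\sigma\times T_\sigma)\otimes k(T_\sigma)$. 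The bimodule structure comes from right multiplication on $\End$, and I extend to the thick subcategory by exactness.

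The inclusion $p_T^*p_{T*}\CF\subset\CT_B\Eis_B\CF$ is dual to the restriction $\res\colon\End(\Eis_B\CO)\hookrightarrow\End(\eis_B\CO)$. Both modules are free of the same rank $|W^\sigma|$, and the map is generically an isomorphism over the generic locus by Lemma \ref{transformation}. So the dual map is injective between locally free sheaves of equal rank, and by $\CO(T_\sigma)$-linearity this extends to locally free $\CF$. Functoriality and the bimodule compatibility follow from Lemma \ref{ring homo}.

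The main obstacle I expect is making the corepresentability argument rigorous at the $\infty$-categorical level. Specifically, one must produce an honest functor on the thick subcategory rather than just objectwise, so I would define $\CT_B$ on the full subcategory spanned by $\Eis_B$ of perfect complexes, via the $E_1$-algebra $\End(\Eis_B\CO)$ acting: Morita theory identifies the thick closure with perfect modules over this discrete algebra, and $\CT_B$ is then tensoring with the dual bimodule.
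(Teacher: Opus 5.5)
Your proposal is correct and follows essentially the paper's route. As in the paper, $\ct_B\circ\eis_B\simeq p_T^*p_{T*}$ comes from Lemma \ref{locallyfreeeis} together with the full faithfulness of $\beta^*$ and $\pi_T^*$, $\CT_B\Eis_B\CO_{T_\sigma}$ is the $\CO(T_\sigma)$-dual of $\End(\Eis_B\CO_{T_\sigma})$ using Lemmas \ref{inclusion} and \ref{eis-Eis}, and the inclusion is dual to $\res$; your Morita-theoretic extension to the thick subcategory is a cleaner version of the paper's term-by-term Yoneda construction on complexes of copies of $\Eis_B\CO_{T_\sigma}$. Two slips: the global formula is $\ct_B=p_T^*\pi_{T*}\beta^*$ (not $\beta_*$), and since $p_T^*\dashv p_{T*}$ holds unconditionally, your remark about needing a ``right adjoint formalism'' should simply be dropped.
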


\begin{proof}
    We firstly prove the argument for $\eis$ and $\ct$. Choose a simply connected cover
    $G_0\to G$. Recall the morphisms $\beta\colon G\sigma/G_0\to G\sigma/G$ and $\pi_T \colon G\sigma/ G_0 \to T\sigma\sslash' N_G(T\sigma)$.
    By the proof of Lemma \ref{Endeis}, the unit map $\mathbb{1}\to \pi_{T*}\pi_T^*$ is an isomorphism and $\pi_T^*$ is fully faithful. Thus the left adjoint of $\pi_{T}^*$ in the subcategory of essential image of $\pi_T^*$ is $\pi_{T*}$.
    By Lemma \ref{locallyfreeeis}, we have
    \[
    \beta^*\eis_B \mathcal{F} \simeq \pi_T^* p_{T*}\mathcal{F}
    \]
    and thus $\beta^*D\Coh(G\sigma/G)_{T,1}\subset \pi_T^*D\Coh(T\sigma\sslash' N_G(T\sigma))$. We claim that $p_T^*\pi_{T*}\beta^*$ is the desired $\ct_B$.
    Indeed, for $\CF \in D\Coh(G\sigma/G)_{T,1}$ and $\CG \in D\Coh(T_\sigma)$,
    \[
    \Hom(p_T^*\pi_{T*}\beta^*\CF, \CG)\simeq \Hom(\beta^*\CF, \pi_T^{*}p_{T*}\CG)
    \simeq \Hom(\beta^*\CF, \beta^*\eis_B\CG)\simeq \Hom(\CF, \eis_B\CG).
    \]
    Thus $\ct_B\eis_B \simeq p_T^*p_{T*}$ and $\Gamma(\ct_B\eis_B\CO_{T_\sigma})$
    is isomorphic to $\CO(T_{\sigma}\times_{T\sigma\sslash' N_G(T\sigma)} T_\sigma)$.
    
    To construct $\CT_B$, we firstly construct 
    $\CT_B$ of $\Eis_B\CO_{T_\sigma}$.
    By Lemma \ref{eis-Eis}, we have that (equivariantly as $\CO(T_\sigma)$-bimodules)
    \[
    \End(\Eis_B\CO_{T_\sigma}) \subset \End(\eis_B\CO_{T_\sigma})
    \simeq \Hom(\ct_{B}\circ\eis_B\CO_{T_\sigma}, \CO_{T_\sigma})
    \simeq \Hom_{\CO(T_\sigma)}(\CO(T_{\sigma}\times_{T\sigma\sslash' N_G(T\sigma)} T_\sigma), \CO(T_\sigma)).
    \]
    Thus we construct $\CT_B\circ \Eis_B\CO_{T_\sigma}$ as the coherent sheaf
    corresponding to the $\CO(T_\sigma)$-module
    \[
    \Hom_{\CO(T_\sigma)}(\End(\Eis_B\CO_{T_\sigma}), \CO(T_\sigma))
    \supset \Gamma(\ct_{B}\circ\eis_B\CO_{T_\sigma}),
    \]
    which inherits an $\CO(T_\sigma)$-bimodule structure. The statement about the basis of $\CT_B\circ \Eis_B\CO_{T_\sigma}$ follows from 
    Lemma \ref{eis-Eis}. Thus we prove the statement for $\CF=\CO_{T_\sigma}$.
    
    This construction extends to the whole $D\Coh(\BL_{G\sigma})_{T,1}$ by linearity: For a general element $\CG_\bullet$ in $D\Coh(\BL_{G\sigma})_{T,1}$, it can be written as a complex $\CG_\bullet=\{\CG_n,\delta_n\}$ where $\CG_n=\Eis_B\CO_{T_\sigma}^{k_n}$ for all $n$. Then we may construct the complex $\CT_B(\CG)_\bullet=\{\CT_B(\CG_n), \CT_B(\delta_n)\}$. Here we construct $\CT_B(\delta_n)$ in the following formal process. 
    For any two categories $\CC$ and $\CD$, a functor $G\colon \CD\to \CC$ and $c\in \CC$, $F(c)$ exists if and only if $\Hom(c, G(-)) \in \mathsf{Fun}(\CD,\mathsf{Ani})$ is representable. Thus for $c_1, c_2 \in \CC$ such that $F(c_1)$, $F(c_2)$ exists, 
    $c_1\to c_2$ induces a morphism $\Hom(c_2, G(-))\to \Hom(c_1, G(-))$ and then a morphism $F(c_1)\to F(c_2)$ by Yoneda lemma.
    Then we have isomorphisms
    \[
    R\Hom((\CT_B\CG)_\bullet, \CF)\simeq (\Hom(\CT_B\CG, \CF))_\bullet
    \simeq (\Hom(\CG, \Eis_B\CF))_\bullet \simeq R\Hom(\CG_\bullet, \Eis_B\CF),
    \]
    where the first isomorphism is from that $\CT_B\CG$ is locally free, the second is from the adjunction and the third is from that $R\Hom(\Eis_B\CG, \Eis_B\CF)$ is concentrated in degree $0$ by Lemma \ref{inclusion}.
    Hence $(\CT_B\CG)_\bullet$ is the desired object $\CT_B(\CG_\bullet)$
    and we prove the existence of $\CT_B$.
    The statement about the inclusion $\ct_B\eis_B\subset \CT_B\Eis_B$
    follows from the case for $\CF=\CO_{T_\sigma}$.
\end{proof}

Next we construct $\CT$ for general $\chi$.

\begin{proposition}
\label{CT_chi}
    Let $\chi$ be a character in $X^*(T^\sigma)^{W^\sigma}$.
    Denote by $D\Coh(\BL_{G\sigma})_{T,\chi}$ the thick subcategory of $D\Coh(\BL_{G\sigma})$ generated by the image by $\Eis$ (which is independent of the choice of $B$) of $D\Coh(\BL_{T\sigma})_\chi$. Similarly, we define $D\Coh(G\sigma/G)_{T,\chi}$ for $\eis$.

    The left adjoint functor (in the $\infty$-categorical sense) of $\Eis_B$, 
    \[
    \CT_B \colon D\Coh(\BL_{G\sigma})_{T,\chi} \to D\Coh(\BL_{T\sigma})_\chi
    \]
    and the left adjoint functor (in the $\infty$-categorical sense) of $\eis_B$, 
    \[
    \ct_B \colon D\Coh(G\sigma/G)_{T,\chi} \to D\Coh(T\sigma/T)_\chi
    \]
    exist. More precisely, for any $\CF \in D\Coh(\BL_{T\sigma})_1$, we have
    \[
    \CT_B(\Eis_B\CF_\chi) \simeq (\CT_B\Eis_B\CF)_\chi
    \]
    and similarly for $\eis$.
\end{proposition}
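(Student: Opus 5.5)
The plan is to transport the $\chi=1$ case (Proposition \ref{ct_circ_eis}) to the block $\chi$ by twisting with $\CO_\chi$ and $\overline{\CO_\chi}$. For $\CF\in D\Coh(\BL_{T\sigma})_1$ the projection formula gives $\Eis_B(\CF_\chi)\simeq \Eis_B(\CF)\otimes \overline{\CO_\chi}$, where $\overline{\CO_\chi}$ is the descended invertible sheaf (pulled back to $\BL_{G\sigma}$ through $\pi_G$ and the twisted Chevalley map, or the corresponding stack $T\sigma/N_G(T\sigma)$). The construction of this descent is purely about $G\sigma/G$ versus $T\sigma/T$, so it applies equally to $\eis$ and to $\Eis$. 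The first step is to check that tensoring with $\overline{\CO_\chi}$ is an autoequivalence of $D\Coh(\BL_{G\sigma})$ sending $D\Coh(\BL_{G\sigma})_{T,1}$ onto $D\Coh(\BL_{G\sigma})_{T,\chi}$. This holds because both are thick subcategories generated by the images of generators, and $\CF\mapsto \CF_\chi$ is an equivalence $D\Coh(\BL_{T\sigma})_1\simeq D\Coh(\BL_{T\sigma})_\chi$.

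The second step is to define
\[
\CT_B^\chi(\CG):=\bigl(\CT_B(\CG\otimes \overline{\CO_\chi}^{-1})\bigr)_\chi,
\]
and to verify the adjunction
\[
\Hom(\CG,\Eis_B\CH_\chi)\simeq \Hom(\CG\otimes\overline{\CO_\chi}^{-1},\Eis_B\CH)\simeq \Hom(\CT_B(\CG\otimes\overline{\CO_\chi}^{-1}),\CH)\simeq \Hom(\CT^\chi_B\CG,\CH_\chi)
\]
for $\CG\in D\Coh(\BL_{G\sigma})_{T,\chi}$ and $\CH\in D\Coh(\BL_{T\sigma})_1$. Since every object of the $\chi$-block has the form $\CH_\chi$, this exhibits $\CT_B^\chi$ as the left adjoint, and it is functorial in the $\infty$-categorical sense. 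Taking $\CG=\Eis_B\CF_\chi\simeq \Eis_B\CF\otimes\overline{\CO_\chi}$ then gives $\CT_B(\Eis_B\CF_\chi)\simeq(\CT_B\Eis_B\CF)_\chi$ immediately. The argument for $\eis$ and $\ct$ is identical, using the projection formula for $p_{0*}$.

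The step I expect to be the main obstacle is the descent of $\CO_\chi$ in the generality needed. The lemma above descends $\CO_\chi$ only to $T\sigma/N_G(T\sigma)$ and to the regular semisimple locus of $G\sigma/G$. To get a line bundle on all of $G\sigma/G$ (hence on $\BL_{G\sigma}$) with $\eis_B(\CF_\chi)\simeq \eis_B(\CF)\otimes\overline{\CO_\chi}$, I would first try to avoid a global descent. Following Lemma \ref{tensor_by_chi} and Lemma \ref{tensor_by_chi_Eis}, the maps $-\otimes\overline{\CO_\chi}$ already give isomorphisms
\[
\Hom(\Eis_B\CF,\Eis_B\CG)\simeq\Hom(\Eis_B\CF_\chi,\Eis_B\CG_\chi)
\]
for all $\CF,\CG\in D\Coh(\BL_{T\sigma})_1$. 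So the functor $\Eis_B(\CF)\mapsto\Eis_B(\CF_\chi)$ on the generating objects is fully faithful, and it extends by thick closure to an equivalence $\Phi\colon D\Coh(\BL_{G\sigma})_{T,1}\simeq D\Coh(\BL_{G\sigma})_{T,\chi}$ satisfying $\Phi\circ\Eis_B\simeq \Eis_B\circ(-)_\chi$. I would then define $\CT_B^\chi:=(-)_\chi\circ\CT_B\circ\Phi^{-1}$, and the adjunction chain above goes through with $\Phi$ in place of $\otimes\overline{\CO_\chi}$.

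If the extension of $\Phi$ to the thick closure causes trouble at the $\infty$-level, there is a fallback. I would mimic the construction in Proposition \ref{ct_circ_eis}: define $\CT_B$ on the generators $\Eis_B\CO_\chi$ as the sheaf $\bigl(\Hom_{\CO(T_\sigma)}(\End(\Eis_B\CO_\chi),\CO(T_\sigma))\bigr)$ and extend it to complexes by Yoneda. This uses that $R\Hom(\Eis_B\CO_\chi,\Eis_B\CO_\chi)$ is concentrated in degree $0$ (Lemma \ref{inclusion}) and the identification $\End(\Eis_B\CO_\chi)\simeq\End(\Eis_B\CO_{T_\sigma})$ from Lemma \ref{tensor_by_chi_Eis}.
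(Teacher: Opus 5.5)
Your second route, which uses Lemma \ref{tensor_by_chi_Eis} instead of a global line bundle, is the paper's proof and is all you need. The first route should be dropped, not repaired. As written it does not work. The twisted Chevalley map lands in the coarse quotient $T\sigma\sslash N_G(T\sigma)$, and a line bundle pulled back from there has trivial isotropy action. On $\CO_\chi$, however, $T^\sigma$ acts by $\chi$, so such a pullback can only restrict to $\CO_\chi$ when $\chi=1$. The only available map from $G\sigma/G$ to the stack $T\sigma/N_G(T\sigma)$ is the inverse of the isomorphism of Lemma \ref{iota-G}, which exists only over the regular semisimple locus. This is exactly why the paper only constructs $\overline{\CO_\chi^{rs}}$ and never asserts a global projection formula $\Eis_B(\CF_\chi)\simeq\Eis_B(\CF)\otimes\overline{\CO_\chi}$. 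A global line bundle on $\BL_{G\sigma}$ with $p_B^*\overline{\CO_\chi}\simeq q_B^*\CO_\chi$ would need a separate construction that neither you nor the paper supplies.

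Compared with your second route, the paper's proof is leaner because it never builds an equivalence $\Phi$ on the thick closure. For $\CF\in D\Coh(\BL_{T\sigma})_1$ it checks directly that $(\CT_B\Eis_B\CF)_\chi$ corepresents $\Hom(\Eis_B\CF_\chi,\Eis_B(-))$ on the $\chi$-block, via
\[
\Hom((\CT_B\Eis_B\CF)_\chi,\CG_\chi)\simeq\Hom(\CT_B\Eis_B\CF,\CG)\simeq\Hom(\Eis_B\CF,\Eis_B\CG)\simeq\Hom(\Eis_B\CF_\chi,\Eis_B\CG_\chi).
\]
The last isomorphism is Lemma \ref{tensor_by_chi_Eis}. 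The paper then extends $\CT_B$ to all of $D\Coh(\BL_{G\sigma})_{T,\chi}$ by the Yoneda argument at the end of Proposition \ref{ct_circ_eis}. That argument uses only two facts: $R\Hom$ between $\Eis_B$ of locally free sheaves is concentrated in degree $0$ (Lemma \ref{inclusion}, proved for every $W^\sigma$-invariant $\chi$), and $\CT_B\Eis_B\CF$ is locally free.

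Your $\Phi$ is legitimate. The mapping spaces between the generators are discrete, so the thick closure is determined by the additive category they span. It is extra packaging that the representability statement does not need. In your fallback, the dual module must be placed in the $\chi$-block, that is, tensored with $\CO_\chi$; otherwise its $\Hom$ into $\CO_\chi$ vanishes.
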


\begin{proof}
    We firstly prove that $(\CT_B\Eis_B\CF)_\chi$ is the desired object
    $\CT_B(\Eis_B\CF_\chi)$ for any $\CF \in D\Coh(\BL_{T\sigma})_1$.
    Indeed, for any $\CG\in D\Coh(\BL_{T\sigma})_1$, we have
    \begin{align*}
        \Hom((\CT_B\Eis_B\CF)_\chi, \CG_\chi) \simeq
        \Hom(\CT_B\Eis_B\CF, \CG) \simeq 
        \Hom(\Eis_B\CF, \Eis_B\CG) \simeq
        \Hom(\Eis_B\CF_\chi, \Eis_B\CG_\chi),
    \end{align*}
    where the last isomorphism is from Lemma \ref{tensor_by_chi_Eis}.
    Then by the same formal arguments in the end of the proof of Proposition \ref{ct_circ_eis}, $\CT_B$ can be extended to the whole category $D\Coh(\BL_{G\sigma})_{T,\chi}$. The statements for $\eis$ and $\ct$ follows in the same way.
\end{proof}

\begin{remark}
\label{CT_B*}
    As mentioned in the introduction, our definition of $\CT$ is not the same as the definition of $\CT$ in \cite{hansen_cllc}, where $\CT$ is defined as a right adjoint of $\Eis$. 
    
    Moreover, though the right adjoint of $\Eis_B = p_*q^*$, $q_*p^!$ can be totally defined in the category of ind-coherent sheaves on $\BL_{T\sigma}$, $\BL_{B\sigma}$ and $\BL_{G\sigma}$, it is not clear whether the left adjoint of $\Eis_B$ can be totally defined. From the standpoint of the conjectured categorical local Langlands correspondence, $\Eis$ on the spectral side corresponds to $\Eis_!$ on the automorphic side, which does not admit a natural left adjoint.
\end{remark}

The $\CO(T_\sigma)$-bimodules $\Gamma(p_T^*p_{T*}\CO_{T_\sigma})$ and $\Gamma(\CT_B\circ\Eis_B\CO_{T_\sigma})$ above are similar to Soergel bimodules. We may consider the following analog of filtrations.
Note that the scheme $T_{\sigma}\times_{T\sigma\sslash' N_G(T\sigma)} T_\sigma$ a union of irreducible components isomorphic to $T_\sigma$, indexed by $w \in W^\sigma$. We may define the following filtration of $\CO(T_{\sigma}\times_{T\sigma\sslash' N_G(T\sigma)} T_\sigma)$,
which is the counterpart of the Bruhat--Mackey's filtration on the spectral side.

\begin{definition}[Filtration]
\label{filtration}
    Suppose that there is a partial order on $W^\sigma$, such that it refines the Bruhat order.
    Denote by $\CO(T_\sigma)_{w}$ the $\CO(T_\sigma)$-bimodule with natural action on the left and twisted action by $w$ on the right.
    The filtration $F^{\geq w}$, $F^{>w}$ index by $w \in W^\sigma$ with respect to the order above of $\CO(T_{\sigma}\times_{T\sigma\sslash' N_G(T\sigma)} T_\sigma)$ is defined by
    \[
    F^{\geq w} := \bigcap_{w'<w} \ker(\CO(T_{\sigma}\times_{T\sigma\sslash' N_G(T\sigma)} T_\sigma) \xrightarrow{(1,w')} \CO(T_\sigma)_{w'}).
    \]
    and 
    \[
    F^{> w} := \bigcap_{w'\leq w} \ker(\CO(T_{\sigma}\times_{T\sigma\sslash' N_G(T\sigma)} T_\sigma) \xrightarrow{(1,w')} \CO(T_\sigma)_{w'}).
    \] 
    Taking saturation, we obtain a filtration on $\CT_B\circ \Eis_B\CO_{T_\sigma}$. Since the filtration is compatible with homomorphisms between locally free sheaves in $D\Coh(\BL_{T\sigma})_1$.
    Extending the filtration by linearity, 
    we obtain uniquely a filtration on $\CT_B\circ\Eis_B\CF$ for all $\CF\in D\Coh(\BL_{T\sigma})_1$. By Proposition \ref{CT_chi}, for any $\chi \in X^*(T^\sigma)^{W^\sigma}$, tensoring by $\CO_\chi$ defines a filtration on $\CT_B\circ\Eis_B\CF_\chi$. In this way, we obtain a filtration on the functor $\CT_B\circ \Eis_B$ over $D\Coh(\BL_{T\sigma})_\chi$. 
    
    Moreover, for $B' = w^{-1}(B)$, we have 
    \[
    \CT_B\circ \Eis_{B'} \simeq \CT_B \circ \Eis_B \circ w_*
    \]
    where $W$ denotes the $w$-twisting, and the filtration $F^{>w'}$ of $\CT_B\circ \Eis_{B'}$ is defined as $F^{>w'w}$ of $\CT_B \circ \Eis_B$, under the isomorphism above.
\end{definition}

Here we use the term ``filtration'' for a compatible family of functors, insisting on the fact that for locally free sheaves, it defines a classical filtration in the abelian category $\Coh(T_\sigma)$. We will see that the piece $F^w=F^{\geq w}/F^{>w}$ is isomorphic to $w^*$, as desired.

\begin{remark}[Geometric intuition of the filtration]
\label{geometric}
    We present a geometric intuition of the filtration here. Consider the following commutative diagram:
    \[
    \begin{tikzcd}
	&& {X:=\BL_{B\sigma}\times_{\BL_{G\sigma}} \BL_{B\sigma}} \\
	& {\BL_{B\sigma}} && {\BL_{B\sigma}} \\
	{\BL_{T\sigma}} && {\BL_{G\sigma}} && {\BL_{T\sigma}}
	\arrow["{p_1}"', from=1-3, to=2-2]
	\arrow["{p_2}", from=1-3, to=2-4]
	\arrow["q_B"', from=2-2, to=3-1]
	\arrow["p_B", from=2-2, to=3-3]
	\arrow["p_B"', from=2-4, to=3-3]
	\arrow["q_B", from=2-4, to=3-5]
    \end{tikzcd}
    \]
    For terms in the diagram, we have
    \[
    \BL_{B\sigma} \simeq G\times^B \tilde{\BL}_{B\sigma}/G \simeq 
    \{(g, h)\in G/B\times \tilde{\BL}_{G\sigma}\mid \ad(g)^{-1}h\in \tilde{\BL}_{B\sigma}\}/G,
    \]
    and 
    \[
    \BL_{B\sigma}\times_{\BL_{G\sigma}} \BL_{B\sigma} \simeq 
    \{(g_1, g_2, h)\in G/B\times G/B \times \tilde{\BL}_{G\sigma}\mid \ad(g_1)^{-1}h,
    \ad(g_2)^{-1}h \in \tilde{\BL}_{B\sigma}\}/G.
    \]
    Via the projection 
    \[
    \BL_{B\sigma}\times_{\BL_{G\sigma}} \BL_{B\sigma} \to (G/B\times G/B)/G
    \simeq B\backslash G/B = \coprod_{w\in W} B\backslash BwB/B,
    \]
    there is a stratification of $\BL_{B\sigma}\times_{\BL_{G\sigma}} \BL_{B\sigma}$ indexed by $W$. Moreover, the stratum indexed by $w$ is non-empty if and only if $wB\sigma(w)^{-1}\cap B\neq \emptyset$, which is equivalent to $w \in W^\sigma$. Thus we obtain a $W^\sigma$-stratification of $\BL_{B\sigma}\times_{\BL_{G\sigma}} \BL_{B\sigma}$, with respect to the Bruhat order. The filtration satisfies that the $w$-stratum of $X$, $X_w$ is isomorphic to $\BL_{B\cap w^{-1}B\sigma}$ by \cite{hansen_cllc}, Proposition 3.6.1 and then it has a closed substack $\BL_{T\sigma}$.
    Thus we may expect that the filtration on $\CT\circ \Eis$ can be deduced from the stratification on $\BL_{B\sigma}\times_{\BL_{G\sigma}} \BL_{B\sigma}$ in some way.

    In \cite{hansen_cllc}, Theorem 3.6.3, a filtration $\CT_{B*}\Eis_B$ is defined in this way for the right adjoint $\CT_{B*}$ of $\Eis_B$. 
\end{remark}

The subquotients of the filtrations are free $\CO(T_\sigma)$-modules. More precisely:
\begin{lemma}
\label{free}
    The $\CO(T_\sigma)$-modules
    $F^{\geq w}\subset \CO(T_{\sigma}\times_{T\sigma\sslash' N_G(T\sigma)} T_\sigma)$, $\CO(T_{\sigma}\times_{T\sigma\sslash' N_G(T\sigma)} T_\sigma)/F^{\geq w}$, $F^{>w}\subset \CO(T_{\sigma}\times_{T\sigma\sslash' N_G(T\sigma)} T_\sigma)$ and $\CO(T_{\sigma}\times_{T\sigma\sslash' N_G(T\sigma)} T_\sigma)/F^{>w}$ are free $\CO(T_\sigma)$-modules (by left action). Similarly for the filtration on $\Gamma(\CT_B\circ \Eis_B\CO_{T_\sigma})$. 

    More precisely, the $\CO(T_\sigma)$-module $\CO(T_{\sigma}\times_{T\sigma\sslash' N_G(T\sigma)} T_\sigma)/F^{> w}$ is generated by the dual basis $\{\delta_{w'}^*\}_{w'\leq w}$ and the $\CO(T_\sigma)$-module $\Gamma(\CT_B\circ \Eis_B\CO_{T_\sigma})/F^{>w}$ is generated by the dual basis $\{(L\delta)_{w'}^*\}_{w'\leq w}$. Similarly for other modules.

    In particular, $\Gamma(F^{\geq w}/F^{>w}(\CO_{T_\sigma}))$ is free and of rank $1$ both for $\Eis$ and $\eis$. Moreover, Denote by $\{e_w\}_{w\in W^\sigma} \subset \CO(T_{\sigma}\times_{T\sigma\sslash' N_G(T\sigma)} T_\sigma)$ the (rational) dual basis of $\{(1,w)\}_{w\in W^\sigma}$. Then $L(0, \ad_{\fn/w^{-1}(\fn)})^{-1}e_w$ is a basis of $F^{\geq w}/F^{>w}$ for $\eis$ and $L(0, \ad_{\fn/w^{-1}(\fn)})^{-1}L_w^{-1} e_w$ is a basis of $F^{\geq w}/F^{>w}$ for $\Eis$.
\end{lemma}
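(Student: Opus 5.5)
We will work entirely inside the $k(T_\sigma)$-vector space $V:=\CO(T_{\sigma}\times_{T\sigma\sslash' N_G(T\sigma)} T_\sigma)\otimes_{\CO(T_\sigma)}k(T_\sigma)$. By the strongly regular semisimple computation in Lemma \ref{Endeis}, $V$ is $\prod_{w\in W^\sigma}k(T_\sigma)$ via the maps $(1,w)$. In $V$ we have the basis $\{(1,w)\}$ of the dual $\End\otimes k(T_\sigma)$, and its dual basis $\{e_w\}$. By Lemma \ref{demazure}, $\CO(T_{\sigma}\times_{T\sigma\sslash' N_G(T\sigma)} T_\sigma)$ is the free $\CO(T_\sigma)$-lattice spanned by $\{\delta_w^*\}$. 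By Proposition \ref{ct_circ_eis}, $\Gamma(\CT_B\circ\Eis_B\CO_{T_\sigma})$ is the lattice spanned by $\{(L\delta)_w^*\}$. The whole lemma then reduces to understanding the transition matrix between $\{\delta_w\}$ (respectively $\{(L\delta)_w\}$) and $\{w\}$ in $\End\otimes k(T_\sigma)$.

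\textbf{Step 1 (triangularity).} From the definition of $\delta_s=\frac{1-s}{1-e^{\tilde\alpha}}$ and induction on a reduced expression, $\delta_w=\sum_{w'\le w}c_{w,w'}\,w'$ with $c_{w,w'}\in k(T_\sigma)$. The leading coefficient $c_{w,w}$ equals $\prod$ of the factors $(1-e^{\tilde\beta})^{-1}$ over the inversion roots of $w$. This agrees with $L(0,\ad_{\fn/w^{-1}(\fn)})$ up to a unit, as was already used in the proof of Lemma \ref{eis-Eis}; Proposition \ref{L-function} justifies the type A/BC bookkeeping. The same argument gives $(L\delta)_w=\sum_{w'\le w}d_{w,w'}\,w'$ with $d_{w,w}=L_w\cdot L(0,\ad_{\fn/w^{-1}(\fn)})$ up to a unit, using the commutation rule $\delta_s f=\delta_s(f)+s(f)\delta_s$. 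Dualizing, the transpose-inverse matrix expresses $\delta_w^*$ (respectively $(L\delta)_w^*$) as $\sum_{w''\ge w}c'_{w,w''}e_{w''}$, upper triangular with diagonal entry $c_{w,w}^{-1}$ (respectively $d_{w,w}^{-1}$).

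\textbf{Step 2 (identifying the filtration).} The map $(1,w')$ kills $e_{w''}$ for $w''\neq w'$. Hence $F^{\ge w}$ is the set of lattice elements whose $e_{w'}$-coordinates vanish for all $w'<w$. By Step 1, every $\delta_{w''}^*$ with $w''\not<w$ lies in $F^{\ge w}$. Conversely, let $x=\sum a_{w''}\delta_{w''}^*\in F^{\ge w}$. Choose a minimal $w''<w$ with $a_{w''}\ne0$ (minimal with respect to a total order refining the given one). The $e_{w''}$-coordinate of $x$ is then $a_{w''}c_{w'',w''}^{-1}\neq0$, a contradiction. Therefore $F^{\ge w}$ is free on $\{\delta^*_{w''}\}_{w''\not<w}$, and the quotient is free on the complementary set $\{\delta^*_{w''}\}_{w''<w}$. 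The same argument applies to $F^{>w}$, and verbatim to $\{(L\delta)^*_{w}\}$. For the $\Eis$ case, the filtration is defined by saturation in $\Gamma(\CT_B\Eis_B\CO_{T_\sigma})$, and saturation of a lattice-coordinate kernel gives the same description.

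\textbf{Step 3 (the graded piece).} $F^{\ge w}/F^{>w}$ is then free of rank one, generated by the image of $\delta_w^*$ (respectively $(L\delta)_w^*$). Modulo $F^{>w}$, only the $e_w$-coordinate survives under $(1,w)$, and that coordinate is $c_{w,w}^{-1}$ (respectively $d_{w,w}^{-1}$). This yields generators $L(0,\ad_{\fn/w^{-1}(\fn)})^{-1}e_w$ and $L(0,\ad_{\fn/w^{-1}(\fn)})^{-1}L_w^{-1}e_w$, up to units.

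\textbf{Main obstacle.} The delicate point is Step 1's claim that the diagonal coefficients are exactly the stated L-functions up to units. This requires matching $\prod(1-e^{\tilde\beta})$ over inversions with $L(0,\ad_{\fn/w^{-1}(\fn)})^{-1}$ in the type BC cases, where $\tilde\alpha=2\alpha$ and one must use Proposition \ref{reflection}. I would check this for a simple reflection via the rank-one factorization of Proposition \ref{L-function}, then proceed by induction on length.
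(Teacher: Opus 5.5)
Your proposal is correct and is essentially the paper's argument. Bruhat-triangularity of $\delta_w$ and $(L\delta)_w$ against $\{w\}$, together with the bases from Lemma~\ref{demazure} and Lemma~\ref{eis-Eis}, identifies $F^{\geq w}$ with the span of the dual basis vectors indexed by $w''\not<w$, and the diagonal coefficient then gives the generator of $F^{\geq w}/F^{>w}$. The only difference is cosmetic: the paper phrases your Step~2 as ``$\{(1,w')\}_{w'<w}$ and $\{\delta_{w'}\}_{w'<w}$ span the same $k(T_\sigma)$-subspace, so $F^{\geq w}=\bigcap_{w'<w}\ker\delta_{w'}$'', which replaces your minimal-element coordinate check and uses the same downward-closedness of $\{w'<w\}$.
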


\begin{proof} 
    In the proof, we will denote $\Hom_{\CO(T_\sigma)}$ to be the set of homomorphisms as left $\CO(T_\sigma)$-modules. 
    By definition of the morphism $p_{T*}$, we have an isomorphism of $\CO(T_\sigma)$-bimodules.
    \[
    \Hom_{\CO(T_\sigma)}(\CO(T_{\sigma}\times_{T\sigma\sslash' N_G(T\sigma)} T_\sigma), \CO(T_\sigma))
    \simeq \End_{T\sigma\sslash' N_G(T\sigma)}(p_{T*}\CO_{T_\sigma}),
    \]
    under which $(1,w)$ corresponds to $w$. We also view $\delta_w$ as an element in the left hand side.
    For any $w\in W$, $\{(1, w')\}_{w'<w}$ and $\{\delta_{w'}\}_{w'<w}$ generate the same $k(T_\sigma)$-subspace in the vector space 
    \[
    \Hom_{\CO(T_\sigma)}(\CO(T_{\sigma}\times_{T\sigma\sslash' N_G(T\sigma)} T_\sigma), \CO(T_\sigma))
    \otimes_{\CO(T_\sigma)} k(T_\sigma)
    \] by definition of $\delta_w$.
    Thus we have $F^{\geq w}\subset \CO(T_{\sigma})\otimes_{\CO(T_\sigma\sslash W^\sigma)}\CO(T_{\sigma})$ is $\bigcap_{w'<w}\ker(\delta_{w'})$, which is free as $\{\delta_w\}_{w\in W^\sigma}$ forms a basis of $\End_{T\sigma\sslash' N_G(T\sigma)}(p_{T*}\CO_{T_\sigma})$ by Lemma \ref{demazure}.
    Other submodules or quotient modules of $\CO(T_{\sigma}\times_{T\sigma\sslash' N_G(T\sigma)} T_\sigma)$ in the statement are free for the same reason.
    
    Moreover, for $\CT_B\circ \Eis_B \CO_{T_\sigma}$, $\{(L\delta)_{w'}\}_{w'<w}$ and $\{\delta_{w'}\}_{w'<w}$ generate the same $k(T_\sigma)$-subspace in the vector space
    \[\Hom_{\CO(T_\sigma)}(\CO(T_{\sigma}\times_{T\sigma\sslash' N_G(T\sigma)} T_\sigma), \CO(T_\sigma))
    \otimes_{\CO(T_\sigma)} k(T_\sigma)
    = \Hom(\CT_B\circ \Eis_B \CO_{T_\sigma}, \CO_{T_\sigma})\otimes_{\CO(T_\sigma)} k(T_\sigma)
    \]
    by definition of $(L\delta)_w$. 
    As the filtration of $\Gamma(\CT_B\circ\Eis_B)$ is defined by saturation, we have $F^{\geq w}\subset \Gamma(\CT_B\circ \Eis_B\CO_{T_\sigma})$ is $\bigcap_{w'<w}\ker((L\delta)_{w'})$, which is free as $\{(L\delta)_w\}_{w\in W^\sigma}$ forms a basis of the module 
    \[\End(\Eis_B\CO_{T_\sigma})\simeq\Hom_{\CO(T_\sigma)}(\Gamma(\CT_B\circ \Eis_B\CO_{T_\sigma}), \CO(T_\sigma)).\]
    Other submodules or quotient modules of $\Gamma(\CT_B\circ \Eis_B\CO_{T_\sigma})$ in the statement are free for the same reason.

    For the statement about the generator of $F^{\geq w}/F^{>w}$, note that $\delta_w^*$ is its generator the coefficient of $\delta_w$ at $w$ is $L(0, \ad_{\fn/w^{-1}\fn})$ and other terms are in the places that are smaller than $w$ in the Bruhat order. Thus $\delta_w^*-L(0, \ad_{\fn/w^{-1}\fn})^{-1}e_w$ lies in $F^{>w}$, proving the case for $\eis$. As the leader term of $(L\delta)_w$ is $L_wL(0, \ad_{\fn/w^{-1}\fn})$ up to a unit, the statement for $\Eis$ follows similarly.
\end{proof}

By Lemma \ref{free}, $1\mapsto L(0,\ad_{\fn/w^{-1}\fn})L_w e_w$ defines an isomorphism of functors
\[
w^* \simeq F^{\geq w}/F^{>w}(\CT_{B'} \circ \Eis_B)
\]
as an analog of a classical property of the Mackey filtration.

Now we can prove the main theorem of this subsection, calculating the composition of the inclusion of $\mathbb{1}$ in $\CT_{B}\circ \Eis_{B'}/F^{>1}$ and the adjoint of the intertwining operator.

\begin{theorem}
    \label{mul-sc}
    Let $\chi$ be a character in $X^*(T^\sigma)^{W^\sigma}$.
    Suppose that there is an other Borel subgroup $B'$ in $G$ stable under $\sigma$ and containing $T$. 
    The set of natural transformation of functors over the category $D\Coh(\BL_{T\sigma})_\chi$, viewed as an $\CO(T_{\sigma})=\End(\mathbb{1})$-module acting on the source
    (or equivalently on the target)
    \[
    \Hom(\mathbb{1}, \CT_{B}\circ \Eis_{B'}/F^{>1})
    \]
    is free and of rank $1$. Moreover, there exists a generator of this module such that the composition
    \[
    \mathbb{1} \to \CT_{B}\circ \Eis_{B'}/F^{>1} \to \mathbb{1}
    \]
    is the multiplication by $L(0, \ad_{\fn_{B}/\fn_{B'}})^{-1}$,
    where the morphism 
    \[
    \CT_{B}\circ \Eis_{B'} \to \mathbb{1}
    \]
    is given by the adjunction of the intertwining operator $j_{B|B'} \colon \Eis_{B'} \to \Eis_{B}$.
\end{theorem}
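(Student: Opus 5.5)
The plan is to reduce everything to the case $\chi=1$ and to the single generator $\CO_{T_\sigma}$, and then compute explicitly in the rational vector space $\CO(T_{\sigma}\times_{T\sigma\sslash' N_G(T\sigma)} T_\sigma)\otimes_{\CO(T_\sigma)} k(T_\sigma)$ using the bases of Lemma \ref{free}.

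\textbf{Reduction.} By Proposition \ref{CT_chi}, Lemma \ref{tensor_by_chi_Eis}, and the definition of the filtration for general $\chi$ (tensoring by $\CO_\chi$), all objects in the statement for the block $\chi$ are obtained from those for $\chi=1$ by applying $(-)\otimes\CO_\chi$. Hence it suffices to treat $\chi=1$.

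Since $D\Coh(\BL_{T\sigma})_1$ is generated by $\CO_{T_\sigma}$ with $\End(\CO_{T_\sigma})=\CO(T_\sigma)$, a natural transformation $\mathbb{1}\to \CT_B\circ\Eis_{B'}/F^{>1}$ is determined by its value on $\CO_{T_\sigma}$. This value must be a global section commuting with the right $\CO(T_\sigma)$-action. Write $B'=w^{-1}(B)$. Using $\CT_B\circ\Eis_{B'}\simeq \CT_B\circ\Eis_B\circ w_*$, the quotient by $F^{>1}$ of $\CT_B\circ\Eis_{B'}$ becomes the quotient by $F^{>w}$ of $\CT_B\circ\Eis_B$, precomposed with $w_*$.

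\textbf{Computing the Hom-module.} By Lemma \ref{free}, the quotient module $\Gamma(\CT_B\Eis_B\CO_{T_\sigma})/F^{>w}$ is free with basis $\{(L\delta)^*_{w'}\}_{w'\le w}$. A section of this quotient commutes with the bimodule structure (twisted by $w$) exactly when it lies in the $w$-isotypic part for the two actions. Rationally that part is spanned by $e_w$, because $e_{w'}$ transforms by $w'$. The $w$-isotypic part therefore meets the free module in a rank-one saturated submodule. This submodule equals the graded piece $F^{\ge w}/F^{>w}$ lifted into the quotient, whose generator Lemma \ref{free} gives as $L(0,\ad_{\fn/w^{-1}\fn})^{-1}L_w^{-1}e_w$ up to units.

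The main obstacle is this step: the isotypic part of the quotient has to be exactly the image of $F^{\ge w}$, and not something larger. To prove it, I will first localize away from the non-strongly-regular locus, where the quotient splits as $\bigoplus_{w'\le w} k(T_\sigma)e_{w'}$. I will then use saturation, which ensures that an element of the $e_w$-line in the quotient is integral if and only if it is integral in $F^{\ge w}/F^{>w}$. This gives freeness of rank $1$, and a generator $g$ corresponding to $L(0,\ad_{\fn/w^{-1}\fn})^{-1}L_w^{-1}e_w$.

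\textbf{Composing with the adjunction.} The adjoint of $j_{B|B'}$ is the functional on $\Gamma(\CT_B\Eis_{B'}\CO_{T_\sigma})$ given by pairing with $j_{B|B'}\in\Hom(\Eis_{B'},\Eis_B)\subset\End(\Eis_B\CO_{T_\sigma})\circ w$. By Proposition \ref{all}, and after transporting via $w$, $j_{B|B'}=L(1,\ad_{\fn_B/\fn_{B'}})^{-1}J^\circ$, and $J^\circ$ corresponds rationally to the element $w$ (equivalently $(1,w)$) in the endomorphism algebra. This is the content of $t_B, t_{B'}$ in Definition \ref{rs, g} together with the identification of Lemma \ref{Endeis}.

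We must check that this functional kills $F^{>w}$. This holds because $(1,w)$ vanishes on $F^{>w}$ by the definition of the filtration. So the functional descends to the quotient. Pairing it with the generator $g$ gives
\[
\langle L_w\,(1,w),\,L(0,\ad_{\fn/w^{-1}\fn})^{-1}L_w^{-1}e_w\rangle=L(0,\ad_{\fn/w^{-1}\fn})^{-1},
\]
up to a unit. Here $L_w=L(1,\ad_{\fn/w^{-1}\fn})^{-1}$ matches $L(1,\ad_{\fn_B/\fn_{B'}})^{-1}$, and $L(0,\ad_{\fn/w^{-1}\fn})^{-1}$ equals $L(0,\ad_{\fn_B/\fn_{B'}})^{-1}$ (up to the sign convention fixed in Remark \ref{convention}).

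Finally, I rescale $g$ by the inverse of the unit so that the composition is exactly multiplication by $L(0,\ad_{\fn_B/\fn_{B'}})^{-1}$. The result for general $\chi$ then follows by tensoring with $\overline{\CO_\chi}$, as in the reduction step.
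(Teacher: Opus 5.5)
Your proposal is correct and follows essentially the same route as the paper's proof. Both reduce to $\chi=1$ and the generator $\CO_{T_\sigma}$, twist by $w$ to pass from $F^{>1}$ of $\CT_B\circ\Eis_{B'}$ to $F^{>w}$ of $\CT_B\circ\Eis_B$, and use Lemma \ref{free}, the rational splitting into the lines $k(T_\sigma)e_{w'}$ and the saturation of $F^{\geq w}$ to identify the Hom-module with $F^{\geq w}/F^{>w}$, generated by $L(0,\ad_{\fn/w^{-1}\fn})^{-1}L_w^{-1}e_w$. Both then pair this generator with the adjoint $L_w(1,w)$ of $j_{B|B'}$; you make this pairing and the vanishing of $(1,w)$ on $F^{>w}$ explicit, where the paper leaves them implicit.

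One small correction: Remark \ref{convention} plays no role in matching $L(0,\ad_{\fn/w^{-1}\fn})$ with $L(0,\ad_{\fn_B/\fn_{B'}})$, since $\fn_{B'}=w^{-1}(\fn_B)$ makes them literally equal.
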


\begin{proof}
    We firstly consider the case for $\chi=1$.
    There is a unique $w\in W^\sigma$ such that $B'=w^{-1}(B)$. 
    As $\CO_{T_\sigma}$ is a generator of the category $D\Coh(\BL_{T\sigma})_1$ and $\End(\CO_{T_\sigma})\simeq \CO(T_\sigma)$, 
    \[
    \Hom(\mathbb{1}, \CT_{B}\circ \Eis_{B'}/F^{>1})
    \simeq \Hom_b(\CO(T_\sigma), \Gamma(\CT_{B}\circ \Eis_{B'}\CO_{T_\sigma})/F^{>1})
    \simeq \Hom_b(\CO(T_\sigma)_w, \Gamma(\CT_{B} \circ \Eis_B\CO_{T_\sigma})/F^{>w}),
    \]
    where the ``$\Hom_b$'' here denotes the bimodule homomorphism space and the second isomorphism is given by twisting by $w$. 
    
    Recall that we may identify the 
    $k(T_\sigma)$-vector spaces
    \[
    \CO(T_{\sigma}\times_{T\sigma\sslash' N_{G}(T\sigma)} T_{\sigma})
    \otimes_{\CO(T_{\sigma})}k(T_\sigma) = 
    \Gamma(\CT_B\circ \Eis_B\CO_{T_\sigma})\otimes_{\CO(T_{\sigma})}k(T_\sigma)
    \]
    and the sets $\{(1,w)\}_{w\in W^\sigma}$, $\{\delta_w\}_{w\in W^\sigma}$ and 
    $\{(L\delta)_w\}_{w\in W^\sigma}$ are bases of its dual, in which lie
    the $\CO(T_\sigma)$-modules
    \[
    \Hom_{\CO(T_\sigma)}(\CT_{B} \circ \Eis_B\CO_{T_\sigma}, \CO_{T_\sigma})\simeq \End(\Eis_B\CO_{T_\sigma})\ \text{and}\ 
    \Hom_{\CO(T_\sigma)}(\CO(T_{\sigma}\times_{T\sigma\sslash' N_{G}(T\sigma)} T_{\sigma}), \CO(T_\sigma))\simeq 
    \End(\eis_B\CO_{T_\sigma}).
    \]  
    There is a splitting of the $\CO(T_\sigma)$-bimodule
    \[
    \CO(T_{\sigma}\times_{T\sigma\sslash' N_{G}(T\sigma)} T_{\sigma})
    \otimes_{\CO(T_{\sigma})}k(T_\sigma)
    \simeq \bigoplus_{w\in W^\sigma}k(T_\sigma)_w e_w
    \]
    such that $\{e_w\}_{w\in W^\sigma}$ is the dual basis of $\{1,w\}_{w\in W^\sigma}$.

    By Lemma \ref{free}, the subquotients of the filtration $F^{>w}$ are free. Thus
    \[
    \Hom_b(\CO(T_\sigma)_w, \Gamma(\CT_{B} \circ \Eis_B\CO_{T_\sigma})/F^{>w})
    \subset \Hom_b(\CO(T_\sigma)_w, \bigoplus_{w'\in W^\sigma}k(T_\sigma)_{w'} e_{w'}/F^{>w})
    \simeq k(T_\sigma)e_w
    \]
    given by taking its value at $1$. Moreover, by construction of the intertwining operator in \ref{all},
    the adjunction of $j_{B|B'}$, $\CT_{B} \circ \Eis_B\CO_{T_\sigma} \to \CO(T_\sigma)_w$
    is given by $L(1, \ad_{\fn_B/\fn_{B'}})^{-1}(1,w)=L_w(1,w)$. 
    By Lemma \ref{free}, $L(0, \ad_{\fn/w^{-1}(\fn)})^{-1}L_w^{-1} e_w$ is a generator of the free $\CO(T_\sigma)$-module $F^{\geq w}/F^{>w}$ and $F^{\geq w}$ is saturated in the module $\Gamma(\CT_{B} \circ \Eis_B\CO_{T_\sigma})$.
    Thus the homomorphism given by $1 \mapsto L(0, \ad_{\fn/w^{-1}(\fn)})^{-1}L_w^{-1} e_w$ 
    is a generator of the free $\CO(T_\sigma)$-module $\Hom_b(\CO(T_\sigma)_w, \Gamma(\CT_{B} \circ \Eis_B\CO_{T_\sigma})/F^{>w})$ and we conclude the $\chi=1$ case.

    Then we consider the case for general $\chi$. Recall that tensoring by $\CO_\chi$ defines an equivalence of categories
    \[
    D\Coh(\BL_{T\sigma})_1 \simeq D\Coh(\BL_{T\sigma})_\chi.
    \]
    Recall that the construction $\CT_B\circ \Eis_{B'}$ is compatible with this equivalence by Proposition \ref{CT_chi}. The filtration $F^{>1}$ is compatible with this equivalence by definition. The integral intertwining operator $j_{B'|B}$ is compatible with this equivalence by its construction in the proof of Proposition \ref{all}. 
    Thus tensoring the generator of $\Hom(\mathbb{1}, \CT_{B}\circ \Eis_{B'}/F^{>1})_1$ defined above by $\CO_\chi$ defines the desired generator of $\Hom(\mathbb{1}, \CT_{B}\circ \Eis_{B'}/F^{>1})_\chi$,
    where the subscripts mean the $\Hom$-modules over the corresponding block.
\end{proof}

Imitating the construction of the classical unnormalized intertwining operator,
we divide the factor in Theorem \ref{mul-sc} to obtain the unnormalized spectral intertwining operator.

\begin{definition}
    The unnormalized intertwining operator $J_{B'|B}$ is defined as
    \[
    L(0, \ad_{\fn_{B'}/\fn_B})j_{B'|B},
    \]
    viewing as a transformation $\Eis_B^{rs} \to \Eis_{B'}^{rs}$.
\end{definition}

We deduce Theorem \ref{Adjunction} in the unipotent case.

%% file: 3.tex
\section{The general case}

In this section, we construct the spectral intertwining operators for some components of $\Par_G$, generalizing the results on the unipotent block.

\subsection{Structure of general components}

Firstly we describe the properties of the general connected components of the moduli stack of L-parameters. We will show that any connected component is isomorphic to the unipotent component of another group, quotiented by a finite group.

Let $F$ be a $p$-adic local field. Let $W_F$ be its Weil group and $I = I_F$ be the inertia subgroup of $W_F$. Denote by $\Fr$ the \textit{arithmetic} Frobenius in $W_F$.

Let $G$ be a reductive group over the algebraically closed field $\bar{\mathbb{Q}}_\ell$ ($\ell\neq p$) with $W_F$-action preserving a pinning. Note that it
comes from the Langlands dual of some $p$-adic reductive group 
(Attention: we denote by $G$ the group on the spectral side, instead of $G^\vee$). 
Let $Z^1(W_F, G)$ be the moduli space of $\ell$-adically continuous 1-cocycles. By \cite{fargues2024}, Proposition VIII.2.5, we may view this space as the moduli space of Weil--Deligne L-parameters, i.e., space of the pairs $(\phi, N)$, where $\phi$ is a continuous 1-cocycle in discrete topology and $N \in \CN_G$ is nilpotent (called the monodromy part), satisfying $\ad(\phi(w))N = q^{|w|}N$ for all $w\in W_F$
($|\cdot|\colon W_F\to W_F/I_F\simeq \mathbb{Z}$, $\Fr\mapsto 1$).
We will denote by $Z^1(W_F,G)_0$ the moduli space of continuous 1-cocycle in discrete topology and we have a projection $Z^1(W_F,G) \to Z^1(W_F,G)_0$
by ignoring the monodromy part.

Let $M$ be a Levi subgroup of $G$ stable under the $W_F$-action and
$\phi \in Z^1(W_F, M)$ be an elliptic semisimple parameter. Let $P$ be a parabolic subgroup of $G$ stable under $W_F$ with Levi subgroup $M$.

Denote by $\Par_{G, \phi}$ the connected component of $\Par_{G} := Z^1(W_F, G)/G$ containing $\phi$ and denote by $\Par_{G,\phi,0}$ the connected component of $\Par_{G,0} := Z^1(W_F, G)_0/G$ containing $\phi$. Similarly for $P$ and $M$. We will denote by ${}^L(-)$ the L-group 
$(-)\rtimes W_F$ for a subgroup of $G$ stable under $W_F$. 
Then $\phi \in Z^1(W_F, M)$ corresponds to a continuous group homomorphism ${}^L\phi\colon W_F\to {}^LM$.

The spectral Eisenstein series operator in general is defined as follows.

\begin{definition}[Spectral Eisenstein series]
    Notations as above, define
    $p_P \colon \Par_{P} \to \Par_{G}$ induced by the inclusion and 
    $q_P \colon \Par_{P} \to \Par_{M}$ induced by the projection. The spectral Eisenstein operator $\Eis_P$ is defined as 
    \[
        \Eis_P := p_{P*} \circ q_P^* \colon D\Coh(\Par_{M}) \to D\Coh(\Par_{G}).
    \]
    Similarly, we define $p_{P,0} \colon \Par_{P,0} \to \Par_{G,0}$ and $q_{P,0} \colon \Par_{P,0} \to \Par_{M,0}$.
    Then $\eis_P$ is defined as
    \[
        \eis_P := p_{P,0*} \circ q_{P,0}^* \colon D\Coh(\Par_{M}) \to D\Coh(\Par_{G}).
    \]
    We will also consider the morphism and operators over the connected components $\Par_{M,\phi}$, $\Par_{P,\phi}$ and $\Par_{G,\phi}$ and use the same notations if there is no ambiguity, such as
    $p_P \colon \Par_{P,\phi} \to \Par_{G,\phi}$, $q_P \colon \Par_{P,\phi} \to \Par_{M,\phi}$ and
    \[
    \Eis_P := p_{P*} \circ q_P^* \colon D\Coh(\Par_{M,\phi}) \to D\Coh(\Par_{G,\phi}).
    \]
\end{definition}

We reduce to the case for the principal block by the following facts that any connected component of $Z^1(W_F, G)$ is similar to the unramified component of some group. Some of these facts are mentioned in a more general way in \cite{dat2025}. 

In the following statements, we will consider the centralizer of a parameter such as $C_G(\phi|_I)$, which always means the centralizer of the corresponding group homomorphism, such as $C_G({}^L\phi|_I)$.

\begin{lemma}(\cite{dat2025}, Theorem 3.12)
    \label{pinning}
    After an unramified twist of $\phi$, there is a pinning of $C_G^\circ(\phi|_I)$ preserved by the adjoint action of ${}^L\phi(\Fr)$
    (such action is well-defined as $I$ is normal in $W_F$).
\end{lemma}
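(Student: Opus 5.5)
The plan is to follow the structure-theoretic approach of \cite{dat2025}. The group $C_G^\circ(\phi|_I)$ is a (possibly disconnected-centralizer) reductive group, and conjugation by ${}^L\phi(\Fr)$ defines an automorphism of it. The statement asks that, after replacing $\phi$ by an unramified twist $\phi\cdot z$ with $z$ an unramified cocycle valued in $Z(M)^\circ$ (or more generally in the appropriate torus commuting with $\phi|_I$), this automorphism fix a pinning of $C_G^\circ(\phi|_I)$.

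\textbf{Step 1: reduce to a choice of Borel and torus.} I would first record that $H:=C_G^\circ(\phi|_I)$ is reductive. This holds because $\phi|_I$ factors through a finite quotient of $I$ on the semisimple part: since $\phi$ is semisimple, $\phi(P_F)$ is finite and the tame part is handled by semisimplicity. Conjugation $\theta:=\ad({}^L\phi(\Fr))$ normalizes $H$, because $\Fr$ normalizes $I$. Any automorphism of a reductive group stabilizes some Borel–torus pair $(B_H,T_H)$ when it is quasi-semisimple. Since $\phi$ is semisimple, ${}^L\phi(\Fr)$ is semisimple in ${}^LG$, so $\theta$ is a semisimple automorphism of $H$. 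By Steinberg's theorem (\cite{steinberg1968endomorphisms}), $\theta$ therefore fixes a Borel pair $(B_H,T_H)$.

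\textbf{Step 2: adjust on the torus.} With $(B_H,T_H)$ fixed, $\theta$ permutes the simple root spaces of $H$ by some diagram automorphism $\tau$. On a pinning $\{X_\alpha\}$ it acts by $X_\alpha\mapsto c_\alpha X_{\tau\alpha}$ with scalars $c_\alpha$. Composing $\theta$ with $\ad(t)$ for $t\in T_H$ rescales the $c_\alpha$ by $\alpha(t)$. Solving $\tau$-orbitwise shows that one can choose $t\in T_H$ making all $c_\alpha=1$ on orbits; this is possible because the simple roots are linearly independent on $T_H$, and it is the standard argument of Steinberg that quasi-semisimple automorphisms become pinned after inner twisting by the torus. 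The key point is then that this $t$ may be taken inside $Z(C_G(\phi))^\circ$-directions realizable by an \emph{unramified twist}. The element $t$ must commute with $\phi|_I$, which holds automatically since $t\in H$. Replacing ${}^L\phi(\Fr)$ by $t\cdot{}^L\phi(\Fr)$ amounts to twisting $\phi$ by the cocycle $\Fr\mapsto t$, trivial on $I$.

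\textbf{Step 3: the obstacle.} The main difficulty is that a general $t\in T_H$ changes the component of $\phi$ and need not be an "unramified twist" in the sense of $\Par_{M,\phi}$, i.e.\ an element of $Z(M)^{\circ}$ (or of $(Z(M)^{W_F})^\circ$-type twists). I would handle this as follows. Decompose $T_H$ as $(T_H)^\theta$ times the image of $1-\theta$. The pinning condition only involves $t$ modulo $\ker$ of the simple roots composed with orbit sums, so $t$ can be taken in the image of $(1-\theta)$ up to elements acting trivially on the pinning; elements of the form $s\theta(s)^{-1}$ give a $H$-conjugate parameter, not a twist. What remains lies in a torus centralizing $\phi(W_F)$ up to finite index, hence in the connected center relevant to the component. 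Here I expect to need ellipticity of $\phi$ in $M$: this gives that $(C_M(\phi))^\circ=(Z(M)^{W_F})^\circ$-type, so the residual $t$ is an unramified character of $M$. I would try to reduce the residual torus computation to $X^*(T_H)_\theta$ as in the lemma proved above on $Z(G)^\sigma\to\pi_0(T^\sigma)$. This is the step I would check most carefully, and it is where I would simply invoke \cite{dat2025}, Theorem 3.12, if the direct argument becomes intricate.
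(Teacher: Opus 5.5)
Your Steps 1--2 are sound, with one correction: $\ad(t)\circ\theta$ sends $X_\alpha$ to $c_\alpha\,(\tau\alpha)(t)\,X_{\tau\alpha}$. Since the simple roots are linearly independent on $T_H$, you can solve $(\tau\alpha)(t)=c_\alpha^{-1}$ for all $\alpha$ at once. With that, Steps 1--2 already prove the statement as literally phrased, provided twists by $C_G^\circ(\phi|_I)$-valued unramified cocycles are allowed. For that bare statement Steinberg is even dispensable, since $H_{\mathrm{ad}}$ acts simply transitively on pinnings.

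The gap is Step 3. As written it is a sketch that ends by deferring to \cite{dat2025}, and it aims at the wrong target.
\begin{itemize}
\item A twist by $t\in T_H\subset C_G^\circ(\phi|_I)$ never changes the connected component of $\Par_G$ (cf.\ Proposition \ref{reduction}).
\item In this paper an unramified twist of the elliptic parameter $\phi\in Z^1(W_F,M)$ means a point of $X_{M_\phi^\circ}\phi$ (Definition \ref{ngmphi}). That is a twist by an unramified cocycle valued in $M_\phi^\circ=C_M^\circ(\phi|_I)$, not by $Z(M)^\circ$.
\item What the later sections need is that the twisted parameter stays in $Z^1(W_F,M)$ and that the fixed pinning is adapted to $(C_P^\circ(\phi|_I),C_M^\circ(\phi|_I))$.
\end{itemize}
With an arbitrary Steinberg pair $(B_H,T_H)$ in Step 1, your $t$ need not lie in $M$. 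The $(1-\theta)$-decomposition sketch does not repair this.

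The missing idea is to choose the Borel pair canonically rather than by Steinberg.
\begin{itemize}
\item Since ${}^L\phi(\Fr)\in{}^LM$ and $P$, $M$ are $W_F$-stable, $\theta$ preserves $P$ and $M$. Since $\Fr$ normalizes $I$, $\theta$ also preserves $C_G(\phi|_I)$. Hence $\theta$ stabilizes $(C_P^\circ(\phi|_I),C_M^\circ(\phi|_I))$ with no twist at all.
\item This is a Borel pair of $H$ because $C_M^\circ(\phi|_I)$ is a torus, which follows from ellipticity alone. Take a $\theta$-stable Borel pair $(B_1,T_1)$ of $C_M^\circ(\phi|_I)$ (here Steinberg is used). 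Every root restricts nontrivially to $(T_1^{\theta})^\circ$, because it agrees there with its $\theta$-average, which is a nonzero positive combination of positive roots ($\theta$ has finite order on $T_1$). But $(T_1^{\theta})^\circ\subset C_M(\phi)^\circ=(Z(M)^{W_F})^\circ$ is central, so there are no roots.
\item Now run your Step 2 with $t\in C_M^\circ(\phi|_I)$. The twisted parameter lies in $X_{M_\phi^\circ}\phi\subset\Par_{M,\phi}$ and is still semisimple. It is still elliptic, because $\ad(t)$ is trivial on the torus $C_M^\circ(\phi|_I)$, so $C_M(\chi_t\phi)^\circ=C_M(\phi)^\circ$.
\end{itemize}
With this choice Step 3 is unnecessary. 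If you really want a central twist, your Step 3 idea does work once $T_H=C_M^\circ(\phi|_I)$. Conjugating by $s\in T_H$ preserves the pair and replaces $t$ by $t\,s\theta(s)^{-1}$. Ellipticity, i.e.\ $(T_H^{\theta})^\circ=(Z(M)^{W_F})^\circ$, then makes $(Z(M)^I)^\circ\to(T_H)_\theta$ surjective.

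The paper gives no argument of its own and only cites \cite{dat2025}, Theorem 3.12. This repaired version of your Steps 1--2 would therefore be a self-contained proof of the form actually used later.
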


\begin{lemma}
    The connected component of the (twisted) centralizer of $\phi|_I$ (i.e., the restriction of $\phi$ to the inertia subgroup $I_F$) in $M$, $C_M^\circ(\phi|_I)$ is a torus. Moreover, $C_G^\circ(\phi|_I)$ is a reductive group and $C_P^\circ(\phi|_I)$ is a Borel subgroup in it.
\end{lemma}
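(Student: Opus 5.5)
We must prove three things: $C_M^\circ(\phi|_I)$ is a torus, $C_G^\circ(\phi|_I)$ is reductive, and $C_P^\circ(\phi|_I)$ is a Borel subgroup of it. The plan is to use the finiteness of the image of inertia together with ellipticity of $\phi$ in $M$. We begin with reductivity, which is the most formal part. Since $\phi$ is a continuous $1$-cocycle that is trivial on an open subgroup of $I$ (wild inertia acts through a finite quotient, and on the semisimple part the restriction to $I$ has finite image after passing to $Z^1(W_F,G)_0$), the homomorphism ${}^L\phi|_I$ factors through a finite quotient $I/I'$. Hence $C_G({}^L\phi|_I)$ is the fixed-point group of a finite group $\Gamma={}^L\phi(I)$ acting by algebraic automorphisms on $G$. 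In characteristic $0$, the fixed points of a finite (hence linearly reductive) group acting on a reductive group form a reductive group, by the standard result of Steinberg and Richardson. Therefore $C_G^\circ(\phi|_I)$ is reductive, and the same holds for $C_M^\circ(\phi|_I)$.

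Next we show that $C_M^\circ(\phi|_I)$ is a torus, using ellipticity. By Lemma \ref{pinning}, after an unramified twist the group $H:=C_M^\circ(\phi|_I)$ carries a pinning preserved by $\theta:=\mathrm{Ad}({}^L\phi(\Fr))$. Since $\phi$ is semisimple, $\theta$ is a semisimple automorphism of $H$ preserving this pinning. Suppose $H$ were not a torus. Then its derived group has a $\theta$-stable Borel $B_H\supset T_H$ coming from the pinning, and the element $\theta$ normalizes a proper $\theta$-stable parabolic of $H$. Consider a cocharacter $\lambda$ of $(T_H)^{\theta,\circ}$ that is $B_H$-dominant and regular for the twisted root system $X^*(T_H)^\theta$, the latter being nonempty when $H$ is non-abelian. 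Such $\lambda$ exists by the description of the root system for $\theta$ in the Proposition on the root system for $\sigma$. This $\lambda$ centralizes $\phi(I)$ and $\phi(\Fr)$, so it centralizes the image of ${}^L\phi$ while not being central in $M$ (it acts nontrivially on the root spaces of $H\subset M$). Then $\phi$ factors through the proper Levi $C_M(\lambda)$, contradicting ellipticity, which requires $C_M(\phi)^\circ\subset Z(M)$. Hence $H$ is a torus.

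Finally, let $H_G:=C_G^\circ(\phi|_I)$ and $H_P:=C_P^\circ(\phi|_I)=(P\cap H_G)^\circ$. The plan is to choose a cocharacter $\mu$ of $Z(M)^{\circ}$, fixed by $W_F$, with $P=P(\mu)$ and $M=C_G(\mu)$. Because $\mu$ is central in $M$ and $W_F$-invariant, it commutes with ${}^L\phi(I)$, so $\mu$ lands in $H_G$. Then $P(\mu)\cap H_G$ is the parabolic $P_{H_G}(\mu)$ of the reductive group $H_G$, and its Levi is $C_{H_G}(\mu)=C_M(\phi|_I)\cap H_G$, whose identity component is the torus of the previous step. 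A parabolic subgroup whose Levi component is a torus is a Borel subgroup. Connectedness of $P_{H_G}(\mu)$ gives $H_P=P_{H_G}(\mu)$.

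The main obstacle is the torus step: making rigorous that a non-abelian $H$ produces a $\phi$-fixed non-central cocharacter. This requires that $X^*(T_H)^\theta$ genuinely carries roots, which uses the pinning-preservation from Lemma \ref{pinning}, and that the twisted centralizer of such $\lambda$ contains $\phi(W_F)$ inside a proper Levi of $M$. If this runs into trouble, the fallback is to argue with the identity component $C_M(\phi)^\circ=(H^\theta)^\circ$: if $H$ is non-abelian, then $(H^\theta)^\circ$ contains the non-central torus $(T_H^\theta)^\circ$ acting nontrivially on roots, and this already violates ellipticity directly.
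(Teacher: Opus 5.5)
Your proof is correct. Its skeleton matches the paper's:

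\begin{itemize}
\item reductivity of $C_G^\circ(\phi|_I)$ follows from finiteness of ${}^L\phi(I)$;
\item the Borel statement reduces to $C_M^\circ(\phi|_I)$ being a torus;
\item that torus statement comes from a contradiction with ellipticity, using a $\theta$-stable structure on $H=C_M^\circ(\phi|_I)$.
\end{itemize}

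The genuine difference is in the torus step. The paper produces a \emph{unipotent} connected subgroup of $C_M(\phi)$, namely $\exp(tX)$ with $X=\sum_{\alpha\in O}X_\alpha$. This subgroup is fixed precisely because $\sigma$ permutes the pinning vectors, and it cannot be central in $M$. You produce a \emph{torus} instead: a cocharacter of $(T_H^\theta)^\circ$ pairing nontrivially with the roots of $H$, for instance $2\rho_H^\vee$, which is $\theta$-fixed because $\theta$ permutes the positive coroots. It lies in $C_M(\phi)^\circ$ but acts nontrivially on the root spaces $\mathfrak{h}_\beta\subset\mathfrak{m}$, so it is not central in $M$.

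Your variant is more economical. It only needs $\theta=\mathrm{Ad}({}^L\phi(\Fr))|_H$ to stabilize some Borel pair of $H$, which Steinberg's theorem gives directly for this semisimple automorphism. So Lemma \ref{pinning} and the unramified twist are actually unnecessary for you, along with any check that the twisted parameter is still elliptic. The paper's unipotent argument genuinely needs eigenvalue $1$ on the simple root spaces, i.e.\ the pinning. Your ``fallback'' is in fact the clean form of the argument; the dominant-regular/proper-parabolic framing adds nothing.

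You also make explicit the parabolic/Levi claim that the paper only asserts, via a $W_F$-fixed cocharacter $\mu$ of $(Z(M)^{W_F})^\circ$ with $P=P(\mu)$ and $M=C_G(\mu)$. Its existence deserves one line: average a $\mu_0\in X_*(Z(M)^\circ)$ with $P(\mu_0)=P$ over the finite image of $W_F$. The average stays in the relevant open cone because $W_F$ preserves $P$ and $M$. Note also that $C_{H_G}(\mu)=M\cap H_G$ is already connected, being the centralizer of a torus, hence equals $C_M^\circ(\phi|_I)$.

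A small wording point: ${}^L\phi(I)$ is finite because $\phi$ is a semisimple parameter, continuous for the discrete topology on the profinite group $I$. It has nothing to do with wild inertia specifically.
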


\begin{proof}
    As $I$ is a normal subgroup of $W_F$, $\phi|_I$ is also semisimple.
    Thus we conclude that $C_G^\circ(\phi|_I)$ is a reductive group, $C_P^\circ(\phi|_I)$ is a parabolic subgroup in it and $C_M^\circ(\phi|_I)$ is a corresponding Levi subgroup in it. Hence it suffices to prove that $C_M^\circ(\phi|_I)$ is a torus.

    By Lemma \ref{pinning}, we may twist $\phi$ such that $\sigma=\ad({}^L\phi(\Fr))$ preserves a pinning of $C_M^\circ(\phi|_I)$. Moreover, the fixed points of $C_M^\circ(\phi|_I)$ under $\sigma$ is $C_M(\phi)$.
    As $\phi$ is elliptic in $M$, the group $C_M(\phi)/Z_M$ is finite. We may replace $M$ by its quotient $M/Z_M$. Then the centralizer $C_M(\phi)$ is finite.
    Suppose that $C_M^\circ(\phi|_I)$ is not a torus. We have that $\sigma$ permutes a pinning $\{X_\alpha\}$ of it. 
    Thus $\sigma$ fixes the vector $0\neq X := \sum_{\alpha\in O} X_\alpha$ for any orbit $O$ of simple roots of $C_M^\circ(\phi|_I)$ under the $\sigma$-action and it will fix the one-parameter subgroup $\exp(tX)$, a contradiction.
\end{proof}

From now on, we assume that $\phi$ satisfies that the adjoint action of ${}^L\phi(\Fr)$ on $C_G(\phi|_I)^\circ$ fixes a pinning of the Borel pair $(C_P(\phi|_I)^\circ, C_M(\phi|_I)^\circ)$, by replacing it by an unramified twist. We will denote the adjoint action of ${}^L\phi(\Fr)$ by $\sigma$. Note that these constructions are independent of the choice of $\Fr$.

\begin{proposition}
\label{reduction}
    The connected component $\Par_{G, \phi}$ is isomorphic to $\tilde{\BL}_{C_G^\circ (\phi|_I)\sigma}/C_G^1 (\phi|_I)$, where $C_G^1 (\phi|_I)$ is the open and closed subgroup of $C_G(\phi|_I)$ that stabilizes $C_G^\circ(\phi|_I)$ under the $\sigma$-twisted conjugation (equivalently saying, $\pi_0(C_G^1 (\phi|_I))=\pi_0(C_G (\phi|_I))^\sigma$).
    Similarly for $P$ and $M$. 
    Moreover we have $\Par_{G,\phi,0} \simeq C_G^\circ (\phi|_I)\sigma/C_G^1 (\phi|_I)$.
    
    More precisely, the isomorphism is induced by
    \[
    \tilde{\BL}_{C_G^\circ (\phi|_I)\sigma} \to \Par_{G,\phi},
    \quad (g\sigma, N) \mapsto (\chi_g\phi_0, N)
    \]
    where $\chi_g$ denotes the unipotent parameter $W_F\to 
    C_G^\circ (\phi|_I)\sigma$ sending $\Fr$ to $g$.
\end{proposition}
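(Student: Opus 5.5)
The plan is to go through the standard description of the component of $Z^1(W_F,G)$ containing $\phi$ in terms of the restriction to inertia, as in \cite{dat2025}. \textbf{First step.} Restriction to the (tame quotient of the) inertia $I$ is locally constant in families. Since $\phi|_I$ factors through a finite quotient on the wild part and the tame part is rigid up to conjugacy, every parameter in the $G$-orbit-component of $\phi$ has restriction to $I$ that is $G$-conjugate to $\phi|_I$. Concretely, I would invoke the result of \cite{dat2025} that the map $Z^1(W_F,G)\to Z^1(I,G)$ has image in the closed $G$-orbit of $\phi|_I$ on the connected component, and that this orbit is $G/C_G(\phi|_I)$. This gives
\[
\Par_{G,\phi}\simeq Z^1(W_F,G)_{\phi|_I}^{\phi}/C_G(\phi|_I),
\]
where $Z^1(W_F,G)_{\phi|_I}$ denotes cocycles restricting exactly to $\phi|_I$, and the superscript means the union of its components that map into our component.

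\textbf{Second step.} I describe the cocycles extending $\phi|_I$. Such a cocycle $\phi'$ is determined by ${}^L\phi'(\Fr)$, subject to normalizing $\phi|_I$ correctly. Writing ${}^L\phi'(\Fr)=g\,{}^L\phi(\Fr)$, the condition becomes $g\in C_G(\phi|_I)$. Adding the monodromy $N$, the compatibility condition forces $N$ to commute with $\phi(I)$, so $N\in \CN_{C_G^\circ(\phi|_I)}$. It also forces $\ad(g)\sigma(N)=qN$, where $\sigma=\ad({}^L\phi(\Fr))$. Thus the Weil--Deligne parameters extending $\phi|_I$ are exactly $\{(g\sigma,N): g\in C_G(\phi|_I)\}$, and they are acted on by $C_G(\phi|_I)$ via $\sigma$-twisted conjugation.

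\textbf{Third step.} I isolate the connected component and compare stabilizers. Here $C_G(\phi|_I)\sigma$ decomposes according to the cosets $C_G^\circ(\phi|_I)c\sigma$ for $c\in\pi_0$. The component containing $\phi$ (where $g=1$) is $C_G^\circ(\phi|_I)\sigma$, taken together with all cosets $C_G(\phi|_I)$-conjugate to it. A coset $C^\circ c\sigma$ is conjugate to $C^\circ\sigma$ through $h$ exactly when the twisted-conjugation orbit is met. So the stabilizer of the piece $C^\circ\sigma$ is the subgroup of $h$ with $hC^\circ\sigma(h)^{-1}=C^\circ$, i.e.\ $\pi_0$-class fixed by $\sigma$; this is $C_G^1(\phi|_I)$. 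The standard induction isomorphism $(H\times^{H^1}X)/H\simeq X/H^1$ then gives
\[
\Par_{G,\phi}\simeq \tilde{\BL}_{C_G^\circ(\phi|_I)\sigma}/C_G^1(\phi|_I),
\]
via the stated map $(g\sigma,N)\mapsto(\chi_g\phi_0,N)$. Dropping $N$ gives the statement for $\Par_{G,\phi,0}$, and the arguments for $P$ and $M$ are identical with $C_P,C_M$.

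\textbf{Main obstacle.} The hardest part is the first step: justifying, scheme-theoretically and not just on points, that $\phi|_I$ is constant up to conjugacy on the whole connected component, and that $G\times^{C_G(\phi|_I)}(\ldots)\to Z^1$ is an isomorphism onto an open and closed substack, including on non-reduced structure. I would try to import this directly from \cite{dat2025}, via their description of $Z^1(W_F,G)$ over $Z^1(I,G)$ as a disjoint union of such induced pieces, rather than reprove it.
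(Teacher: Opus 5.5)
Your proposal is correct and follows essentially the same route as the paper: $\phi|_I$ is constant up to $G$-conjugacy on the component (the paper simply invokes profiniteness of $I_F$), so one reduces to parameters restricting exactly to $\phi|_I$; these are identified with pairs $(g\sigma,N)$, $g\in C_G(\phi|_I)$, via $(\phi_0,N)\mapsto(\phi_0\phi,N)$ as in \cite{dat2025}; and the component through $\phi$ is cut out by $g\in C_G^\circ(\phi|_I)$. Your explicit stabilizer computation $h\,C_G^\circ(\phi|_I)\,\sigma(h)^{-1}=C_G^\circ(\phi|_I)$ and the induction isomorphism that produces the quotient by $C_G^1(\phi|_I)$ spell out a step the paper compresses into ``thus we conclude.''
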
 

\begin{proof}
    We view $\Par_G$ as the moduli space of Weil--Deligne L-parameters $\Par_G^{\text{WD}}$. 
    As $I_F$ is profinite, $(\phi_1, N_1)$, $(\phi_2, N_2)$ are in the same connected component only if $\phi_1|_I$ and $\phi_2|_I$ are $G$-conjugate. Thus we have
    \[
    \Par_{G,\phi} \subset G\times^{C_G(\phi|_I)}\{(\phi', N')\in \Par_G^{\text{WD}}\mid \phi'|_I=\phi|_I\}.
    \]
    We have an isomorphism of schemes (compare the arguments in \cite{dat2025}, Section 3.1)
    \[
    \{(\phi_0, N)\mid \phi_0\in Z^1_{\ad(\phi)}(W_F/I_F, C_G(\phi|_I)), 
    N \in \CN_{C_G^\circ(\phi|_I)}, \ad(\phi_0(w))N = q^{|w|}N\} \simeq
    \{(\phi', N')\in \Par_G^{\text{WD}}\mid \phi'|_I=\phi|_I\},
    \]
    via $(\phi_0, N) \mapsto (\phi_0\phi, N)$, where $\ad(\phi|_I)$ defines a $W_F/I_F$-action on $C_G(I_F)$. 

    Thus the connected component of $\{(\phi', N')\in \Par_G^{\text{WD}}\mid \phi'|_I=\phi|_I\}$ containing $\phi|_I$ is 
    \[
    \{(\phi_0, N)\mid \phi_0\in Z^1_{\ad(\phi)}(W_F/I_F, C_G^\circ(\phi|_I)), 
    N \in \CN_{C_G^\circ(\phi|_I)}, \ad(\phi_0(w))N = q^{|w|}N\},
    \]
    which is exactly $\tilde{\BL}_{C_G^\circ (\phi|_I)\sigma}$. Thus we conclude. The statement for $P$, $M$ and $\Par_{G,\phi,0}$, etc. follows similarly.
\end{proof}

From now on, we will denote by $M_\phi := C_M(\phi|_I)$,
$P_\phi:=C_P(\phi|_I)$, $P'_\phi:=C_{P'}(\phi|_I)$ and
$G_\phi:= C_G(\phi|_I)$ and similarly for the groups with superscripts $\circ$ and $1$. We have the following corollary.

\begin{corollary}
\label{to unipotent}
    The morphisms $\Par_{M,\phi}\leftarrow \Par_{P,\phi}\to \Par_{G,\phi}$ are isomorphic to 
    \[
    \tilde{\BL}_{M^\circ_\phi\sigma}/M_\phi^1 \leftarrow 
    \tilde{\BL}_{P^\circ_\phi\sigma}/P_\phi^1 \to
    \tilde{\BL}_{G^\circ_\phi\sigma}/G_\phi^1,
    \]
    where the morphisms are induced by the natural morphisms $M_\phi^1\leftarrow P^1_\phi\to G^1_\phi$ and morphisms between their connected components.

    Moreover, denote by
    \[
    \overline{q_{P_\phi^\circ}} \colon \tilde{\BL}_{P_\phi^\circ\sigma}/P^1_\phi\to \tilde{\BL}_{M_\phi^\circ\sigma}/M^1_\phi,
    \]
    \[
    \overline{p_{P_\phi^\circ}} \colon \tilde{\BL}_{P_\phi^\circ\sigma}/P^1_\phi\to \tilde{\BL}_{G_\phi^\circ\sigma}/M^1_\phi G_\phi^\circ,
    \]
    and the projection
    \[
    \pi_{\phi} \colon \tilde{\BL}_{G_\phi^\circ\sigma}/M^1_\phi G_\phi^\circ\to \tilde{\BL}_{G_\phi^\circ\sigma}/G^1_\phi.
    \]
    Then the Eisenstein operator $\Eis_P$
    \[
    \Eis_P = p_{P*}q^*_P \colon D\Coh(\Par_{M,\phi})\to D\Coh(\Par_{G, \phi})
    \]
    can be identified with $\pi_{\phi*}\circ\overline{\Eis_{P_\phi^\circ}}$,
    where
    $\overline{\Eis_{P_\phi^\circ}} := \overline{p_{P_\phi^\circ}} \circ \overline{q_{P_\phi^\circ}}$.
    Similarly, we have $\eis_P\simeq \pi_{\phi,0*}\circ\overline{\eis_{P_\phi^\circ}}$ where the operators are naturally defined.
\end{corollary}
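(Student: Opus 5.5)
The plan is to deduce the corollary directly from Proposition \ref{reduction} applied to $G$, $P$ and $M$, and then to rewrite $p_{P*}q_P^*$ by factoring the map of quotient stacks through an intermediate quotient.

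\textbf{Step 1: identifying the three stacks and the maps between them.} By Proposition \ref{reduction} we have
\[
\Par_{G,\phi}\simeq\tilde{\BL}_{G^\circ_\phi\sigma}/G^1_\phi, \quad \Par_{P,\phi}\simeq\tilde{\BL}_{P^\circ_\phi\sigma}/P^1_\phi, \quad \Par_{M,\phi}\simeq\tilde{\BL}_{M^\circ_\phi\sigma}/M^1_\phi.
\]
Each of these is induced by $(g\sigma,N)\mapsto(\chi_g\phi,N)$. Because this formula is functorial in the inclusion $P\to G$ and the projection $P\to M$, the maps $p_P$ and $q_P$ correspond to the maps induced by $P^1_\phi\to G^1_\phi$ and $P^1_\phi\to M^1_\phi$ together with the maps on neutral components.

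One point needs checking here. The element $\phi$ lies in $M$, and the unramified twist of Lemma \ref{pinning} is chosen simultaneously for $G$, $P$ and $M$, so the same $\sigma$ and the same base point are used in all three cases. It remains to verify that $P^\circ_\phi$ maps into $M^\circ_\phi$ and into $G^\circ_\phi$ compatibly with the twisted actions; this follows because centralizers of $\phi|_I$ are compatible with the Levi decomposition $P=M\ltimes U$.

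\textbf{Step 2: factoring the map $P^1_\phi\to G^1_\phi$.} I would factor this map through the subgroup $M^1_\phi G^\circ_\phi\subset G^1_\phi$. The first thing to check is that this is indeed a subgroup. It is, because $G^\circ_\phi$ is normal in $G^1_\phi$ and $M^1_\phi\subset G^1_\phi$. Next I need $P^1_\phi\subset M^1_\phi G^\circ_\phi$. Using $P_\phi=M_\phi\ltimes U_\phi$, and the fact that $U_\phi$ is connected (it is the centralizer of a reductive group acting on a unipotent group), we get $P^1_\phi=M^1_\phi U_\phi$ with $U_\phi\subset G^\circ_\phi$. The map $p_P$ therefore factors as $\pi_\phi\circ\overline{p_{P_\phi^\circ}}$, and this gives
\[
\Eis_P=p_{P*}q_P^*\simeq\pi_{\phi*}\,\overline{p_{P_\phi^\circ}}_*\,\overline{q_{P_\phi^\circ}}^*.
\]
The map $\pi_\phi$ is a finite étale cover with fibers $G^1_\phi/M^1_\phi G^\circ_\phi$, so the identification is clean.

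\textbf{Step 3: the same argument without monodromy.} Repeating Steps 1 and 2 with the monodromy part $N$ dropped, and using $\Par_{G,\phi,0}\simeq G^\circ_\phi\sigma/G^1_\phi$, gives the statement for $\eis_P$.

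\textbf{Expected main obstacle.} I expect the hardest step to be the equality $P^1_\phi=M^1_\phi U_\phi$. This requires that an element of $P_\phi$ which stabilizes $P^\circ_\phi\sigma$ has its $M$-component stabilizing $M^\circ_\phi\sigma$. I would prove this by projecting along $P\to M$: the $U_\phi$-part acts on $P^\circ_\phi\sigma$ within $U_\phi\subset P^\circ_\phi$, so stability of $P^\circ_\phi\sigma$ is equivalent to stability of its image $M^\circ_\phi\sigma$.
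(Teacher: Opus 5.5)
Your proposal is correct and follows the paper's route. The paper treats this corollary as an immediate consequence of Proposition \ref{reduction}, applied to $G$, $P$, $M$ with the same twist $\sigma$, together with the factorization of $p_P$ through the quotient by $M^1_\phi G^\circ_\phi$. Your check that $P^1_\phi = M^1_\phi\ltimes U_\phi\subset M^1_\phi G^\circ_\phi$, using that $U_\phi$ is connected in characteristic $0$, supplies the step the paper leaves implicit and agrees with its next lemma $\pi_0(M^1_\phi)\simeq\pi_0(P^1_\phi)$.
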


As $M_\phi^\circ$ is a torus, $\tilde{\BL}_{M_\phi^\circ\sigma}=M_\phi^\circ\sigma$.
In particular, $\Par_{M,\phi,0}=\Par_{M,\phi}$ and the source of $\Eis_P$ and $\eis_P$ are identical.

Next we analyze the structure of the groups $M_\phi$, $P_\phi$ and $G_\phi$.

\begin{lemma}
    The maps between connected components satisfy that  $\pi_0(M^1_\phi)\simeq \pi_0(P^1_\phi)$ and $\pi_0(P^1_\phi)\to \pi_0(G^1_\phi)$ is injective.
\end{lemma}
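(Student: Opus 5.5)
We need to show two things: $\pi_0(M^1_\phi)\simeq\pi_0(P^1_\phi)$, and $\pi_0(P^1_\phi)\to\pi_0(G^1_\phi)$ is injective. Here $M_\phi=C_M(\phi|_I)$, $P_\phi=C_P(\phi|_I)$, $G_\phi=C_G(\phi|_I)$, and the superscript $1$ singles out the components stable under $\sigma$-twisted conjugation.

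\textbf{First claim.} I would use the Levi decomposition $P=M\ltimes U_P$. Since $I$ acts through $\phi$ semisimply (finite image modulo a pro-$p$ part, so it acts through a finite quotient), taking invariants is exact for the unipotent group $U_P$, and its fixed group is connected. This gives $P_\phi=M_\phi\ltimes C_{U_P}(\phi|_I)$, with $C_{U_P}(\phi|_I)$ a connected unipotent group. Hence the projection $P_\phi\to M_\phi$ has connected unipotent kernel and a section, and it induces an isomorphism $\pi_0(P_\phi)\simeq\pi_0(M_\phi)$. This isomorphism is $\sigma$-equivariant, because $P$ and $M$ are $W_F$-stable and the projection commutes with $\ad({}^L\phi(\Fr))$. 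Taking $\sigma$-fixed components on both sides, via the characterization $\pi_0(H^1)=\pi_0(H)^\sigma$ from Proposition \ref{reduction}, gives $\pi_0(M^1_\phi)\simeq\pi_0(P^1_\phi)$.

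\textbf{Second claim.} It suffices to show $P_\phi\cap G_\phi^\circ=P_\phi^\circ$, since then $\pi_0(P_\phi)\to\pi_0(G_\phi)$ is injective, and restricting to $\sigma$-fixed components preserves injectivity. Let $\lambda$ be a cocharacter of $Z(M)^\circ$ with $P=P_G(\lambda)$. Then $P_\phi=P_{G_\phi}(\lambda)$, provided $\lambda$ can be taken to centralize $\phi(I)$.
\begin{itemize}
\item The inertia image commutes with $Z(M)^\circ$ because $\phi$ factors through ${}^LM$ and $I$ acts on $Z(M)^\circ$ through a finite group.
\item Replacing $\lambda$ by a suitable multiple or average, it can be chosen $I$-fixed and still define $P$, since $P$ is $W_F$-stable.
\end{itemize}
So $\lambda$ lands in $Z(M_\phi)^\circ\subset G_\phi^\circ$. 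Now take $g\in P_\phi\cap G_\phi^\circ$. Then $\lim_{t\to0}\lambda(t)g\lambda(t)^{-1}$ exists in $G^\circ_\phi$, so $g$ lies in the parabolic subgroup $P_{G_\phi^\circ}(\lambda)$ of the connected group $G^\circ_\phi$. Parabolic subgroups of connected reductive groups are connected, so $P_{G_\phi^\circ}(\lambda)$ is connected. It is contained in $P_\phi$, and therefore $g\in P_\phi^\circ$.

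\textbf{Main obstacle.} The delicate step is producing the cocharacter $\lambda$: it must simultaneously define $P$, centralize $\phi(I)$, and lie in the identity component $G_\phi^\circ$. If the averaging argument over the finite image of $I$ on $X_*(Z(M)^\circ)$ fails, the fallback is the earlier lemma that $C_P^\circ(\phi|_I)$ is a parabolic subgroup of $G^\circ_\phi$ with Levi $M^\circ_\phi$. I would then work with the dynamic description of $P_{G_\phi^\circ}$ relative to the torus $M^\circ_\phi$, choosing a cocharacter of $Z(M_\phi^\circ)$ positive on the roots of $C_P^\circ(\phi|_I)$.
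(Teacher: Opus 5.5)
Your proof is correct. The first claim follows the paper's argument, made explicit. The second claim takes a different route.

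\textbf{First claim.} The paper only says that $M$ is a deformation retract of $P$. You write out $P_\phi=M_\phi\ltimes U_P^{\phi(I)}$ and use that the fixed points of a finite group acting on a unipotent group are connected in characteristic $0$. Note that the decomposition comes from uniqueness of the factorization $p=mu$ together with $\mathrm{Ad}({}^L\phi(I))$-stability of $M$ and $U_P$, not from any exactness of invariants.

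\textbf{Second claim.} The paper invokes the preceding lemma that $P_\phi^\circ$ is a Borel (in particular parabolic) subgroup of $G_\phi^\circ$. It then argues that $P_\phi\cap G_\phi^\circ$ normalizes the characteristic subgroup $P_\phi^\circ$, so it lies in $N_{G_\phi^\circ}(P_\phi^\circ)=P_\phi^\circ$. You instead identify $P\cap G_\phi^\circ$ with $P_{G_\phi^\circ}(\lambda)$ for an $I$-fixed cocharacter $\lambda$ of $Z(M)^\circ$ defining $P$, and use that this group is connected.

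The trade-off is as follows. The paper's route is a one-liner once the preceding lemma is available. That lemma uses ellipticity of $\phi$ and only asserts, without detailed proof, that $C_P^\circ(\phi|_I)$ is parabolic. Your route needs neither ellipticity nor that lemma, works for any $W_F$-stable $P$ and any $\phi$ with finite inertia image, and proves the parabolicity of $P_\phi^\circ$ along the way.

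\textbf{A correction to your first bullet.} As stated it is false. For $w\in I$, ${}^L\phi(w)=\phi(w)\rtimes w$ acts on $Z(M)^\circ$ through the Galois action of $w$; the factor $\phi(w)\in M$ acts trivially. That Galois action is nontrivial whenever $I$ acts nontrivially on $Z(M)^\circ$, so only $I$-fixed cocharacters commute with ${}^L\phi(I)$.

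Your second bullet is what actually does the work, and it succeeds. The group $I$ acts on $X_*(Z(M)^\circ)$ through a finite quotient $\bar I$. Each $\gamma\lambda$ satisfies $P_G(\gamma\lambda)=\gamma(P)=P$ and $Z_G(\gamma\lambda)=M$. Hence $\sum_{\gamma\in\bar I}\gamma\lambda$ is still strictly positive on the weights of $Z(M)^\circ$ in $\mathfrak{n}_P$ and strictly negative on those in $\bar{\mathfrak{n}}_P$, so it defines $P$. Your ``main obstacle'' is therefore resolved, and the fallback is not needed.
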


\begin{proof}
    The first argument follows from the fact that $M$ is a deformation retract of $P$. For the second argument, it suffices to prove that
    $\pi_0(P_\phi)\to \pi_0(G_\phi)$ is injective.
    Note that $P_\phi$ normalizes $P_\phi$ in $G_\phi$. Thus we have that $P_\phi\cap G_\phi^\circ$ normalizes $P_\phi^\circ$ in $G_\phi^\circ$. As $P^\circ_\phi$ is a Borel subgroup of $G_\phi^\circ$, the normalizer of $P_\phi^\circ$ in $G_\phi^\circ$ is itself. Thus $P_\phi\cap G_\phi^\circ = P_\phi^\circ$ and we conclude.
\end{proof}

To analyze the structure of $G_\phi^1$, we define the following notion of ``normalizer''.

\begin{definition}[Normalizer of $(M, \phi)$]
\label{ngmphi}
    Let $N_G(M,\phi)$ be the subgroup of $G$ defined as 
    \[\{g \in N_G({}^LM)\mid \ad(g)\phi \in X_{M_\phi^\circ}\phi\}.\]
    Here $X_{M_\phi^\circ}\phi$, viewed as a subscheme of $Z^1(W_F,M)$ denotes the unramified twists of $\phi$ by characters $W_F/I\to M_\phi^\circ$.
\end{definition}

By definition we have 
$N_G(M, \phi)\cap M = M^1_\phi$ and $N_G(M, \phi)\subset G^1_\phi$.
In particular, $M_\phi^1$ is a normal subgroup of $N_G(M,\phi)$ of finite index.
We have another interpretation of the normalizer of $(M,\phi)$.

\begin{lemma}
\label{another ngmphi}
    The subgroup $N_G(M,\phi)$ in $G$ can be identified with the subgroup
    \[
    N_{G_\phi^1}(M_\phi^\circ\sigma)=\{g \in G\mid \ad(g)(X_{M_\phi^\circ}\phi)\subset X_{M_\phi^\circ}\phi\} \subset G.
    \]
    Note that these two groups are equal under the identification
    \[
    M_\phi^\circ\sigma \simto X_{M_\phi^\circ}\phi,
    \quad m\sigma \mapsto \chi_m\phi,
    \]
    where $\chi_m$ is the unipotent parameter $W_F\to M_\phi^\circ$ sending $\Fr$ to $m$. 
\end{lemma}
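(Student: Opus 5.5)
We have to show that the two subgroups of $G$ coincide:
\[
N_G(M,\phi)=\{g \in N_G({}^LM)\mid \ad(g)\phi \in X_{M_\phi^\circ}\phi\}
\quad\text{and}\quad
N_{G_\phi^1}(M_\phi^\circ\sigma)=\{g\in G\mid \ad(g)(X_{M_\phi^\circ}\phi)\subset X_{M_\phi^\circ}\phi\}.
\]
The plan is to transport everything through the identification $m\sigma\mapsto \chi_m\phi$ of Proposition \ref{reduction}, and then prove the two inclusions separately.

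\emph{Step 1 (the identification).} First we check that $m\sigma\mapsto\chi_m\phi$ is an isomorphism $M_\phi^\circ\sigma\simto X_{M_\phi^\circ}\phi$. Since $M_\phi^\circ$ centralizes $\phi|_I$, the cocycle $\chi_m\phi$ agrees with $\phi$ on $I$ and sends $\Fr$ to $m\,{}^L\phi(\Fr)$. This is the restriction of the isomorphism in Proposition \ref{reduction} to $M$.

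Next we compare the two actions of $g\in G_\phi^1$. By definition $g$ commutes with $\phi|_I$, so $\ad(g)(\chi_m\phi)$ is again trivial-on-$I$ times $\phi$. Its value on $\Fr$ is $g m\,{}^L\phi(\Fr) g^{-1}=gm\sigma(g)^{-1}{}^L\phi(\Fr)$. Hence the conjugation action of $g$ on parameters corresponds exactly to the $\sigma$-twisted conjugation $m\sigma\mapsto g(m\sigma)g^{-1}$ on $G_\phi^\circ\sigma$. It follows that, for $g\in G^1_\phi$, the condition $\ad(g)(X_{M_\phi^\circ}\phi)\subset X_{M_\phi^\circ}\phi$ is equivalent to $g(M_\phi^\circ\sigma)g^{-1}\subset M_\phi^\circ\sigma$. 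This justifies writing the right-hand group as $N_{G_\phi^1}(M_\phi^\circ\sigma)$.

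\emph{Step 2 (the inclusion $\subset$).} Let $g\in N_G(M,\phi)$, so that $g\in N_G({}^LM)$ and $\ad(g)\phi=\chi_{m_0}\phi$ for some $m_0\in M_\phi^\circ$. Restricting to $I$ shows that $g$ centralizes $\phi|_I$, so $g\in G_\phi$. Since $g$ normalizes $M$, it normalizes $M_\phi=C_M(\phi|_I)$ and therefore also $M_\phi^\circ$. For any $m\in M_\phi^\circ$ we then get
\[
\ad(g)(\chi_m\phi)=\chi_{gmg^{-1}}\,\ad(g)\phi=\chi_{gmg^{-1}m_0}\phi\in X_{M_\phi^\circ}\phi,
\]
where we use that $M_\phi^\circ$ is a torus, so the order of the factors is harmless. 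By Step 1, $g$ stabilizes $M_\phi^\circ\sigma$. Stabilizing this subset of $G^\circ_\phi\sigma$ in particular gives $g\in G^1_\phi$ (equivalently, $\pi_0$ of $g$ is $\sigma$-fixed).

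\emph{Step 3 (the inclusion $\supset$), the main obstacle.} Let $g\in G$ with $\ad(g)(X_{M_\phi^\circ}\phi)\subset X_{M_\phi^\circ}\phi$. Then $\ad(g)\phi\in X_{M_\phi^\circ}\phi$ immediately, and restricting to $I$ shows $g\in G_\phi$. What remains is to show $g\in N_G({}^LM)$, i.e.\ that $g$ normalizes $M$ and commutes with the $W_F$-action in the L-group sense.

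The approach is to recover $M$ from the family of twists. Set $A:=(Z(M)^{W_F})^\circ$, a maximal torus of the center. Because $\phi$ is elliptic in $M$, the centralizer in $G$ of a generic unramified twist $\chi_m\phi$, or more precisely the centralizer of the whole family $X_{M_\phi^\circ}\phi$, should have identity component determined by $A$. Consequently $M=C_G(A)$ is characterized intrinsically as the centralizer of the maximal central torus of the common centralizer of the parameters in $X_{M_\phi^\circ}\phi$.

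Since $g$ permutes this family, $g$ normalizes the common centralizer, hence its maximal central torus, hence $C_G$ of that torus, which is $M$. Finally, $g$ conjugates a parameter $\phi$ with image in ${}^LM$ to another parameter with image in ${}^LM$, and this yields $g\in N_G({}^LM)$.

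I expect this characterization of $M$ to be the hardest point. To carry it out, I would first reduce via Lemma \ref{pinning} and the fact that $M_\phi^\circ$ is a maximal torus of $G_\phi^\circ$. The key observation is that a generic point $m\sigma$ of $M_\phi^\circ\sigma$ is regular semisimple in $G_\phi^\circ\sigma$, so its twisted centralizer is $(M_\phi^\circ)^\sigma$ up to finite groups. Combined with ellipticity of $\phi$, this should pin down $A$ and complete the argument.
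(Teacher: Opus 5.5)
Your proposal is correct and follows essentially the paper's route. The inclusion $N_G(M,\phi)\subset N_{G_\phi^1}(M_\phi^\circ\sigma)$ is a direct check. For the converse you show that $g$ normalizes $A=(Z(M)^{W_F})^\circ$ and then use $C(A)=M$ (resp.\ $C_{{}^LG}(A)={}^LM$).

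The only difference is where $A$ comes from. You take the identity component of the common centralizer of the family $X_{M_\phi^\circ}\phi$; this works because $C_{G_\phi^\circ}(M_\phi^\circ)=M_\phi^\circ$ and ellipticity gives $((M_\phi^\circ)^\sigma)^\circ=A$. You should state explicitly that $\ad(g)(X_{M_\phi^\circ}\phi)=X_{M_\phi^\circ}\phi$, which follows by dimension. The paper instead takes the connected center of the group $M_\phi^\circ\,\im({}^L\phi)$ generated by the family.
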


We remark that the condition $\ad(g)(X_{M_\phi^\circ}\phi)\subset X_{M_\phi^\circ}\phi$ is equivalent to $\ad(g)(X_{M_\phi^\circ}\phi)= X_{M_\phi^\circ}\phi$, as $\ad(g)$ defines an automorphism of $X_{G_\phi^\circ} \phi$.

\begin{proof}
    For any $g \in N_G(M,\phi)$ and any $\phi'\in X_{M_\phi^\circ}\phi$, the image of $\ad(g)(\phi')$ is in $M$. Moreover, $\ad(g)\phi|_I=\phi|_I$ and thus $\ad(g)(X_{M_\phi^\circ}\phi)\subset X_{M_\phi}\phi$. Thus $\ad(g)(X_{M_\phi^\circ}\phi)\subset X_{M_\phi^\circ}\phi$ by connectivity. Hence $N_G(M,\phi)\subset N_{G_\phi^1}(M_\phi^\circ\sigma)$.

    It remains to prove that $N_{G_\phi^1}(M_\phi^\circ\sigma)\subset N_G(M,\phi)$. For $g \in N_{G_\phi^1}(M_\phi^\circ\sigma)$, the requirement $\ad(g)\phi\in X_{M_\phi^\circ}\phi$ is trivial. 
    Then we prove the statement $g \in N_G({}^LM)$. By definition $g$ normalizes the subgroup
    \[
    M_\phi^\circ\im({}^L\phi) \subset {}^LG.
    \]
    This subgroup is generated by $\im({}^L(\phi|_I))$, $\sigma={}^L\phi(\Fr)$ and $M_\phi^\circ$. We claim that its center is $(M_\phi^\circ)^\sigma$. Indeed, we have an exact sequence
    \[
    1 \to M_\phi^\circ \to M_\phi^\circ\im({}^L\phi)\to W_F \to 1
    \]
    and the center of $W_F$ is trivial. Then the center of $M_\phi^\circ\im({}^L\phi)$ is contained in $M_\phi^\circ$ and then in $M_\phi^\circ$ as it centralizes ${}^L\phi(\Fr)$.
    
    Hence $g$ normalizes the connected component $(M_\phi^\sigma)^\circ$ of the center. As $\phi \in Z^1(W_F, M)$ is elliptic, we have
    \[
    (M_\phi^\sigma)^\circ = (Z(M)^{W_F})^\circ.
    \]
    Thus $g$ normalizes the centralizer
    \[
    C_{{}^LG}((Z(M)^{W_F})^\circ) \simeq {}^LM
    \]
    where this identity follows from \cite{Borel}, Lemma 3.5 and we conclude.
\end{proof}

\begin{lemma}[Structure of $G_\phi^1$]
\label{gphi1}
    The group $G_\phi^1$ can be written as the product $G_\phi^\circ N_G(M,\phi)$.
\end{lemma}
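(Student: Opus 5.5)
The plan is to prove both inclusions $N_G(M,\phi)\subset G_\phi^1$ (already noted after Definition \ref{ngmphi}), hence $G_\phi^\circ N_G(M,\phi)\subset G_\phi^1$, and the reverse, via a Frattini-type argument. The reverse inclusion is the real content.

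Let $g\in G_\phi^1$. By Lemma \ref{another ngmphi} it suffices to find $h\in G_\phi^\circ$ with $hg\in N_{G_\phi^1}(M_\phi^\circ\sigma)$, i.e.\ with $hg$ preserving $M_\phi^\circ\sigma$ under twisted conjugation. Since $g$ normalizes $G_\phi^\circ$ and, by definition of $G_\phi^1$, stabilizes $G_\phi^\circ\sigma$ under twisted conjugation, $\ad(g)$ sends the Borel pair $(P_\phi^\circ,M_\phi^\circ)$ of $G_\phi^\circ$ to another Borel pair $(gP_\phi^\circ g^{-1}, gM_\phi^\circ g^{-1})$. Moreover it sends the $\sigma$-stable pair to a pair stable under the twisted action $\ad(g\sigma g^{-1})=\ad(g\sigma(g)^{-1})\circ\sigma$, where $g\sigma(g)^{-1}\in G_\phi^\circ$.

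The key step, which I expect to be the main obstacle, is to reduce the element $g\sigma g^{-1}\in G_\phi^\circ\sigma$ back into $M_\phi^\circ\sigma$ while simultaneously conjugating the Borel pair back. The statement to use is the twisted analog of ``all $\sigma$-stable Borel pairs are conjugate under $(G_\phi^\circ)^\sigma$'': for $x\sigma\in G_\phi^\circ\sigma$ quasi-semisimple (normalizing a Borel pair), there is $h\in G_\phi^\circ$ conjugating the pair $x\sigma$ stabilizes to $(P_\phi^\circ,M_\phi^\circ)$ and moving $x\sigma$ into $M_\phi^\circ\sigma$. This is Steinberg's theorem on quasi-semisimple automorphisms (\cite{steinberg1968endomorphisms}); equivalently one can use that every element of $G\sigma$ normalizing a Borel pair $(B,T)$ lies in $T\sigma$ up to $B$-conjugation, as in the proof that $B\sigma/B\to G\sigma/G$ is surjective in Section 2. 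Concretely: take $h_1\in G_\phi^\circ$ with $h_1g$ fixing $P_\phi^\circ$ and $M_\phi^\circ$ (conjugacy of Borel pairs in $G_\phi^\circ$). Then $h_1g\sigma(h_1g)^{-1}$ normalizes $P_\phi^\circ$ and $M_\phi^\circ$ inside $G_\phi^\circ$, hence lies in $N_{G_\phi^\circ}(P_\phi^\circ)\cap N_{G_\phi^\circ}(M_\phi^\circ)=M_\phi^\circ$. So $h_1g$ conjugates $M_\phi^\circ\sigma$ into $M_\phi^\circ\, h_1g\sigma(h_1g)^{-1}\sigma\subset M_\phi^\circ\sigma$.

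Thus $h_1g\in N_{G_\phi^1}(M_\phi^\circ\sigma)=N_G(M,\phi)$, and $g=h_1^{-1}(h_1g)\in G_\phi^\circ N_G(M,\phi)$. I still need to check that $h_1g$ stays in $G_\phi^1$ and preserves the whole coset under twisted conjugation: $(h_1g)m\sigma(h_1g)^{-1}=(h_1g)m(h_1g)^{-1}\cdot (h_1g)\sigma(h_1g)^{-1}$, and both factors lie in $M_\phi^\circ$. Finally, since $G_\phi^\circ$ is normal in $G_\phi^1$, the product $G_\phi^\circ N_G(M,\phi)$ is a subgroup, giving the equality.
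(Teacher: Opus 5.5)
Your argument is correct, and it reaches the conclusion by a somewhat different route from the paper.

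\textbf{The paper's route.} The paper works in the non-connected group $G_\phi^\circ\rtimes\langle\sigma\rangle$. It observes that $\ad(g)$ carries the minimal Levi subgroup $M_\phi^\circ\rtimes\langle\sigma\rangle$ (Levi in the sense of Borel, 3.4) to another minimal Levi subgroup. It then invokes the $G_\phi^\circ$-conjugacy of minimal Levi subgroups to find $g_0\in G_\phi^\circ$ with $\ad(g_0g)$ preserving $M_\phi^\circ\rtimes\langle\sigma\rangle$, and concludes by Lemma \ref{another ngmphi}.

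\textbf{Your route.} You stay inside the connected group $G_\phi^\circ$. Conjugacy of Borel pairs gives $n=h_1g$ normalizing $(P_\phi^\circ,M_\phi^\circ)$. Since $\sigma$ stabilizes this pair and $n\in G_\phi^1$, the element $n\sigma(n)^{-1}$ lies in $G_\phi^\circ$ and normalizes both subgroups. Hence it lies in $N_{G_\phi^\circ}(P_\phi^\circ)\cap N_{G_\phi^\circ}(M_\phi^\circ)=M_\phi^\circ$, which is exactly twisted stability of $M_\phi^\circ\sigma$.

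\textbf{Comparison.}
\begin{itemize}
\item Your version uses only standard facts about connected reductive groups: conjugacy of Borel pairs, $N(B)=B$, and $N_B(T)=T$. It avoids Borel's Levi theory for non-connected groups, including the check that $M_\phi^\circ\rtimes\langle\sigma\rangle$ is minimal.
\item It also yields more. Every component of $G_\phi^1$ contains an element of $N_G(M,\phi)$ that additionally normalizes $P_\phi^\circ$. This is precisely the $P_\phi^\circ$-stable section that Proposition \ref{semidirect} later constructs with an extra $W(G_\phi^\circ)^\sigma$-correction.
\item The paper's version is shorter, given its black box.
\end{itemize}

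\textbf{Two presentational remarks.}
\begin{itemize}
\item The ``key step'' detour through Steinberg's quasi-semisimple theorem is unnecessary. Your concrete argument never uses it, so it should simply be dropped.
\item Cite explicitly that $\sigma(P_\phi^\circ)=P_\phi^\circ$ and $\sigma(M_\phi^\circ)=M_\phi^\circ$, from the pinning normalization fixed before Proposition~3.1.3. This is what makes $\sigma(n)$ normalize the pair.
\end{itemize}
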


\begin{proof}
    Consider the semidirect product $G_\phi^0\rtimes \langle \sigma \rangle$.
    For an element $g\in G_\phi^1$, $\ad(g)$ induces an automorphism of $G_\phi^\circ\rtimes \langle \sigma \rangle$. Thus in this group it sends the minimal Levi subgroup $M_\phi^\circ\rtimes \langle \sigma \rangle$ to a minimal Levi subgroup. Here the Levi subgroups are in the sense of \cite{Borel}, 3.4. Hence there exists $g_0 \in G_\phi^\circ$ satisfying 
    \[
    \ad(g_0g)(M_\phi^\circ\rtimes \langle \sigma \rangle)\subset M_\phi^\circ\rtimes \langle \sigma \rangle 
    \]
    and then $g_0g\in N_G(M,\phi)$ by Lemma \ref{another ngmphi}.
\end{proof}

\begin{proposition}[Structure of $N_G(M,\phi)/M_\phi^1$]
\label{semidirect}
    $N_G(M,\phi)/M_\phi^\circ$ can be expressed as a semidirect product
    \[
    N_G(M,\phi)/M_\phi^\circ \simeq 
    \pi_0(G^1_\phi) \ltimes 
    N_{G_\phi^\circ}(M^\circ_\phi\sigma)/M_\phi^\circ,
    \]
    such that the elements in $\pi_0(G_\phi^1)$ stabilize $P_\phi^\circ$.
    Note that the second component is isomorphic to $W(G_\phi^\circ)^\sigma$, the $\sigma$-fixed points of the Weyl group of $G^\circ_\phi$.

    In particular, we have
    \[
    N_G(M,\phi)/M_\phi^1 \simeq \pi_0(G^1_\phi/M^1_\phi)\ltimes W(G_\phi^\circ)^\sigma ,
    \]
\end{proposition}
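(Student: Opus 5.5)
The plan is to realize $N_G(M,\phi)/M_\phi^\circ$ as an extension of a finite group by the twisted Weyl group, and then split it using the stabilizer of the Borel pair $(P_\phi^\circ, M_\phi^\circ)$.

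Set $N:=N_G(M,\phi)$. By Lemma \ref{another ngmphi}, $N=N_{G_\phi^1}(M_\phi^\circ\sigma)$, and by Lemma \ref{gphi1}, $G_\phi^1=G_\phi^\circ N$. Hence the map $N\to \pi_0(G_\phi^1)=G_\phi^1/G_\phi^\circ$ is surjective. Its kernel is $N\cap G_\phi^\circ=N_{G_\phi^\circ}(M_\phi^\circ\sigma)$, and this group contains $M_\phi^\circ$. Passing to the quotient by $M_\phi^\circ$ gives an exact sequence
\[
1\to N_{G_\phi^\circ}(M_\phi^\circ\sigma)/M_\phi^\circ\to N/M_\phi^\circ\to \pi_0(G_\phi^1)\to 1 .
\]
By the proposition on the root system for $\sigma$ (Xiao--Zhu Lemma 5.1.1), applied to the pair $(G_\phi^\circ,\sigma)$ with maximal torus $M_\phi^\circ$ (a torus by the lemma above), the kernel is identified with $W(G_\phi^\circ)^\sigma$.

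To split the sequence, consider the subgroup $S:=\{g\in N \mid \ad(g)P_\phi^\circ=P_\phi^\circ\}$. First, $S$ surjects onto $\pi_0(G_\phi^1)$. Indeed, take any $g\in N$. Then $\ad(g)P_\phi^\circ$ is a Borel subgroup of $G_\phi^\circ$ containing $M_\phi^\circ$. It is also $\sigma$-stable, because $g$ normalizes $M_\phi^\circ\sigma$, so $\sigma(g)g^{-1}\in M_\phi^\circ$. The $\sigma$-stable Borels containing the torus correspond to Weyl chambers of the root system $(X^*(M^\circ_{\phi,\sigma}),\{\alpha_O\})$, on which $W(G_\phi^\circ)^\sigma$ acts simply transitively. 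So there is $w\in N_{G_\phi^\circ}(M_\phi^\circ\sigma)$ with $wg\in S$.

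Second, $S\cap N_{G_\phi^\circ}(M_\phi^\circ\sigma)=N_{P_\phi^\circ}(M_\phi^\circ)\cap N=M_\phi^\circ$. This holds because the normalizer of a maximal torus inside a Borel subgroup is the torus itself. Therefore $S/M_\phi^\circ\cong\pi_0(G_\phi^1)$ is a complement to the normal subgroup $W(G_\phi^\circ)^\sigma$, and its elements stabilize $P_\phi^\circ$ by construction. This yields the stated semidirect product decomposition.

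For the final formula, quotient by the normal subgroup $M_\phi^1/M_\phi^\circ=\pi_0(M_\phi^1)$. The step to check is that $\pi_0(M_\phi^1)$ lies inside the complement $S/M_\phi^\circ$. Since $M_\phi^1\subset M\subset P$, it normalizes $P_\phi=C_P(\phi|_I)$ and hence $P_\phi^\circ$; it also normalizes $M_\phi^\circ\sigma$. So $M_\phi^1\subset S$. By the lemma above, $\pi_0(M_\phi^1)\cong\pi_0(P_\phi^1)\hookrightarrow\pi_0(G_\phi^1)$, so $\pi_0(M_\phi^1)$ is identified with its image in $\pi_0(G_\phi^1)$. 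Quotienting both sides then gives
\[
N/M_\phi^1\cong \pi_0(G_\phi^1)/\pi_0(M_\phi^1)\ltimes W(G_\phi^\circ)^\sigma=\pi_0(G^1_\phi/M^1_\phi)\ltimes W(G_\phi^\circ)^\sigma .
\]

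The main obstacle is the simple transitivity of $W(G_\phi^\circ)^\sigma$ on $\sigma$-stable Borels containing $M_\phi^\circ$ when $\sigma$ is twisted, and the check that $\ad(g)P_\phi^\circ$ is $\sigma$-stable. I would handle this using the quasi-split group $(G_\phi^\circ)^\sigma$ and Steinberg's theorem, or directly through the $\sigma$-orbit root system of Xiao--Zhu.
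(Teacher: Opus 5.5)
Your proposal is correct and follows essentially the same route as the paper. Both use Lemmas \ref{another ngmphi} and \ref{gphi1} to present $N_G(M,\phi)/M_\phi^\circ$ as an extension of $\pi_0(G_\phi^1)$ by $N_{G_\phi^\circ}(M_\phi^\circ\sigma)/M_\phi^\circ\simeq W(G_\phi^\circ)^\sigma$, and both split it by choosing in each coset the unique element that stabilizes $P_\phi^\circ$. Your subgroup $S/M_\phi^\circ$ is exactly the image of the paper's section, and defining it as a stabilizer gives the homomorphism property for free instead of by an appeal to uniqueness.

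Your check that $M_\phi^1\subset S$ justifies the ``in particular'' quotient, which the paper leaves implicit. The step you flag as the main obstacle is in fact elementary. Since $\sigma$ fixes $P_\phi^\circ$, one has $\sigma(\dot wP_\phi^\circ\dot w^{-1})=\sigma(\dot w)P_\phi^\circ\sigma(\dot w)^{-1}$. The full Weyl group acts simply transitively on Borel subgroups containing $M_\phi^\circ$, so the $\sigma$-stable ones are exactly the $\dot wP_\phi^\circ\dot w^{-1}$ with $w\in W(G_\phi^\circ)^\sigma$.
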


\begin{proof}
    As $N_G(M, \phi)= N_{G_\phi^1}(M_\phi^\circ\sigma)$ by Lemma \ref{another ngmphi}, we have
    \[
    (N_G(M, \phi)\cap G_\phi^\circ)/M_\phi^\circ\simeq N_{G_\phi^\circ}(M_\phi^\circ\sigma)/M_\phi^\circ \simeq W(G_\phi^\circ)^\sigma.
    \]
    By Lemma \ref{gphi1}, $N_G(M, \phi)/M_\phi^\circ$ surjects to $\pi_0(G_\phi^1)$. Thus it suffices to find a section as a group homomorphism of this surjection satisfying the condition about $P_\phi^\circ$.
    
    For $\gamma \in N_G(M,\phi)/M_\phi^\circ$, choose arbitrarily a preimage $\tilde{\gamma}\in N_G(M,\phi)$. Then the torus $M_\phi^\circ$ is stable under the $\sigma$-twisted $\tilde{\gamma}$-conjugation on $G_\phi^\circ$ and thus $\tilde{\gamma}(P^\circ_\phi)\sigma(\tilde{\gamma})^{-1}$ is a $\sigma$-stable standard Borel subgroup. This Borel subgroup is independent of the choice of $\tilde{\gamma}$ and we will denote it simply by $\gamma(P^\circ_\phi)\sigma(\gamma)^{-1}$.
    
    There exists a unique element $\gamma_0$ in $N_{G_\phi^\circ}(M^\circ_\phi\sigma)/M_\phi^\circ\simeq W(G_\phi^\circ)^\sigma$ such that $\gamma_0\gamma P^\circ_\phi\sigma(\gamma_0\gamma)^{-1} = P^\circ_\phi$. Thus $\gamma_1:=\gamma_0^{-1}\gamma$ is the unique element in the coset $\gamma W(G_\phi^\circ)^\sigma$ such that $P_\phi^\circ$ is stable under the $\sigma$-twisted $\gamma_1$-conjugation. Thus we obtain a section 
    \[
    \pi_0(G^1_\phi) 
    \to N_G(M,\phi)/M_\phi^\circ, \quad \gamma W(G_\phi^\circ)^\sigma \mapsto \gamma_1.\] 
    and the section is a group homomorphism by the uniqueness above.
\end{proof}

\begin{lemma}
\label{generical free}
    The morphism $\tilde{\BL}_{M_\phi^\circ\sigma}/M_\phi^1\to \tilde{\BL}_{M_\phi^\circ\sigma}\sslash M_\phi^1$ is a homeomorphism.
    Moreover, the $N_G(M,\phi)/M_\phi^1$-action on $\tilde{\BL}_{M_\phi^\circ\sigma}\sslash M_\phi^1$ is free over a dense open subset of it.
\end{lemma}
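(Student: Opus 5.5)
Since $M_\phi^\circ$ is a torus, $\tilde{\BL}_{M_\phi^\circ\sigma}=M_\phi^\circ\sigma$ and $M_\phi^1$ acts by $\sigma$-twisted conjugation. The plan is to describe the $M_\phi^1$-orbits explicitly and compare them with the points of the GIT quotient. First restrict to the neutral component $M_\phi^\circ$. Its action $m\cdot t\sigma = m t\sigma(m)^{-1}\sigma$ is by translation through the image of the homomorphism $1-\sigma\colon M_\phi^\circ\to M_\phi^\circ$. That image is a subtorus, hence closed. By Lemma \ref{splitting}, $M_\phi^\circ\sigma/M_\phi^\circ\simeq (M_\phi^\circ)_\sigma\times */(M_\phi^\circ)^\sigma$, where $(M_\phi^\circ)_\sigma = M_\phi^\circ\sslash M_\phi^\circ$ is a torus. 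Every orbit is therefore a closed coset of the same dimension, and the stack-to-coarse map is a gerbe, hence a homeomorphism.

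Next I handle the finite group $\pi_0(M_\phi^1)$, which acts on the torus $(M_\phi^\circ)_\sigma$ through the residual action. Quotienting a variety by a finite group gives a map $Y/\Gamma\to Y\sslash\Gamma$ that is a homeomorphism: orbits are finite, hence closed, and the GIT quotient by a finite group separates orbits. Combining this with the first step shows that $\tilde{\BL}_{M_\phi^\circ\sigma}/M_\phi^1\to \tilde{\BL}_{M_\phi^\circ\sigma}\sslash M_\phi^1 = (M_\phi^\circ)_\sigma\sslash\pi_0(M_\phi^1)$ is a homeomorphism. I expect the only subtlety here to be checking that the composite quotient is indeed this iterated quotient, which follows because the first quotient is by a normal subgroup.

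For the freeness statement, $N_G(M,\phi)/M_\phi^1$ is finite and acts on the irreducible variety $Y:=(M_\phi^\circ)_\sigma\sslash\pi_0(M_\phi^1)$. It therefore suffices to show that every nontrivial element $\gamma$ acts nontrivially on $Y$. Then the fixed locus of each $\gamma$ is a proper closed subset, and its complement (a finite union) is dense and open. Lifting to $(M_\phi^\circ)_\sigma$, I must show that a lift $\tilde\gamma\in N_G(M,\phi)$ acting on $(M_\phi^\circ)_\sigma$ as some element of $\pi_0(M_\phi^1)$ already lies in $M_\phi^1$. Multiplying by an element of $M_\phi^1$, I may assume $\tilde\gamma$ acts trivially on $(M_\phi^\circ)_\sigma$, i.e.\ it fixes $X_{M_\phi^\circ}\phi$ pointwise up to $M_\phi^\circ$-conjugacy. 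Using Proposition \ref{semidirect}, write $\tilde\gamma$ as an element of $\pi_0(G_\phi^1/M_\phi^1)\ltimes W(G_\phi^\circ)^\sigma$.

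The main obstacle is to show that such an element is trivial. For the Weyl part, $W(G_\phi^\circ)^\sigma$ acts faithfully on $(M_\phi^\circ)_\sigma$, which is the torus $T_\sigma$ of the root system in Section 2, with Weyl group $W^\sigma$. For the $\pi_0$ part, I would argue as in Lemma \ref{another ngmphi}. An element acting trivially on all unramified twists $\chi_m\phi$ up to $M$-conjugacy centralizes, after modification by $M_\phi^1$, the group generated by $\im({}^L\phi)$ and $(Z(M)^{W_F})^\circ$. It therefore lies in $C_{{}^LG}((Z(M)^{W_F})^\circ)={}^LM$, and hence in $N_G(M,\phi)\cap M=M_\phi^1$. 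I expect that combining these two parts carefully, in particular handling the semidirect twisting, will require the most care.
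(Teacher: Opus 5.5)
Your homeomorphism argument is fine. It is the paper's argument made explicit: the paper only notes that every isotropy group contains $(M_\phi^\circ)^\sigma$ with finite index, so all orbits have the same dimension and are closed. Your reduction for the freeness claim is also right. One step there should be stated explicitly. The torus $(M_\phi^\circ)_\sigma$ is irreducible and is covered by the finitely many closed sets $\{y:\tilde\gamma y=\bar m y\}$ for $\bar m\in\pi_0(M_\phi^1)$. Hence a lift of an element acting trivially on $Y$ acts on $(M_\phi^\circ)_\sigma$ as one fixed $\bar m$, and after correcting by $M_\phi^1$ it acts trivially.

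The gap is the decisive step. You never show that an element $g\in N_G(M,\phi)$ acting trivially on $(M_\phi^\circ)_\sigma$ lies in $M_\phi^1$. Splitting $g$ via Proposition \ref{semidirect} into a Weyl part and a $\pi_0$ part cannot do this. Faithfulness of each factor says nothing about a product $\gamma w$, which is the combination problem you flag but do not resolve. Moreover, the argument of Lemma \ref{another ngmphi} only shows that $g$ \emph{normalizes} $(Z(M)^{W_F})^\circ$, whereas Borel's Lemma 3.5 needs $g$ to \emph{centralize} it.

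The missing idea is the following. For any $g\in N_G(M,\phi)$, write $g\,{}^L\phi(\Fr)\,g^{-1}=a_g\,{}^L\phi(\Fr)$ with $a_g\in M_\phi^\circ$. Since $M_\phi^\circ$ is a torus, $\ad(g)$ commutes with $\sigma$ on $M_\phi^\circ$. So $g$ acts on $(M_\phi^\circ)_\sigma$ by the affine map $\bar t\mapsto\overline{\ad(g)(t)}\,\bar a_g$. If this map is the identity, evaluating at $\bar t=1$ gives $\bar a_g=1$, and then $\ad(g)$ is trivial on $(M_\phi^\circ)_\sigma$.

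Next, the composite $((M_\phi^\circ)^\sigma)^\circ\hookrightarrow M_\phi^\circ\to(M_\phi^\circ)_\sigma$ is an $\ad(g)$-equivariant isogeny. On character lattices, composing with the norm map gives multiplication by the order of $\sigma$. So $a\mapsto\ad(g)(a)a^{-1}$ sends the connected group $((M_\phi^\circ)^\sigma)^\circ=(Z(M)^{W_F})^\circ$ (equal by ellipticity) into a finite kernel, and is therefore trivial. Hence $g\in C_{{}^LG}((Z(M)^{W_F})^\circ)\cap G=M$, so $g\in N_G(M,\phi)\cap M=M_\phi^1$. This handles all of $N_G(M,\phi)$ at once; you need neither the semidirect decomposition nor centralization of $\im({}^L\phi)$.

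For comparison, the paper assumes $g$ fixes every point of $M_\phi^\circ\sigma\simeq X_{M_\phi^\circ}\phi$. Then fixing each $\chi_m\phi$ forces $g$ to centralize all of $M_\phi^\circ$ directly. Your hypothesis, triviality only up to $M_\phi^1$-conjugacy, is the one the statement about the quotient actually requires. Under it the pointwise argument is unavailable, which is exactly where the isogeny step is needed. That step also supplies the passage from the scheme to its quotient, which the paper leaves implicit.
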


\begin{proof}
    The group $(M_\phi^\circ)^\sigma$ is a subgroup of finite index
    in the stabilizer of the $M_\phi^1$-action of any point $x \in M_\phi^\circ\sigma$.
    Thus the isotropy groups of all points in 
    $\tilde{\BL}_{M_\phi^\circ\sigma}/M_\phi^1$ have the same dimension. Thus all orbits in $\tilde{\BL}_{M_\phi^\circ\sigma}/M_\phi^1$ are closed and the morphism from $\tilde{\BL}_{M_\phi^\circ\sigma}/M_\phi^1$ to its coarse moduli space induces a homeomorphism.

    Consider the $N_G(M,\phi)$-action on $M_\phi^\circ\sigma \simeq X_{M_\phi^\circ}\phi$, the subscheme of unramified twists of $\phi$. Suppose that $g \in N_G(M,\phi)$ acts trivially on this scheme. As $g$ fixes $\phi$, $g \in C_G({}^L\phi)$. For $m \in M_\phi^\circ$, denote by $\chi_m$ the unipotent parameter $W_F\to M_\phi^\circ$ sending $\Fr$ to $m$. As $g$ fixes $\chi_m\phi$, $g \in C_G(m)$. Varying $m$, we obtain that $g$ centralizes $M_\phi^\circ$. As $(Z(M)^{W_F})^\circ \subset M_\phi^\circ$, we have
    \[
    C_{{}^LG}(M_\phi^\circ) \subset C_{{}^LG}((Z(M)^{W_F})^\circ) 
    = {}^LM
    \]
    where the identity is from \cite{Borel}, Lemma 3.5. Thus $g \in N_G(M,\phi)\cap M = M_\phi^1$.

    We conclude that the $N_G(M,\phi)/M_\phi^1$-action on $\tilde{\BL}_{M_\phi^\circ\sigma}\sslash M_\phi^1$ cannot factor through a proper quotient group. Thus the action is free over a dense open subset since $N_G(M,\phi)/M_\phi^1$ is a finite group.
\end{proof}

\subsection{Intertwining over the regular semisimple and generic locus}

We will define the regular semisimple locus and the generic locus in general and construct the normalized intertwining operator in this subsection. We retain the notations of the previous section.

\begin{definition}[L-function]
\label{L-function,general}
    For $s=0,1$ and $\psi \in \Par_{M, \phi}$, the L-function, as a rational function on $\Par_{M,\phi}$, is defined as
    \[
    L(s, \ad_{\fn_P})(\psi) := \det(1 - q^{-s} {}^L\psi(\Fr), \fn_P^{\phi(I)})^{-1}.
    \]
\end{definition}

As $\fn_{P_\phi^\circ} = \fn_P^{\phi(I)}$, we have $L(s, \ad_{\fn_P})(\chi_m\phi) = L(s, \ad_{\fn_{P_\phi^\circ}})(m)$ for any $m\in M_\phi^\circ\sigma$ (where $\chi_m$ is the unramified $1$-cocycle corresponding to $m$). In particular, the $M^1_\phi$-action on $M_\phi^\circ\sigma$ preserves the L-function $L(s, \ad_{\fn_{P_\phi^\circ}})$. Thus we may view the L-function as a rational function on $\tilde{\BL}_{M_\phi^\circ\sigma}/M_\phi^1$.

\begin{definition}[Regular semisimple locus and generic locus]
    \label{rs,g,intrinsic}
    We define the regular semisimple or generic loci as follows:
    \begin{itemize}
        \item For $\Par_{M,\phi}$, consider the cotangent complex $L\iota_{M,G}$ of the morphism $\iota_{M,G}\colon \Par_{M,\phi}\to \Par_{G,\phi}$ induced by $M \to G$. Then the generic locus $\Par_{M,\phi}$ is defined to be where $L^1\iota_{M,G}=0$ and the regular and semisimple locus is defined to be where $L^{-1}\iota_{M,G}=0$.
        \item For $\Par_{P,\phi}$, the regular semisimple or generic locus of it is defined to be the pullback of the corresponding locus of $\Par_{M,\phi}$ along $q_P$.
        \item For $\Par_{G,\phi}$, the regular semisimple or generic locus of it is defined to be the image of the corresponding locus of $\Par_{P,\phi}$ along $p_P$, as it is so in the unipotent case.
    \end{itemize}
    Similarly we define the loci for $\Par_{M,\phi,0}$, etc. for the parameters without monodromy.
\end{definition}

Note that the loci on $\Par_{M,\phi}$ depends on the ambient group $G$. We will always take $G$ as the ambient group in the definition of the loci. 

We remark that the regular semisimple and generic locus $\Par_{G,\phi}^{rs,g}\subset \Par_{G,\phi}$ here can be identified with the open substack of parameters of ``Langlands-Shahidi type'' in the sense of Zou (\cite{zou2025categoricallocallanglandsmathrmgln}) and also the open substack of ``generous'' L-parameters in the sense of Hansen \cite{hansen_beijing}.

\begin{lemma}
\label{rs,g,stable}
    The regular, semisimple or generic locus in $G_\phi^\circ\sigma$
    (resp. $P_\phi^\circ\sigma$, $M_\phi^\circ\sigma$) is invariant under the $G_\phi^1$ (resp. $P_\phi^1$, $M_\phi^1$)-action. 
    Similarly for $\tilde{\BL}_{G_\phi^\circ\sigma}$, etc.
\end{lemma}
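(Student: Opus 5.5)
The plan is to reduce everything to invariance of the defining functions, using the explicit description of the loci in the unipotent setting. Via Proposition \ref{reduction} and Corollary \ref{to unipotent}, the regular semisimple (resp. generic) locus of $M_\phi^\circ\sigma$ is the non-vanishing locus of $L(0,\ad_{\fn_{P_\phi^\circ}})^{-1}L(0,\ad_{\bar{\fn}_{P_\phi^\circ}})^{-1}$ (resp. the same with $s=1$). To see this, compute the cotangent complex of $\iota_{M,G}$ at $\chi_m\phi$ exactly as in Proposition \ref{complex}. It is $R\Gamma(W_F,(\fg/\mathfrak{m})_{\chi_m\phi})$. Since $I$ acts semisimply through a finite quotient, the inertia invariants are $(\fn_P\oplus\bar\fn_P)^{\phi(I)}=\fn_{P_\phi^\circ}\oplus\bar\fn_{P_\phi^\circ}$. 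Then $H^0$ is the fixed space of $\ad(m)\circ\sigma$ on this space, and $H^2$ is dual to the fixed space of the $q$-twisted action, by \cite{dat2025}, Lemma 5.1. So the vanishing conditions are exactly the non-vanishing of these $L$-functions.

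For $M_\phi^1$-invariance on $M_\phi^\circ\sigma$, I would use the remark after Definition \ref{L-function,general}: the $M_\phi^1$-action preserves $L(s,\ad_{\fn_{P_\phi^\circ}})$. The same argument gives this for $\bar{\fn}$, since $m'\in M_\phi^1$ normalizes $M$ and $\phi(I)$. Conjugation then identifies the $\ad({}^L\psi(\Fr))$-action on $(\fn\oplus\bar\fn)^{\phi(I)}$ with that for $\ad(m')\psi$, up to an automorphism of $(\fg/\mathfrak m)^{\phi(I)}$. This argument is cleaner stated directly: the cotangent complex is intrinsic, so its fibers at $\psi$ and at $\ad(m')\psi$ are isomorphic via $\ad(m')$. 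Hence the loci defined by $L^{\pm1}\iota_{M,G}=0$ are invariant under any automorphism of $\Par_{M,\phi}$ coming from conjugation by $M_\phi^1$. This intrinsic formulation avoids choices of $P$.

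For $P_\phi^1$ on $P_\phi^\circ\sigma$, I will use that the locus there is the pullback along $q_P$, which is $P_\phi^1$-equivariant with $P_\phi^1\to M_\phi^1$. Invariance then follows from the $M$ case. For $G_\phi^1$ on $G_\phi^\circ\sigma$, I will show the locus equals $\chi^{-1}$ of the corresponding locus in $M_\phi^\circ\sigma\sslash N_{G_\phi^\circ}(M_\phi^\circ\sigma)$, using Corollary \ref{factor Chevalley} and the unipotent discussion after it. So it suffices that the defining function $L(s,\ad_{\fn\oplus\bar\fn})^{-1}$ on $M_\phi^\circ\sigma$ is $N_G(M,\phi)$-invariant. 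By Lemma \ref{gphi1}, $G_\phi^1=G_\phi^\circ N_G(M,\phi)$. The $G_\phi^\circ$-part is automatic because the locus is a preimage under the twisted Chevalley map. For $N_G(M,\phi)$, use Lemma \ref{another ngmphi}: such $g$ preserves $X_{M_\phi^\circ}\phi$ and normalizes ${}^LM$ and $\phi(I)$. The intrinsic cotangent-complex argument then gives invariance of the locus on the $M$-level. Equivalently, $\ad(g)$ permutes $(\fg/\mathfrak m)^{\phi(I)}$ compatibly with Frobenius, so $\det(1-q^{-s}\ad\circ\sigma)$ on $\fn\oplus\bar\fn$ is preserved.

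The step I expect to be the main obstacle is the $G$-level statement. There one must check that the Chevalley-preimage description agrees with the definition of the locus as the image under $p_P$, and that $G_\phi^1$ acts compatibly on the twisted Chevalley quotient. Elements of $\pi_0(G_\phi^1)$ act by automorphisms of $G_\phi^\circ$ twisted by $\sigma$, so their action on $M_\phi^\circ\sigma\sslash W^\sigma$ must be identified via Proposition \ref{semidirect}. The statement for $\tilde{\BL}$ then follows, since these loci are defined by conditions on the $g\sigma$ coordinate only.
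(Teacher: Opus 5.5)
Your argument is correct, but it takes a different route from the paper's.

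\emph{The paper's route.} It never passes through the torus-level $L$-functions. Instead it uses intrinsic, conjugation-invariant characterizations of each condition directly on $G_\phi^\circ\sigma$:
\begin{itemize}
\item regularity, via the dimension of the twisted centralizer, using $C_{G_\phi^\circ}(\ad(g)(x\sigma))=\ad(g)(C_{G_\phi^\circ}(x\sigma))$;
\item semisimplicity, via semisimplicity of the associated $L$-parameter $\chi_x\phi$;
\item genericity of a semisimple element, via the absence of a nonzero $N$ with $(x\sigma,N)\in\tilde{\BL}_{G_\phi^\circ\sigma}$.
\end{itemize}
The generic statement is then extended to all of $G_\phi^\circ\sigma$ by observing two things. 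Genericity is a condition on $G_\phi^\circ\sigma\sslash G_\phi^\circ$, to which the $G_\phi^1$-action descends. Every point of that quotient has a semisimple representative. This handles all of $G_\phi^1$ at once, without the decomposition $G_\phi^1=G_\phi^\circ N_G(M,\phi)$ or Lemma \ref{another ngmphi}.

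\emph{Your route.} You reduce everything to two facts: invariance of $\det(1-q^{-s}\ad(\cdot),(\fg/\mathfrak{m})^{\phi(I)})$ on $M_\phi^\circ\sigma$ under $N_G(M,\phi)$, and equivariance of the twisted Chevalley isomorphism. This needs more structure. In exchange, the same computation also identifies the unipotent-type loci with the cotangent-complex loci of Definition \ref{rs,g,intrinsic}, which is exactly what the subsequent proposition requires.

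Three small remarks:
\begin{itemize}
\item The compatibility you flag as the main obstacle is routine, and the paper's proof uses it too. $G_\phi^\circ$ is normal in $G_\phi^1$, and $N_G(M,\phi)$ normalizes both $M_\phi^\circ\sigma$ and $N_{G_\phi^\circ}(M_\phi^\circ\sigma)$. Hence the isomorphism induced by the inclusion $M_\phi^\circ\sigma\subset G_\phi^\circ\sigma$ is equivariant.
\item Agreement with the ``image under $p_P$'' description is not needed for this lemma; it belongs to the following proposition.
\item If the statement is read as also covering the regular locus and the semisimple locus separately, as the paper's proof treats them, your proposal does not address those two. Both follow at once from conjugation invariance of centralizer dimension and of semisimplicity.
\end{itemize}
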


\begin{proof}
    We prove the statement for $G_\phi^\circ\sigma$ and the statements for 
    $P_\phi^\circ\sigma$ and $M_\phi^\circ\sigma$
    follow similarly.

    For the statement about the regular locus, note that for $x\sigma\in G_\phi^\circ$, the stabilizers satisfy that $C_{G_\phi^\circ}(\ad(g)(x\sigma))= \ad(g)(C_{G_\phi^\circ}(x\sigma))$.
    Recall that the regularity of $x$ is defined by the dimension of its stabilizer.
    Thus the regular locus is stable under its action.

    For the statement about the semisimple locus, note that $x\sigma\in G_\phi^\circ\sigma$ is semisimple if and only if $\chi_x\phi$ is semisimple as an L-parameter, where $\chi_x$ denotes the $1$-cocycle on $W_F/I$ corresponding to $x$. 
    As any conjugation of a semisimple L-parameter is again semisimple, the semisimple locus is stable under conjugation.

    For the statement about the generic locus, 
    note that a semisimple element $x\sigma\in G_\phi^\circ\sigma$ is generic if and only if $(x\sigma,N)\in \BL_{G_\phi^\circ}$ implies $N = 0$. Thus the $G_\phi^1$-action preserves the semisimple and generic locus. 
    Moreover, recall that the genericity is defined via the twisted Chevalley morphism. Thus it is a condition on 
    $G_\phi^\circ\sigma\sslash G_\phi^\circ$ to which the $G_\phi^1$ action be descended. As any element in $G_\phi^\circ\sigma\sslash G_\phi^\circ$ has a semisimple representative, the generic locus is stable under conjugation.

    The statements about $\tilde{\BL}_{G_\phi^\circ\sigma}$ hold as the loci are defined by pulling back the corresponding loci along the projection $\tilde{\BL}_{G_\phi^\circ\sigma} \to G_\phi^\circ\sigma$.
    Similarly for $\tilde{\BL}_{P_\phi^\circ\sigma}$ and $\tilde{\BL}_{M_\phi^\circ\sigma}$.
\end{proof}

From the lemma above, we can identify these loci on $\Par_{G,\phi}$,
$\Par_{P,\phi}$ and $\Par_{M,\phi}$ with the corresponding loci in the unipotent case.

\begin{proposition}
    Under the identification in Proposition \ref{reduction}, the regular semisimple locus or the generic locus of $\Par_{M,\phi}$ 
    (resp. $\Par_{P,\phi}$, $\Par_{G,\phi}$) can be identified with 
    the corresponding locus of
    $\tilde{\BL}_{M_\phi^\circ\sigma}/M^1_\phi$ (resp. $\tilde{\BL}_{P_\phi^\circ\sigma}/P^1_\phi$, $\tilde{\BL}_{G_\phi^\circ\sigma}/G^1_\phi$).
    Similarly for the parameters without monodromy, such as $\Par_{G,\phi,0}$, we define the regular semisimple locus.
\end{proposition}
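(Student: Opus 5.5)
The plan is to transport the intrinsic definitions in Definition \ref{rs,g,intrinsic} through the isomorphism of Proposition \ref{reduction}, reducing everything to the cotangent-complex computation of Proposition \ref{complex}. The argument begins with $\Par_{M,\phi}$, where the loci are defined via $L\iota_{M,G}$. By Corollary \ref{to unipotent}, $\iota_{M,G}$ is identified with the map
\[
\tilde{\BL}_{M_\phi^\circ\sigma}/M^1_\phi \to \tilde{\BL}_{G_\phi^\circ\sigma}/G^1_\phi .
\]
This map factors as
\[
\tilde{\BL}_{M_\phi^\circ\sigma}/M^1_\phi \to \tilde{\BL}_{G_\phi^\circ\sigma}/M^1_\phi G^\circ_\phi \to \tilde{\BL}_{G_\phi^\circ\sigma}/G^1_\phi .
\]
The second arrow is a finite étale (Galois-type) quotient by the finite group $G^1_\phi/M^1_\phi G^\circ_\phi$, which is of this form by Lemma \ref{gphi1}. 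Its cotangent complex therefore vanishes, and $L\iota_{M,G}$ is computed by the first arrow. After pulling back along the smooth cover $M_\phi^\circ\sigma\to \tilde{\BL}_{M_\phi^\circ\sigma}/M^1_\phi$, and using that $M^1_\phi/M^\circ_\phi$ is finite, this reduces to the map $\BL_{M_\phi^\circ\sigma}\to\BL_{G_\phi^\circ\sigma}$ of the unipotent setting for the pair $(G_\phi^\circ,\sigma)$, whose maximal torus is $M_\phi^\circ$.

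One could apply Proposition \ref{complex} directly at this point. More intrinsically, I would use \cite{fargues2024}, Proposition VIII.2.1 to write the fiber of $L\iota_{M,G}$ at a parameter $\psi=\chi_m\phi$ as $R\Gamma(W_F,(\fg/\mathfrak m)_\psi)$ up to shift. Since $I_F$ is profinite and $\ell\neq p$ (so the wild part acts through a finite group of prime-to-$\ell$ order), taking $I_F$-invariants is exact on these representations, and the cohomology becomes $R\Gamma(W_F/I_F,(\fg/\mathfrak m)^{\phi(I)})$. Here $(\fg/\mathfrak m)^{\phi(I)}=\fn_{P_\phi^\circ}\oplus\bar\fn_{P_\phi^\circ}$ with Frobenius acting by $\ad(m)\circ\sigma$. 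This is exactly the computation in the proof of Proposition \ref{complex}. So the vanishing loci of $L^{-1}$ and $L^{1}$ are the nonvanishing loci of $L(0,\ad_{\fn})^{-1}$ and $L(1,\ad_{\fn\oplus\bar\fn})^{-1}$ respectively, evaluated via Definition \ref{L-function,general}. By the identity $L(s,\ad_{\fn_P})(\chi_m\phi)=L(s,\ad_{\fn_{P_\phi^\circ}})(m)$, these agree with Definitions \ref{Regular semisimple locus} and \ref{Generic locus} for $M_\phi^\circ\sigma$. They are $M^1_\phi$-stable by Lemma \ref{rs,g,stable}, so they descend to the quotient by $M^1_\phi$.

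For $\Par_{P,\phi}$ the statement is formal. In both settings the locus is the preimage under $q$ (Definition \ref{rs,g,intrinsic} here, and the remark after Definition \ref{Regular semisimple locus} in the unipotent case), and $q$ corresponds to $\overline{q_{P_\phi^\circ}}$ under Corollary \ref{to unipotent}.

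For $\Par_{G,\phi}$ the locus is defined as the image under $p_P$, while in the unipotent case it is defined as a preimage under the twisted Chevalley map. I would argue as follows.
\begin{enumerate}
\item By Corollary \ref{factor Chevalley} and the subsequent remark, $\BL^?_{P_\phi^\circ\sigma}=p^{-1}(\BL^?_{G_\phi^\circ\sigma})$.
\item Surjectivity of $P_\phi^\circ\sigma/P_\phi^\circ\to G_\phi^\circ\sigma/G_\phi^\circ$ (used in Lemma \ref{generic locus}) gives $p(p^{-1}(U))=U$ for $U=\BL^?_{G_\phi^\circ\sigma}$.
\item The image of $\tilde{\BL}_{P_\phi^\circ\sigma}/P^1_\phi$ in $\tilde{\BL}_{G_\phi^\circ\sigma}/G^1_\phi$ is the $G^1_\phi$-saturation of this image, and that saturation is $U$ itself because $U$ is $G^1_\phi$-stable by Lemma \ref{rs,g,stable}.
\end{enumerate}
The case without monodromy is identical, replacing $\BL$ by $(-)\sigma/(-)$.

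I expect the main obstacle to be the cotangent-complex reduction for $M$. The difficulty is justifying that the stack quotient by the disconnected groups $M^1_\phi$ and $G^1_\phi$, which are not the connected groups treated in Proposition \ref{complex}, does not change $L^{\pm1}$. Along with this, one needs the exactness of $I_F$-invariants that passes from $W_F$-cohomology to the unramified computation. I would handle both by citing the identification of Proposition \ref{reduction} as an isomorphism of stacks, so that cotangent complexes are computed intrinsically via \cite{fargues2024} and no comparison of quotients is needed.
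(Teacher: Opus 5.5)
Your skeleton is the paper's. The $\Par_{M,\phi}$ case is reduced to Proposition~\ref{complex}, the $\Par_{P,\phi}$ case holds by definition, and the $\Par_{G,\phi}$ case follows from ``image of the $P$-locus'' together with $G^1_\phi$-stability (Lemma~\ref{rs,g,stable}). Your first route for $M$ is sound and already disposes of the obstacle you name. Consider the square with top row $\BL_{M_\phi^\circ\sigma}\to\BL_{G_\phi^\circ\sigma}$ and bottom row $\tilde{\BL}_{M_\phi^\circ\sigma}/M^1_\phi\to\tilde{\BL}_{G_\phi^\circ\sigma}/G^1_\phi$. Both vertical maps are finite \'etale, so the top cotangent complex is the pullback of the bottom one along a surjection, and the loci where $L^{\pm1}=0$ correspond. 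Nothing intrinsic is needed.

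There is one smaller gap in your $G$-step. Surjectivity of $P_\phi^\circ\sigma/P_\phi^\circ\to G_\phi^\circ\sigma/G_\phi^\circ$ only handles points with $N=0$. That is enough for the $g$ and $rs,g$ loci by Lemma~\ref{generic locus}. However, the $rs$ locus of $\tilde{\BL}_{G_\phi^\circ\sigma}$ contains points with $N\neq0$; already $(\mathrm{diag}(q,1),E_{12})$ for $\mathrm{GL}_2$ with $\sigma=1$ is one. For these you need that $(t\sigma,N)$, with $t\in M_\phi^\circ$, lies in a $\sigma$-stable Borel containing $M_\phi^\circ$ with $N$ in its nilradical. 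This holds: the root orbits $O$ on which $N$ has a nonzero component satisfy $\pm\alpha_O(t)=q^{|O|}$, and since $q$ is not a root of unity these $\alpha_O$ lie in a common positive system. The paper's one-line argument is equally silent on this point.

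The real problem is the ``more intrinsic'' computation, which you say you would rely on. You claim that $I_F$-invariants are exact, so that the fiber of $L\iota_{M,G}$ becomes $R\Gamma(W_F/I_F,(\fg/\mathfrak m)^{\phi(I)})$. This is false for the continuous $\ell$-adic cohomology of \cite{fargues2024}, Proposition VIII.2.1. Already $H^1(I_F,\bar{\mathbb{Q}}_\ell)=\Hom_{\mathrm{cont}}(I_F,\bar{\mathbb{Q}}_\ell)\simeq\bar{\mathbb{Q}}_\ell(-1)\neq0$, through the tame quotient $\mathbb{Z}_\ell(1)$.

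Your formula would also break the generic statement. Since $W_F/I_F\simeq\mathbb{Z}$ has cohomological dimension $1$, it would give $L^1\iota_{M,G}\equiv0$, so the generic locus would be all of $\Par_{M,\phi}$. That contradicts the conclusion you then draw. The factor $q^{-1}$ in $L(1,-)$ is exactly the tame-inertia contribution, visible in the duality $H^2(W_F,V)^*\simeq H^0(W_F,V^*(1))$ used in Proposition~\ref{complex}.

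The correct reduction keeps all of $W_F$. Points of $\Par_{M,\phi}$ carry no monodromy because $M_\phi^\circ$ is a torus, so $\psi(I_F)$ is finite. It is this finiteness that matters, not the wild part having prime-to-$\ell$ order. The nontrivial $I_F$-isotypic part of $(\fg/\mathfrak m)_\psi$ is then acyclic, and $R\Gamma(W_F,(\fg/\mathfrak m)_\psi)\simeq R\Gamma(W_F,\fn_{P_\phi^\circ}\oplus\bar\fn_{P_\phi^\circ})$. Its $H^0$ and $H^2$ are then computed exactly as in Proposition~\ref{complex}.
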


In particular, the loci of $\Par_{G,\phi}$ are open.

\begin{proof}
    The statement for $\Par_{M,\phi}$ follows from Proposition \ref{complex} about the cotangent complex in the unipotent case. The statements $\Par_{P,\phi}$ follows by definition. The statement for $\Par_{G,\phi}$ holds since $\BL_{G_\phi^\circ\sigma}^?$ is the image of $\BL_{M_\phi^\circ\sigma}^?$ and the ?-locus is invariant under conjugation, where ? is $rs$, $g$ or $rs, g$.  
\end{proof}

\begin{lemma}
\label{iota-G-general}
    The morphisms of quotient stacks
    \[
    (M_\phi^\circ\sigma)^{rs}/N_G(M,\phi)
    \to (G_\phi^\circ\sigma)^{rs}/G^1_\phi.
    \]
    and
    \[
    \tilde{\BL}_{M_\phi^\circ\sigma}^{rs,g}/N_G(M,\phi) \to
    \tilde{\BL}_{G_\phi^\circ\sigma}^{rs,g}/G_\phi^1 
    \]
    are isomorphisms. 
\end{lemma}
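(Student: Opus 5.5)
The plan is to reduce to the unipotent statement, Lemma \ref{iota-G}, applied to the connected reductive group $G_\phi^\circ$ with its pinned automorphism $\sigma$ and maximal torus $M_\phi^\circ$. We then account for the finite component groups using Lemma \ref{gphi1} and Proposition \ref{semidirect}. The second isomorphism follows from the first: over the generic locus the monodromy vanishes by Lemma \ref{generic locus}, applied to $G_\phi^\circ$ and $M_\phi^\circ$ (the generic locus being $G_\phi^1$-stable by Lemma \ref{rs,g,stable}). So $\tilde{\BL}^{rs,g}_{G_\phi^\circ\sigma}=(G_\phi^\circ\sigma)^{rs,g}$, and similarly for $M_\phi^\circ$. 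It therefore suffices to treat the first map, and then restrict to the open generic substack, which is the preimage of the corresponding open on the target.

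Step 1. Apply Lemma \ref{iota-G} to $(G_\phi^\circ,\sigma)$. This gives an isomorphism
\[
(M_\phi^\circ\sigma)^{rs}/N_{G_\phi^\circ}(M_\phi^\circ\sigma)\simto (G_\phi^\circ\sigma)^{rs}/G_\phi^\circ .
\]
Here we use that $M_\phi^\circ$ is the maximal torus of the Borel $P_\phi^\circ$ fixed by the pinning, and that the regular semisimple locus defined intrinsically agrees with that of Definition \ref{Regular semisimple locus}.

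Step 2. Pass to the larger groups. Write $\Gamma:=G_\phi^1/G_\phi^\circ=\pi_0(G_\phi^1)$. By Lemma \ref{gphi1}, $G_\phi^1=G_\phi^\circ N_G(M,\phi)$, and by Lemma \ref{another ngmphi}, $N_G(M,\phi)\cap G_\phi^\circ=N_{G_\phi^\circ}(M_\phi^\circ\sigma)$. Hence
\[
N_G(M,\phi)/N_{G_\phi^\circ}(M_\phi^\circ\sigma)\simto\Gamma .
\]
The target $(G_\phi^\circ\sigma)^{rs}/G_\phi^1$ is the quotient of $(G_\phi^\circ\sigma)^{rs}/G_\phi^\circ$ by the induced $\Gamma$-action, which is well defined by Lemma \ref{rs,g,stable}. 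Likewise, the source $(M_\phi^\circ\sigma)^{rs}/N_G(M,\phi)$ is the quotient of $(M_\phi^\circ\sigma)^{rs}/N_{G_\phi^\circ}(M_\phi^\circ\sigma)$ by the same group $\Gamma$. The isomorphism from Step 1 is induced by the inclusion $M_\phi^\circ\sigma\subset G_\phi^\circ\sigma$, so it is equivariant for conjugation by $N_G(M,\phi)$. Passing to $\Gamma$-quotients then gives the claimed isomorphism.

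The main obstacle is making the equivariance and quotient step rigorous at the level of stacks, not just on points. I would handle it by a formal argument. Consider the map of groupoids
\[
N_G(M,\phi)\times(M_\phi^\circ\sigma)^{rs}\rightrightarrows(M_\phi^\circ\sigma)^{rs}
\]
to the action groupoid $G_\phi^1\times(G_\phi^\circ\sigma)^{rs}$. Prove essential surjectivity: every $G_\phi^1$-orbit meets $M_\phi^\circ\sigma$, which already holds for $G_\phi^\circ$ by Remark \ref{rs=r+s}. Then prove full faithfulness: if $g\in G_\phi^1$ conjugates $m\sigma$ to $m'\sigma$ with both regular semisimple, then $g\in N_G(M,\phi)$. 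For the latter, write $g=g_0n$ with $g_0\in G_\phi^\circ$ and $n\in N_G(M,\phi)$. Then $g_0$ conjugates $\ad(n)(m\sigma)\in (M_\phi^\circ\sigma)^{rs}$ to $m'\sigma$, so by Step 1 (full faithfulness of Lemma \ref{iota-G}) we get $g_0\in N_{G_\phi^\circ}(M_\phi^\circ\sigma)$. It may also be necessary to check that Lemma \ref{iota-G} applies when $G_\phi^\circ$ is not simply connected, but it is stated for general $G$, so I would invoke it directly.
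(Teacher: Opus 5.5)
Your proposal is correct and follows the paper's proof. Both arguments apply Lemma \ref{iota-G} to $G_\phi^\circ$ and then use $G_\phi^1=G_\phi^\circ N_G(M,\phi)$ and $G_\phi^\circ\cap N_G(M,\phi)=N_{G_\phi^\circ}(M_\phi^\circ\sigma)$, with the monodromy case following from the case without monodromy.

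The paper handles your ``main obstacle'' by pulling back along the atlas $(G_\phi^\circ\sigma)^{rs}\to(G_\phi^\circ\sigma)^{rs}/G_\phi^1$ and observing $G_\phi^\circ\times^{N_{G_\phi^\circ}(M_\phi^\circ\sigma)}(M_\phi^\circ\sigma)^{rs}\simeq G_\phi^1\times^{N_G(M,\phi)}(M_\phi^\circ\sigma)^{rs}$. This is the stack-level form of your groupoid check, and it replaces your pointwise full-faithfulness argument.
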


\begin{proof}
    We prove the statement without monodromy and the statement with monodromy follows directly.
    It suffices to prove the statement after pulling back along
    \[
    (G_\phi^\circ\sigma)^{rs} \to (G_\phi^\circ\sigma)^{rs}/G_\phi^1,
    \]
    under which the pullback of $(M_\phi^\circ\sigma)^{rs}/N_G(M,\phi)$ is
    $G_\phi^1 \times^{N_G(M,\phi)} (M_\phi^\circ\sigma)^{rs}$.
    By Lemma \ref{iota-G}, we have an isomorphism
    \[
    G_\phi^\circ \times^{N_{G_\phi^\circ}(M^\circ_\phi\sigma)} (M_\phi^\circ\sigma)^{rs}\simeq G_\phi^\circ.
    \]
    Moreover, by Lemma \ref{gphi1}, we have 
    \[
    G_\phi^\circ N_G(M,\phi) = G_\phi^1\quad \text{and}\quad G_\phi^\circ\cap N_G(M,\phi) = N_{G_\phi^\circ}(M^\circ_\phi\sigma)
    \]
    Thus the morphism
    \[
    G_\phi^\circ \times^{N_{G_\phi^\circ}(M^\circ_\phi\sigma)} (M_\phi^\circ\sigma)^{rs} \to 
    G_\phi^1 \times^{N_G(M,\phi)} (M_\phi^\circ\sigma)^{rs}
    \]
    is an isomorphism and we conclude.
\end{proof}

We can generalize the notion of regular semisimple and generic locus on the entire $\Par_M$, $\Par_P$ and $\Par_G$.

\begin{definition}[Regular semisimple and generic locus in general]
    Let $\psi \in \Par_M$ be a semisimple parameter. 
    Suppose that ${}^LM_0\subset {}^LM$ is a minimal Levi subgroup that contains $\psi$. Then $\psi$ is elliptic in $M_0$ and we define the regular semisimple and generic locus on $\Par_{M,\psi}^{rs,g}$ to be the image $\iota_{M_0,M}(\Par_{M_0,\psi}^{rs,g})$ under the morphism $\iota_{M_0,M} \colon \Par_{M_0,\psi}\to \Par_{M,\psi}$. 
    Note that for $\psi\in M$, all such $M_0$'s are $M$-conjugate. 
    Thus the locus $\Par_{M,\psi}^{rs,g}$ is independent of the choice of $M_0$.
    Moreover, by definition $\Par_{M,\psi}^{rs,g}\simeq \BL_{M_\psi^\circ\sigma}^{rs,g}/M_\psi^1$. Thus it is open. 
    Combining these loci for all $\psi$'s, we obtain an open substack $\Par_M^{rs,g}$ for $\Par_M$.

    By similar process, we define the locus $\Par_G^{rs,g}$ for $G$. For $P$, the locus $\Par_P^{rs,g}$ is defined to be the pullback $q_P^{-1}(\Par_M^{rs,g})$. 
\end{definition}

\begin{lemma}
\label{rs,g,locus,general}
    The morphisms $\iota_{M,P}$ (the morphism induced by the inclusion $M\subset P$) and $q_P$ define an isomorphism $\Par_M^{rs,g}\simeq \Par_P^{rs,g}$. Moreover, the regular semisimple and generic loci for $P$ and $G$ are stable under $p_P$ and $p_P^{-1}$. 
\end{lemma}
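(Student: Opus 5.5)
The plan is to reduce everything to a single connected component $\Par_{G,\psi}$ containing a semisimple parameter $\psi$, and there to the unipotent statements of Section 2 through the structure results of Section 3.1. Both claims are local on the connected components of $\Par_M$, and the loci are unions of the pieces attached to semisimple $\psi$. So we may fix $\psi$ and choose a minimal Levi ${}^LM_0\subset{}^LM$ containing $\psi$, in which $\psi$ is elliptic. By definition, $\Par_{M,\psi}^{rs,g}$ is the image of $\Par_{M_0,\psi}^{rs,g}$. By Proposition \ref{reduction} and Corollary \ref{to unipotent} (applied to $M_0\subset M$, to $M_0\subset P\cap$ a suitable parabolic, and to $M_0\subset G$), every stack involved becomes a quotient $\tilde{\BL}_{H^\circ_\psi\sigma}/H^1_\psi$ for $H\in\{M_0,M,P,G\}$. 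Under this identification the morphisms $\iota_{M,P}$, $q_P$ and $p_P$ are induced by the group inclusions and projections.

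For the first claim, I will compare $\tilde{\BL}^{rs,g}_{P_\psi^\circ\sigma}/P_\psi^1$ with $\tilde{\BL}^{rs,g}_{M_\psi^\circ\sigma}/M_\psi^1$. Here $M_\psi^\circ$ is reductive with Borel pair coming from $M_0$, and $P_\psi^\circ = M_\psi^\circ\ltimes U$ with $U$ the unipotent radical fixed by $\psi(I)$. The argument of Lemma \ref{rs locus} applies directly: $q_P\circ\iota_{M,P}=\mathrm{id}$. The fiber of the cotangent complex of $\iota_{M,P}$ at a point is $R\Gamma(W_F,\fn_P)$ up to shift, and it vanishes on the rs,g locus by the same computation as in Proposition \ref{complex}, namely the $H^0$ and $H^2$ vanishing given by the nonvanishing of $L(0,\cdot)^{-1}$ and $L(1,\cdot)^{-1}$, together with \cite{dat2025}, Lemma 5.1. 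Hence $\iota_{M,P}$ is étale on this locus. It is a section of $q_P$, whose fibers are connected (they are quotients by the connected unipotent group $U$, and the monodromy fiber vanishes on the generic locus by Lemma \ref{generic locus}). So $\iota_{M,P}$ is open and closed in each fiber, hence surjective, and therefore an isomorphism. One point needs care: the component groups match, $\pi_0(M_\psi^1)\simeq\pi_0(P_\psi^1)$, and this holds by the deformation retraction lemma.

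For the second claim, stability under $p_P$ should follow from Corollary \ref{factor Chevalley} applied to $G_\psi^\circ$ and its parabolic $P_\psi^\circ$. The rs and g conditions are pulled back from the coarse quotient through the twisted Chevalley maps, and $B\sigma\sslash B\simeq T\sigma\sslash T$ is compatible with these maps. Invariance under the disconnected groups is Lemma \ref{rs,g,stable}. For stability under $p_P^{-1}$, I take a point of $\Par_P$ whose image is rs,g and must show its image in $\Par_M$ is rs,g. The image in $\Par_M$ has the same semisimplification class in the Chevalley quotient of $G_\psi^\circ$. Because the loci are defined through the Chevalley map for the ambient group $G$ (see the convention that loci use $G$ as ambient group), this condition depends only on that class, and so it is satisfied.

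The main obstacle is matching the intrinsic definition of the loci, which is phrased through minimal Levis and images, with the Chevalley-map description when $P$ is not minimal. In particular one must check that the image of $\Par_{M_0,\psi}^{rs,g}$ in $\Par_{G,\psi}$ is exactly the preimage of the rs,g locus in $T\sigma\sslash'N$. To handle this I will use Lemma \ref{iota-G-general} for $M_0\subset G$ and for $M_0\subset M$, which identifies both loci with quotients of $(M_{0,\psi}^\circ\sigma)^{rs,g}$ by $N_G(M_0,\psi)$ and by $N_M(M_0,\psi)$ respectively.
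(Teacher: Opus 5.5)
Your proposal is correct. For the first assertion it follows the paper: $\iota_{M,P}$ is an \'etale section of $q_P$ over the rs,g locus, and connectedness finishes the argument. The only differences there are minor. The paper gets \'etaleness at $\psi'$ by two-out-of-three from $\iota_{M_0,P}$ and $\iota_{M_0,M}$ at a preimage $\psi'_0$. You compute $R\Gamma(W_F,\fn_P)$ directly, which is the same computation, since $\fn_P$ is a $W_F$-stable summand of $\fg/\mathfrak{m}_0$ at such a point. Your use of connectedness of the fibres of $q_P$ is also, if anything, more precise than the paper's appeal to connectedness of $\Par_{P,\psi}$.

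The second assertion is where you genuinely differ. The paper gets $p_P(\Par_P^{rs,g})=\Par_G^{rs,g}$ from $\iota_{M,G}=p_P\circ\iota_{M,P}$ and the first part. It then handles $p_P^{-1}$ pointwise: an rs,g point of $\Par_{G,\psi}$ is semisimple without monodromy, so any lift to $\Par_P$ comes from $\Par_{M_0,\psi}$, and $\iota_{M_0,G}^{-1}(\Par^{rs,g}_{G,\psi})=\Par^{rs,g}_{M_0,\psi}$ from the unipotent case concludes.

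You instead exhibit all three loci as preimages of a single $G^1_\psi$-stable open subset of the twisted Chevalley quotient of $G_\psi^\circ\sigma$. You then use that $p_P$ and $\iota_{M,G}\circ q_P$ induce the same map to that quotient. This gives $p_P^{-1}(\Par^{rs,g}_G)=\Par^{rs,g}_P$ in one stroke. It also bypasses a step the paper leaves implicit: that a lift to ${}^LP$ of a semisimple $G$-parameter is $P$-conjugate into ${}^LM_0$.

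Your route costs two inputs that you should state explicitly.
\begin{itemize}
\item Corollary \ref{factor Chevalley} is stated only for Borel subgroups. For $P_\psi^\circ=M_\psi^\circ\ltimes U$ you need $\chi_G(mu\sigma)=\chi_G(m\sigma)$ for $u\in U$. This follows, for instance, by contracting $U$ with a $\sigma$-fixed cocharacter of $Z(M_\psi^\circ)$ that is regular for $P_\psi^\circ$.
\item You need to match the minimal-Levi definition of $\Par^{rs,g}_{M,\psi}$ (with ambient group $G$) with a Chevalley preimage for $M_\psi^\circ$. You rightly flag this. Lemma \ref{iota-G-general} applied to $M_0\subset M$ provides it, after restricting to the open set cut out by the $G$-L-functions, which is a stronger condition than $M$-regular semisimplicity.
\end{itemize}
Surjectivity of $p_P$ onto $\Par_G^{rs,g}$ is then immediate from the definition of $\Par^{rs,g}_{G,\psi}$ as the image of $\Par^{rs,g}_{M_0,\psi}$.
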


\begin{proof}
    For both statements, it suffices to consider each connected component $\Par_{M,\psi}$, for semisimple parameters $\psi$. Choose a minimal Levi subgroup of $M$ containing $\psi$. 
    
    For any $\psi'\in \Par_{M,\psi}^{rs,g}$, choose a preimage $\psi'_0$ of it in $\Par_{M_0,\psi}^{rs,g}$. By definition, $\iota_{M_0,P}$ and $\iota_{M_0,M}$ are \'etale at $\psi'_0$. Thus $\iota_{M,P}$ is \'etale at $\psi'$. Hence $\iota_{M,P}$ is \'etale on $\Par_M^{rs,g}$. Since $\iota_{M,P}$ is the right inverse of $q_P$ and $\Par_{P,\psi}$ is connected, $\iota_{M,P}$ and $q_P$ are mutually inverse over the regular semisimple and generic locus.

    As $\iota_{M,G}=p_P\circ \iota_{M,P}$, $p_P(\Par_{P,\psi}^{rs,g})=\Par_{G,\psi}^{rs,g}$. Moreover, for $\psi'\in \Par_{G,\psi}^{rs,g}$, it is semisimple and without monodromy. Thus for any $\tilde{\psi} \in q_P^{-1}(\Par_{P,\psi})$,
    it comes from $\Par_{M_0,\psi}$. As $\iota_{M_0,G}^{-1}(\Par_{G,\psi}^{rs,g})=\Par_{M_0,\psi}^{rs,g}$ as in the unipotent case, $\tilde{\psi} \in \Par_{P,\psi}^{rs,g}$. 
\end{proof}

Hence the loci are stable under $p_P$, $p_P^{-1}$, $q_P$ and $q_P^{-1}$.
as in the unipotent case. We will use the superscript $rs$ and $g$ to denote the regular semisimple or generic locus respectively.
Then we define
\[\Eis_P^?\colon D\Coh(\Par_{M,\phi}^?) \to D\Coh(\Par_{G,\phi}^?)\]
and 
\[
\Eis_P^{rs,g}\colon D\Coh(\Par_{M}^{rs,g}) \to D\Coh(\Par_{G}^{rs,g})
\]
by restricting $p_P$ and $q_P$ to the ?-loci, where ? is $rs$, $g$ or $rs, g$. Similarly for $\eis_P^?$.

Now we define the normalized intertwining operator $J^\circ = J^{rs,g}$.
Recall that we have $\iota_{M,G}=p_P\circ \iota_{M,P}$ 
and $\iota_{M,P}$ is the inverse of $q_P$ by Lemma \ref{rs,g,locus,general}.
We obtain isomorphisms
\[
\Eis_P^{rs,g}\simeq p_{P*} q_P^* \simeq p_{P*}(\iota_{M,P})_* \simeq (\iota_{M,G})_*.
\]

\begin{definition}[Intertwining over the regular semisimple and generic locus]
\label{rs, g, general}
    Let $Q$ be an another parabolic subgroup of $G$ stable under $W_F$ such that $M\subset Q$ is a Levi subgroup. Then we have Eisenstein operators
    \[
    \Eis_{P}^{rs,g}, \Eis_{Q}^{rs,g}\colon D\Coh(\Par_{M}^{rs,g})\to D\Coh(\Par_{G}^{rs,g}).
    \]
    The intertwining operator
    \[
    J_{Q|P}^{rs,g} \colon \Eis_P^{rs,g} \simto \Eis_Q^{rs,g}
    \]
    is defined as the composition
    \[
    \Eis_P^{rs,g} \simeq (\iota_{M,G})_*\simeq \Eis_Q^{rs,g}
    \]
    where the isomorphism $\Eis_P^{rs,g} \simeq (\iota_{M,G})_*$ (similarly for $Q$) is constructed above. We construct $J_{Q|P,0}^\circ\colon \eis_P^{rs}\simto \eis_Q^{rs}$ similarly.
\end{definition}

As in the unipotent case, the intertwining operator $J^\circ_{Q|P}$ generates all transformations in some sense.

\begin{proposition}
\label{rs,g,general,gen}
    The intertwining operator \[J^{rs,g}_{Q|P}=J_{Q|P}^\circ\colon \Eis_{P}^{rs,g}\simeq \Eis_{Q}^{rs,g}\] generates the free $\CZ(D\Coh(\Par_{M}^{rs,g}))$-module $\Hom(\Eis_P^{rs,g}, \Eis_{Q}^{rs,g})$. 
    
    Similarly, $J^{rs}_{Q|P,0}=J_{Q|P,0}^\circ$ generates the free $\CZ(D\Coh(\Par_{M,0}^{rs}))$-module $\Hom(\eis_P^{rs}, \eis_{Q}^{rs})$.
\end{proposition}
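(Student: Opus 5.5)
The plan is to imitate the proof of Proposition \ref{rs,g,gen}, i.e.\ to reduce to the abstract criterion of Proposition \ref{gen-quotient}, now applied to the morphism $\iota_{M,G}\colon \Par_{M}^{rs,g}\to \Par_{G}^{rs,g}$. Since $\Hom(\Eis_P^{rs,g},\Eis_Q^{rs,g})$ decomposes over connected components, we may work on one component $\Par_{M,\phi}$, with $\phi$ elliptic in $M$; if $\phi$ is not elliptic in $M$, we replace $M$ by a minimal Levi $M_0$ containing $\phi$, as in the definition of the general $rs,g$-locus. By Definition \ref{rs, g, general}, $J^\circ_{Q|P}$ is the composite $\Eis_P^{rs,g}\simeq \iota_{M,G*}\simeq \Eis_Q^{rs,g}$. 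Hence
\[
\Hom(\Eis_P^{rs,g},\Eis_Q^{rs,g})\simeq \End(\iota_{M,G*}),
\]
with $J^\circ_{Q|P}$ corresponding to the identity. It therefore suffices to show that $\CZ(D\Coh(\Par_{M,\phi}^{rs,g}))\to \End(\iota_{M,G*})$ is an isomorphism. Freeness of rank one then follows, because this map is injective: $\iota_{M,G*}$ is faithful, as $\iota_{M,G}$ is finite étale.

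To check the hypotheses of Proposition \ref{gen-quotient}, I use Proposition \ref{reduction} and Lemma \ref{iota-G-general}. These identify $\iota_{M,G}$ on the $rs,g$-locus with the morphism
\[
\tilde{\BL}^{rs,g}_{M_\phi^\circ\sigma}/M_\phi^1\to \tilde{\BL}^{rs,g}_{M_\phi^\circ\sigma}/N_G(M,\phi)\simeq \tilde{\BL}^{rs,g}_{G_\phi^\circ\sigma}/G_\phi^1 .
\]
Since $M_\phi^1$ is normal of finite index in $N_G(M,\phi)$, this is a finite étale Galois covering with group $N_G(M,\phi)/M_\phi^1$, the group described in Proposition \ref{semidirect}. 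Both stacks are smooth, because $M_\phi^\circ$ is a torus and over the generic locus the monodromy vanishes (Lemma \ref{generic locus}). The source is connected: it is an open dense substack of the connected component $\Par_{M,\phi}$, and the complement is cut out by $L$-function factors, which are irreducible hypersurfaces in a torus-type space.

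The remaining hypothesis is a closed point $x$ whose isotropy group equals that of its image. By Lemma \ref{generical free}, all points of $\tilde{\BL}_{M_\phi^\circ\sigma}/M_\phi^1$ are closed, and $N_G(M,\phi)/M_\phi^1$ acts freely on a dense open subset of the coarse space. Choose $x$ in this free locus intersected with the $rs,g$-locus, which is still dense open. Then the stabilizer of $x$ in $N_G(M,\phi)$ equals its stabilizer in $M_\phi^1$. This gives the required equality of isotropy groups, and Proposition \ref{gen-quotient} concludes.

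The statement for $\eis^{rs}$ is proved in the same way, using the first isomorphism of Lemma \ref{iota-G-general} and smoothness of $(M_\phi^\circ\sigma)^{rs}/M_\phi^1$.

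I expect the main obstacle to be the isotropy-matching step. One must check that freeness on the coarse moduli space really lifts to an equality of stack stabilizers; this uses that points of the torus quotient have isotropy exactly the $M_\phi^1$-stabilizer. One must also handle the non-connected group $M_\phi^1$: its gerbe structure means $\CZ$ is not simply $\CO$, so I would rely on Proposition \ref{gen-quotient} being stated for the full $\CZ(D\Coh)$ rather than for functions.
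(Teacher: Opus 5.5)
Your proposal is correct and follows essentially the same route as the paper: identify $\Hom(\Eis_P^{rs,g},\Eis_Q^{rs,g})$ with $\End(\iota_{M,G*})$ so that $J^\circ_{Q|P}\mapsto 1$, then verify the hypotheses of Proposition \ref{gen-quotient} component by component via Lemma \ref{iota-G-general}, and take the point with matching isotropy from the free locus of Lemma \ref{generical free}. One small correction: when $\psi$ is not elliptic in $M$, you should not literally replace $M$ by $M_0$, since that would change the morphism to $\iota_{M_0,G}$; instead, as in the paper, $\iota_{M,G}$ on that component is $\tilde{\BL}^{rs,g}_{M^\circ_{0,\psi}\sigma}/N_M(M_0,\psi)\to\tilde{\BL}^{rs,g}_{M^\circ_{0,\psi}\sigma}/N_G(M_0,\psi)$, which is finite \'etale (not necessarily Galois), and a point in the free locus has the same stabilizer in $M^1_{0,\psi}\subset N_M(M_0,\psi)\subset N_G(M_0,\psi)$, so your argument goes through unchanged.
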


\begin{proof}
    We prove the statement for $\Eis$ and the statement for $\eis$ is similar. 
    As the intertwining operator $J^\circ_{Q|P}$ is defined as $\Eis_P^{rs,g} \simeq (\iota_{M,G})_*\simeq \Eis_Q^{rs,g}$,
    we have
    \[
    \Hom(\Eis_P^{rs,g}, \Eis_{Q}^{rs,g}) \simeq \End((\iota_{M,G})_*)
    \]
    and $J^\circ_{Q|P}$ corresponds to $1$ in the right hand side
    and it remains to show that
    \[
    \CZ(D\Coh(\Par_{M})) \simeq \End((\iota_{M,G})_*).
    \]
    We will then verify that $\iota_{M,G}$ satisfies the conditions in Proposition \ref{gen-quotient}. It suffices to prove for each connected component $\Par_{M,\psi}$, where $\psi$ is semisimple. Choose a minimal Levi subgroup ${}^LM_0\subset {}^LM$ containing $\psi$.
    By Lemma \ref{iota-G-general}, $\iota_{M,G}$
    can be identified with
    \[
    \tilde{\BL}_{M_{0,\psi}^\circ\sigma}^{rs,g}/N_M(M_0,\psi) \to \tilde{\BL}_{M_{0,\psi}^\circ\sigma}^{rs,g}/N_G(M_0,\psi).
    \]
    Thus $\iota_{M,G}$ is an \'etale covering between connected algebraic stacks. 

    It remains to find a point in $(M_{0,\psi}^\circ\sigma)^{rs,g}$ such that its stabilizers under the $M_{0,\psi}^1$-action and $N_G(M_0,\psi)$-action are the same (and hence the its stabilizer under the $N_M(M_0,\psi)$-action is also the same). In fact, by Lemma \ref{generical free}, 
    there is an open and dense subset of $M_{0,\psi}^\circ\sigma \sslash M_{0,\psi}^1$ such that $N_G(M_0,\psi)$ acts freely on it.
    Then any point in the inverse image of this subset in $(M_{0,\psi}^\circ\sigma)^{rs,g}/M_{0,\psi}^1$ satisfies this condition.
\end{proof}

From its construction, $J^\circ_{Q|P}$ satisfies the transitivity,
proving the first main theorem \ref{intertwining, rs,g}.

\subsection{Block decomposition of \texorpdfstring{$D\Coh(\Par_{M,\phi})$}{DCoh(Par M,phi)}}

Next we consider the block decomposition of $D\Coh(\Par_{M,\phi})$ as in the unipotent case. We will only consider elliptic components and always identify $\Par_{M,\phi}$ with $M_\phi^\circ\sigma/M_\phi^1$.

\begin{lemma}
\label{gerbe}
    The morphism $M_\phi^\circ\sigma/M_\phi^1 \to M_\phi^\circ\sigma \sslash M_\phi^1$ is a $C_M(\phi)=(M_\phi^1)^\sigma$-gerbe. 
    Moreover, the morphism $(M_\phi^\circ)_\sigma \to M_\phi^\circ\sigma \sslash M_\phi^1$ is a Galois covering and the gerbe is trivialized over this covering.
\end{lemma}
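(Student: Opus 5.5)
The plan is to reduce to the torus case already treated in Lemma \ref{splitting}, by first killing the connected part of the group with a twisted Lang-type map and then quotienting by the finite component group. Write $T:=M_\phi^\circ$, a torus, with $\sigma$-twisted conjugation action $m\cdot t\sigma = m t\sigma(m)^{-1}\sigma$. By the proof of Lemma \ref{splitting}, the map $T\sigma/T\to T\sigma\sslash T = T_\sigma$ is a $T^\sigma$-gerbe, and after choosing a complement $T_3$ to the image of $1-\sigma$ it is trivial: $T\sigma/T\simeq T_\sigma\times */T^\sigma$.

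Next I would pass from $T$ to $M_\phi^1$. Since $M_\phi^1/T = \pi_0(M_\phi^1)=:\Pi$ is finite, $M_\phi^\circ\sigma/M_\phi^1 \simeq (T\sigma/T)/\Pi$, where $\Pi$ acts on the stack $T\sigma/T$ (by conjugation) and hence on the coarse space $T_\sigma$. Thus $M_\phi^\circ\sigma\sslash M_\phi^1 = T_\sigma/\Pi'$, where $\Pi'$ is the image of $\Pi$ in $\operatorname{Aut}(T_\sigma)$. I will show that every point has stabilizer exactly $(M_\phi^1)^\sigma$ up to the identification of twisted centralizers. Indeed, the stabilizer of $t\sigma$ in $M_\phi^1$ is the twisted centralizer, and the ellipticity argument of Lemma \ref{generical free} shows that $(T)^\sigma$ has finite index in it. The key claim is that $\Pi$ acts \emph{trivially} on $T_\sigma$ (so $\Pi'=1$), and that for $t\sigma=\sigma$ the stabilizer is $(M_\phi^1)^\sigma = C_M(\phi)$. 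Translating by $t$ (which commutes with all of $M_\phi^1$ modulo $T$ after the twist) shows that all stabilizers are conjugate to $C_M(\phi)$. For triviality of $\Pi$ on $T_\sigma$, I would use the description $M_\phi^\circ\sigma\simeq X_{M_\phi^\circ}\phi$ from Lemma \ref{another ngmphi}. An element of $M_\phi^1$ conjugates $\chi_m\phi$ to a parameter of $M$ that is an unramified twist of $\phi$. Since $\phi$ is elliptic in $M$ and the $W_F$-torus $T_\sigma$ is identified with the unramified characters modulo $Z(M)$-invariants, the induced action on $T_\sigma=\pi_0$-free part is by translation by an element of $\ker(T_\sigma)$-type, which is trivial on the coarse quotient. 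More concretely, for $g\in M_\phi^1$ the element $g\sigma(g)^{-1}$ lies in $T$ (definition of $M_\phi^1$), so $g\cdot t\sigma = g t g^{-1}\cdot g\sigma(g)^{-1}\sigma$. The map $t\mapsto gtg^{-1}$ acts on the torus $T$ through a finite-order automorphism commuting with $\sigma$; ellipticity should force it to be trivial on $T_\sigma$. This is the main obstacle, and I would first try to prove it by showing that $\ad(g)$ fixes $(Z(M)^{W_F})^\circ$, which surjects isogenously onto $T_\sigma$.

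Granting this, the map to the coarse space is a gerbe with band $(M_\phi^1)^\sigma$. For the second assertion, the composite $T_\sigma = T\sigma\sslash T \to M_\phi^\circ\sigma\sslash M_\phi^1$ is the quotient by the finite group $\Pi$ acting freely through translations on a torus, hence a Galois covering. Its pullback of the gerbe is $T\sigma/T$ up to the finite group $\Pi$ (equivalently $T\sigma/M_\phi^1\times_{\,T_\sigma/\Pi}T_\sigma$), and the section $T_\sigma\simeq T_3\to T\sigma$ from Lemma \ref{splitting} gives an explicit trivialization of it.
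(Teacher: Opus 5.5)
You have a genuine gap at the step you call the key claim: $\Pi=\pi_0(M_\phi^1)$ does \emph{not} in general act trivially on $T_\sigma=(M_\phi^\circ)_\sigma$. You wrote down $g\cdot t\sigma=(gtg^{-1})\,g\sigma(g)^{-1}\sigma$, but you then analyzed only the conjugation part.

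That part is indeed harmless. Since $g\sigma(g)^{-1}\in T$ and $T$ is abelian, $\ad(g)$ commutes with $\sigma$ on $T$. By ellipticity, the central torus $(T^\sigma)^\circ=(Z(M)^{W_F})^\circ$ surjects onto $T_\sigma$. Hence $\ad(g)$ induces the identity on $T_\sigma$.

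What remains is translation by $c(g):=[g\sigma(g)^{-1}]\in T_\sigma$. This gives a homomorphism $c\colon\Pi\to T_\sigma$, with $c(g)=1$ exactly when the component $gT$ meets $C_M(\phi)=(M_\phi^1)^\sigma$, and it need not vanish. Here is an example:
\begin{itemize}
\item Let $M$ be the torus whose dual has character lattice $\mathbb{Z}p_1\oplus\mathbb{Z}p_2\oplus\mathbb{Z}s$.
\item Let inertia act through $p_1\leftrightarrow p_2$, $s\mapsto -s$, and Frobenius through $p_1\mapsto p_1+s$, $p_2\mapsto p_2-s$, $s\mapsto -s$. These are two commuting involutions, realized e.g.\ by a biquadratic extension with $e=f=2$.
\item Then $M_\phi=M_\phi^1\cong\mathbb{G}_m\times\mu_2$ with $\sigma(a,\epsilon)=(a\epsilon,\epsilon)$. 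Since $\sigma$ is trivial on $M_\phi^\circ$, we get $T_\sigma=\mathbb{G}_m$.
\item For $g=(1,-1)$ one has $g\sigma(g)^{-1}=(-1,1)$, so $g$ acts on $T_\sigma$ by $a\mapsto -a$.
\end{itemize}
So the coarse space is $\mathbb{G}_m/\{\pm1\}$, not $T_\sigma$. The gerbe conclusion you derive ``granting this'' therefore rests on a false premise.

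Your final paragraph asserts the opposite, that $\Pi$ acts freely by translations, and this is also false in general. Take $M^\vee=PGL_2$, $\phi(I)=\{1,\mathrm{diag}(1,-1)\}$ and $\phi(\Fr)$ the antidiagonal involution. Then $\phi$ is elliptic and $\Pi=\mathbb{Z}/2$, but $T_\sigma$ is a point.

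The correct statement, which is what the paper proves, is this:
\begin{itemize}
\item There is a subgroup $H_\phi\subset\Pi$ acting trivially on $T_\sigma$, with $\Pi/H_\phi$ acting freely.
\item So $T_\sigma\to M_\phi^\circ\sigma\sslash M_\phi^1$ is Galois with group $\Pi/H_\phi$.
\item The stack is obtained by first passing to $(T\sigma/T)/H_\phi$, a gerbe over $T_\sigma$, and then quotienting by the free $\Pi/H_\phi$-action.
\end{itemize}
The paper obtains $H_\phi$ from the Haines-variety description of the coarse space in \cite{dat2025}: central unramified twists act transitively and commute with conjugation, so all isotropy groups coincide. Your own computation, once you keep the term $g\sigma(g)^{-1}$, gives this directly and even identifies $H_\phi=\ker c$.

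To finish the repair, represent points by $z\sigma$ with $z\in(Z(M)^{W_F})^\circ$. This shows every stabilizer is exactly $(M_\phi^1)^\sigma$. After that, the section from Lemma \ref{splitting} trivializes the pulled-back gerbe as you say.
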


\begin{proof}
    By \cite{dat2025}, Theorem 6.10, the coarse moduli space of $\Par_{M,\phi}$ is isomorphic to the Haines variety of $\phi$, 
    which consists of unramified central twists of $\phi$.
    In particular, the morphism
    \[
    X_{(Z(M)^I)^\circ} \phi \to \Par_{M,\phi}
    \]
    (where the left hand side denotes the unramified twists of $\phi$ by an unramified cocycle in $(Z(M)^I)^\circ$)
    is surjective.
    Since the $Z(M)^I$-action by twisting and the $C_M(\phi)$-action by conjugation on $Z^1(W_F,M)_\phi$ (the scheme of $\psi$'s satisfying $\psi'|_I=\phi|_I$) commutes, 
    the isotropy groups of any point in $\Par_{M,\phi}$
    are identical.

    As 
    \[\Par_{M,\phi}\simeq M_\phi^\circ\sigma/M_\phi^1\simeq (M_\phi^\circ\sigma/M_\phi^\circ)/\pi_0(M_\phi^1),
    \]
    the isotropy group of any point in $\Par_{M,\phi}$ contains $(M_\phi^\sigma)^\circ$ as a subgroup and the corresponding quotient is the stabilizer of its image in $(M_\phi^\circ)_\sigma$ of the $\pi_0(M_\phi^1)$-action. 
    Since the stabilizers of all points in $(M_\phi^\circ)_\sigma$ are identical, there is a subgroup $H_\phi$ of $\pi_0(M_\phi^1)$ acting trivially on $(M_\phi^\circ)_\sigma$ such that $\pi_0(M_\phi^1)/H_\phi$ acts freely on $(M_\phi^\circ)_\sigma$. This proves $(M_\phi^\circ)_\sigma \to M_\phi^\circ\sigma \sslash M_\phi^1$ is a Galois covering.

    Then we prove the statements about the gerbe.
    The morphism $M_\phi^\circ\sigma/M_\phi^1 \to M_\phi^\circ\sigma \sslash M_\phi^1$ can be factored as
    \[
    ((M_\phi^\circ\sigma/M_\phi^\circ)/H_\phi)/(\pi_0(M_\phi^1)/H_\phi)
    \to ((M_\phi^\circ)_\sigma/H_\phi)/(\pi_0(M_\phi^1)/H_\phi)
    \to (M_\phi^\circ)_\sigma/(\pi_0(M_\phi^1)/H_\phi).
    \]
    Thus it is a gerbe as a composition of gerbes and it is a $C_M(\phi)$-gerbe as $C_M(\phi)$ is the isotropy group of $1\sigma$ in $M_\phi^\circ\sigma/M_\phi^1$. Moreover, recall that by Lemma \ref{splitting}, there is a section $(M_\phi^\circ)_\sigma \to M_\phi^\circ\sigma/M_\phi^\circ$. Thus we have a section 
    \[
    (M_\phi^\circ)_\sigma \to M_\phi^\circ\sigma/M_\phi^\circ
    \to M_\phi^\circ\sigma/M_\phi^1
    \]
    of the gerbe $M_\phi^\circ\sigma/M_\phi^1 \to M_\phi^\circ\sigma \sslash M_\phi^1$ after base change along the Galois covering $(M_\phi^\circ)_\sigma \to M_\phi^\circ\sigma \sslash M_\phi^1$.
\end{proof}

Recall that for a $H$-gerbe $\CX \to X$ 
where $X$ is a connected $k$-scheme and $H$ is a (not necessarily connected) reductive group, we have a block decomposition 
\[
D\Coh(\CX) = \bigoplus_{V \in \irr(H)} D\Coh(\CX)_V
\]
where the category with subscript $V$ consists of sheaves $\CF$ satisfying that for any point $x$ in $X$, the $H$-representation on the fiber $\CF_x$ is $V$-isotypic. 
Moreover, if the gerbe is trivial: $\CX \simeq BH \times X$, for any $\CF \in D\Coh(\CX)_V$, we have 
\[
\CF \simeq V \boxtimes \mathcal{H}om(V\boxtimes\CO_X,\CF)
\]
where the ``$\mathcal{H}om$'' is viewed as a sheaf on $X$.

Note that for any coherent sheaf $\CF$ on $X$, there is an $\CO(X)$-action on it.
This action induces a ring homomorphism from $\CO(X)$ to the Bernstein center of $D\Coh(\CX)_V$ for any $V \in \irr(H)$.

\begin{lemma}
\label{bernstein}
    Let $\CX \to X$ be as above. Suppose that $\CX$ and $X$ are smooth and the gerbe is \'etale-locally trivial. Then the ring homomorphism
    \[
    \CO(X) \to \CZ(D\Coh(\CX)_V)
    \]
    is an isomorphism, for any $V \in \irr(H)$.
\end{lemma}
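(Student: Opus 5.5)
The plan is to reduce to the trivial-gerbe case by étale descent, and to use, on the trivial gerbe, that $V$ generates the block. Throughout, $\CZ(-)$ denotes the endomorphism ring of the identity functor, as in the formalism preceding Proposition \ref{rs,g,gen}. The argument has three steps.

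\emph{Step 1: the trivial case.} Suppose first that $\CX\simeq BH\times X$. The decomposition $\CF\simeq V\boxtimes\mathcal{H}om(V\boxtimes\CO_X,\CF)$ recalled above yields an equivalence
\[
D\Coh(\CX)_V\simeq D\Coh(X),\qquad \CF\mapsto \mathcal{H}om(V\boxtimes\CO_X,\CF),
\]
with inverse $\CG\mapsto V\boxtimes \CG$. This requires $\End_H(V)=k$ by Schur's lemma (since $k$ is algebraically closed), and it requires that $\mathcal{H}om$ be exact on the $V$-isotypic part. The latter holds because $H$ is reductive and we are in characteristic $0$, so taking $H$-invariants is exact.

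Under this equivalence, the composite $\CO(X)\to \CZ(D\Coh(\CX)_V)$ is identified with $\CO(X)\to\CZ(D\Coh(X))$, the map sending a function to multiplication by it. It therefore suffices to show that the latter is an isomorphism. Since $X$ is smooth and $\CO_X$ generates $D\Coh(X)$ (at least when $X$ is affine, as in our applications), a central endomorphism is determined by its component on $\CO_X$, which lies in $\End(\CO_X)=\CO(X)$. Naturality against all morphisms $\CO_X\to\CF$ then forces the whole transformation to be multiplication by that function. This gives both injectivity and surjectivity.

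\emph{Step 2: the general case.} Choose an étale cover $U\to X$ over which the gerbe trivializes. In our application we may take $U=(M_\phi^\circ)_\sigma$, which is a Galois covering of $X$ by Lemma \ref{gerbe}. Injectivity of $\CO(X)\to\CZ(D\Coh(\CX)_V)$ is immediate: because $\CX\to X$ is a gerbe, the pushforward of $V$-isotypic objects recovers $\CO_X$-modules, so the action on a faithful object such as $V\boxtimes\CO$ restricted over $U$ detects the function.

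For surjectivity, take $z\in\CZ(D\Coh(\CX)_V)$. Pulling back along $\CX_U\to\CX$ sends $D\Coh(\CX)_V$ to $D\Coh(\CX_U)_V$. I would define the restriction of $z$ to $U$ by working with the subcategory of pulled-back objects: every object of $D\Coh(\CX_U)_V$ is a direct summand of the pullback of its pushforward, since $U\to X$ is finite étale. Step 1 then gives a function $f_U\in\CO(U)$. The two pullbacks of $f_U$ to $U\times_X U$ agree, because both compute the action of $z$ on the same pulled-back objects. Hence $f_U$ descends to some $f\in\CO(X)$, and $z-f$ vanishes after pullback to $U$. Finally, $z-f$ vanishes itself, because the pullback along $\CX_U\to\CX$ is faithful on Hom-spaces by flat descent.

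\emph{Main obstacle.} I expect the main difficulty to be making the restriction of a central element $z$ to the cover well-defined: centers of categories are not obviously local. The direct-summand argument above, using pushforward and pullback along a finite étale map, is what I would try first. Alternatively, for a Galois cover, one can take invariants under the Galois group to identify $D\Coh(\CX)_V$ with equivariant objects in $D\Coh(\CX_U)_V$, and compare the centers of these two categories directly.
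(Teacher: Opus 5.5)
Your strategy (prove the trivial case, then descend the central element along an \'etale cover) differs from the paper's, and it has a genuine gap exactly at the step you flag.

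Write $p\colon\CX_U\to\CX$. Let $z_U(\CG)$ be the diagonal-summand component of $p^*(z_{p_*\CG})$: the unit $\CG\to p^*p_*\CG$ of $p_*\dashv p^*$, then $p^*(z_{p_*\CG})$, then the counit $p^*p_*\CG\to\CG$ of $p^*\dashv p_*$. This is natural in $\CG$, so Step 1 does produce $f_U$. But everything afterwards requires $z_U(p^*\CF)=p^*(z_\CF)$ for $\CF\in D\Coh(\CX)_V$; this includes agreement of the two pullbacks of $f_U$ to $U\times_XU$ and vanishing of $z-f$ after pullback. The phrase ``both compute the action of $z$ on the same pulled-back objects'' does not give this. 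The endomorphism $z_U(p^*\CF)$ is built from $z_{p_*p^*\CF}$, not from $z_\CF$. It is a diagonal block of $p^*(z_{p_*p^*\CF})$ for the splitting $p^*p_*p^*\CF\simeq p^*\CF\otimes\CO_{U\times_XU}$, which does not come from $\CX$.

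The missing input is that $z_{p_*p^*\CF}$ commutes with the $\CO(U)$-action on $p_*p^*\CF\simeq\CF\otimes p_*\CO_U$, since these are endomorphisms inside the block. After pullback, together with $\CO_U$-linearity, $p^*(z_{p_*p^*\CF})$ commutes with $\CO(U)\otimes_{\CO(X)}\CO(U)=\CO(U\times_XU)$. In particular it commutes with the idempotent $e$ of the diagonal component. Since $e\circ p^*(u_\CF)$ is the inclusion of the diagonal summand, naturality of $z$ for the unit $u_\CF\colon\CF\to p_*p^*\CF$ yields $z_U(p^*\CF)=p^*(z_\CF)$. With this, your descent and faithfulness steps go through, provided you detect functions on a global object such as $p_*(V\boxtimes\CO_U)$. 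Note that $V\boxtimes\CO$ does not exist on $\CX$ when the gerbe is nontrivial, so your injectivity sentence needs the same fix.

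The paper never localizes $z$. It fixes a locally free $\CF$ and a closed point $x$, and lets $z$ act on the completion $\CF_x^\wedge$, an inverse system of objects of the global block. The gerbe is trivial over $X_x^\wedge$, so $\CF_x^\wedge\simeq(V\boxtimes\CO_{X,x}^\wedge)^n$ and $a_{\CF_x^\wedge}\in\CO(X_x^\wedge)$. The local-to-global step is then done only for the single endomorphism $a_\CF$, which is a section of the sheaf $\mathcal{E}nd(\CF)$: \'etale-locally $a_\CF$ is a matrix over $\CO$ that is scalar after completion at $x$, hence scalar. This sidesteps the non-locality of centers, needs only an \'etale (not finite \'etale) trivialization, and does not need $X$ affine. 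Your route, once repaired, still uses two extra inputs. It needs a finite \'etale trivializing cover, which is available in the application by Lemma \ref{gerbe} but not implied by the hypotheses. It also needs $X$ affine in Step 1, so that $\CO_X$ generates $D\Coh(X)$.
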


\begin{proof}
    An element in $\CZ(D\Coh(\CX)_V)$ is given by a compatible family $\{a_\CF\}_{\CF\in D\Coh(\CX)_V}$, where $a_\CF \in \End_\CX(\CF)$. As $\CX$ is smooth and $k$ is of characteristic $0$, $D\Coh(\CX)_V$ is generated by locally free sheaves and thus $a_\CF$ is determined by those elements for locally free $\CF$. For locally free $\CF$, $\CO(X)\to \End_\CX(\CF)$ is injective. Thus the homomorphism $\CO(X) \to \CZ(D\Coh(\CX)_V)$ is injective.

    It remains to prove that the homomorphism is surjective.
    Fix an element $a \in \CZ(D\Coh(X)_V)$ and a locally free $\CF\in D\Coh(\CX)_V$. It suffices to prove that $a_\CF \in \CO(X)$. 
    Choose arbitrarily a closed point $x$ in $X$. Consider the completion $\CF_x^\wedge$ of $\CF$ along the fiber of $x$. As the gerbe is \'etale-locally trivial, $\CX_x^\wedge \simeq BH\times X_x^\wedge$ is a trivial gerbe and thus over $\CX_x^\wedge$, there exists $n\in \mathbb{N}$ such that
    \[
    \CF_x^\wedge \simeq (V \boxtimes \CO_{X,x}^\wedge)^n.
    \]
    As $\End_\CX(V \boxtimes \CO_{X,x}^\wedge)=\CO(X_x^\wedge)$, $a_{\CF_x^\wedge} \in \CO(X_x^\wedge)$ by compatibility. Here we view all those completions of sheaves as inverse systems in $D\Coh(\CX)$.

    We claim that the inverse image of $\CO(X_x^\wedge)$ along the homomorphism
    \[
    \End(\CF) \to \End(\CF_x^\wedge)
    \]
    is $\CO(X)$. We will denote by $(-)_Y$ for a $X$-scheme $Y$ the base change of $(-)$ along $Y \to X$. 
    As the constructions $\End(\CF_Y)$, $\End((\CF_x^\wedge)_Y)$, $\CO(Y)$ and $\CO(X_x^\wedge)_Y$ in the claim are sheafy over the \'etale site over $X$, it suffices to prove the claim after any surjective and \'etale base change.
    
    Choose an \'etale covering $X'\to X$ trivializing the gerbe. Then choose a Zariski covering $\tilde{X}\to X'$ trivializing the sheaf
    $\mathcal{H}om(V\boxtimes \CO_X,\CF)$. Then $\CF_{\tilde{X}} \simeq (V\boxtimes \CO_{\tilde{X}})^m$ for some $m\in \CN$ over $\CX_{\tilde{X}}$. Under this identification,
    \[
    \End_{\tilde{X}}(\CF_{\tilde{X}}) \simeq M_m(\CO(\tilde{X})).
    \]
    Thus the inverse image of $\CO(X_x^\wedge)_{\tilde{X}}$ is the scalar matrices, proving the claim. Hence $a_\CF \in \CO(X)$, proving the lemma.
\end{proof}

By Lemma \ref{gerbe}, the $C_M(\phi)$-gerbe $M_\phi^\circ\sigma/M_\phi^1 \to M_\phi^\circ\sigma\sslash M_\phi^1$ satisfies the conditions in Lemma \ref{bernstein}. Thus there is a block decomposition
\[
D\Coh(\Par_{M,\phi}) = \bigoplus_{V \in \irr(C_M(\phi))}D\Coh(\Par_{M,\phi})_V
\]
and 
\[\CZ(D\Coh(\Par_{M,\phi}))_V = \CO(\Par_{M,\phi})\] 
for any $V$.
Restricting the gerbe $M_\phi^\circ\sigma/M_\phi^1 \to M_\phi^\circ\sigma\sslash M_\phi^1$ to the ?-locus, where ? is $rs$, $g$ or $rs,g$, we have
\[
\CZ(D\Coh(\Par_{M,\phi}^?))_V = \CO(\Par_{M,\phi}^?).
\]
We will use the subscript $V$, such as $\Hom(\Eis^?_P,\Eis^?_Q)_V$ to denote the $\Hom$-module over the subcategory $D\Coh(\Par_{M,\phi})_V$.

Decomposing Proposition \ref{rs,g,general,gen} by block, we have the following corollary.

\begin{corollary}
    \label{rs,g,gen,V}
    Let $V$ be an irreducible representation of $C_M(\phi)$.
    The intertwining operator $J_{Q|P}^\circ\colon \Eis_{P}^{rs,g}\simeq \Eis_{Q}^{rs,g}$ generates the free $\CO(\Par_{M,\phi}^{rs,g})$-module $\Hom(\Eis_P^{rs,g}, \Eis_{Q}^{rs,g})_V$. Similarly, $J_{Q|P,0}^\circ$ generates the free $\CO(\Par_{M,\phi}^{rs})$-module $\Hom(\eis_P^{rs}, \eis_{Q}^{rs})_V$. 
\end{corollary}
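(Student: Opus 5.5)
The plan is to obtain Corollary \ref{rs,g,gen,V} from Proposition \ref{rs,g,general,gen} by decomposing that global statement along the block decomposition of $D\Coh(\Par_{M,\phi}^{rs,g})$. The decomposition comes from the $C_M(\phi)$-gerbe of Lemma \ref{gerbe}, restricted to the open locus.

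First, the decomposition $D\Coh(\Par_{M,\phi}^{rs,g}) = \bigoplus_V D\Coh(\Par_{M,\phi}^{rs,g})_V$ gives splittings. Every natural transformation between the additive functors $\Eis_P^{rs,g},\Eis_Q^{rs,g}$ restricted to $\Par_{M,\phi}$ is determined by its restrictions to the summands, so
\[
\Hom(\Eis_P^{rs,g},\Eis_Q^{rs,g}) = \prod_V \Hom(\Eis_P^{rs,g},\Eis_Q^{rs,g})_V .
\]
Similarly, the center splits as $\CZ(D\Coh(\Par_{M,\phi}^{rs,g})) = \prod_V \CZ(D\Coh(\Par_{M,\phi}^{rs,g})_V)$. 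These splittings are compatible with the module structure, since a central element acts blockwise. Next, I would apply Lemma \ref{bernstein} to the restricted gerbe $(M_\phi^\circ\sigma)^{rs,g}/M_\phi^1 \to (M_\phi^\circ\sigma)^{rs,g}\sslash M_\phi^1$. Its hypotheses hold: both spaces are smooth (open in smooth spaces), and the gerbe is \'etale-locally trivial by Lemma \ref{gerbe}, via the Galois covering from $(M_\phi^\circ)_\sigma$. The lemma identifies each factor $\CZ(\cdot)_V$ with $\CO(\Par_{M,\phi}^{rs,g})$, meaning the ring of functions on the coarse space.

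By Proposition \ref{rs,g,general,gen}, the global $\Hom$-module is free of rank one over the center, generated by $J^\circ_{Q|P}$. Write $J^\circ_{Q|P} = (J^\circ_V)_V$ with $J^\circ_V$ the restriction to the $V$-block. Freeness of rank one means the map $z \mapsto z\cdot J^\circ_{Q|P}$ is a bijection from $\prod_V \CZ_V$ to $\prod_V \Hom_V$. This bijection is the product of the blockwise maps $z_V \mapsto z_V J^\circ_V$, so each blockwise map is a bijection. Hence $\Hom(\Eis_P^{rs,g},\Eis_Q^{rs,g})_V$ is free over $\CO(\Par_{M,\phi}^{rs,g})$, generated by $J^\circ_V$, which is the restriction of $J^\circ_{Q|P}$ to the $V$-block, as claimed. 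The $\eis^{rs}$ statement follows the same way from the second half of Proposition \ref{rs,g,general,gen}, using the gerbe over $(M_\phi^\circ\sigma)^{rs}$.

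The step I expect to need the most care is the compatibility of the module structures. The center used in Proposition \ref{rs,g,general,gen} is that of the category on all of $\Par_M^{rs,g}$, and one must restrict it to the component $\Par_{M,\phi}^{rs,g}$ before splitting by blocks. This restriction works because $\Par_{M,\phi}$ is open and closed, so it is itself a direct summand. One also needs the products over $V$ to be honest products of modules. I would handle this by noting that every transformation, and every central element, is determined summand by summand.
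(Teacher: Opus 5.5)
Your proposal is correct and follows the paper's route. The paper obtains this corollary by decomposing Proposition \ref{rs,g,general,gen} along the block decomposition coming from the $C_M(\phi)$-gerbe of Lemma \ref{gerbe}, and it identifies the blockwise centers with $\CO(\Par_{M,\phi}^{rs,g})$ (resp.\ $\CO(\Par_{M,\phi}^{rs})$) via Lemma \ref{bernstein} applied to the restricted gerbe; your explicit handling of the product decompositions, and of the restriction to the open and closed component $\Par_{M,\phi}$, just spells out what the paper leaves implicit.
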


\subsection{Integral intertwining operators}

Now we can prove the main theorem \ref{intertwining} about integral intertwining operators.

\begin{theorem}[Integral intertwining operator]
\label{integral, general}
    Let $V$ be an irreducible representation of $C_M(\phi)$ such that the $(M_\phi^\sigma)^\circ$-action on $V$ is invariant under the Weyl group $W(G_\phi^\circ)^\sigma$.
    After restricting the Eisenstein operators to the subcategory $D\Coh(\Par_{M,\phi})_V$ of $D\Coh(\Par_{M,\phi})$,
    there is a natural transformation $j_{Q|P}\colon \Eis_P\simeq \Eis_Q$, which is equal to $L(1,\ad_{\fn_{Q}/\fn_{P}})^{-1}J^\circ_{Q|P}$ after restricting to the regular semisimple and generic locus. Moreover, $j_{Q|P}$ is a generator of $\Hom(\Eis_P, \Eis_Q)_V$, as a free $\CO(\Par_{M, \phi})$-module of rank $1$.
\end{theorem}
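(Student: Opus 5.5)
The plan is to reduce to the unipotent case, Proposition \ref{all}, using Corollary \ref{to unipotent}. On $\Par_{M,\phi}\simeq M_\phi^\circ\sigma/M_\phi^1$ the Eisenstein operator factors as $\Eis_P\simeq\pi_{\phi*}\circ\overline{\Eis_{P_\phi^\circ}}$. Here $\overline{\Eis_{P_\phi^\circ}}$ is the unipotent Eisenstein operator $\Eis_{P_\phi^\circ}$ for the pair $(G_\phi^\circ,\sigma)$, with Borel $P_\phi^\circ$ and torus $M_\phi^\circ$, made equivariant for the finite group $\pi_0(M_\phi^1)$. By Proposition \ref{semidirect}, $\pi_0(M_\phi^1)$ is exactly the part of $\pi_0(G_\phi^1)$ stabilizing $P_\phi^\circ$. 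The same factorization holds for $Q$, with $Q_\phi^\circ=w^{-1}(P_\phi^\circ)$ for some $w\in W(G_\phi^\circ)^\sigma$.

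First I will pass from $V$ to characters of the torus. Pulling back along the finite étale map $M_\phi^\circ\sigma/M_\phi^\circ\to M_\phi^\circ\sigma/M_\phi^1$, the block $D\Coh(\Par_{M,\phi})_V$ lands in the sum of blocks $D\Coh(\BL_{M_\phi^\circ\sigma})_\chi$, where $\chi$ runs over the characters of $(M_\phi^\circ)^\sigma=(M_\phi^\sigma)^\circ$ occurring in $V$. The hypothesis on $V$ says exactly that each such $\chi$ is $W(G_\phi^\circ)^\sigma$-invariant, so Proposition \ref{all} applies to each $\chi$. It gives a generator $j_{Q_\phi^\circ|P_\phi^\circ}$ of $\Hom(\Eis_{P_\phi^\circ},\Eis_{Q_\phi^\circ})_\chi$, equal to $L(1,\ad_{\fn_{Q_\phi^\circ}/\fn_{P_\phi^\circ}})^{-1}J^\circ$ on the rs,g locus. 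Since $\fn_{P_\phi^\circ}=\fn_P^{\phi(I)}$, this L-function agrees with $L(1,\ad_{\fn_Q/\fn_P})$ from Definition \ref{L-function,general}.

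Second, I need $\pi_0(M_\phi^1)$-equivariance, so that the operator descends and $\pi_{\phi*}$ can then be applied. The operator $j$ is characterized uniquely as the extension of $L^{-1}J^\circ$. Both $J^\circ$ (canonical, by Definition \ref{rs, g, general}) and the L-function are $M_\phi^1$-invariant. Hence $\gamma^*j=j$ for each $\gamma\in\pi_0(M_\phi^1)$, and $j$ descends to $\overline{j}\colon\overline{\Eis_{P_\phi^\circ}}\to\overline{\Eis_{Q_\phi^\circ}}$ on the block $V$. Setting $j_{Q|P}:=\pi_{\phi*}\overline{j}$ gives the desired operator. On the rs,g locus it agrees with $L^{-1}J^\circ_{Q|P}$, because both $J^\circ$'s are defined via $\iota_{M,G*}$ and are compatible with $\pi_\phi$ through Lemma \ref{iota-G-general}.

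Third, and this is the main obstacle, I must show that $j_{Q|P}$ generates $\Hom(\Eis_P,\Eis_Q)_V$ as a free $\CO(\Par_{M,\phi})$-module of rank $1$. The difficulty is that $\pi_{\phi*}$ is not fully faithful, and I see two routes.

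The first route is by injectivity and a divisibility argument. Restriction to the rs,g locus is injective, as in Lemma \ref{inclusion}, using local freeness of $\Eis$ on locally free sheaves. Corollary \ref{rs,g,gen,V} then embeds $\Hom(\Eis_P,\Eis_Q)_V$ into $\CO(\Par^{rs,g}_{M,\phi})\cdot J^\circ$, and the image contains $L^{-1}J^\circ$. For the divisibility bound I would use the local computation from the proof of Proposition \ref{all} at strongly regular semisimple points $x\in V(f)$. Any multiplier $m$ must vanish on $V(f)$ for every irreducible factor $f$ of $L(1,\ad_{\fn_Q/\fn_P})^{-1}$. This holds because, at a point with free $N_G(M,\phi)/M_\phi^1$-action (Lemma \ref{generical free}), $\pi_\phi$ is étale and, as in Lemma \ref{formal-G}, the completion of $\Eis_P$ is locally the unipotent one. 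I need these points to be dense in $V(f)$; I will get this from density of the free locus together with the strong regularity argument.

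The second route is by adjunction. One has $\Hom(\pi_{\phi*}A,\pi_{\phi*}B)=\bigoplus_{\gamma}\Hom(A,\gamma^*B)$ over cosets $\gamma\in\pi_0(G_\phi^1)/\pi_0(M_\phi^1)$. The terms with $\gamma\neq1$ twist the block by an element acting freely on the coarse space, so on the rs,g locus they should vanish by Proposition \ref{rs,g,general,gen}; with injectivity they then vanish everywhere.

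Either route reduces the question to Proposition \ref{all}. Finally, I will pass from the $\chi$-blocks back to $V$ using $\CZ(D\Coh(\Par_{M,\phi})_V)=\CO(\Par_{M,\phi})$ (Lemma \ref{bernstein}) and $\pi_0(M_\phi^1)$-invariants.
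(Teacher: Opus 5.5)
Your skeleton is the paper's. You factor $\Eis_P\simeq\pi_{\phi*}\circ\overline{\Eis_{P_\phi^\circ}}$, apply Proposition \ref{all} upstairs on $W(G_\phi^\circ)^\sigma$-invariant character blocks, and descend using the $\pi_0(M_\phi^1)$-invariance of $L(1,\ad_{\fn_Q/\fn_P})$ and $J^\circ$. Generation then reduces to the claim that $\pi_{\phi*}$ induces $\Hom(\overline{\Eis_{P_\phi^\circ}},\overline{\Eis_{Q_\phi^\circ}})_V\simeq\Hom(\Eis_P,\Eis_Q)_V$. After that, a transformation is written as $fJ^\circ$ and $f$ is lifted to $\BL_{M_\phi^\circ\sigma}$ to invoke Proposition \ref{all}.

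Your second route proves that claim differently from the paper. The paper argues as in Lemma \ref{gen-quotient}: it uses compatibility with the completion at one rs,g point whose isotropy does not grow under $\pi_\phi$ (Lemma \ref{generical free}) to force a transformation to be diagonal. You instead use $\Hom(\pi_{\phi*}A,\pi_{\phi*}B)=\bigoplus_\gamma\Hom(A,\gamma^*B)$ and kill the $\gamma\neq1$ summands on the rs,g locus. There they split into pieces $\Hom(\mathbb{1},(w\gamma)^*)$ that vanish by generic freeness, consistent with Proposition \ref{rs,g,general,gen}. You then extend to everything by injectivity of restriction. This is sound and makes the diagonal structure explicit. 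Like the paper's argument, it needs transformations to be detected on the rs,g locus also for the twisted pieces $\Hom(\overline{\Eis_{P_\phi^\circ}},\gamma^*\overline{\Eis_{Q_\phi^\circ}})$. In your formulation this is visibly Lemma \ref{inclusion} for the pair of characters $\chi,\gamma\chi$. It goes through by the same simply connected cover reduction, since $\gamma\chi$ is again $W(G_\phi^\circ)^\sigma$-invariant ($W$ is normal in $N_G(M,\phi)/M_\phi^\circ$).

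Do not rely on your first route as written; it has two gaps.
\begin{enumerate}
\item Corollary \ref{rs,g,gen,V} only puts the multiplier $m$ in $\CO(\Par^{rs,g}_{M,\phi})$. The local computation along the $V(f)$ with $f\mid L(1,\ad_{\fn_Q/\fn_P})^{-1}$ does not exclude poles of $m$ along the non-regular-semisimple divisors or the remaining non-generic ones. In the unipotent case that regularity comes from the embedding $\Hom(\Eis_B,\Eis_{B'})\subset\Hom(\eis_B,\eis_{B'})\simeq\CO(\BL_{T\sigma})$ in the proof of Proposition \ref{all}.
\item Lemma \ref{generical free} gives free points that are dense in the coarse space, not in the divisor $V(f)$. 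The density lemma preceding Lemma \ref{formal-G} only concerns $W(G_\phi^\circ)^\sigma$, not the $\pi_0(G_\phi^1/M_\phi^1)$-part of $N_G(M,\phi)/M_\phi^1$.
\end{enumerate}
Lifting everything to $\BL_{M_\phi^\circ\sigma}$, as the paper does, sidesteps both points.

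There are also two minor slips.
\begin{enumerate}
\item $(M_\phi^\circ)^\sigma$ may be disconnected; only its neutral component equals $(M_\phi^\sigma)^\circ$. So the $W(G_\phi^\circ)^\sigma$-invariance of the upstairs characters needs the unlabelled lemma right after Proposition \ref{g}, on surjectivity of $Z(G)^\sigma\to\pi_0(T^\sigma)$.
\item By Proposition \ref{semidirect}, all of $\pi_0(G_\phi^1)$ lifts to elements stabilizing $P_\phi^\circ$, not only $\pi_0(M_\phi^1)$.
\end{enumerate}
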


\begin{proof}
    Recall that we have
    \[
    \Eis_P = (\pi_\phi)_* \circ \overline{\Eis_{P^\circ_\phi}}
    \] 
    for 
    \begin{gather*}
        \overline{\Eis_{P^\circ_\phi}}\colon D\Coh(\tilde{\BL}_{M_\phi^\circ\sigma}/M_\phi^1)
        \to D\Coh(\tilde{\BL}_{G_\phi^\circ\sigma}/G_\phi^\circ M_\phi^1),\\
        \pi_\phi \colon \tilde{\BL}_{G_\phi^\circ\sigma}/G_\phi^\circ M_\phi^1
        \to \tilde{\BL}_{G_\phi^\circ\sigma}/G_\phi^1.
    \end{gather*}
    We claim that $\pi_{\phi,*}$ induces an isomorphism
    \[
    \Hom(\overline{\Eis_{P^\circ_\phi}}, \overline{\Eis_{Q^\circ_\phi}})_V
    \simto \Hom(\Eis_P, \Eis_Q)_V.
    \]
    In fact, for locally free $\CF$, choose a point $x \in (M_\phi^\circ\sigma)^{rs,g}$ satisfying that the isotropy groups of it and its image in $\tilde{\BL}_{G_\phi^\circ\sigma}/G_\phi^1$ are isomorphic (by Lemma \ref{generical free} it is possible). Then by similar descent arguments as in the proof of Lemma \ref{gen-quotient} and Lemma \ref{bernstein}, 
    we have for any locally free sheaf $\CF \in D\Coh(\tilde{\BL}_{M_\phi^\circ\sigma}/M_\phi^1)_V$,
    \[
    \Hom(\Eis_P\CF, \Eis_Q\CF)\ \text{``} \cap\text{''}\ \Hom(\Eis_P\CF_x^\wedge, \Eis_Q\CF_x^\wedge) = \pi_{\phi,*}(\Hom(\overline{\Eis_{P^\circ_\phi}}\CF, \overline{\Eis_{Q^\circ_\phi}}\CF)),
    \]
    i.e., if there exists compatible $a_\CF \in \Hom(\Eis_P\CF, \Eis_Q\CF)$ and $a_{\CF_x^\wedge} \in \Hom(\Eis_P\CF_x^\wedge, \Eis_Q\CF_x^\wedge)$, then $a_\CF$ is in the image of $\pi_{\phi, *}$,
    proving the claim.

    Thus it suffices to prove similar statements for $\overline{\Eis_{P^\circ_\phi}}$ and $\overline{\Eis_{Q^\circ_\phi}}$.
    The L-function 
    \[
    L(1,\ad_{\fn_{Q}/\fn_{P}})^{-1} = L(1,\ad_{\fn_{Q^\circ_\phi}/\fn_{P_\phi^\circ}})^{-1}
    \]
    is invariant under the $\pi_0(M^1_\phi)$-action and $J^\circ_{Q^\circ_\phi|P^\circ_\phi}$ is the lifting of $J^\circ_{Q|P}$.
    For $\CF \in D\Coh(\Par_{M,\phi})_V$, 
    its lifting to $\BL_{M_\phi^\circ\sigma}$ satisfies that its $(M_\phi^\sigma)^\circ$-characters are invariant under the Weyl group $W(G_\phi^\circ)^\sigma$. Thus by Theorem \ref{all}, the operator 
    \[
    j_{P^\circ_\phi|Q^\circ_\phi} = L(1,\ad_{\fn_{Q^\circ_\phi}/\fn_{P^\circ_\phi}})^{-1}
    J^\circ_{Q^\circ_\phi|P^\circ_\phi}
    \]
    is well-defined. 
    Moreover, it is invariant by $\pi_0(M^1_\phi)$ and hence descends to an operator 
    $j_{Q|P}$ between $\overline{\Eis_{P^\circ_\phi}}$ and $\overline{\Eis_{Q^\circ_\phi}}$, which is $L(1,\ad_{\fn_{Q}/\fn_{P}})^{-1}J^\circ_{Q|P}$ after restricting to the regular semisimple and generic locus, as desired.

    Next we prove the statement about the structure of $\Hom(\overline{\Eis_{P^\circ_\phi}}, \overline{\Eis_{Q^\circ_\phi}})_V$. By Lemma \ref{inclusion} and descent, there is a natural inclusion
    \[
    \Hom(\overline{\Eis_{P^\circ_\phi}}, \overline{\Eis_{P^\circ_\phi}})_V \subset \Hom(\overline{\Eis_{P^\circ_\phi}^{rs,g}}, \overline{\Eis_{Q^\circ_\phi}^{rs,g}})_V
    \simeq \CO(\Par_{M,\phi}^{rs,g}) = 
    \CO(\BL_{M_\phi^\circ\sigma}^{rs,g})^{\pi_0(M_\phi^1)}
    \]
    Thus it suffices to determine for what function $f \in \CO(\Par_{M,\phi}^{rs,g})$, $fJ^\circ_{Q|P}$ can be extended to a well-defined operator $\overline{\Eis_{P^\circ_\phi}}\to \overline{\Eis_{Q^\circ_\phi}}$.
    Note that $f J^\circ_{Q|P}$ is well-defined if and only if its lifting $\tilde{f}J^\circ_{Q_\phi^\circ|P_\phi^\circ}\colon \Eis_{P_\phi^\circ}\to \Eis_{Q_\phi^\circ}$ is well-defined, where $\tilde{f}$ is its image in  
    By Theorem \ref{all}, the statement is equivalent to
    \[\tilde{f}\in \CO(\BL_{M_\phi^\circ\sigma})L(1, \ad_{\fn_{Q^\circ_\phi}/\fn_{P^\circ_\phi}})^{-1}\] and then $f \in \CO(\Par_{M,\phi})L(1,\ad_{\fn_Q/\fn_{P}})^{-1}$. Thus we conclude.
\end{proof}

\begin{remark}
    The generalization of Proposition \ref{g} about the generic locus holds by similar arguments as above.
    More precisely, let $V$ be an irreducible representation of $M_\phi^1$ such that the $(M_\phi^\sigma)^\circ$-action on $V$ is trivial.
    Then the intertwining operator 
    $J^\circ_{Q|P}\colon \Eis_{P}^{rs,g}\simeq \Eis_{Q}^{rs,g}$ can be extended to an isomorphism $J^g_{Q|P} \colon \Eis_{P}^{g}\simeq \Eis_{Q}^{g}$, which
    generates the free $\CO(\Par_{M,\phi}^{g})$-module $\Hom(\Eis_P^{g}, \Eis_{Q}^{g})_V$. 
    
    Similarly, $J^{rs}_{Q|P,0}=J_{Q|P,0}^\circ$ can be extended to an isomorphism
    $J_{Q|P,0} \colon \eis_{P}\simeq \eis_{Q}$, which
    generates the free $\CO(\Par_{M,\phi})
    $-module $\Hom(\eis_P, \eis_{Q})_V$.
\end{remark}

\subsection{Adjunction}

Next we define the constant term functor $\CT_P$ as the partial left adjoint functor of $\Eis_P$, and finally construct the unnormalized intertwining operator. Note that we will always consider adjoint functors in the $\infty$-categorical sense.

Denote by $D\Coh(\Par_{M,\phi})_{W^\sigma-\text{inv}}$ the direct sum of all $D\Coh(\Par_{M,\phi})_V$'s for irreducible representation $V$ of $C_M(\phi)$ such that $(M_\phi^\sigma)^\circ$ action is invariant under the Weyl group $W(G_\phi^\circ)^\sigma$.  
Denote by $D\Coh(\Par_{G,\phi})_{M,W^\sigma-\text{inv}}$ the thick subcategory generated by the image of 
\[\Eis_P \colon D\Coh(\Par_{M,\phi})_{W^\sigma-\text{inv}}\to D\Coh(\Par_{G,\phi}).\] 
Note that the category $D\Coh(\Par_{G,\phi})_{M,W^\sigma-\text{inv}}$ is independent of the choice of $P$: for $P'$, there exists $w \in W(G_\phi^\circ)^\circ$ such that 
$w(P_\phi'^\circ)=P_\phi^\circ$ and then $\Eis_{P'}=\Eis_P\circ w_*$.

The expected left adjoint of $\Eis_P$ is denoted by
\[\CT_P\colon D\Coh(\Par_{G,\phi})_{M,W^\sigma-\text{inv}} \to D\Coh(\Par_{M,\phi})_{W^\sigma-\text{inv}}.\]
We view $\CT_P\CG$ only formally as a functor $D\Coh(\Par_{M,\phi})_{W^\sigma-\text{inv}} \to \mathsf{Ani}$ for now. However, by Proposition \ref{ct-general} that we will prove later, $\CT_Q\circ \Eis_P \CF$ for any $\CF \in D\Coh(\Par_{M,\phi})_{W^\sigma-\text{inv}}$ lies in $D\Coh(\Par_{M,\phi})_{W^\sigma-\text{inv}}$ and thus
$\CT_P$ defines exactly a functor $D\Coh(\Par_{G,\phi})_{M,W^\sigma-\text{inv}} \to D\Coh(\Par_{M,\phi})_{W^\sigma-\text{inv}}$.

\begin{proposition}[Decomposition of $\CT\circ \Eis$]
\label{decomposition}
    There is a decomposition
    \[
    \CT_Q\circ\Eis_P \simeq \bigoplus_{\gamma\in \pi_0(G^1_\phi/M^1_\phi)} \overline{\CT_{Q_\phi^\circ}}\circ \overline{\Eis_{P_\phi^\circ}}\circ \gamma^*,
    \]
    where $\gamma$, being viewed as an element in $N_G(M,\phi)/M_\phi^1$ under the isomorphism in Proposition \ref{semidirect}, 
    means the $\gamma$-action on $\tilde{\BL}_{M_\phi^\circ\sigma}/M_\phi^1$.
    Here $\overline{\CT_{Q_\phi^\circ}}$ 
    is the formal adjoint functor of $\overline{\Eis_{Q_\phi^\circ}}$.
\end{proposition}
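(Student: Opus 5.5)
We compute $\CT_Q\circ\Eis_P$ through its defining property $\Hom(\CT_Q\Eis_P\CF,\CG)\simeq\Hom(\Eis_P\CF,\Eis_Q\CG)$, as functors of $\CG\in D\Coh(\Par_{M,\phi})_{W^\sigma-\text{inv}}$. The plan is to decompose the right-hand side along the finite morphism $\pi_\phi$ of Corollary \ref{to unipotent}, and then to recognize each summand as a Hom out of one of the proposed pieces.

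First we use Corollary \ref{to unipotent}: $\Eis_P=\pi_{\phi*}\overline{\Eis_{P_\phi^\circ}}$ and $\Eis_Q=\pi_{\phi*}\overline{\Eis_{Q_\phi^\circ}}$. By Lemma \ref{gphi1} and Proposition \ref{semidirect}, $\pi_\phi\colon \tilde{\BL}_{G_\phi^\circ\sigma}/M_\phi^1G_\phi^\circ\to\tilde{\BL}_{G_\phi^\circ\sigma}/G_\phi^1$ is a finite étale Galois cover. Its group is $\pi_0(G_\phi^1)/\pi_0(M_\phi^1)=\pi_0(G^1_\phi/M^1_\phi)$, and the section of Proposition \ref{semidirect} realizes it by elements $\gamma\in N_G(M,\phi)$ that stabilize $P_\phi^\circ$ and $Q_\phi^\circ$. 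Hence $\pi_\phi^*\pi_{\phi*}\simeq\bigoplus_\gamma\gamma^*$. Moreover, since $\gamma$ normalizes $M^\circ_\phi\sigma$ and fixes $P_\phi^\circ$, it induces automorphisms of the whole diagram $\tilde{\BL}_{M_\phi^\circ\sigma}/M_\phi^1\leftarrow\tilde{\BL}_{P_\phi^\circ\sigma}/P_\phi^1\to\tilde{\BL}_{G_\phi^\circ\sigma}/M_\phi^1G_\phi^\circ$. By base change this gives $\gamma^*\overline{\Eis_{P_\phi^\circ}}\simeq\overline{\Eis_{P_\phi^\circ}}\gamma^*$.

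Second, since $\pi_\phi$ is finite étale we have $\pi_\phi^*\dashv\pi_{\phi*}$ (and $\pi_{\phi*}\dashv\pi_\phi^*$ up to the trivial relative dualizing sheaf). Combining this with the first step gives
\[
\Hom(\pi_{\phi*}\overline{\Eis_{P_\phi^\circ}}\CF,\pi_{\phi*}\overline{\Eis_{Q_\phi^\circ}}\CG)\simeq\Hom(\pi_\phi^*\pi_{\phi*}\overline{\Eis_{P_\phi^\circ}}\CF,\overline{\Eis_{Q_\phi^\circ}}\CG)\simeq\bigoplus_\gamma\Hom(\overline{\Eis_{P_\phi^\circ}}\gamma^*\CF,\overline{\Eis_{Q_\phi^\circ}}\CG).
\]
If $\overline{\CT_{Q_\phi^\circ}}$ exists on the relevant objects, each summand is $\Hom(\overline{\CT_{Q_\phi^\circ}}\,\overline{\Eis_{P_\phi^\circ}}\gamma^*\CF,\CG)$. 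The direct sum of these representing objects then represents the whole functor, and the Yoneda lemma yields the claimed decomposition, functorially in $\CF$.

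The main obstacle is the existence of $\overline{\CT_{Q_\phi^\circ}}$ on $\overline{\Eis_{P_\phi^\circ}}\gamma^*\CF$ when $\CF$ lies in a $W^\sigma$-invariant block. We need two things here. First, $\gamma^*$ must preserve the $W^\sigma$-invariant blocks; this holds because $\gamma$ normalizes $(M_\phi^\sigma)^\circ$ and permutes characters compatibly with $W(G_\phi^\circ)^\sigma$. Second, the unipotent results must descend through the quotient by $\pi_0(M_\phi^1)$. For the second point, we pull back along the covering $\tilde{\BL}_{M^\circ_\phi\sigma}/M^\circ_\phi\to\tilde{\BL}_{M_\phi^\circ\sigma}/M_\phi^1$, where Proposition \ref{CT_chi} (with $\Eis_{P'}=\Eis_P\circ w_*$) gives $\CT_{Q_\phi^\circ}\Eis_{P_\phi^\circ}$ explicitly. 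We then descend via $\pi_0(M^1_\phi)$-invariants, since this group acts compatibly on both sides and, in characteristic $0$, taking invariants is exact. The remaining steps are formal, so I expect this to be the one needing the most care.
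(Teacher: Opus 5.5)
Your argument is correct and is essentially the paper's: the paper composes formal left adjoints to get $\CT_Q\simeq\overline{\CT_{Q_\phi^\circ}}\circ\pi_\phi^*$, uses $\pi_\phi^*\pi_{\phi*}\simeq\bigoplus_\gamma\gamma^*$ for the Galois cover, and commutes $\gamma^*$ past $\overline{\Eis_{P_\phi^\circ}}$ because the section of Proposition \ref{semidirect} stabilizes $P_\phi^\circ$; this is your Hom-level/Yoneda computation, and the existence issue you flag is deferred in the paper to Proposition \ref{ct-general}, which uses the same pull-back-and-descend argument you sketch. One small correction: the section only guarantees that $\gamma$ stabilizes $P_\phi^\circ$, and it need not stabilize $Q_\phi^\circ$ (conjugation by $\gamma$ can act nontrivially on $W(G_\phi^\circ)^\sigma$), but your computation never uses that claim.
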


\begin{proof}
    Recall that $\Eis_P \simeq \pi_{\phi,*}\circ \overline{\Eis_{P_\phi^\circ}}$. We have $\CT_Q\simeq \overline{\CT_{Q_\phi^\circ}}\circ \pi_\phi^*$.
    As $\pi_\phi \colon \tilde{\BL}_{G_\phi^\circ\sigma}/G_\phi^\circ M_\phi^1 \to \tilde{\BL}_{G_\phi^\circ\sigma}/G_\phi^1$ is a Galois cover with Galois group $\pi_0(G_\phi^1/M_\phi^1)$, we have
    \[
    \CT_Q \circ \Eis_P \simeq \overline{\CT_{Q_\phi^\circ}}\circ 
    \pi_\phi^* \circ \pi_{\phi, *} \circ \overline{\Eis_{P_\phi^\circ}}
    \simeq \bigoplus_{\gamma\in \Gamma} \overline{\CT_{Q_\phi^\circ}}\circ 
    \gamma^* \circ \overline{\Eis_{P_\phi^\circ}},
    \]
    where $\gamma$ is the $\gamma$-action on $\tilde{\BL}_{G_\phi^\circ\sigma}/G_\phi^\circ M_\phi^1$.
    By Proposition \ref{semidirect}, the image of $\gamma$ under the section $\pi_0(G_\phi^1/M_\phi^1) \to N_G(M,\phi)/M_\phi^1$
    stabilizes $P_\phi^1$.
    Thus the $\gamma$-action restricts to actions on $\tilde{\BL}_{P_\phi^\circ\sigma}/P_\phi^1$ and $\tilde{\BL}_{M_\phi^\circ\sigma}/M_\phi^1$. 
    Hence 
    \[
    \gamma^*\circ \overline{\Eis_{P_\phi^\circ}}\simeq  \overline{\Eis_{P_\phi^\circ}}\circ \gamma^*
    \]
    and we conclude.
\end{proof}

\begin{proposition}
\label{ct-general}
    For any element $\CG \in D\Coh(\Par_{G,\phi})_{M,W^\sigma-\text{inv}}$, 
    the formal $\CT_Q(\CG)$ is representable by an object in $D\Coh(\Par_{M,\phi})_{W^\sigma-\text{inv}}$.
    
    In particular, after restricting the functor $\Eis_P$ to the block
    $D\Coh(\Par_{M,\phi})_V$ for $V$ as above, its left adjoint is obtained by composing $\CT_P$ with the functor of taking the $V$-isotypic component. The result will be denoted by $\CT_{P,V}$.

    Moreover, for locally free $\CF \in D\Coh(\Par_{M,\phi})_{W^\sigma-\text{inv}}$, $\CT_Q\circ\Eis_P\CF$ is locally free.
\end{proposition}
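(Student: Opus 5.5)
We reduce to the unipotent case through the Galois cover $\pi_\phi$ and the decomposition of Proposition \ref{decomposition}. By Corollary \ref{to unipotent}, $\Eis_P\simeq\pi_{\phi*}\circ\overline{\Eis_{P_\phi^\circ}}$. Since $\pi_\phi$ is a finite étale Galois cover with group $\Gamma=\pi_0(G^1_\phi/M^1_\phi)$, the pair $\pi_\phi^*\dashv\pi_{\phi*}$ is also an adjunction in the other direction, $\pi_{\phi*}\dashv\pi_\phi^*$. Hence a formal left adjoint of $\Eis_P$ is $\overline{\CT_{P_\phi^\circ}}\circ\pi_\phi^*$, as soon as $\overline{\CT_{P_\phi^\circ}}$ exists on the relevant subcategory. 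The key step is therefore to construct $\overline{\CT_{P_\phi^\circ}}$, the left adjoint of $\overline{\Eis_{P_\phi^\circ}}\colon D\Coh(\tilde{\BL}_{M_\phi^\circ\sigma}/M^1_\phi)\to D\Coh(\tilde{\BL}_{G_\phi^\circ\sigma}/G_\phi^\circ M^1_\phi)$.

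To construct it, we first pull back along the finite étale map $\tilde{\BL}_{M_\phi^\circ\sigma}/M_\phi^\circ\to\tilde{\BL}_{M_\phi^\circ\sigma}/M^1_\phi$, and similarly on the $G$-side with $G_\phi^\circ\to G_\phi^\circ M_\phi^1$. There the functor becomes $\Eis_{P_\phi^\circ}$ for the pair $(G_\phi^\circ,\sigma)$ with maximal torus $M_\phi^\circ$. Propositions \ref{ct_circ_eis} and \ref{CT_chi} give the left adjoint $\CT_{P_\phi^\circ}$ on the blocks $\chi\in X^*((M_\phi^\circ)^\sigma)^{W^\sigma}$. This matches the hypothesis: $V$ lies in $W^\sigma$-invariant blocks exactly when its $(M_\phi^\sigma)^\circ$-characters are $W(G_\phi^\circ)^\sigma$-invariant, and by the lemma on $\pi_0(T^\sigma)$ this is the same as $W^\sigma$-invariance of the characters of $(M_\phi^\circ)^\sigma$.

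Next we descend along the finite group $\pi_0(M^1_\phi)$. This group acts compatibly on source and target, and since $\pi_0(M^1_\phi)\simeq\pi_0(P^1_\phi)$ it preserves $P_\phi^\circ$. Uniqueness of adjoints gives $\CT_{P_\phi^\circ}$ a canonical $\pi_0(M^1_\phi)$-equivariant structure, and in characteristic $0$ taking invariants is exact. So $\overline{\CT_{P_\phi^\circ}}(\CG)$ is represented by the descent of $\CT_{P_\phi^\circ}$ of the pullback of $\CG$. One checks that the Hom-adjunction survives taking invariants, because Hom on a quotient by a finite group is the invariants of Hom upstairs. The formal representability argument at the end of Proposition \ref{ct_circ_eis} then extends the adjoint from the generators $\Eis_P\CF$ to the whole thick subcategory $D\Coh(\Par_{G,\phi})_{M,W^\sigma-\text{inv}}$. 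That argument uses that $R\Hom(\Eis_P\CF,\Eis_P\CG)$ is concentrated in degree $0$ for locally free $\CF,\CG$, which follows from Lemma \ref{inclusion} by the same descent.

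We also need $\CT_Q(\CG)$ to land in the $W^\sigma$-invariant part. By Proposition \ref{decomposition}, $\CT_Q\circ\Eis_P\simeq\bigoplus_\gamma\overline{\CT_{Q_\phi^\circ}}\circ\overline{\Eis_{P_\phi^\circ}}\circ\gamma^*$, so it suffices that each $\gamma\in\Gamma$ preserves $D\Coh(\Par_{M,\phi})_{W^\sigma-\text{inv}}$. Under the section of Proposition \ref{semidirect}, $\gamma$ normalizes $M_\phi^\circ$ and $P_\phi^\circ$. It therefore permutes the characters of $(M_\phi^\sigma)^\circ$ compatibly with $W(G_\phi^\circ)^\sigma$, which it normalizes, and so preserves invariance. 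The statement for the single block $V$ follows at once: $D\Coh(\Par_{M,\phi})_V$ is a direct summand, so the left adjoint of $\Eis_P|_V$ is the $V$-isotypic projection of $\CT_P$.

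Finally, for local freeness: for locally free $\CF$, each summand $\overline{\CT_{Q_\phi^\circ}}\overline{\Eis_{P_\phi^\circ}}\gamma^*\CF$ pulls back upstairs to $\CT_{Q_\phi^\circ}\circ\Eis_{P_\phi^\circ}$ of a locally free sheaf. This is locally free by Proposition \ref{ct_circ_eis}, using $\CT_{B'}\Eis_B\simeq\CT_B\Eis_B\circ w_*$ and the twist by $\CO_\chi$ from Proposition \ref{CT_chi}, and local freeness descends along finite étale maps. I expect the main obstacle to be the careful bookkeeping that the $\pi_0(M^1_\phi)$-equivariance of the unipotent adjoint is canonical and compatible with block decompositions for non-abelian $C_M(\phi)$.
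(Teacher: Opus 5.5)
Your proposal is correct and follows essentially the same route as the paper. The paper likewise lifts along the $\pi_0(M_\phi^1)$-Galois cover, applies Propositions \ref{ct_circ_eis} and \ref{CT_chi}, descends using the induced equivariant structure and the invariants description of $\Hom$, assembles the $\gamma$-summands via Proposition \ref{decomposition}, and extends from locally free objects by the formal argument. Your explicit check that each $\gamma$ preserves the $W^\sigma$-invariant blocks, together with the passage from invariance on $(M_\phi^\sigma)^\circ$ to invariance of the characters of $(M_\phi^\circ)^\sigma$, fills in steps the paper leaves tacit.
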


\begin{proof}
    By similar formal arguments as in the end of the proof of Proposition 
    \ref{ct_circ_eis}, it suffices to prove that for locally free 
    $\CF\in D\Coh(\Par_{M,\phi})_1$, $\CT_Q\circ\Eis_P\CF$ is representable and locally free.

    Fixing locally free $\CF \in D\Coh(\Par_{M,\phi})_1$, denote by $\tilde{\CF}$ its lifting on $\BL_{M_\phi^\circ}$. Then $\tilde{\CF}$ is a $\pi_0(M_\phi^1)$-equivariant sheaf. 
    The $(M_\phi)^\circ$-action at any fiber of $\tilde{\CF}$ is trivial by definition. Thus by Proposition \ref{CT_chi}, the object $\CT_{Q_\phi^\circ}\circ\Eis_{P_\phi^\circ}\tilde{\CF}$ exists and the $(M_\phi^\sigma)^\circ$-action on any fiber of it is trivial.
    As the construction $\CT_{Q_\phi^\circ}\circ\Eis_{P_\phi^\circ}$
    is functorial, we obtain a $\pi_0(M_\phi^1)$-equivariant structure on
    $\CT_{Q_\phi^\circ}\circ\Eis_{P_\phi^\circ}\tilde{\CF}$ and it can be descended to an object $\overline{\CT_{Q_\phi^\circ}\circ\Eis_{P_\phi^\circ}\tilde{\CF}}$ in $D\Coh(\Par_{M,\phi})_1$. This object satisfies that
    \[
    \Hom(\overline{\CT_{Q_\phi^\circ}\circ\Eis_{P_\phi^\circ}\tilde{\CF}},
    \CF') = \Hom(\CT_{Q_\phi^\circ}\circ\Eis_{P_\phi^\circ}\tilde{\CF}, \widetilde{\CF'})^{\pi_0(M_\phi^1)}
    = \Hom(\overline{(\Eis_{P_\phi^\circ}}\CF, \overline{\Eis_{Q_\phi^\circ}}\CF').
    \]
    Thus it is the desired object $\overline{\CT_{Q_\phi^\circ}}\circ \overline{\Eis_{P_\phi^\circ}}\CF$. By Proposition \ref{ct_circ_eis} and \ref{CT_chi}, $\CT_{Q_\phi^\circ}\circ\Eis_{P_\phi^\circ}\tilde{\CF}$ is locally free. Hence $\overline{\CT_{Q_\phi^\circ}}\circ \overline{\Eis_{P_\phi^\circ}}\CF$ is locally free.

    As $\gamma^*\CF \in D\Coh(\Par_{M,\phi})_1$ and is locally free, $\overline{\CT_{Q_\phi^\circ}}\circ \overline{\Eis_{P_\phi^\circ}}
    \circ \gamma^* \CF$ is representable for any $\gamma \in \pi_0(G_\phi^1/M_\phi^1)$. Thus $\CT_Q\Eis_P\CF$ is representable and locally free by the decomposition in Proposition \ref{decomposition}.
\end{proof}

\begin{remark}
    As mentioned in remark \ref{CT_B*}, over the category of ind-coherent sheaves, we may define a right adjoint $\CT_{P*}$ of $\Eis_P$ even for general components of $\Par_M$ and $\Par_G$. Moreover, the conjectural second adjunction implies that $\CT_{\bar{P}*}$ is the totally defined left adjoint of $\Eis_P$.
\end{remark}

Now we define a filtration of $\CT\circ \Eis$ indexed by the ``relative Weyl group'' $N_G(M, \phi)/M_\phi^1$. 

\begin{definition}[Order on $N_G(M, \phi)/M_\phi^1$]
    The partial order on $N_G(M, \phi)/M_\phi^1$ is defined as follows.
    For $w, w' \in N_G(M, \phi)/M_\phi^1$,
    \[
    w\leq w' \iff \overline{P w Q}\subset \overline{P w'Q} \subset G.
    \]
\end{definition}

As we have a morphism of stacks $P_\phi^1\backslash G_\phi^1/Q_\phi^1\to P\backslash G/Q$,
for $w, w' \in W(G_\phi^\circ)^\sigma$ and $\gamma\in \pi_0(G_\phi^1/M_\phi^1)\hookrightarrow N_G(M,\phi)/M_\phi^1$
as in Proposition \ref{semidirect},
$\overline{P_\phi^\circ w Q_\phi^\circ}\subset \overline{P_\phi^\circ w'Q_\phi^\circ}$ implies that $\gamma w\leq \gamma w'$. 
Thus the restriction of the order on $N_G(M, \phi)/M_\phi^1$ over each coset $W(G_\phi^\circ)^\sigma\gamma$ refines the Bruhat order of $W(G_\phi^\circ)^\sigma$ with respect to $(P_\phi^\circ, Q_\phi^\circ)$.

\begin{lemma}
\label{descend filtration}
    Let $\CF$ be an object in $D\Coh(\Par_{M,\phi})_{W^\sigma-\text{inv}}$ and denote by $\tilde{\CF}$ its lifting on $\BL_{M_\phi^\circ\sigma}$. 
    The filtration $\{F^{\geq w}, F^{>w}\}$ with respect to the order above of $\CT_{Q_\phi^\circ}\circ\Eis_{P_\phi^\circ}\widetilde{\gamma^*\CF}$
    indexed by $W(G_\phi^\circ)^\sigma$ defined in Definition   \ref{filtration} can be uniquely descended to a functorial filtration $\{\bar{F}^{\geq w}, \bar{F}^{>w}\}$ of $\overline{\CT_{P_\phi^\circ}}\circ \overline{\Eis_{P_\phi^\circ}}\gamma^*\CF$.

    Moreover, if $\CF$ is locally free, then all the subquotients on the filtration $\bar{F}$ are locally free.
\end{lemma}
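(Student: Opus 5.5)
The proof is Galois descent along the finite étale cover $\alpha\colon \BL_{M_\phi^\circ\sigma}\to \BL_{M_\phi^\circ\sigma}/M_\phi^1\simeq\Par_{M,\phi}$. This cover is the composition of the $M_\phi^\circ$-quotient and the finite group quotient by $\pi_0(M_\phi^1)$. From the proof of Proposition \ref{ct-general} we know two things. First, $\overline{\CT_{Q_\phi^\circ}}\circ\overline{\Eis_{P_\phi^\circ}}\gamma^*\CF$ is the descent of $\CT_{Q_\phi^\circ}\circ\Eis_{P_\phi^\circ}\widetilde{\gamma^*\CF}$. Second, the latter carries the $\pi_0(M_\phi^1)$-equivariant structure obtained by functoriality. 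A filtration on the upstairs object descends, uniquely, as soon as each piece $F^{\geq w}$, $F^{>w}$ is stable under this equivariant structure. Uniqueness is automatic, since pullback along $\alpha$ is faithful and $\alpha^*$ of a subobject determines it. So the task reduces to checking stability, and this is the step I expect to be the main obstacle.

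First, reduce to locally free $\CF$ in a single block $D\Coh(\Par_{M,\phi})_V$, and then by twisting (Proposition \ref{CT_chi}) to the case where the lift lies in the trivial block. The general case follows by linearity, exactly as in Definition \ref{filtration}. For locally free $\CF$, the filtration upstairs is an honest filtration by saturated subsheaves. By Lemma \ref{free} and saturation, these subsheaves are cut out by kernels of the components indexed by $(1,w')$. More precisely, $F^{\geq w}$ is the intersection of the kernels of the maps to $\CO(M_{\phi,\sigma}^\circ)_{w'}$ over $w'<w$, or equivalently of the kernels of the $(L\delta)_{w'}$.

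To check stability, take $\eta\in\pi_0(M_\phi^1)$, viewed in $N_G(M,\phi)/M_\phi^\circ$. By Proposition \ref{semidirect}, $\eta$ can be chosen to stabilize $P_\phi^\circ$, and also $Q_\phi^\circ$, since it lies in $M$. Hence $\eta$ normalizes $W(G_\phi^\circ)^\sigma$ and preserves its Bruhat order relative to $(P_\phi^\circ,Q_\phi^\circ)$. Its action on $M_\phi^\circ\sigma$ then transports the component $(1,w')$ of $M_{\phi,\sigma}^\circ\times M_{\phi,\sigma}^\circ$ to $(1,\eta w'\eta^{-1})$. Because the order on $N_G(M,\phi)/M_\phi^1$ is defined through double cosets $\overline{PwQ}$, and $\eta\in M\subset P\cap Q$, we get $\overline{P\eta w\eta^{-1}Q}=\overline{PwQ}$. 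Therefore conjugation by $\eta$ preserves the order, and in fact acts trivially on the index set modulo $M_\phi^1$. Consequently each $F^{\geq w}$ and $F^{>w}$ is sent to itself, and these subsheaves are $\pi_0(M_\phi^1)$-stable. I would verify this compatibility of the equivariant structure with the components $(1,w')$ generically over the strongly regular semisimple locus, where everything is explicit. Saturation then extends it to the whole space.

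For the last assertion, take $\CF$ locally free. By Lemma \ref{free}, the subquotients upstairs are free $\CO(M_{\phi,\sigma}^\circ)$-modules of finite rank; the graded pieces have rank one, namely $w^*$ of the relevant sheaf. Local freeness descends along the finite étale $\alpha$ after passing to coarse spaces, using the gerbe description of Lemma \ref{gerbe}. Hence the descended subquotients $\bar F^{\geq w}/\bar F^{>w}$, and all other subquotients, are locally free.
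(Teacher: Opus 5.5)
Your argument is sound in substance, but it runs the descent in the opposite direction from the paper. The paper never checks invariance upstairs. It constructs $\bar F$ directly on $\Par_{M,\phi}$. Over the regular semisimple and generic locus, $\overline{\Eis^{rs,g}_{P_\phi^\circ}}\simeq\overline{\iota_{G_\phi^\circ}}_*$, where $\overline{\iota_{G_\phi^\circ}}$ is a Galois cover with group $W(G_\phi^\circ)^\sigma$. Hence $\overline{\CT^{rs,g}}\circ\overline{\Eis^{rs,g}}\simeq\bigoplus_{w}w^*j^{rs,g*}$ canonically. The paper then defines $\bar F^{\geq w}$ as the intersection over $w'<w$ of the kernels of $\overline{\CT}\circ\overline{\Eis}\,\CF\hookrightarrow j^{rs,g}_*(\cdots)\to j^{rs,g}_*w'^*j^{rs,g*}\CF$, and observes that this pulls back to the filtration of Definition~\ref{filtration}.

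Because this definition is intrinsic, equivariance comes for free. The step you postpone (``verify this compatibility of the equivariant structure with the components $(1,w')$ generically'') is exactly what the paper's canonical decomposition supplies. What your route adds is an explicit reason the descent works. An element $\eta\in M_\phi^1$ preserves $P_\phi^\circ$ and $Q_\phi^\circ$ under twisted conjugation, and it commutes with $W(G_\phi^\circ)^\sigma$ in $N_G(M,\phi)/M_\phi^\circ$. The paper uses this silently when it indexes the generic decomposition by $W(G_\phi^\circ)^\sigma$ on the quotient by $M_\phi^1$.

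Two points need tightening.

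\begin{enumerate}
\item Order-preservation alone would only send $F^{\geq w}$ to $F^{\geq \eta w\eta^{-1}}$; you need $\eta w\eta^{-1}=w$ in $W(G_\phi^\circ)^\sigma$. Your ``trivially modulo $M_\phi^1$'' gives this once you add that $W(G_\phi^\circ)^\sigma\to N_G(M,\phi)/M_\phi^1$ is injective, which holds because $M_\phi\cap G_\phi^\circ=M_\phi^\circ$. Equivalently: $M_\phi^1$ and $N_{G_\phi^\circ}(M_\phi^\circ\sigma)$ are both normal in $N_G(M,\phi)$ with intersection $M_\phi^\circ$, so their commutator lies in $M_\phi^\circ$.
\item The reduction ``by twisting to the trivial block'' is not compatible with descent. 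The lift of an object of $D\Coh(\Par_{M,\phi})_V$ is in general spread over several $\chi$-blocks of $D\Coh(\BL_{M_\phi^\circ\sigma})$, which $\pi_0(M_\phi^1)$ may permute. Moreover, $\CO_\chi$ depends on a non-canonical splitting, so the equivariant structure cannot be transported to $\chi=1$. Drop that step and run the stability check on the intrinsic generic characterization: kernels of the projections to $w'^*$ on the regular semisimple and generic locus, followed by saturation. This is block-independent, and is effectively the paper's argument.
\end{enumerate}

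Your uniqueness and local-freeness arguments agree with the paper. Local freeness descends directly along the faithfully flat cover, so coarse spaces are not needed.
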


As in the unipotent case,
here we use the term ``filtration'' for a compatible family of functors, insisting the fact that it defines a classical filtration in an abelian category for locally free sheaves.

\begin{proof}
    We will omit $\gamma$ in the notations, as only this summand is concerned.
    As $\BL_{M_\phi^\circ\sigma}\to \Par_{M,\phi}$ is a Galois covering, the uniqueness part is clear.
    It suffices to construct such a filtration for locally free $\CF$.
    Then the result extends by linearity. 
    Moreover, we can consider the case $P_\phi^\circ = Q_\phi^\circ$ only
    and the general case follows from multiplying the indices by some $w \in W(G_\phi^\circ)^\sigma$.
    Denote by $\CT^{rs,g}$ the left adjoint of $\Eis^{rs,g}$ and $j^{rs,g}$ the open immersion from the regular and semisimple locus. 
    Then $\overline{\CT_{P_\phi^\circ}^{rs,g}}\circ \overline{\Eis_{P_\phi}^{rs,g}}\CF$ exists and there is a natural inclusion
    \[
    \overline{\CT_{P_\phi^\circ}}\circ j^{rs,g}_*\overline{\Eis_{P_\phi^\circ}}\CF
    \subset
    \overline{\CT_{P_\phi^\circ}^{rs,g}}\circ \overline{\Eis_{P_\phi}^{rs,g}}j^{rs,g*}\CF
    \]
    as it is so for $\tilde{\CF}$ and descent.
    Similar to Definition \ref{rs, g, general}, 
    $\overline{\Eis^{rs,g}_{P_\phi^\circ}}$ can be canonically identified with
    $\overline{\iota_{G_\phi^\circ}}_*$, where
    \[
    \overline{\iota_{G_\phi^\circ}} \colon \tilde{\BL}^{rs,g}_{M_\phi^\circ\sigma}/M_\phi^1
    \to (\tilde{\BL}^{rs,g}_{M_\phi^\circ\sigma}/M_\phi^1)/W(G_\phi)^\sigma
    \simto  
    \tilde{\BL}^{rs,g}_{G_\phi^\circ\sigma}/G_\phi^\circ M_\phi^1.
    \]
    As $\overline{\iota_{G_\phi^\circ}}$ is a Galois covering with Galois group $W(G_\phi^\circ)^\sigma$, we have
    \[
    \overline{\CT_{P_\phi^\circ}^{rs,g}}\circ \overline{\Eis_{P_\phi}^{rs,g}}j^{rs,g*}\CF \simeq \overline{\iota_{G_\phi^\circ}}^*\overline{\iota_{G_\phi^\circ}}_*j^{rs,g*}\CF
    \simeq \bigoplus_{w\in W(G_\phi^\circ)^\sigma} w^*j^{rs,g*}\CF.
    \]
    Then we have a filtration $F^{\geq w}$ defined by
    \[
    \bar{F}^{\geq w} := \bigcap_{w'<w}
    \ker(
    \overline{\CT_{P_\phi^\circ}}\circ \overline{\Eis_{P_\phi^\circ}}\CF
    \hookrightarrow
    j^{rs,g}_*\overline{\CT_{P_\phi^\circ}^{rs,g}}\circ \overline{\Eis_{P_\phi}^{rs,g}}\CF \to j^{rs,g}_*w^*j^{rs,g*}\CF)
    \]
    and similarly for $F^{>w}$. By constructions in Definition \ref{filtration}, $\bar{F}$ lifts to the filtration $F$ of $\CT_{P_\phi^\circ}\circ\Eis_{P_\phi^\circ}\tilde{\CF}$ as desired.
    Moreover, the subquotients of the filtration is locally free, as they are so after lifting to $\BL_{M_\phi^\circ\sigma}$.
\end{proof}

\begin{definition}[Filtration of $\CT\circ \Eis$]
\label{filtration-general}
    The filtration $\{F^{\geq w}, F^{>w}\}$ of the functor $\CT_Q\circ \Eis_P$ on the category $D\Coh(\Par_{M,\phi})_{W^\sigma-\text{inv}}$ indexed by $N_G(M, \phi)/M_\phi^1$ with respect to the above ordering is
    defined as follows. Write $w=\gamma w_0$ as in Proposition \ref{semidirect}. By Lemma \ref{descend filtration}, there is a filtration $\bar{F}$ of $\overline{\CT_{Q_\phi^\circ}}\circ \overline{\Eis_{P_\phi^\circ}}$. Then for locally free $\CF$, as in the proof of Lemma \ref{descend filtration}, we define
    \[
    F^{\geq w} := \bigcap_{w'<w}
    \ker(
    \CT_Q\circ \Eis_P\CF
    \hookrightarrow
    j^{rs,g}_*\CT_Q\circ \Eis_P\CF \to j^{rs,g}_*w'^*j^{rs,g*}\CF)
    \]
    and similarly for $F^{>w}$.
    Note that the filtration is preserved under homomorphisms between locally free sheaves in $D\Coh(\Par_{M,\phi})_{W^\sigma-\text{inv}}$. We may extend by linearity to uniquely obtain a filtration on the functor $\CT_Q\circ\Eis_P$ on $D\Coh(\Par_{M,\phi})_{W^\sigma-\text{inv}}$. 

    Moreover, let $V$ be as above. By taking $V$-isotypic components, we obtain a filtration indexed by $N_G(M, \phi)/M_\phi^1$ of the functor $\CT_{Q,V}\circ \Eis_P$ on the category $D\Coh(\Par_{M,\phi})_V$. This filtration reduces to a $N_G(M,\phi,V)/M_\phi^1$-filtration, where the group $N_G(M,\phi,V)$ is the subgroup of $N_G(M,\phi)$ which containing those elements that preserve $D\Coh(\Par_{M,\phi})_V$ under their action on $\Par_{M,\phi}$.
\end{definition}

By definition, under the decomposition in Proposition \ref{decomposition}, if $w=\gamma w_0$ for $w_0\in W(G_\phi^\circ)^\sigma$,
$F$ restricts to the filtration $\bar{F}$ on $\overline{\CT_{P_\phi^\circ}}\circ \overline{\Eis_{P_\phi^\circ}}\gamma^*\CF$.

Now we can prove the third main theorem \ref{Adjunction}, about the unnormalized intertwining operator.

\begin{theorem}
\label{mul-general}
    Let $V$ be an irreducible representation of $M_\phi^1$ such that the $(M_\phi^\sigma)^\circ$-action on $V$ is invariant under the Weyl group $W(G_\phi^\circ)^\sigma$.
    The natural transformations between endofunctors on $D\Coh(\Par_{M,\phi})_V$, 
    viewed as an $\CO(\Par_{M,\phi}) = \CZ(D\Coh(\Par_{M,\phi}))_V$-module 
    \[
    \Hom(\mathbb{1}, \CT_{Q,V}\circ\Eis_P/F^{>1})_V 
    \]
    is free and of rank $1$. Moreover, there exists a generator of the module such that the composition
    \[
    \mathbb{1} \to \CT_{Q,V}\circ \Eis_{P}/F^{>1} \to \mathbb{1}
    \]
    is the multiplication by $L(0, \ad_{\fn_{Q}/\fn_{P}})^{-1}$,
    where the morphism 
    \[
    \CT_{Q,V}\circ \Eis_P \to \mathbb{1}
    \]
    is given by the adjunction of the intertwining operator $j_{Q|P} \colon \Eis_P \to \Eis_Q$ defined in \ref{integral, general}.
\end{theorem}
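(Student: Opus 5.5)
The plan is to reduce to the unipotent statement, Theorem \ref{mul-sc}, through the decomposition of Proposition \ref{decomposition} and the descent of filtrations in Lemma \ref{descend filtration}. Write $\CT_{Q,V}\circ\Eis_P$ as the $V$-isotypic part of $\bigoplus_{\gamma}\overline{\CT_{Q_\phi^\circ}}\circ\overline{\Eis_{P_\phi^\circ}}\circ\gamma^*$, with $\gamma$ ranging over $\pi_0(G_\phi^1/M_\phi^1)$. The index $1\in N_G(M,\phi)/M_\phi^1$ is the minimal element for the order defined above. By Definition \ref{filtration-general}, the quotient $\CT_{Q,V}\circ\Eis_P/F^{>1}$ only sees the summand $\gamma=1$, and within that summand it sees the piece indexed by the element $w_0\in W(G_\phi^\circ)^\sigma$ with $w_0(Q_\phi^\circ)=P_\phi^\circ$; here the indices are shifted as in Definition \ref{filtration}. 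So I first show that the summands with $\gamma\neq 1$ lie in $F^{>1}$. These summands are indexed by elements $\gamma w\neq 1$, which are not $\leq 1$. This should reduce
\[
\Hom(\mathbb{1},\CT_{Q,V}\circ\Eis_P/F^{>1})_V\simeq \Hom\bigl(\mathbb{1},\overline{\CT_{Q_\phi^\circ}}\circ\overline{\Eis_{P_\phi^\circ}}/\bar F^{>1}\bigr)_V .
\]

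Next I lift along the Galois covering $\BL_{M_\phi^\circ\sigma}\to\Par_{M,\phi}$ with group $\pi_0(M_\phi^1)$. On the cover, objects of the $V$-block have $(M_\phi^\sigma)^\circ$-characters invariant under $W(G_\phi^\circ)^\sigma$. So Theorem \ref{mul-sc}, applied to the pair $(G_\phi^\circ,\sigma)$ and to each such character $\chi$, shows that the corresponding Hom-module upstairs is free of rank one over $\CO(T_\sigma)$. It also provides a generator $e$ whose composite with the adjunction of $j_{Q_\phi^\circ|P_\phi^\circ}$ is multiplication by $L(0,\ad_{\fn_{Q_\phi^\circ}/\fn_{P_\phi^\circ}})^{-1}$. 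This $L$-function equals $L(0,\ad_{\fn_Q/\fn_P})^{-1}$ by the identity $\fn_{P_\phi^\circ}=\fn_P^{\phi(I)}$ of Definition \ref{L-function,general}.

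I then descend. The generator from Lemma \ref{free}, namely $1\mapsto L(0,\cdot)^{-1}L_w^{-1}e_w$, is canonically defined in terms of the $L$-functions and the rational dual basis $e_w$, and these are $\pi_0(M_\phi^1)$-invariant. So $e$ is $\pi_0(M_\phi^1)$-invariant and descends. Taking invariants of a free rank-one module whose generator is invariant gives a free rank-one module over $\CO(\BL_{M_\phi^\circ\sigma})^{\pi_0(M_\phi^1)}=\CO(\Par_{M,\phi})$. Here I would use the gerbe and Bernstein center identification of Lemma \ref{bernstein}, in the same style as the descent at the start of the proof of Theorem \ref{integral, general}.

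Finally, $j_{Q|P}$ is by construction the descent of $j_{Q_\phi^\circ|P_\phi^\circ}$ (Theorem \ref{integral, general}). Its adjunction $\CT_{Q,V}\circ\Eis_P\to\mathbb{1}$ is therefore the descent of the upstairs adjunction, restricted to the $\gamma=1$ summand. The composite $\mathbb{1}\to\CT_{Q,V}\circ\Eis_P/F^{>1}\to\mathbb{1}$ is then the descent of multiplication by $L(0,\ad_{\fn_Q/\fn_P})^{-1}$.

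I expect the main obstacle to be the first step. It requires checking that the $\gamma\neq1$ summands are contained in $F^{>1}$, and that their adjunction components factor through $F^{>1}$. This is a compatibility between the ordering on $N_G(M,\phi)/M_\phi^1$ and the decomposition of Proposition \ref{decomposition}. I would try to prove it first over the regular semisimple generic locus, where everything splits as $\bigoplus_w w^*$, and then use saturation of the filtration to extend it.
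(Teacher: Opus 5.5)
Your overall plan matches the paper's: split off the $\gamma\neq 1$ summands of Proposition \ref{decomposition}, lift the $\gamma=1$ part to $\BL_{M_\phi^\circ\sigma}$, apply Theorem \ref{mul-sc}, and descend using invariance of the $L$-functions. The reduction you aim for in the first step is also correct. However, your justification of that step is false in general.

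With the order $w\le w'\iff\overline{PwQ}\subset\overline{Pw'Q}$, the element $1$ need not be minimal. The quotient $\CT_{Q,V}\circ\Eis_P/F^{>1}$ keeps every graded piece indexed by some $w'\le 1$, not a single piece.
\begin{itemize}
\item For $Q=\bar P$ the cell $PQ$ is open, so every element of $N_G(M,\phi)/M_\phi^1$ is $\le 1$.
\item Then $F^{>1}=0$ on locally free objects, and the quotient is all of $\CT_Q\circ\Eis_P$.
\item So the $\gamma\neq 1$ summands are not contained in $F^{>1}$, and no compatibility between the order and the decomposition can make them so.
\end{itemize}
Already in the unipotent case, $\Gamma(\CT_B\circ\Eis_B\CO_{T_\sigma})/F^{>w}$ is free on $\{(L\delta)^*_{w'}\}_{w'\le w}$ (Lemma \ref{free}). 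Rank one is a property of $\Hom(\mathbb{1},-)$, not of the quotient.

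The missing idea is to kill the $\gamma\neq1$ contributions at the level of $\Hom$, by showing $\Hom(\mathbb{1},\overline{\CT_{Q_\phi^\circ}}\circ\overline{\Eis_{P_\phi^\circ}}\circ\gamma^*/\bar F^{>1})=0$. This is how the paper argues:
\begin{itemize}
\item For locally free $\CF$ the quotient is locally free (Lemma \ref{descend filtration}). So such a transformation is determined on the regular semisimple generic locus, where the functor is $\bigoplus_{w}w^*\gamma^*j^{rs,g*}$.
\item By Lemma \ref{generical free} there is a point $x$ at which $N_G(M,\phi)/M_\phi^1$ acts freely. Hence $\Hom(\CF_x^\wedge,w^*\gamma^*\CF_x^\wedge)=0$, and every component vanishes.
\end{itemize}
This is the same mechanism that makes only the identity-twisted piece receive maps from $\mathbb{1}$.

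A smaller point concerns your descent by taking $\pi_0(M_\phi^1)$-invariants of the upstairs rank-one module. This is not directly available, for two reasons. The $V$-block does not pull back to a single $\chi$-block. Also, a transformation on $D\Coh(\Par_{M,\phi})_V$ only gives maps on lifted objects.

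The paper proceeds instead as follows:
\begin{itemize}
\item It embeds the downstairs module into $\CO(\Par^{rs,g}_{M,\phi})$ via the canonical section $s$.
\item It checks by lifting that $L(1,\ad_{\fn_Q/\fn_P})L(0,\ad_{\fn_Q/\fn_P})^{-1}s$ extends.
\item It proves generation by lifting a single locally free $\CF$, which is a summand of sums of $\CO_\chi$'s. It then uses that $\Hom(\mathbb{1},\CT_{Q_\phi^\circ}\circ\Eis_{P_\phi^\circ}/F^{>1})_\chi$ is saturated in $\Hom(\CO_\chi,\CT_{Q_\phi^\circ}\circ\Eis_{P_\phi^\circ}\CO_\chi/F^{>1})$, which follows from Lemma \ref{free}.
\end{itemize}
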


\begin{proof}
    We may replace the functor $\CT_{Q,V}$ by $\CT_Q$, as we have
    \[
    \Hom(\mathbb{1}, \CT_{Q,V}\circ\Eis_P/F^{>1})_V \simeq \Hom(\mathbb{1}, \CT_{Q}\circ\Eis_P/F^{>1})
    \]
    where in the latter $\Hom$, the terms are functors from $D\Coh(\Par_{M,\phi})_V$ to $D\Coh(\Par_{M,\phi})_{W^\sigma-\text{inv}}$ and $\mathbb{1}$ is viewed as the inclusion.
    
    Firstly, we prove that for $\gamma\neq 1 \in N_G(M,\phi)/M_\phi^1$, 
    \[
    \Hom(\mathbb{1}, \overline{\CT_{Q_\phi^\circ}}\circ \overline{\Eis_{P_\phi^\circ}}\circ \gamma/\bar{F}^{>1}) = 0.
    \]
    As $D\Coh(\Par_{M,\phi})$ and $D\Coh(\Par_{M,\phi})_V$ are generated by locally free sheaves and by Lemma \ref{descend filtration}, for locally free $\CF$, $\overline{\CT_{Q_\phi^\circ}}\circ\overline{\Eis_{P_\phi^\circ}}\circ \gamma^*\CF/F^{>1}$ is locally free. Thus we have an inclusion
    \[
    \Hom(\mathbb{1}, \overline{\CT_{Q_\phi^\circ}}\circ \overline{\Eis_{P_\phi^\circ}}\circ \gamma/\bar{F}^{>1}) 
    \subset
    \Hom(j^{rs,g,*}, \overline{\CT^{rs,g}_{Q_\phi^\circ}}\circ\overline{\Eis^{rs,g}_{P_\phi^\circ}}\gamma^*j^{rs,g*}/\bar{F}^{>1}),
    \]
    where $j^{rs,g}$ is the open immersion from the regular semisimple and generic locus.
    As in the proof of Lemma \ref{descend filtration}, we have
    \[
    \overline{\CT^{rs,g}_{Q_\phi^\circ}}\circ\overline{\Eis^{rs,g}_{P_\phi^\circ}}\gamma^*j^{rs,g*}\CF \simeq \bigoplus_{w\in W(G_\phi^\circ)^\sigma} w^*\gamma^*j^{rs,g*}\CF.
    \]
    By Lemma \ref{generical free}, there exists a point $x$ in the coarse moduli of $\Par_{M,\phi}$ such that $N_G(M,\phi)$ acts freely on it.
    Then for such $x$,
    \[
    \Hom(\CF_x^\wedge ,w^*\gamma^*\CF_x^\wedge) = 0
    \]
    and thus $\Hom(\mathbb{1}, \overline{\CT_{Q_\phi^\circ}}\circ \overline{\Eis_{P_\phi^\circ}}\circ \gamma/\bar{F}^{>1})=0$.

    Hence the inclusion map under the decomposition of $\CT_Q\circ\Eis_P$ in Proposition \ref{decomposition}
    \[
    \Hom(\mathbb{1}, \overline{\CT_{Q_\phi^\circ}}\circ \overline{\Eis_{P_\phi^\circ}}/\bar{F}^{>1}) \to
    \Hom(\mathbb{1}, \CT_Q\circ\Eis_P/F^{>1}) 
    \]
    is an isomorphism. Furthermore, again from the decomposition of $\overline{\CT^{rs,g}_{Q_\phi^\circ}}\circ\overline{\Eis^{rs,g}_{P_\phi^\circ}}$, we have 
    \[
    \Hom(\mathbb{1}, \overline{\CT_{Q_\phi^\circ}}\circ \overline{\Eis_{P_\phi^\circ}}/\bar{F}^{>1})_V 
    \hookrightarrow \Hom(j^{rs,g*}, \overline{\CT^{rs,g}_{Q_\phi^\circ}}\circ \overline{\Eis^{rs,g}_{P_\phi^\circ}}j^{rs,g*}/\bar{F}^{>1})_V 
    = \End(j^{rs,g*})_V = \CO(\Par_{M,\phi}^{rs,g})
    \]
    under which $1$ in the rightmost object corresponds in the second object to the canonical section of the adjoint of $J_{Q|P}^\circ$,
    denoted by $s$. Thus it suffices to prove that $L(1, \ad_{\fn_Q/\fn_P})L(0, \ad_{\fn_Q/\fn_P})^{-1}s$ lies in the first $\Hom$-module and generates it as an $\CO(\Par_{M,\phi})$-module.

    Consider the Galois covering $\BL_{M_\phi^\circ\sigma} \to \tilde{\BL}_{M_\phi^\circ\sigma}/M_\phi^1$ and denoted by a tilde
    the lifting of an object along the covering. 
    By definition, $\tilde{s}$ is the canonical section of the adjoint of $J_{Q_\phi^\circ|P_\phi^\circ}^\circ$. 
    Thus by Theorem \ref{mul-sc}, $L(1, \ad_{\fn_Q/\fn_P})L(0, \ad_{\fn_Q/\fn_P})^{-1}\tilde{s}$ is a well-defined transformation
    $\mathbb{1} \to \CT_{Q_\phi^\circ}\Eis_{P_\phi^\circ}/F^{>1}$. As the L-functions are invariant under the $\pi_0(M_\phi^1)$-action, 
    the transformation
    \[
    L(1, \ad_{\fn_Q/\fn_P})L(0, \ad_{\fn_Q/\fn_P})^{-1}s \in \Hom(\mathbb{1}, \overline{\CT_{Q_\phi^\circ}}\circ \overline{\Eis_{P_\phi^\circ}}/\bar{F}^{>1})_V
    \]
    is well-defined.

    Moreover, for $f \in \CO(\Par_{M,\phi}) = \CO(\BL_{M_\phi^\circ\sigma})^{\pi_0(M_\phi^1)}$, 
    choose arbitrarily a locally free sheaf $\CF\in D\Coh(\Par_{M,\phi})_V$. Suppose that the transformation $fs$ can be extended everywhere. Then 
    \[
    \widetilde{(fs)_\CF} \in \Hom_{\CO(\BL_{M_\phi^\circ\sigma})}(\tilde{\CF}, \CT_{Q_\phi^\circ}\Eis_{P_\phi^\circ}\tilde{\CF}/F^{>1}).
    \]
    We then prove the claim that the image of the module $\Hom(\mathbb{1},  \CT_{Q_\phi^\circ}\Eis_{P_\phi^\circ}/F^{>1})$ in the $\Hom$-module for $\tilde{\CF}$ is saturated. As $\CF$ is locally free, it is a direct summand of $\CO_\chi$'s, where $\chi$'s are characters of $(M_\phi^\circ)^\sigma$ such that they are trivial on $(M_\phi^\sigma)^\circ$. Hence it suffices to prove that the $\CO(\BL_{M_\phi^\circ\sigma})$-submodule
    \[
    \CO(\BL_{M_\phi^\circ\sigma}) \simeq \Hom(\mathbb{1},  \CT_{Q_\phi^\circ}\Eis_{P_\phi^\circ}/F^{>1})_\chi \subset
    \Hom(\CO_\chi,  \CT_{Q_\phi^\circ}\Eis_{P_\phi^\circ}\CO_\chi/F^{>1})
    \]
    is saturated. By Proposition \ref{CT_chi}, it suffices to prove the case for $\chi=1$. By Lemma \ref{free} and the proof of Theorem \ref{decomposition}, for $w \in W(G_\phi^\circ)^\sigma$ such that $Q_\phi^\circ=w(P_\phi^\circ)$, the $\CO(\BL_{M_\phi^\circ\sigma})$-module
    \[
    \Hom(\CO_1,  \CT_{Q_\phi^\circ}\Eis_{P_\phi^\circ}\CO_1/F^{>1}) =
    \Gamma(\CT_{Q_\phi^\circ}\Eis_{P_\phi^\circ}\CO_1/F^{>1})
    = \Gamma(\CT_{P_\phi^\circ}\Eis_{P_\phi^\circ}\CO_1/F^{>w})
    \]
    is free and generated by $\{(L\delta)^*_{w'}\}_{w'\leq w}$ and the submodule $\Hom(\mathbb{1},  \CT_{Q_\phi^\circ}\Eis_{P_\phi^\circ}/F^{>1})_1$ corresponds to the submodule generated by $(L\delta)_w^*$, proving the claim.

    Thus $\widetilde{(fs)_\CF}$ is an $\CO(\BL_{M_\phi^\circ\sigma})$-multiple of a generator of $\Hom(\mathbb{1}, \CT_{Q_\phi^\circ}\Eis_{P_\phi^\circ}/F^{>1})$,
    as it is a rational multiple. Hence $\tilde{f}$ is an $\CO(\BL_{M_\phi^\circ\sigma})$-multiple of $L(1, \ad_{\fn_Q/\fn_P})L(0, \ad_{\fn_Q/\fn_P})^{-1}$  as rational functions in $k(\BL_{M_\phi^\circ\sigma})$ and then $f$ is an $\CO(\Par_{M,\phi})$-multiple of $L(1, \ad_{\fn_Q/\fn_P})L(0, \ad_{\fn_Q/\fn_P})^{-1}$ as rational functions in $k(\Par_{M,\phi})$, 
    proving that $L(1, \ad_{\fn_Q/\fn_P})L(0, \ad_{\fn_Q/\fn_P})^{-1}s$ is a generator.
\end{proof}

From the proof, we have the following corollary as an analog of the Mackey's formula for classical parabolic induction and restrictions.
\begin{corollary}
    \label{stratum w}
    The descent of $L(1, \ad_{\fn_Q/\fn_P})L(0, \ad_{\fn_Q/\fn_P})^{-1}s$ induces an isomorphism
    \[
    \mathbb{1} \simeq F^{\geq 1}/F^{>1}(\CT_Q\circ \Eis_P)
    \]
    and by twisting we generalize this results to all strata:
    \[
    w^* \simeq F^{\geq w}/F^{>w}(\CT_Q\circ \Eis_P).
    \]
\end{corollary}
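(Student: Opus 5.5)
The plan is to read off the first isomorphism from the proof of Theorem \ref{mul-general}, and then obtain the remaining strata by twisting. By that proof, $\Hom(\mathbb{1}, \CT_Q\circ\Eis_P/F^{>1})$ reduces, via the decomposition of Proposition \ref{decomposition}, to the summand $\gamma=1$; the summands with $\gamma\neq 1$ contribute nothing by Lemma \ref{generical free}. So I would first lift along the Galois covering $\BL_{M_\phi^\circ\sigma}\to \tilde{\BL}_{M_\phi^\circ\sigma}/M_\phi^1$ and work with $\CT_{Q_\phi^\circ}\circ\Eis_{P_\phi^\circ}$ on the blocks $\CO_\chi$. Using Proposition \ref{CT_chi}, this reduces further to $\chi=1$ and the free sheaf $\CO_{T_\sigma}$.

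In that reduced setting, with $w_0$ chosen so that $Q_\phi^\circ=w_0(P_\phi^\circ)$, the filtration satisfies $F^{\geq 1}/F^{>1}(\CT_{Q_\phi^\circ}\Eis_{P_\phi^\circ})=F^{\geq w_0}/F^{>w_0}(\CT_{P_\phi^\circ}\Eis_{P_\phi^\circ})\circ w_0$. By Lemma \ref{free}, this quotient is free of rank one, generated by $L(0,\ad_{\fn/w_0^{-1}\fn})^{-1}L_{w_0}^{-1}e_{w_0}$. The canonical section $\tilde s$ corresponds to $e_{w_0}$ in the rational description, so $L(1,\ad_{\fn_Q/\fn_P})L(0,\ad_{\fn_Q/\fn_P})^{-1}\tilde s$ is exactly this generator. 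Hence it induces an isomorphism of free rank-one modules $\CO_{T_\sigma}\simeq F^{\geq 1}/F^{>1}$. The isomorphism is functorial in $\CF$ because it is defined by a natural transformation; for a general $\CF$ one reduces by linearity to locally free $\CF$, and for those both sides are locally free by Lemma \ref{descend filtration}. It is also $\pi_0(M_\phi^1)$-equivariant, since the $L$-functions are invariant, so it descends to the isomorphism $\mathbb{1}\simeq F^{\geq 1}/F^{>1}(\CT_Q\circ\Eis_P)$. For the descent I would check that the map is an isomorphism after pullback to the covering, and that being an isomorphism of locally free sheaves is a local property.

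For a general $w=\gamma w_0'$, I would use $\Eis_{w^{-1}(Q)}\simeq \Eis_Q\circ w_*$, together with the indexing convention in Definitions \ref{filtration} and \ref{filtration-general}. Together these identify $F^{\geq w}/F^{>w}(\CT_Q\circ\Eis_P)$ with $F^{\geq 1}/F^{>1}(\CT_{Q'}\circ\Eis_P)\circ w^*$ for the appropriate parabolic $Q'$. Applying the first step to $Q'$ then gives $w^*\simeq F^{\geq w}/F^{>w}$.

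The step I expect to be the main obstacle is matching the indexing. One has to check that twisting by the $\gamma$-part from Proposition \ref{semidirect} preserves the filtration, i.e.\ that the order on $N_G(M,\phi)/M_\phi^1$ is compatible with left translation by $\gamma$. This holds because $\gamma$ stabilizes $P_\phi^\circ$, so the $\gamma$-summand of Proposition \ref{decomposition} carries exactly the filtration $\bar F$ transported from the $\gamma=1$ summand.
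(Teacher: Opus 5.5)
Your proposal follows the paper's route: the corollary is read off from the proof of Theorem \ref{mul-general}, where the descended section $L(1,\ad_{\fn_Q/\fn_P})L(0,\ad_{\fn_Q/\fn_P})^{-1}s$ is matched with the rank-one generator of the graded piece from Lemma \ref{free}, and the other strata then follow by twisting. Two claims in your write-up need correcting, though neither affects the conclusion. First, the fact that $F^{\geq 1}/F^{>1}$ has no contribution from the summands with $\gamma\neq 1$ in Proposition \ref{decomposition} comes directly from the definition of the filtration, since the graded piece at $w$ only sees the $w^*$-component; it does not come from the Hom-vanishing of Lemma \ref{generical free}. Second, for fixed $(P,Q)$ the order is in general not invariant under left translation by $\gamma$, because $\gamma$ stabilizes $P_\phi^\circ$ but need not normalize $P$. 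You do not need that invariance: it suffices that the order is compatible with replacing $Q$ by a conjugate and re-indexing by right translation, which is automatic from $\overline{Pv(wQw^{-1})}=\overline{PvwQ}\,w^{-1}$, together with the fact, checked in the paper, that on each coset $\gamma W(G_\phi^\circ)^\sigma$ the order refines the Bruhat order.
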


As in the unipotent case, we divide the integral intertwining operator to obtain the unnormalized intertwining operator.

\begin{definition}[Unnormalized intertwining operator]
\label{unnormalized}
    The unnormalized intertwining operator $J_{Q|P}$ is defined to be the rational operator $\Eis_P\to \Eis_Q$ by 
    \[
    J_{Q|P} := L(0,\mathrm{ad}_{\fn_Q/\fn_P}) j_{Q|P},
    \]
    as a transformation $\Eis_P^{rs} \to \Eis_Q^{rs}$.
\end{definition}

By Theorem \ref{mul-general}, $J_{Q|P}$ is the desired unnormalized intertwining operator in Theorem \ref{Adjunction}.

%% file: 4.tex
\section{Compatibility}

In this section, we will state the compatibility between the classical intertwining operator and the spectral Eisenstein operator defined above.

We first recall the definition of the classical intertwining operator.

\begin{definition}[Classical intertwining operator]
    Let $G$ be a reductive group over a $p$-adic local field $F$, $P$, $Q$ are two parabolic subgroups of $G$ such that their Levi subgroups are the same, denoted $M$.
    Then there are (normalized) parabolic induction functors
    \[
    i_P^G, i_Q^G \colon \mathsf{Rep}(M(F)) \to \mathsf{Rep}(G(F))
    \]
    and parabolic restriction functors 
    \[
    r_P^G, r^G_Q \colon \mathsf{Rep}(G(F)) \to \mathsf{Rep}(M(F)).
    \]

    By Mackey's formula, there exists a filtration $F^{\leq w}$ of $r_Q^G \circ i_P^G$ 
    indexed by $w \in N_G(M)/M$ and $F^{\leq 1}/F^{<1}\simeq \mathbb{1}$.
    It is proved that the inclusion
    \[
    \mathbb{1} \to r_Q^G \circ i_P^G/F^{<1}
    \]
    has a unique \textbf{rational} section $r_Q^G \circ i_P^G/F^{<1} \to \mathbb{1}$.
    The adjunction of 
    \[
    r_Q^G \circ i_P^G \to r_Q^G \circ i_P^G/F^{<1} \to \mathbb{1},
    \]
    $J_{Q|P} \colon i_P^G \to i_Q^G$ is called the unnormalized intertwining operator.
\end{definition}

From now, we always assume that $G$ is a pure inner form $(G^*)_b$ for a quasi-split p-adic reductive group $G^*$ and an element $b$ in the Kottwitz set $B(G^*)_{bas}$. 
To state the compatibility result, we assume the following version of the categorical local Langlands correspondence.
Recall the definition and properties of $\operatorname{Bun}_{G^*}$, the moduli space of principal $G^*$-bundles over the Fargues--Fontaine curve in \cite{fargues2024}, Chapter III and $D_{\text{lis}}(\operatorname{Bun}_{G^*})^\omega$, the category of lisse \'etale sheaves over $\operatorname{Bun}_{G^*}$ in \cite{fargues2024}, Chapter V. In particular, there is an open immersion $i_b \colon */G_b(F) \to \operatorname{Bun}_G$, which induces an inclusion $i_{b!} \colon D\mathsf{Rep}(G(F)) \to D_{\text{lis}}(\operatorname{Bun}_{G^*})$.

\begin{conjecture}[Categorical local Langlands correspondence]
\label{categorical}
    Let $G = (G^*)_b$ be as above.
    For a fixed Whittaker datum of $G^*$, there is an equivalence of categories
    \[
    \widetilde{\operatorname{LL}}_{G^*} \colon 
    D_{\text{lis}}(\operatorname{Bun}_{G^*})^\omega \simeq D\Coh^{\text{qc}}(\Par_{G^\vee}),
    \]
    where $\omega$ denotes the full subcategory of compact object and qc denotes the full subcategory of objects with compact supports,
    satisfying the following properties:
    \begin{enumerate}
        \item Under this functor, the parabolic induction operator corresponds to the Eisenstein operator. More precisely, let $P$ be a parabolic subgroup of $G$ with Levi subgroup $M$, denote by $\bar{P}$ the opposite parabolic subgroup of $P$. We have $M=(M^*)_b$ for the quasi-split inner form $M^*$ of $M$ and the Whittaker datum for $G$ induces a Whittaker datum for $M^*$. Then the diagram
        \[
        \begin{tikzcd}
	    D(\mathsf{Rep}(M(F))) & D_{\text{lis}}(\operatorname{Bun}_{M^*})^\omega & D\Coh^{\text{qc}}(\Par_{M^\vee}) \\
	      D(\mathsf{Rep}(G(F))) & D_{\text{lis}}
        (\operatorname{Bun}_{G^*})^\omega & D\Coh^{\text{qc}}(\Par_{G^\vee})
        \arrow["i_{b!}",from=1-1, to=1-2]
	    \arrow["\widetilde{\operatorname{LL}}_{M^*}",from=1-2, to=1-3]
        \arrow["i_{b!}",from=2-1, to=2-2]
	    \arrow["\widetilde{\operatorname{LL}}_{G^*}",from=2-2, to=2-3]
        \arrow["{i_{\bar{P}}^G}"', from=1-1, to=2-1]
	    \arrow["{\Eis^a_{\bar{P}!}}", from=1-2, to=2-2]
        \arrow["{\Eis_{P^\vee}}", from=1-3, to=2-3]
        \end{tikzcd}
        \]
        commutes up to isomorphisms, where $\Eis^a_{\bar{P}!}$ is the Eisenstein operator for $\operatorname{Bun}$'s, defined in \cite{hamann}, Definition 2.1.8. In this notation, ``$a$'' stands for ``automorphic side''.
        \item This functor is compatible with automorphisms: for an automorphism $\sigma$ of $G^*$ that preserves $b$ and a pinning, it induces an isomorphism $\sigma^\vee$ of $G^\vee$. Then the diagram
        \[
        \begin{tikzcd}
	    D(\mathsf{Rep}(G(F))) & D_{\text{lis}}(\operatorname{Bun}_{G^*})^\omega & D\Coh^{\text{qc}}(\Par_{G^\vee}) \\
	      D(\mathsf{Rep}(G(F))) & D_{\text{lis}}
        (\operatorname{Bun}_{G^*})^\omega & D\Coh^{\text{qc}}(\Par_{G^\vee})
        \arrow["i_{b!}",from=1-1, to=1-2]
	    \arrow["\widetilde{\operatorname{LL}}_{G^*}",from=1-2, to=1-3]
        \arrow["i_{b!}",from=2-1, to=2-2]
	    \arrow["\widetilde{\operatorname{LL}}_{G^*}",from=2-2, to=2-3]
        \arrow["\sigma"', from=1-1, to=2-1]
	    \arrow["\sigma", from=1-2, to=2-2]
        \arrow["\sigma^\vee", from=1-3, to=2-3]
        \end{tikzcd}
        \]
        commutes up to isomorphisms.
        Such a functor is called categorical local Langlands correspondence.
        \item The correspondence is compatible with the Kottwitz isomorphism. More precisely, for any $V \in \mathsf{Rep}(G(F))$,
        The $Z(G^\vee)^{W_F}$-character of the fiber of $\widetilde{\operatorname{LL}}_{G^*} \circ i_{b!}(V)$ at any point in $\Par_{G^\vee}$ is $\kappa(b)$ under the Kottwitz isomorphism $\kappa\colon B(G^*)_{bas} \simeq X^*(Z(G^\vee)^{W_F})$.
    \end{enumerate}    
\end{conjecture}

In particular, we obtain a fully faithful functor
\[
    \operatorname{LL}_G = \widetilde{\operatorname{LL}}_{G^*} \circ i_{b!} \colon D(\mathsf{Rep}(G(F))) \to D\Coh(\Par_{G^\vee})
\]
Moreover, since $\CO(\Par_{G^\vee})$ is in the Bernstein center of the right hand side, it induces a homomorphism of rings
\[
    \operatorname{LL}_G^*\colon \CO(\Par_{G^\vee}) \to \CZ(D\mathsf{Rep}(G(F))),
\]
called the local Langlands correspondence for functions, which will satisfy similar compatibility conditions. The correspondence for functions induces for an irreducible representation $\pi$ of $G(F)$ a semisimple cocycle $\phi_\pi\in Z^1(W_F, G^\vee)$ up to inner conjugation, called its semisimple L-parameter. 

One possible way to construct such a categorical Langlands correspondence is in the article of Fargues--Scholze \cite{fargues2024}. They defined the functor $\widetilde{\operatorname{LL}}_{G^*}$ as the conjecture inverse of the functor in \cite{fargues2024}, Conjecture X.1.4. 
They also construct a correspondence for functions
\[
\CO(\Par_{G^\vee}) \to \CZ(D\mathsf{Rep}(G(F)))
\]
in \cite{fargues2024}, Corollary IX.0.3 (for general $G$). By their constructions, the categorical correspondence and the correspondence for functions are compatible. The assumption (1) holds for functions by \cite{fargues2024}, Corollary IX.7.3 and it is conjectured to be true categorically in \cite{hansen_cllc}, Conjecture 5.4.1. The assumption (2) holds for functions by \cite{fargues2024}, Theorem IX.6.1 and it is natural to hope that is it true categorically. The assumption (3) is conjecture to be true in \cite{hansen_beijing}, Conjecture 1.4.5 (it is also expected to be true classically in \cite{kaletha2016local}, Conjecture F).

The conjectural categorical local Langlands correspondence induces an equivalence of blocks. 

\begin{proposition}
   \label{equivalence-blocks}
    Let $\pi$ be an irreducible, supercuspidal representation of $G(F)$
    such that its corresponding semisimple L-parameter $\phi_\pi \in \Par_{G^\vee}$ is elliptic. Denote by $\mathsf{Rep}(G(F))_\pi$ the block of the representation $\pi$ (in which the irreducible objects are the unramified twists of $\pi$). Then there exists a unique irreducible representation $V_\pi$ of $C_{G^\vee}(\phi_\pi)$ such that the categorical local Langlands correspondence $\operatorname{LL}_G$ restricts to an equivalence of categories
    \[
    D(\mathsf{Rep}(G(F))_\pi) \simeq D\Coh(\Par_{G^\vee,\phi_\pi})_{V_\pi}.
    \] 
\end{proposition}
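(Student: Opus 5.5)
The plan is to locate the image of the block under $\operatorname{LL}_G$ in two stages: first the connected component, then the isotypic summand.

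\textbf{Step 1: support on the component.} The Bernstein block $\mathsf{Rep}(G(F))_\pi$ is cut out by an idempotent $e_\pi$ of $\CZ(D\mathsf{Rep}(G(F)))$. Its spectrum is a quotient of the torus $X_{\mathrm{nr}}(G)$ of unramified characters. Under the local Langlands correspondence for functions $\operatorname{LL}_G^*$, the decomposition of $\Par_{G^\vee}$ into connected components gives idempotents in $\CO(\Par_{G^\vee})$. I will show that $e_\pi$ is the image of the idempotent of $\Par_{G^\vee,\phi_\pi}$. To do this I use that $\phi_\pi$ is elliptic, so by Lemma \ref{gerbe} the coarse moduli space of $\Par_{G^\vee,\phi_\pi}$ consists of unramified central twists of $\phi_\pi$. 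The set-theoretic correspondence is compatible with unramified twisting (central character / twisting compatibility of Fargues--Scholze). Hence every irreducible object of the block has semisimple parameter in $\Par_{G^\vee,\phi_\pi}$. Conversely, any irreducible with parameter in this component is an unramified twist of a supercuspidal with elliptic parameter, since ellipticity forces supercuspidal support on $G$ itself by the compatibility (1) with $\Eis$. Since $\operatorname{LL}_G$ is $\CO(\Par_{G^\vee})$-linear, $\operatorname{LL}_G$ sends $D(\mathsf{Rep}(G(F))_\pi)$ into $D\Coh(\Par_{G^\vee,\phi_\pi})$.

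\textbf{Step 2: choice of $V_\pi$.} By Lemma \ref{gerbe} and the block decomposition following Lemma \ref{bernstein}, $D\Coh(\Par_{G^\vee,\phi_\pi})=\bigoplus_V D\Coh(\Par_{G^\vee,\phi_\pi})_V$, over irreducible $V$ of $C_{G^\vee}(\phi_\pi)$. The projective generator $\Pi=\operatorname{ind}_{G(F)^0}^{G(F)}\pi$ of the block is indecomposable up to the finite group acting on the Bernstein variety. More precisely, $\End(\Pi)$ is a twisted group algebra over $\CO$ of the Bernstein torus, and its center is the connected ring of functions on the Bernstein variety. Fully faithfulness then makes $\operatorname{LL}_G(\Pi)$ an object whose endomorphism ring has connected center, so it lies in a single summand $D\Coh(\dots)_V$. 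I define $V_\pi$ to be this $V$; uniqueness is automatic. Condition (3) of Conjecture \ref{categorical} pins down the $Z(G^\vee)^{W_F}$-part of $V_\pi$ as $\kappa(b)$.

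\textbf{Step 3: essential surjectivity.} Fully faithfulness is given, so it remains to show $\operatorname{LL}_G(\Pi)$ generates $D\Coh(\Par_{G^\vee,\phi_\pi})_{V_\pi}$. This summand is generated by the locally free sheaves $V_\pi\boxtimes\CL$, using the gerbe structure of Lemma \ref{gerbe}, with $\CL$ a line bundle pulled back from a Galois cover of the coarse space. Compare $\End$'s: $\End(\operatorname{LL}_G\Pi)\cong\End(\Pi)$ is a module over $\CO$ of the Bernstein torus of rank given by $\dim\pi$-multiplicity data. On the other side, $\operatorname{LL}_G\Pi$ is supported on all of the component and has nonzero fibers. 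I will argue that $\operatorname{LL}_G\Pi\cong V_\pi\boxtimes\CE$ with $\CE$ locally free with faithful action, and that a locally free object with full support generates the block, because $\Coh$ of the smooth gerbe is generated by any such object (Lemma \ref{bernstein}-style argument). To obtain these properties, first use that the whole of $D\Coh^{\mathrm{qc}}(\Par_{G^\vee})$ is hit by $\widetilde{\operatorname{LL}}$. Then observe that objects supported on the component in the $\kappa(b)$-isotypic part come from the open stratum $b$ of $\operatorname{Bun}_{G^*}$, since an elliptic semisimple parameter has no Eisenstein contributions from other strata. That means they come from $D\mathsf{Rep}(G(F))$, and then from the block by Step 1.

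\textbf{Main obstacle.} The hardest step is this last one. Showing that every object of the $V_\pi$-summand comes from $i_{b!}$ requires ruling out contributions from non-basic strata and from other basic $b'$ with the same Kottwitz image, which I would handle via condition (3) together with the injectivity of $\kappa$ on $B(G^*)_{bas}$.
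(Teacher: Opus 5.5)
Your Steps 1 and 2 are essentially fine. The only part of Step 1 you actually need is that the block lands in $D\Coh(\Par_{G^\vee,\phi_\pi})$, and that is also how the paper starts. Your stronger claim that $e_\pi$ is the image of the component idempotent is false in general. Every supercuspidal block whose parameter lies in $\Par_{G^\vee,\phi_\pi}$ (for instance the other members of the L-packet) is cut out by that same idempotent, and those blocks go to the other $V$'s. In Step 2, the connected-centre argument for $\End(\Pi)$ correctly forces the image into a single summand $D\Coh(\Par_{G^\vee,\phi_\pi})_{V_\pi}$.

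The gap is Step 3, essential surjectivity, which is the real content of the proposition.
\begin{enumerate}
\item Condition (3) of Conjecture \ref{categorical} only says where $\widetilde{\operatorname{LL}}_{G^*}\circ i_{b!}$ lands; it has no converse. So it cannot show that an object of the $V_\pi$-summand comes from the stratum $b$. Injectivity of $\kappa$ on $B(G^*)_{bas}$ only separates basic strata and says nothing about non-basic ones.
\item Your sentence ``an elliptic semisimple parameter has no Eisenstein contributions from other strata'' is exactly what needs proof. An object of $D_{\mathrm{lis}}(\operatorname{Bun}_{G^*})^\omega$ is in general only glued from its restrictions to strata by excision triangles, not a direct sum of them.
\item Even granting that an object comes from $i_{b!}D(\mathsf{Rep}(G(F)))$, Step 1 does not put it in the block of $\pi$. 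Another supercuspidal block over the same component could a priori land in the same $V_\pi$-summand, and you would need an extra indecomposability argument to rule this out.
\item The ``locally free with full support'' route is unsupported. Nothing in the conjecture makes $\operatorname{LL}_G$ t-exact, so $\operatorname{LL}_G(\Pi)$ need not even be a sheaf.
\end{enumerate}

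The missing idea is to upgrade the block of $\pi$ to a block of the whole automorphic category, after which essential surjectivity is formal. The paper does this as follows.
\begin{enumerate}
\item Since $\phi_\pi$ is elliptic, $\Par_{G^\vee,\phi_\pi}$ is disjoint from the image of $\Par_{P^\vee}$ for every proper parabolic $P$. Hence the right adjoint of $\Eis_{P^\vee}$ kills $\operatorname{LL}_G(V)$ for every $V$ in the block.
\item By compatibility (1), passing to right adjoints, the automorphic constant terms $\CT^a_{P*}i_{b!}(V)$ vanish. So $i_{b!}$ of the block lies in $D(\operatorname{Bun}_{G^*})^{\mathrm{cusp}}$.
\item By \cite{hamann}, Theorem 1.3.2, the cuspidal part is a direct summand of $D(\operatorname{Bun}_{G^*})$. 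Moreover $i_{b!}D(\mathsf{Rep}(G(F)))\cap D(\operatorname{Bun}_{G^*})^{\mathrm{cusp}}$ is a direct summand of the cuspidal part. Hence $i_{b!}D(\mathsf{Rep}(G(F))_\pi)$ is an indecomposable direct summand of the entire automorphic category.
\item An equivalence carries blocks to blocks. So the image is a block of $D\Coh^{\mathrm{qc}}(\Par_{G^\vee})$ contained in $D\Coh(\Par_{G^\vee,\phi_\pi})$.
\item By Lemma \ref{gerbe} and Lemma \ref{bernstein}, the blocks there are exactly the summands $D\Coh(\Par_{G^\vee,\phi_\pi})_V$.
\end{enumerate}
This structural input on cuspidal sheaves on $\operatorname{Bun}_{G^*}$ is what replaces your stratum-by-stratum analysis. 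It also removes any need to know sheaf-theoretic properties of $\operatorname{LL}_G(\Pi)$.
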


In particular, this equivalence induces an isomorphism of Bernstein centers
\[
\CO(\pi) := \CZ(D(\mathsf{Rep}(G(F))_\pi)) \simeq \CZ(D\Coh(\Par_{G^\vee,\phi_\pi})_{V_\pi}) \simeq \CO(\Par_{G^\vee,\phi_\pi}),
\]
where the last identity is from Lemma \ref{gerbe} and \ref{bernstein}.
Moreover, by assumption (3) of Conjecture \ref{categorical}, the $Z(G^\vee)^{W_F}$-character of $V_\pi$ is $\kappa(b)$.

\begin{proof}
    The correspondence for functions induces a morphism of algebraic varieties, from the variety of unramified twists of $\pi$ to the coarse moduli of $\Par_{G^\vee}$.
    By connectedness, the image of this morphism lies in the coarse moduli of the connected component $\Par_{G^\vee,\phi_\pi}$. Thus for any $V \in D(\mathsf{Rep}(G(F))_\pi)$, the support of $\operatorname{LL}_G(V)$ is contained in $\Par_{G^\vee, \phi}$.
    As $\phi_\pi$ is elliptic, $\Par_{G^\vee, \phi}$ is disjoint with the image of $\Par_{P^\vee}$ for any proper parabolic subgroup $P$ of $G$. Thus the image of $\operatorname{LL}_G(V)$ under the (putative) right adjoint of $\Eis_{P^\vee}$ is $0$ for any $V \in D(\mathsf{Rep}(G(F))_\pi)$. 

    Hence for any proper parabolic subgroup $P$ of $G$, $\CT^a_{P*}i_{b!}(V)=0$, where $\CT_*$ is the constant term operator defined in \cite{hamann}, Definition 2.1.8, which it is the right adjoint of $\Eis^a_{P!}$. Thus 
    \[
    i_{b!}(D(\mathsf{Rep}(G(F))_\pi)) \subset D(\operatorname{Bun}_G)^{\mathrm{cusp}}
    \]
    where the latter is defined by vanishing of all such $\CT_{P*}^a$'s and proved to be a direct summand of $D(\operatorname{Bun}_G)^{\mathrm{cusp}}$ in \cite{hamann}, Theorem 1.3.2. Again by \cite{hamann}, Theorem 1.3.2, 
    \[
    i_{b!}(D(\mathsf{Rep}(G(F))) \cap D(\operatorname{Bun}_G)^{\mathrm{cusp}} \subset D(\operatorname{Bun}_G)^{\mathrm{cusp}}
    \]
    is a direct summand. As $D(\mathsf{Rep}(G(F))_\pi)$ is a block of $D(\mathsf{Rep}(G(F)))$, $i_{b!}(D(\mathsf{Rep}(G(F))_\pi))$ is a block of $D(\operatorname{Bun}_G)$ and it corresponds to a block of $D\Coh(\Par_{G^\vee,\phi_\pi})$. By Lemma \ref{gerbe}, $\Par_{G^\vee, \phi_\pi}$ is a $C_{G^\vee}(\phi_\pi)$-gerbe and thus there exists such a $V_\pi$.
\end{proof}

Moreover, we can prove that there is an isomorphism between the relative Weyl groups.

\begin{proposition}
\label{correspondence-Weyl}
    Let $\pi$ be an irreducible, supercuspidal representation of $M(F)$
    such that its corresponding semisimple L-parameter $\phi_\pi \in \Par_{M^\vee}$ is elliptic. Denote by $V_\pi$ the representation of $C_{M^\vee}(\phi_\pi)$ as in Conjecture \ref{equivalence-blocks}.
    Then under the isomorphism 
    \[N_{G(F)}(M(F))/M(F)\simeq N_{{}^LG}({}^LM)/{}^LM\]
    of Weyl groups,
    there is an isomorphism of relative Weyl groups
    \[
    N_{G^\vee}(M^\vee, \phi, V_\pi)M^\vee/M^\vee \simeq N_G(M, \pi)/M
    \]
    where the group $N_{G^\vee}(M^\vee, \phi_\pi, V_\pi)$ is defined in Definition \ref{filtration-general} and the group $N_G(M, \pi)$ is
    \[
    \{g\in N_{G(F)}(M(F))\mid \pi\circ g \simeq \chi \pi, \chi \in X^*_{\text{ur}}(M(F))\},
    \]
    such that their action on the isomorphic rings $\CO(\Par_{M,\phi})\simeq \CO(\pi)$ are compatible.
\end{proposition}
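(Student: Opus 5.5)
The plan is to compare the two groups through their actions on the two isomorphic rings $\CO(\Par_{M^\vee,\phi})\simeq\CO(\pi)$ from Proposition \ref{equivalence-blocks}, using the compatibilities (1) and (2) of Conjecture \ref{categorical}.

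\emph{Step 1: a common ambient group.} Both groups sit inside $N_{G(F)}(M(F))/M(F)\simeq N_{{}^LG}({}^LM)/{}^LM$. An element $g\in N_{G(F)}(M(F))$ can be represented by an automorphism of $M^*$ preserving $b$ and a pinning, modulo inner automorphisms. By assumption (2) of Conjecture \ref{categorical}, applied to $M$, twisting by $g$ on $D(\mathsf{Rep}(M(F)))$ corresponds to twisting by the dual element $g^\vee\in N_{{}^LG}({}^LM)$ on $D\Coh(\Par_{M^\vee})$. Inner automorphisms act trivially up to isomorphism on both sides, so the correspondence is well defined on the quotients.

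\emph{Step 2: the two stabilizers match.} Take $g$ with $\pi\circ g\simeq\chi\pi$ for some unramified $\chi$. Then $g$ preserves the block $\mathsf{Rep}(M(F))_\pi$, since the irreducible objects of that block are exactly the unramified twists of $\pi$. By Step 1, $g^\vee$ preserves the image block $D\Coh(\Par_{M^\vee,\phi_\pi})_{V_\pi}$. This means $g^\vee$ stabilizes the component $\Par_{M^\vee,\phi_\pi}$, i.e.\ $\ad(g^\vee)\phi_\pi$ is an unramified twist of $\phi_\pi$, so $g^\vee\in N_{G^\vee}(M^\vee,\phi_\pi)$ by Definition \ref{ngmphi}. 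It also preserves the $V_\pi$-isotypic block, so $g^\vee\in N_{G^\vee}(M^\vee,\phi_\pi,V_\pi)$.

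The converse runs the same way. An element of $N_{G^\vee}(M^\vee,\phi_\pi,V_\pi)$ preserves $D\Coh(\Par_{M^\vee,\phi_\pi})_{V_\pi}$, which by Proposition \ref{equivalence-blocks} is the full image of $D(\mathsf{Rep}(M(F))_\pi)$. The corresponding $g$ therefore preserves the block of $\pi$, which forces $\pi\circ g$ to be an unramified twist of $\pi$. Both groups contain the Levi, and $M^\vee$ and $M(F)$ act trivially up to isomorphism, so we get the stated isomorphism of quotients $N_{G^\vee}(M^\vee,\phi_\pi,V_\pi)M^\vee/M^\vee\simeq N_G(M,\pi)/M$.

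\emph{Step 3: compatibility of the ring actions.} The Bernstein centers are functorial for equivalences of categories. The square in Step 1 intertwines the autoequivalences induced by $g$ and $g^\vee$ with $\operatorname{LL}_M$, so the identification $\CZ(D(\mathsf{Rep}(M(F))_\pi))\simeq\CZ(D\Coh(\Par_{M^\vee,\phi_\pi})_{V_\pi})$ is equivariant. The right-hand side is $\CO(\Par_{M^\vee,\phi_\pi})$ by Lemma \ref{bernstein}, where $g^\vee$ acts geometrically, which gives the claimed compatibility.

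\emph{Main obstacle.} The delicate point is Step 1: checking that every element of $N_{G(F)}(M(F))/M(F)$ can be realized by an automorphism of $M^*$ fixing $b$ and a pinning, so that assumption (2) applies. I would first change the representative by an element of $M(F)$ so that it fixes the pinning. I would then use that $b$ is basic, so conjugation preserves $\kappa(b)$, checking this via assumption (3) on the $Z(M^\vee)^{W_F}$-character of $V_\pi$. Some care is also needed to see that the isomorphism of Weyl groups used in the statement agrees with the one induced by dualizing these automorphisms.
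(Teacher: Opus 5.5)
Your proposal is correct and follows the same route as the paper. You use assumption (2) of Conjecture \ref{categorical} to see that a Weyl element carries $(\phi_\pi,V_\pi)$ to $(w(\phi_\pi),w(V_\pi))$, match the two stabilizers through the block equivalence of Proposition \ref{equivalence-blocks}, and read off the compatibility on $\CO(\Par_{M^\vee,\phi_\pi})\simeq\CO(\pi)$ from the same assumption; the point you single out as the main obstacle (that each element of $N_{G(F)}(M(F))/M(F)$ is induced by an automorphism of $M^*$ fixing $b$ and a pinning, so that assumption (2) actually applies) is simply taken for granted in the paper's proof.
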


\begin{proof}
    For an element $w$ in these isomorphic Weyl groups, $w$ defines an automorphism of $M(F)$ on the automorphic side and its corresponding automorphism of ${}^LM$ on the spectral side. Hence by the compatibility of the categorical correspondence with isomorphisms, $\phi_{w(\pi)}=w(\phi_\pi)$ and $V_{w(\pi)} = w(V_\pi)$.

    By the equivalence of blocks \ref{equivalence-blocks}, these two representations $\pi, w(\pi)$ are in the same block of $\mathsf{Rep}(M(F))$ if and only if their corresponding L-parameters $\phi_\pi$ and $\phi_{w(\pi)}$ are in the same connected component and $V_\pi=V_{w(\pi)}$. Hence the relative Weyl groups are isomorphic.

    By the compatibility of the geometric correspondence with isomorphisms, the action of the relative Weyl group on the isomorphic rings $\CO(\Par_{M^\vee,\phi})\simeq \CO(\pi)$ are compatible.
\end{proof}

Now we can state the compatibility theorem between classical and spectral intertwining operators. Let $\pi$ be an irreducible, supercuspidal representation of $M(F)$ such that its corresponding semisimple L-parameter $\phi_\pi \in \Par_{M^\vee}$ is elliptic. By Proposition \ref{equivalence-blocks}, it corresponds to an irreducible representation $V_\pi$ of $C_{M^\vee}(\phi_\pi)$. Denote by $M^*$ the quasisplit inner form of $M$. Note that for $b_M \in B(M^*)_{bas}\simeq X^*(Z(M^\vee)^{W_F})$ corresponding to $M$, it comes from $b_G \in B(G^*)_{bas}\simeq X^*(Z(G^\vee)^{W_F})$. Thus $\kappa(b_M)$ is invariant under the Weyl group action. Moreover, since $\phi_\pi$ is elliptic, $(Z(M^\vee)^{W_F})^\circ = (C_{M^\vee}(\phi_\pi))^\circ$. Thus the representation $V_\pi$ satisfies the condition in section 3.4 and 3.5 and we can define the intertwining operators over the category $D\Coh(\Par_{M^\vee,\phi_\pi})_{V_\pi}$.

\begin{theorem}[Compatibility]
\label{compatibility}
    Under the commutative diagram
    \[\begin{tikzcd}
	\mathsf{Rep}(M(F))_\pi & D\Coh(\Par_{M^\vee, \phi_\pi})_{V_\pi} \\
	\mathsf{Rep}(G(F))_\pi & D\Coh(\Par_{G^\vee, \phi_\pi}),
	\arrow["\operatorname{LL}_M", from=1-1, to=1-2]
	\arrow["{i_{\bar{P}}^G, i_{\bar{Q}}^G}", from=1-1, to=2-1]
	\arrow["{\Eis_P, \Eis_Q}", from=1-2, to=2-2]
	\arrow["\operatorname{LL}_G",from=2-1, to=2-2]
    \end{tikzcd}\]
    the intertwining operator satisfies the following compatibility:
    for any $\pi_0 \in \mathsf{Rep}(M(F))_\pi$,
    \[
    \operatorname{LL}_G(J_{\bar{Q}|\bar{P}}\pi_0) = J_{Q^\vee|P^\vee}(\operatorname{LL}_M\pi_0)
    \]
    up to a unit in $\CO(\pi)\simeq \CO(\Par_{M,\phi})$.
\end{theorem}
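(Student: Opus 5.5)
The plan is to transport both sides to one side of the correspondence and compare them inside a free rank-one module, where any two generators agree up to a unit. By Proposition \ref{equivalence-blocks} and the commutative square of Conjecture \ref{categorical}(1), $\operatorname{LL}_M$ and $\operatorname{LL}_G$ restrict to fully faithful functors on the relevant blocks, and $\Eis_{P^\vee}\circ\operatorname{LL}_M\simeq \operatorname{LL}_G\circ i_{\bar P}^G$, and likewise for $Q$. Composing with these isomorphisms identifies
\[
\Hom(i_{\bar P}^G, i_{\bar Q}^G)|_{\mathsf{Rep}(M(F))_\pi}\simeq \Hom(\Eis_{P^\vee},\Eis_{Q^\vee})_{V_\pi}.
\]
Since $\operatorname{LL}_M$ intertwines the action of $\CO(\pi)$ with that of $\CO(\Par_{M^\vee,\phi_\pi})$, this identification is linear over $\CO(\pi)\simeq\CO(\Par_{M^\vee,\phi_\pi})$. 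Every natural transformation is determined by its value on the projective generator $\operatorname{ind}$ of the block, which corresponds to a locally free generator. So it is enough to compare the two transformations on this generator.

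The comparison proceeds by identifying the classical and spectral characterizations. On the spectral side, Theorem \ref{mul-general} characterizes $J_{Q^\vee|P^\vee}$: up to a unit, it is the unique rational transformation whose adjoint, precomposed with the generator of $\Hom(\mathbb 1,\CT\circ\Eis/F^{>1})$, gives the identity. On the classical side, $J_{\bar Q|\bar P}$ is characterized in the same way using the left adjoint $r_{\bar Q}^G$ and the Mackey filtration. The first step is to show that $\operatorname{LL}_M\circ r_{\bar Q}^G\simeq \CT_{Q^\vee}\circ \operatorname{LL}_G$ on the image block. This should follow formally from full faithfulness together with Proposition \ref{ct-general}, because both functors are left adjoints of corresponding functors.

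The second step is to match the filtrations. The graded pieces are indexed by $N_G(M,\pi)/M$ on the classical side and by $N_{G^\vee}(M^\vee,\phi,V_\pi)/M^\vee$ on the spectral side, and Proposition \ref{correspondence-Weyl} identifies these groups compatibly with their actions on $\CO(\pi)$. I would characterize the piece $F^{>1}$ intrinsically in the same way on both sides: it is the largest subfunctor, or the kernel of the map to the generic-locus decomposition, whose pieces are $w^*$ for $w\neq 1$, as in Corollary \ref{stratum w}. I expect this step to be the main obstacle, because the classical filtration is defined geometrically, by Bruhat cells, and not through the generic locus. My first attempt is to localize to the open set where the relative Weyl group acts freely (Lemma \ref{generical free}). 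There both $r\circ i$ and $\CT\circ\Eis$ split as $\bigoplus_w w^*$, and each summand is uniquely determined by its $\CO(\pi)$-semilinear action, the twist by $w$. Both $F^{>1}$'s are saturated with locally free quotients, so they are determined by this generic splitting, and they therefore correspond under $\operatorname{LL}$.

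Once the filtrations correspond, $\Hom(\mathbb 1,r_{\bar Q}\circ i_{\bar P}/F^{<1})$ is carried to the free rank-one module of Theorem \ref{mul-general}. The classical inclusion and the spectral generator are then both generators of the same free rank-one module, so they differ by a unit $u\in\CO(\pi)^\times$. It follows that the two adjoint rational sections differ by $u^{-1}$. Taking adjoints once more gives $\operatorname{LL}_G(J_{\bar Q|\bar P})=u^{-1}J_{Q^\vee|P^\vee}(\operatorname{LL}_M)$, which is the claimed equality up to a unit.
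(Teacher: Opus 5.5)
Your overall route is the paper's. You transport along the fully faithful $\operatorname{LL}_M,\operatorname{LL}_G$, identify $r^G_{\bar Q}$ with $\CT_{Q^\vee}$ by adjunction, match the two filtrations, and compare two generators of a free rank-one $\CO(\pi)$-module. Your generic-splitting-plus-saturation argument is also a reasonable way to make precise the paper's terse ``such a filtration is unique''.

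The gap is in the filtration-matching step, where you characterize $F^{>1}$ as the saturated subfunctor whose pieces are the $w^*$ with $w\neq 1$. That is false as soon as $P\neq Q$.
\begin{itemize}
\item With the reindexing at the end of Definition \ref{filtration}, $F^{>1}=\bigcap_{w'\leq 1}\ker(\cdots\to w'^*)$.
\item So $F^{>1}$ contains only the pieces \emph{not below} the identity in the closure order, and the quotient $\CT_Q\circ\Eis_P/F^{>1}$ keeps every piece below it.
\item For example, when $Q=\bar P$ the identity cell is open, and the relevant subfunctor is $0$ on both sides.
\end{itemize}
This is not cosmetic. Take the rank-one unipotent case with $Q=\bar P$, and quotient instead by the saturation of all non-identity pieces. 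Then the image of $F^{\geq 1}/F^{>1}\simeq\mathbb{1}$ in the resulting rank-one quotient is, up to a unit, only $L(0,\ad_{\fn_Q/\fn_P})^{-1}$ times a generator. So with your characterization the ``both are generators'' step would compare the wrong modules. It would lose exactly the factor $L(0,\ad_{\fn_Q/\fn_P})$ that separates $J_{Q|P}$ from $j_{Q|P}$.

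What your saturation argument actually shows is this: on each side, $F^{>1}$ is the saturation of $\bigoplus_{w\in S}w^*$ over the locus where the relative Weyl group acts freely, for an index set $S$ cut out by that side's order. To conclude that the two filtrations correspond, you must still check that these index sets match under Proposition \ref{correspondence-Weyl}. Concretely, the closure order on double cosets $\bar P w\bar Q$ in $G(F)$ must agree with the spectral order defined by closures of $P^\vee wQ^\vee$, including where the identity sits. The paper supplies this input with its remark that the orders of the two filtrations coincide. It pairs this with the identification of the graded pieces with $w^*$ on both sides, via Mackey's formula and Corollary \ref{stratum w}. With that comparison added, your argument goes through as you describe.
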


\begin{proof}
    On the automorphic side, the intertwining operator is defined as the adjunction of the rational splitting of the inclusion of functors from $\mathsf{Rep}(M(F))_\pi$,
    \[
    \mathbb{1} \to r_{\bar{Q}}^G \circ i_{\bar{P}}^G/F^{<1},
    \]
    and by Theorem \ref{mul-general}, on the spectral side, the intertwining operator is the adjunction of the rational splitting of the following transformation of functors from $D\Coh(\Par_{M^\vee,\phi_\pi})_{V_\pi}$, 
    \[
    \mathbb{1} \to \CT_{Q^\vee} \circ \Eis_{P^\vee}/F^{<1}.
    \]
    By fully-faithfulness of the functor $\operatorname{LL}_M$ and $\operatorname{LL}_G$, $\CT_{Q^\vee}$ corresponds to $r_{\bar{Q}}^G$ and thus
    \[
    \operatorname{LL}_M(r_{\bar{Q}}^G \circ i_{\bar{P}}^G(\pi_0)) \simeq \CT_{Q^\vee} \circ \Eis_{P^\vee}(\operatorname{LL}_{M}\pi_0).
    \]
    As the inclusion in Theorem \ref{mul-general} is a generator of the $\CO(\Par_{M^\vee,\phi})$-module of rank $1$, it is unique up to a unit in $\CO(\Par_{M^\vee,\phi})$.

    Moreover, after taking the direct summand of $r_{\bar{Q}}^G \circ i_{\bar{P}}^G$ in the block $D(\mathsf{Rep}(M(F)))_\pi$ and taking the direct summand of $\CT_{Q^\vee} \circ \Eis_{P^\vee}$ in the block $D\Coh(\Par_{M^\vee,\phi_\pi})_{V_\pi}$, their filtrations reduce to 
    $N_{G^\vee}(M^\vee, \phi, V_\pi)M^\vee/M^\vee \simeq N_G(M, \pi)/M$-filtrations.
    Thus it suffices to prove that the reduced filtrations correspond to each other under the isomorphic relative Weyl groups in Proposition \ref{correspondence-Weyl} and it suffices to prove this statement for those $\pi_0$ with locally free $\operatorname{LL}_M\pi_0$, as locally free sheaves in $D\Coh(\Par_{M^\vee,\phi_\pi})_{V_\pi}$ generate this category.
    For those $\pi_0$, the filtrations are ``usual'' filtrations in abelian categories.

    Note that the order of the filtration are the same: $\overline{PwQ}\subset\overline{Pw'Q}$ if and only if 
    $\overline{P^\vee wQ^\vee}\subset\overline{P^\vee w'Q^\vee}$. Moreover, the filtrations satisfy that the subquotients are compatible:
    \begin{itemize}
        \item The subquotient of $r_{\bar{Q}}^G \circ i_{\bar{P}}^G$ at the stratum $w$ satisfies $F^{\geq w}/F^{>w} \simeq w^*$ by Mackey's formula.
        \item The subquotient of $\CT_{Q^\vee} \circ \Eis_{P^\vee}$, at the stratum $w$ satisfies $F^{\geq w}/F^{>w} \simeq w^*$ by Corollary \ref{stratum w}.
    \end{itemize}
    Hence by compatibility of the action of Weyl group, the filtrations on both sides of the formula
    \[
    \operatorname{LL}_M((r_{\bar{Q}}^G \circ i_{\bar{P}}^G \pi_0)_\pi)\simeq (\CT_{Q^\vee} \circ \Eis_{P^\vee}(\operatorname{LL}_M(\pi_0)))_{V_\pi}
    \]  
    (where the subscripts denotes the corresponding direct summands)
    correspond to each other, as such a filtration is unique.
\end{proof}

By definition of the unnormalized intertwining operator on the spectral side (Definition \ref{unnormalized}), we verify the Langlands conjecture \ref{conjecture-Langlands} under the conjectural local Langlands correspondence, after changing the convention of the Frobenius as in Remark \ref{convention}. 